\newtheorem{thm}{Theorem}[section]
\newtheorem{lem}[thm]{Lemma}
\newtheorem{defin}[thm]{Definition}
\newtheorem{prop}[thm]{Proposition}
\newtheorem{cor}[thm]{Corollary}
\newtheorem{rem}[thm]{Remark}
\newtheorem{claim}[thm]{Claim}
\newtheorem{conv}[thm]{Convention}
\newcommand {\ignore}[1]  {}
\newcommand{\N}{\mathcal{N}}
\newcommand{\Rel}{\mathrm{Rel}}
\newcommand{\M}{{\mathcal{M}}}
\newcommand{\BB}{\boldsymbol{B}}
\newcommand{\B}{B}
\newcommand{\area}{\mathrm{area}}
\newcommand{\bq}{\mathbf{q}}
\newcommand{\hol}{\mathrm{hol}}
\newcommand{\sm}{\smallsetminus}
\newcommand{\dev}{\mathrm{dev}}
\newcommand{\HH}{{\mathcal{H}}}
\newcommand{\R}{{\mathbb{R}}}
\newcommand{\vre}{{\varepsilon}}
\newcommand{\Res}{{\mathrm{Res}}}
\newcommand{\C}{{\mathbb{C}}}
\newcommand{\Mod}{{\operatorname{Mod}}}
\newcommand{\LL}{{\boldsymbol{L}}}
\newcommand{\x}{\mathrm{x}}
\newcommand{\y}{\mathrm{y}}
\newcommand{\product}{U'_{\x} \times U_{\y}}
\newcommand{\WW}{W_{L}}
\newcommand{\xform}{\beta_{\x}}
\newcommand{\xmeasure}{\nu_{\beta_\x}}
\newcommand{\q}{\boldsymbol{q}}
\newcommand{\df}{\overset{\operatorname{def}}{=}}
\newcommand{\HHm}{\mathcal{H}_{\mathrm{m}}}
\newcommand{\cone}{m_{\M}}
\newcommand{\GL}{\mathrm{GL}_2^+(\R)}
\newcommand{\SL}{\mathrm{SL}_2(\R)}
\newcommand{\U}{\mathcal{U}}
\newcommand{\V}{\mathcal{V}}
\newcommand{\W}{\mathcal{W}}
\title{Horospherical dynamics in invariant subvarieties}
\author{John Smillie}
\address{University of Warwick, Coventry, UK 
{\tt j.smillie@warwick.ac.uk}}
\author{Peter Smillie}
\address{Universit\"at Heidelberg, Heidelberg, Germany
{\tt psmillie@mathi.uni-heidelberg.de}}
\author{Barak Weiss}
\address{Dept. of Mathematics, Tel Aviv University, Tel Aviv, Israel
{\tt barakw@tauex.tau.ac.il}}
\author{Florent Ygouf}
\address{Dept. of Mathematics, Tel Aviv University, Tel Aviv, Israel
{\tt florentygouf@mail.tau.ac.il}}
\begin{document}

\maketitle
\begin{abstract}
 We consider the horospherical foliation on any invariant subvariety in the moduli space of translation surfaces. This foliation can be described dynamically as the strong unstable foliation for the geodesic flow on the invariant subvariety, and geometrically, it is induced by the canonical splitting of $\C$-valued cohomology into its real and imaginary parts. We define a natural volume form on the leaves of this foliation, and define horospherical measures as those measures whose conditional measures on leaves are given by the volume form. We show that the natural measures on invariant subvarieties, and in particular, the Masur-Veech measures on strata, are horospherical. We show that these measures are the unique horospherical measures giving zero mass to the set of surfaces with horizontal saddle connections, extending work of Lindenstrauss-Mirzakhani 
and Hamenst\"adt 
for principal strata. We describe all the leaf closures for the horospherical foliation.
\end{abstract}

\section{Introduction}

It is an interesting fact that geometric questions about rational polygonal billiards can be addressed by studying the dynamics on moduli spaces of translation surfaces. This is one of many reasons  to study the dynamics on moduli spaces of translation surfaces --- see 
 the surveys  \cite{MT, Zorich_survey, Forni-Matheus_survey, Wright_survey} for other motivation and a survey of results. We remind the reader that this moduli space is partitioned into {\em strata}, which correspond to translation surfaces of a fixed topological type. The group $G \df \SL$ acts on each stratum.

The {\em horocycle flow} is given by 
$$
U \df \left\{u_s  : s \in \R\right\} \subset G, \ \ \text{ where } \
u_s
 \df \left(\begin{matrix}  1 & s \\  0 & 1 \end{matrix}
  \right).
$$
The analogy between dynamics on strata and homogeneous dynamics has been fruitful. In the setting of homogeneous dynamics $U$-actions and $G$-actions were analyzed in work of Ratner which showed that orbit closures and ergodic invariant probability measures are surprisingly well-behaved. The dynamics of $G$-actions (and moreover the dynamics of its subgroup $P$ of upper triangular matrices) on strata were analyzed in  two papers \cite{EM, EMM} where it was shown that orbit closures and ergodic invariant measures have nice  descriptions (see Section~\ref{subsec: invariant subvarieties} for a precise statement). The situation for the $U$-action on the strata of the moduli spaces is now known to be more complicated due to the work of Chaika-Smillie-Weiss \cite{chaika2020tremors}.  

The $G$-orbit closures
are endowed with a wealth of geometrical structures, among which is the {\em horospherical foliation} which plays the role of the strong unstable manifold foliation for the one parameter diagonal subgroup which is called the geodesic flow (see \S \ref{subsec: horospherical foliation}). In \S \ref{sec: horospherical measures} we will define {\em horospherical measures}. Loosely speaking, the horospherical leaves are endowed with affine structures and the horospherical measures are those for which the conditional measures on theses leaves are translation invariant with respect to these affine structures. In the setting of homogeneous dynamics, there is a corresponding notion of horospherical dynamics. It has been established by Dani in \cite{dani1978invariant} and \cite{dani1981invariant} about a decade prior to the work of Ratner that these dynamical systems are also well-behaved. This paper is concerned with showing that horospherical measures and horospherical leaves in strata are also well-behaved.  


\subsection{Statement of results}
All measures considered in this paper are Borel regular Radon measures on strata of translation surfaces. Any $G$-orbit closure $\M^{(1)} \subset \HH^{(1)}$ supports a unique ergodic $G$-invariant finite smooth measure; we will refer to this  measure as the {\em special flat} measure on $\M^{(1)}$. The following are the main results of this paper. 

\begin{thm}\label{thm: cone measures are horospherical}
The special flat measure on any $G$-orbit closure is horospherical. 
\end{thm}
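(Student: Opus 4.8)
The plan is to deduce the statement from the near-tautological fact that the flat measure on the affine cone is a product with respect to the real/imaginary splitting, and then to push this down to the area-one locus using the cone structure and the area form. Write $\M\subseteq\HH$ for the cone over $\M^{(1)}$ and $\cone$ for the flat measure on $\M$; in period coordinates $\cone$ is (a normalization of) Lebesgue measure, and since $T\M\subseteq H^1(S,\Sigma;\C)$ is complex-linear it splits as $T\M=\mathrm{Re}(T\M)\oplus i\,\mathrm{Re}(T\M)$. The horospherical leaf through $\q$, being the strong unstable leaf of the geodesic flow $g_t=\mathrm{diag}(e^t,e^{-t})$ (which dilates $\mathrm{Re}$ and contracts $\mathrm{Im}$), is locally the affine subspace obtained by freezing $\mathrm{Im}(\q)$ and letting $\mathrm{Re}(\q)$ range over $\mathrm{Re}(T\M)$; this foliation visibly extends to $\M$, and with respect to the splitting above $\cone$ is a product of a transverse Lebesgue measure in $\mathrm{Im}$ and translation-invariant measures on these leaves — which, with the standard normalizations, are the leafwise volume forms. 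The obstruction to concluding immediately is that these leaves are not contained in $\M^{(1)}$.

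The key structural input is that the area form interacts linearly with the foliation. Writing $\area(\q)=\langle\mathrm{Re}(\q),\mathrm{Im}(\q)\rangle$ for the (pulled-back) cup-product pairing on $H^1(S,\Sigma;\R)$, the restriction of $\area$ to any horospherical leaf of $\M$ is \emph{affine}, since $\mathrm{Im}(\q)$ is constant along the leaf; and its differential along the leaf, the functional $\langle\,\cdot\,,\mathrm{Im}(\q)\rangle$ on $\mathrm{Re}(T\M)$, is nonzero, because the radial vector $\q$ lies in $T\M$, so $\mathrm{Re}(\q)\in\mathrm{Re}(T\M)$, and $\langle\mathrm{Re}(\q),\mathrm{Im}(\q)\rangle=\area(\q)>0$. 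Hence the horospherical leaf of $\M^{(1)}$ through $\q$ is exactly the intersection of the horospherical leaf of $\M$ through $\q$ with $\{\area=1\}$, a codimension-one affine subspace of it; and for any $a>0$ the slice $\{\area=a\}$ of a horospherical leaf of $\M$, rescaled by $1/\sqrt a$, is again a horospherical leaf of $\M^{(1)}$. I would then refine the Fubini disintegration of $\cone$ along the leaves of $\M$ by disintegrating each leafwise Lebesgue over the affine map $\area$ — a clean slicing, because $\area$ is affine — normalizing the level-set slices by contracting the leafwise Lebesgue against a transverse vector $v$ with $\langle v,\mathrm{Im}(\q)\rangle=1$, which is precisely the normalization defining the leafwise volume form on the $\M^{(1)}$-leaves. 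After rescaling each slice to area one, this exhibits $\cone$ as an integral — over a transverse parameter together with a scaling parameter $t\in\R_{>0}$ — of the pushforwards under $\q\mapsto t\q$ of the volume forms of the horospherical leaves of $\M^{(1)}$.

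To finish I would compare this with the radial disintegration $\cone=\mathrm{const}\cdot\int_0^\infty t^{N-1}\,(S_t)_* m_{\M^{(1)}}\,dt$, where $N=\dim_\R\M$ and $S_t\q=t\q$, which characterizes $m_{\M^{(1)}}$ up to the fixed normalization. Carrying out the change of variables that turns the parameter pair above into $(t,\ \text{a transversal to the horospherical foliation of }\M^{(1)})$, the accumulated Jacobian and scaling factors must combine into the weight $t^{N-1}$ by homogeneity of $\cone$, and uniqueness of disintegration — both of the radial type and of the leafwise type — then yields that $m_{\M^{(1)}}$ is a constant multiple of the integral of the leafwise volume forms against a transverse measure; this is exactly the assertion that $m_{\M^{(1)}}$ is horospherical. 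The one genuinely delicate point, and what I expect to be the main obstacle, is the normalization bookkeeping in this comparison — in particular checking that the coarea normalization on the leaves of $\M$ reproduces the paper's definition of the volume form on the leaves of $\M^{(1)}$; the rest is soft, since conditional measures are local objects and so the locally defined transversals cause no difficulty. The feature that makes the whole argument work, producing honestly translation-invariant conditionals rather than merely absolutely continuous ones, is the affineness of $\area$ along the horospherical leaves; for principal strata and $U$-orbits this recovers the folklore fact that the Masur--Veech measure has Lebesgue conditionals on horocycles.
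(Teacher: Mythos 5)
Your proposal is correct, and the point you flag as the delicate one does work out: contracting the leafwise Lebesgue form $\eta_{\x}$ against any vector $v$ with $\langle v,\mathrm{Im}(\q)\rangle=1$ and restricting to the affine slice $\{\langle x,\mathrm{Im}(\q)\rangle=1\}$ is independent of the choice of $v$ (two such vectors differ by a vector tangent to the slice), and taking $v$ to be the radial vector identifies your coarea normalization with the paper's leafwise form $\beta_{\x}=\iota_E\alpha_{\x}$; this is in effect the computation appearing in the proof of Proposition~\ref{jacobian}. Conceptually you use the same decomposition as the paper --- the linear measure is the product $\alpha_{\x}\wedge\alpha_{\y}$ and the area pairing is affine along the horizontal leaves --- but your route to the disintegration is genuinely different in technique. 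The paper (Lemma~\ref{cone}, Lemma~\ref{lem: 3 forms}, Proposition~\ref{prop: horospherical for irreducible component}) stays inside the differential-form calculus: Stokes' theorem plus Cartan's formula identify the cone measure, up to the constant $2\dim(\M)$, with integration of $\iota_E(\alpha_{\x}\wedge\alpha_{\y})$ over $L^{(1)}$; the contraction identity gives $\iota_E(\alpha_{\x}\wedge\alpha_{\y})=2(\iota_E\alpha_{\x})\wedge\alpha_{\y}$ there; and Fubini in a box then exhibits the horospherical disintegration with an explicit transverse measure proportional to $\eta_{\y}$. You instead slice the leafwise Lebesgue measure by the affine area functional, compare with the polar-coordinates disintegration of $\mu_{L}$ over the area map supplied by homogeneity (which is what defines $m_{L}$), and invoke uniqueness of conditional measures; this buys a softer, Stokes-free argument in which no Jacobian actually needs to be computed, since uniqueness of disintegration absorbs the radial weight and scaling equivariance upgrades an almost-every-$t$ statement to the area-one slice. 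What it costs is exactly that extra measure-theoretic care, and you should carry it out on a lift $L\subset\HHm$, where $\dev$ and period coordinates are defined and where horosphericality of $m_{\M}$ is defined via lifts, before descending --- which is also how the paper structures its proof; the paper's calculus, in exchange, delivers the constants and the transverse measure explicitly in every box with no appeal to almost-everywhere uniqueness.
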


We will say that a measure $\mu$ is {\em saddle connection free} if $\mu$-a.e.\;surface has no horizontal saddle connections.

\begin{thm}\label{thm: classification} Up to scaling, the only saddle connection free horospherical measure on a $G$-orbit closure is the special flat measure. 
\end{thm}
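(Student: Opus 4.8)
The plan is to upgrade a saddle-connection-free horospherical measure $\nu$ to a $P$-invariant measure, where $P \subset G$ is the group of upper-triangular matrices, and then appeal to the Eskin--Mirzakhani classification \cite{EM, EMM} recalled in \S\ref{subsec: invariant subvarieties}. The first point is that every horospherical measure is $U$-invariant: in period coordinates $u_s$ acts by $(x,y)\mapsto(x+sy,\,y)$, and since the tangent space to the ambient invariant subvariety is cut out by real-linear equations it splits as a product of a ``real'' and an ``imaginary'' copy of one subspace, so $u_s$ preserves each horospherical leaf (a level set of the imaginary part) and acts on it by the translation $x\mapsto x+sy$, which is \emph{constant} along the leaf; translation invariance of the natural volume form then gives $(u_s)_*\nu=\nu$. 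Passing to the ergodic decomposition of $\nu$ for the $U$-action --- and noting that horosphericity (a condition on the conditionals along leaves) and saddle-connection-freeness (vanishing on a fixed $G$-invariant null set) both descend to almost every ergodic component --- we may assume $\nu$ is ergodic.

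The crux is to show that $\nu$ is also invariant under the geodesic flow $g_t=\mathrm{diag}(e^{t},e^{-t})$, which together with $U$-invariance yields $P$-invariance. For each $t$ the pushforward $(g_t)_*\nu$ is again horospherical, since $g_t$ carries horospherical leaves to horospherical leaves and rescales the leafwise volume by a global constant, and it remains saddle-connection-free because the horizontal-saddle-connection locus is $g_t$-invariant. The saddle-connection-free hypothesis enters to provide recurrence: using non-divergence of the horocycle flow on moduli space (Minsky--Weiss, together with Eskin--Masur-type quantitative bounds) applied to the $U$-invariant measure $\nu$, one rules out escape of mass, so that the Cesàro averages $\tfrac1T\int_0^T(g_t)_*\nu\,dt$ subconverge to a nonzero measure $\bar\nu$ that is $g_t$-invariant, still $U$-invariant and hence $P$-invariant, still horospherical (disintegration over the leaf space commutes with the averaging), and still saddle-connection-free. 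The delicate step is to deduce that $\nu$ \emph{itself}, and not merely its time average, is $g_t$-invariant; here one must use that the conditionals of $\nu$ on the \emph{full} horospherical leaves --- not just on $U$-orbits --- are the whole volume form, which is precisely what excludes the ``tremor''-type $U$-invariant measures of \cite{chaika2020tremors}, and combine this with $g_t$-quasi-invariance to force rigidity of the transverse measure in local product charts, in the spirit of Dani's \cite{dani1978invariant, dani1981invariant} and Lindenstrauss--Mirzakhani's treatment of the classical horocyclic setting. I expect this rigidity step to be the main obstacle; one also checks along the way that $\nu$ is finite.

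Finally, once $\nu$ is $P$-invariant and ergodic, the Eskin--Mirzakhani theorem identifies it as a scalar multiple of the special flat measure of some affine invariant submanifold $\M'\subseteq\M$. If $\M'$ were proper, its horospherical leaves would be lower-dimensional subsets of the horospherical leaves of $\M$, so the conditionals of $\nu$ on the latter could not be the full volume form, contradicting horosphericity; hence $\M'=\M$ and $\nu$ is, up to scaling, the special flat measure on $\M$. Theorem~\ref{thm: cone measures are horospherical} guarantees that this measure does belong to the class being classified, so the statement is not vacuous.
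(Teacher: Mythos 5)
There is a genuine gap at the crux of your argument. Your plan is to upgrade $\nu$ to a $P$-invariant measure and then invoke the Eskin--Mirzakhani classification, but the step that would make this work --- deducing that $\nu$ \emph{itself} is $g_t$-invariant --- is exactly the point you leave open (``I expect this rigidity step to be the main obstacle''), and it is essentially as hard as the theorem: a priori there is no reason a horospherical measure should be geodesic-invariant, and the only way you would know it is after you already know $\nu$ is the special flat measure. Your Ces\`aro-averaging construction only produces \emph{some} limit measure $\bar\nu$ that is $P$-invariant; it carries no information back to $\nu$, so even granting everything about $\bar\nu$ you have classified the wrong object. (There are also smaller unjustified points: saddle-connection-freeness of weak-$*$ limits of $(g_t)_*\nu$ needs an argument --- the horizontal-short-saddle-connection locus is not closed under weak-$*$ limits, and the paper has to work for this in the proof of Theorem \ref{thm: Forni question}; and your reduction to $U$-ergodic components is not justified, since horosphericity is a condition on conditionals along full leaves and need not pass to $U$-ergodic components --- the correct decomposition is the one with respect to the horospherical foliation, as in Proposition \ref{prop: ergodic decomposition}.)

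The paper's actual proof avoids proving any invariance of $\nu$ beyond $U$-invariance. Following Lindenstrauss--Mirzakhani and Dani--Margulis, it works with boxes $\BB$ adapted to the local product structure, controls their distortion and thickness under $g_t$ (Propositions \ref{prop: pushofbox1}, \ref{prop: pushofbox2}), uses nondivergence of the horocycle flow (Lemma \ref{nondivergence cor}) applied to $(g_{-t})_*\nu$ to find times $t_i\to\infty$ and a fixed box carrying a definite proportion of $\nu$ (Lemma \ref{proportion}, which also gives finiteness of $\nu$), and then compares $\nu$ and $m_{\M}$ on the pushed boxes via the single-plaque approximation (Lemma \ref{comparison}) together with \emph{mixing of the geodesic flow with respect to $m_{\M}$}. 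This yields $m_{\M}\ll\nu$ for $\nu$ ergodic with respect to the foliation, and Proposition \ref{prop: ergodic decomposition}(4) then forces $\nu$ to be proportional to $m_{\M}$. If you want to salvage your route, the missing rigidity step would have to be replaced by an argument of this kind; as written, the proposal does not prove the statement.
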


We emphasize that horospherical measures are a \textit{a priori} not assumed to be finite. It is thus a consequence of Theorem \ref{thm: classification} that horospherical measures are finite under the saddle connection free assumption; it seems likely, but we were not able to prove, that all horospherical ergodic measures are finite. Theorem \ref{thm: classification} was announced without proof in \cite[Claim 1, \S 9]{BSW}.  The saddle connection free assumption cannot be removed; for example, the length measure on a periodic horocycle trajectory in a closed $G$-orbit is horospherical. In \S \ref{sec: examples} we will give more interesting examples of invariant subvarieties and horospherical measures on them, which are not the special flat measure. We will also classify (see \S \ref{subsec: eigenform loci classification}) all the horospherical measures on the simplest nontrivial invariant subvarieties, namely the eigenform loci in $\HH(1,1)$.

If a surface has a horizontal cylinder then so does any surface on its horospherical leaf. We will say that a leaf of the horospherical foliation is {\em cylinder-free} if all surfaces on the leaf have no horizontal cylinders. We say that a measure  $\mu$  on $\M$ is cylinder-free if $\mu$-a.e.\;surface has no horizontal cylinders. In \S \ref{sec: examples} we give examples of horospherical measures which are not special flat  and for which almost every point has a horizontal saddle connection. For these measures it is also the case that almost every point has a cylinder. 
It seems likely that
this is always the case; or in other words, that in Theorem \ref{thm:
  classification} the condition `saddle connection free' can be
weakened to `cylinder-free'. 
The analogous assertion about orbit closures is true:
\begin{thm}\label{thm: densehorosphere}
Any cylinder-free leaf for the horospherical foliation of a $G$-orbit closure is dense in that $G$-orbit closure. 
\end{thm}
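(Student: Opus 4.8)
I want to show that a cylinder-free horospherical leaf $\WW$ in a $G$-orbit closure $\M^{(1)}$ is dense in $\M^{(1)}$. The horospherical foliation is the strong unstable foliation for the geodesic flow $g_t = \mathrm{diag}(e^t, e^{-t})$ on $\M^{(1)}$, so leaf closures interact nicely with the $G$-action. The standard strategy — going back to Dani in the homogeneous setting and used by Lindenstrauss–Mirzakhani and Hamenstädt in strata — is to take the closure $X = \overline{\WW}$, observe it is invariant under the horospherical group $U$ (since $\WW$ is a $U$-orbit-like object, or more precisely $\WW$ is $U$-invariant as a set) and then to run a non-divergence plus recurrence argument to produce a $g_t$-orbit accumulating on a point of $X$, from which expansion along $\WW$ and contraction along the opposite horocycle let one sweep out more and more of $\M^{(1)}$.

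**The main steps.** The plan is as follows. (1) Let $x \in \WW$ and let $X = \overline{\WW}$; then $X$ is a closed $U$-invariant subset of $\M^{(1)}$. Since $x$ has no horizontal cylinders, and the set of surfaces with a horizontal cylinder is $U$-invariant and (in an appropriate sense) "large", the hypothesis will be used to guarantee that the forward $U$-orbit of $x$ does not diverge: here one invokes the quantitative non-divergence results of Minsky–Weiss / Eskin–Masur for the horocycle flow on strata, whose divergence criterion is precisely the shrinking of horizontal saddle connections or core curves of horizontal cylinders. Because $x$ is cylinder-free, and one controls the remaining (non-cylinder) degenerations, we get that a positive-density set of times $s$ has $u_s x$ in a fixed compact set $K \subset \M^{(1)}$. (2) By compactness, extract a sequence $s_n \to \infty$ with $u_{s_n} x \to y \in K \cap X$. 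Now run the $g_t$-flow: $g_t u_{s_n} x = u_{e^{2t} s_n} g_t x$, and choosing $t = t_n$ so that $e^{2t_n} s_n$ stays bounded, one finds that $g_{t_n} x$ accumulates on a full horocycle segment through $y$ — i.e. $X$ contains a nontrivial piece of a *different* horospherical leaf, through $y$. (3) Iterate / bootstrap: since $X$ is $g_t$-invariant in the appropriate sense (it is $U$-invariant and the leaf is the strong unstable manifold, so $\overline{g_t \WW} \subseteq X$ for all $t$ by continuity of the leaf closure under the flow — more carefully, $X$ is invariant under $g_t$ for $t \le 0$ and one uses that expansion along $U$ only enlarges $X$), letting $t \to -\infty$ expands horocycle segments to full leaves, and one concludes $X$ is invariant under the group generated by $U$ and $g_t$, hence under the full upper-triangular group $P$. (4) Apply the Eskin–Mirzakhani–Mohammadi classification: a closed $P$-invariant subset of $\M^{(1)}$ is $G$-invariant and is a finite union of orbit closures of invariant subvarieties; since $X \subseteq \M^{(1)}$ and $\M^{(1)}$ is a single $G$-orbit closure, and $X$ meets $\M^{(1)}$ in a set containing an open piece of leaves, we get $X = \M^{(1)}$.

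**The main obstacle.** The delicate point is step (1): upgrading "cylinder-free" to genuine non-divergence of the horocycle orbit. The Minsky–Weiss non-divergence criterion controls excursions to infinity in terms of *all* short saddle connections, not just those bounding cylinders, so a priori a cylinder-free surface could still have a horocycle orbit that escapes to infinity through surfaces acquiring short horizontal saddle connections that do not bound cylinders. One must argue that along the horocycle flow the relevant obstructions are, after passing to the boundary, governed by horizontal cylinders — this is where one needs a structural input about the geometry of degenerating translation surfaces (e.g. that a limit surface in the boundary with a short horizontal saddle connection but no short horizontal cylinder still forces the original leaf to contain a surface with a horizontal cylinder, contradicting cylinder-freeness of the whole leaf). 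Handling this carefully, and making the expansion argument in step (3) respect the invariant subvariety $\M^{(1)}$ rather than the ambient stratum, is where the real work lies; the rest is a by-now-standard unipotent-dynamics bootstrap closed off by \cite{EMM}.
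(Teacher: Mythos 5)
Your step (1) is where the real content lies, and it is not supplied. The Minsky--Weiss nondivergence criterion (Lemma \ref{nondivergence}) is triggered by short \emph{horizontal saddle connections}, not by short cylinders, and a cylinder-free surface --- indeed an entire cylinder-free leaf --- may carry arbitrarily short horizontal saddle connections; worse, your argument (like the paper's) must be run at the points $g_{-t}q$ for arbitrarily large $t$, and applying $g_{-t}$ only shrinks horizontal saddle connections further. You acknowledge this obstacle but resolve it only by gesturing at ``a structural input about degenerating surfaces''; your suggested mechanism (a boundary surface with a short horizontal saddle connection but no short cylinder forces a cylinder somewhere in the original leaf) is not substantiated. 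This is exactly the point at which the paper requires a genuinely new ingredient, Theorem \ref{thm: Apisa Wright} (Apisa--Wright, proved in Appendix \ref{appendix: extending cylinders} via Wright's cylinder deformation theorem and the Smillie--Weiss theorem that horocycle orbit closures contain horizontally periodic surfaces): inside a cylinder-free horospherical leaf one can move to a surface all of whose horizontal saddle connections exceed any prescribed length $T$, and only then does Lemma \ref{nondivergence} produce the recurrent point $p_n \in K \cap W^{uu}(g_{-n}q)$. Without this, your proof does not start.

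Steps (2)--(4) also do not hold as written. The closure $X=\overline{W^{uu}(q)}$ is $U$-invariant (Proposition \ref{prop: horocycle invariance}), but there is no reason it should satisfy $\overline{g_tW^{uu}(q)}\subseteq X$ or be $g_t$-invariant for $t\le 0$: the geodesic flow permutes strong unstable leaves rather than preserving them, and already the closure of a periodic horocycle in a closed $G$-orbit is a $U$-invariant, non-$g_t$-invariant set. Hence the bootstrap to $P$-invariance, and with it the appeal to the Eskin--Mirzakhani--Mohammadi classification of closed $P$-invariant sets, is unjustified (you would in any case still need an argument that the leaf is not contained in a proper invariant subvariety of $\M$). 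Note that the paper's proof avoids this route entirely: it is a direct quantitative equidistribution argument in which nondivergence places a point of $W^{uu}(g_{-n}q)$ in a fixed compact set, boxes of small thickness and distortion (Propositions \ref{prop: pushofbox1} and \ref{prop: pushofbox2}) together with the comparison Lemma \ref{comparison} and mixing of the geodesic flow for the special flat measure (Lemma \ref{mixing}) then show that the horospherical plaque through $g_np_n$, which lies in $W^{uu}(q)$, meets any prescribed open set. So beyond the difference in strategy, your proposal has two genuine gaps: the upgrade from cylinder-freeness to horocycle nondivergence, and the claimed flow-invariance of the leaf closure.
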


The proof of Theorem \ref{thm: densehorosphere} uses a statement of independent interest (Theorem \ref{thm: Apisa Wright}), about extending horizontal saddle connections while staying inside  invariant suborbifolds. This result was explained to us by Paul Apisa and Alex Wright, and its proof is given in Appendix \ref{appendix: extending cylinders}. 



The {\em geodesic flow} is the restriction of the $G$-action to the subgroup
\begin{equation}\label{eq: geodesic subgroup}
A \df \left\{g_t : t \in \R\right\} \subset G, \ \ \text{ where } g_t  \df \left(\begin{matrix} e^t & 0 \\ 0 &
      e^{-t}  \end{matrix} \right).  
\end{equation}
Answering a question of Forni, we prove:

\begin{thm}\label{thm: Forni question}
For any finite horospherical measure $\mu$ on $\M$, the pushforward measures, $g_{t*}\mu$, converge to the special flat measure on $\M$, with respect to the weak-$*$ topology, as $t \to +\infty.$
\end{thm}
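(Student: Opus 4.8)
The plan is to run the classical Margulis--Eskin--McMullen argument that mixing of the geodesic flow forces expanding horospheres to equidistribute, applied here to a horospherical measure $\mu$ in place of a horocycle orbit. The two inputs that make this work are Theorem~\ref{thm: cone measures are horospherical}, which tells us that the special flat probability measure $m$ on $\M$ is itself horospherical, and mixing of the geodesic flow $A=\{g_t\}$ with respect to $m$, which follows from the $G$-ergodicity of $m$ (part of the Eskin--Mirzakhani--Mohammadi theory, \cite{EMM}) together with the Howe--Moore/Mautner phenomenon for $G=\SL$. Since pushforward preserves total mass, it suffices to prove that for every $\varphi\in C_c(\M)$ one has $\int \varphi\circ g_t\,d\mu\to \mu(\M)\int\varphi\,dm$ as $t\to+\infty$; this gives vague convergence of $g_{t*}\mu$ to $\mu(\M)\,m$, and since $g_{t*}\mu(\M)=\mu(\M)=\mu(\M)\,m(\M)$ for all $t$, it upgrades to convergence in the weak-$*$ topology.

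For the local analysis I would cover $\M$ by countably many flow boxes $B_j\cong \mathcal B^u_j\times \mathcal B^{cs}_j$ that are simultaneously product charts for the horospherical foliation and for the complementary weak stable foliation of the geodesic flow (spanned by the strong stable directions of $g_t$ together with the flow direction). The one geometric fact used is that $g_t$ does not expand the transverse $\mathcal B^{cs}_j$-direction: it contracts the strong stable directions and is isometric along $A$-orbits. Since both $\mu$ and $m$ are horospherical, on each $B_j$ the disintegration over the transversal has conditionals equal to the canonical leafwise volume $\mathrm{Leb}$, so $\mu|_{B_j}=\mathrm{Leb}\otimes\lambda_j$ and $m|_{B_j}=\mathrm{Leb}\otimes\lambda^m_j$ for finite transverse measures $\lambda_j,\lambda^m_j$. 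Writing $I^{(j)}_t(c)\df \int_{\mathcal B^u_j}\varphi(g_t(u,c))\,d\mathrm{Leb}(u)$, one has $\int_{B_j}\varphi\circ g_t\,d\mu=\int_{\mathcal B^{cs}_j}I^{(j)}_t(c)\,d\lambda_j(c)$, and the crux of the argument is the claim that, for every fixed $c$, $I^{(j)}_t(c)\to \mathrm{Leb}(\mathcal B^u_j)\int\varphi\,dm$. To prove this, thicken $\mathcal B^u_j\times\{c\}$ into the open set $\mathcal O_\varepsilon\df \mathcal B^u_j\times\mathcal B^{cs}_{j,\varepsilon}$, where $\mathcal B^{cs}_{j,\varepsilon}$ is an $\varepsilon$-neighbourhood of $c$ in the transversal. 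Because $g_t$ does not expand the transverse direction, $|I^{(j)}_t(c')-I^{(j)}_t(c)|\le \omega_\varphi(C\varepsilon)\,\mathrm{Leb}(\mathcal B^u_j)$ for all $c'\in\mathcal B^{cs}_{j,\varepsilon}$, where $\omega_\varphi$ is a modulus of continuity of $\varphi$; averaging over $\mathcal B^{cs}_{j,\varepsilon}$ against $\lambda^m_j$ and using that $m$ is horospherical (so $m(\mathcal O_\varepsilon)=\mathrm{Leb}(\mathcal B^u_j)\,\lambda^m_j(\mathcal B^{cs}_{j,\varepsilon})$) and has full support on $\M$ (so $m(\mathcal O_\varepsilon)>0$), one gets that $I^{(j)}_t(c)$ differs from $\mathrm{Leb}(\mathcal B^u_j)\cdot\frac{1}{m(\mathcal O_\varepsilon)}\int_{\mathcal O_\varepsilon}\varphi\circ g_t\,dm$ by at most $\omega_\varphi(C\varepsilon)\,\mathrm{Leb}(\mathcal B^u_j)$, uniformly in $t$. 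By mixing of $g_t$ with respect to $m$ the last average tends to $\int\varphi\,dm$ as $t\to+\infty$; letting $\varepsilon\to0$ proves the claim. Then, since $0\le I^{(j)}_t(c)\le\|\varphi\|_\infty\mathrm{Leb}(\mathcal B^u_j)$ and $\lambda_j$ is finite, dominated convergence yields $\int_{B_j}\varphi\circ g_t\,d\mu\to \mathrm{Leb}(\mathcal B^u_j)\,\lambda_j(\mathcal B^{cs}_j)\int\varphi\,dm=\mu(B_j)\int\varphi\,dm$.

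To finish, sum over the boxes: $\int\varphi\circ g_t\,d\mu=\sum_j\int_{B_j}\varphi\circ g_t\,d\mu$, and the tail $\sum_{j>J}|\int_{B_j}\varphi\circ g_t\,d\mu|\le\|\varphi\|_\infty\sum_{j>J}\mu(B_j)$ is bounded independently of $t$ and tends to $0$ as $J\to\infty$ because $\mu(\M)<\infty$. Hence $\int\varphi\circ g_t\,d\mu\to\sum_j\mu(B_j)\int\varphi\,dm=\mu(\M)\int\varphi\,dm$, as wanted. In particular the non-compactness of $\M$, i.e. the a priori possibility of escape of mass, is taken care of \emph{for free} by the finiteness of $\mu$, and the argument uses neither the classification Theorem~\ref{thm: classification} nor any quantitative non-divergence estimate --- only Theorem~\ref{thm: cone measures are horospherical} and mixing.

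The step I expect to require the most care is not dynamical but geometric: setting up the bi-foliated flow boxes and verifying the expansion/contraction behaviour of $g_t$ along the horospherical and complementary foliations in the setting of a general invariant subvariety (this rests on the structure recalled in \S\ref{subsec: horospherical foliation}), together with quoting mixing and full support of $m$ in this generality. An alternative, less economical route is to show that every weak-$*$ subsequential limit of $(g_{t*}\mu)_{t\to+\infty}$ is again a finite horospherical measure --- this part is soft --- which is moreover saddle connection free, since $g_t$ expands horizontal saddle connections and hence $g_{t*}\mu(\{S:\ S\text{ has a horizontal saddle connection of length}\le R\})=\mu(\{S:\ \text{length}\le e^{-t}R\})\to0$; one could then invoke Theorem~\ref{thm: classification}. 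However, identifying the total mass of the limit along this route again requires a non-divergence input, which the mixing argument above avoids.
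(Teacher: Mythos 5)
Your proposal is correct in outline, but it takes a genuinely different route from the paper. The paper deduces the theorem from Theorem~\ref{thm: classification}: after normalizing $\mu$ to a probability measure, it proves tightness of $(g_{t*}\mu)$ using the nondivergence input of Lemma~\ref{nondivergence cor} (so no mass escapes and subsequential limits are probability measures), observes that any weak-$*$ limit is horospherical and saddle connection free (by the same stretching of horizontal saddle connections you mention), and then invokes the uniqueness in Theorem~\ref{thm: classification}. In other words, the paper's argument is exactly the ``alternative, less economical route'' you sketch at the end, with Lemma~\ref{nondivergence cor} supplying the nondivergence needed to pin down the total mass of the limit. Your main argument instead runs the Margulis/Eskin--McMullen thickening scheme directly on the horospherical disintegration of $\mu$, using only mixing, the $G$-invariance and full support of $m_{\M}$, and the fact that forward geodesic flow does not expand weak-stable separations. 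This is precisely the machinery the paper develops in \S\ref{sec: main proof} (Propositions~\ref{prop: pushofbox1}, \ref{prop: pushofbox2} and Lemma~\ref{comparison}), but assembled there into the proof of Theorem~\ref{thm: classification} rather than into a direct proof of this theorem. What your route buys: it bypasses both nondivergence and the classification theorem, and it gives the stronger intermediate statement that the forward geodesic push of \emph{every} bounded horospherical plaque equidistributes towards $m_{\M}$. What the paper's route buys: given Theorem~\ref{thm: classification}, the proof is a short soft limiting argument, and the same tightness/saddle-connection-free scheme is reused for Theorems~\ref{cor: maximal entropy} and \ref{thm: eigenform locus}.

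Two points in your write-up should be phrased via the paper's bookkeeping rather than as stated. First, $\mu|_{B_j}$ is not literally $\mathrm{Leb}\otimes\lambda_j$ in box coordinates: the conditional on a plaque $\LL_y$ is the leafwise affine volume $\nu_{\beta_{\x}}$, and transporting it between plaques through the product structure introduces the Jacobian $\langle x,y\rangle^{-\dim(\M)}$ of Proposition~\ref{jacobian}; this is exactly what the distortion $\delta_{\BB}$ controls, it is invariant under $g_t$ (Proposition~\ref{prop: pushofbox1}), and it contributes the uniform error $2\|\varphi\|_\infty\delta_{\BB}$ as in Lemma~\ref{comparison}. Second, the boxes must be chosen as in Proposition~\ref{prop: pushofbox2} so that the thickness of $\BB_t$ stays small for all $t\geq 0$ (you flag this), and the overlapping countable cover should be disjointified or replaced by a partition of unity before summing. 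With these adjustments, your fixed-plaque claim follows by applying Lemma~\ref{comparison} to $\nu=m_{\M}$ on the pushed box, using $g_t$-invariance of $m_{\M}$ to rewrite the box average of $\varphi\circ g_t$ and mixing to evaluate its limit; dominated convergence in the transverse variable and over the countable family, using $\mu(\M)<\infty$, then completes the argument as you describe.
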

Related results are proved in \cite{Forni_density_one}; we stress however that the notion of `horospherical measure' used in \cite{Forni_density_one} is different from the one we use here. 
From a dynamical perspective, the horospherical foliation is the strong unstable foliation for the geodesic flow. Our arguments yield a simpler proof of the following theorem.

\begin{thm}[\cite{EM, EMM}]\label{cor: maximal entropy}
The special flat measure is the unique $A$-invariant horospherical measure on any $G$-orbit closure. Any leaf for the weak-unstable foliation on any $G$-orbit closure is dense.
\end{thm}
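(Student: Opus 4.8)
Here is the plan. Existence is immediate: writing $\lambda$ for the special flat measure on $\M$, it is $G$-invariant, hence $A$-invariant, and horospherical by Theorem~\ref{thm: cone measures are horospherical}; since every positive multiple of $\lambda$ is again $A$-invariant and horospherical, uniqueness is understood up to scaling. For the converse, let $\mu$ be an $A$-invariant horospherical measure. On each horospherical leaf the horocycle $u_s$ acts as an affine translation — by $s$ times the imaginary part of the period coordinate, which is constant along the leaf — and therefore preserves the leafwise volume form; hence any measure whose conditionals on horospherical leaves are that volume form is automatically $U$-invariant, and together with $A$-invariance and $g_t u_s g_{-t} = u_{e^{2t}s}$ this shows $\mu$ is invariant under the full upper triangular group $P = AU$.

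The heart of the argument is to show $\mu$ is saddle connection free; Theorem~\ref{thm: classification} then identifies $\mu$, up to scaling, with $\lambda$, and in particular shows it is finite. Let $\ell(q) \in (0,+\infty]$ be the length of the shortest horizontal saddle connection of $q$. Since $u_s$ preserves the horizontal foliation and lengths along it, $\ell$ is $U$-invariant, while $\ell \circ g_t = e^t \ell$. First, the saddle connection free locus $\{\ell = +\infty\}$ is, up to a $\mu$-null set, a union of horospherical leaves — on any horospherical leaf containing one surface with a horizontal saddle connection, all but a volume-zero set of its surfaces have one — so $\mu|_{\{\ell = +\infty\}}$ is again $A$-invariant, horospherical and saddle connection free, hence a multiple of $\lambda$ by Theorem~\ref{thm: classification}. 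It then remains to exclude a nonzero $A$-invariant horospherical measure $\nu$ with $\nu$-a.e.\ surface carrying a horizontal saddle connection. Cutting $\{0 < \ell < +\infty\}$ into the subsets $\{e^n \le \ell < e^{n+1}\}$, $n \in \Z$, which $g_1$ shifts among themselves while preserving $\nu$, forces $\nu$ to have zero or infinite total mass, so the real obstacle is to rule out the infinite case. I would attack this by splitting $\nu$ into its restrictions to the (horospherically saturated, $A$-invariant) loci of surfaces with, and without, a horizontal cylinder and applying $g_{-t}$: in the first case the moduli of the horizontal cylinders tend to infinity, in the second the shortest horizontal saddle connection shrinks, so $\nu$-a.e.\ orbit leaves every compact set under $g_{-t}$, which I expect to be incompatible with $\nu$ being a nonzero horospherical measure, whose support is a genuine union of horospherical leaves carrying a nonzero leafwise volume. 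Granting $\nu = 0$ we conclude $\mu = c\lambda$; alternatively, once $\mu$ is known to be finite, Theorem~\ref{thm: Forni question} finishes in one line, since $g_{t*}\mu = \mu$ for all $t$ while $g_{t*}\mu \to \lambda$.

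For the statement about leaves, let $\mathcal W$ be a weak-unstable leaf and $q \in \mathcal W$, so $\mathcal W = \bigcup_{t\in\R} W(g_t q)$ with $W(\cdot)$ the horospherical leaf. If $q$ is cylinder free then $W(q)$ is already dense in $\M$ by Theorem~\ref{thm: densehorosphere}. Otherwise every surface of $\mathcal W$, hence of the closed, $A$-invariant, horospherically saturated set $Y := \overline{\mathcal W}$, has a horizontal cylinder, and the plan is to descend in the number of horizontal cylinders inside $Y$: from a surface of $Y$ with $k \ge 1$ horizontal cylinders, Theorem~\ref{thm: Apisa Wright} lets one extend the horizontal saddle connections bounding one cylinder inside an invariant suborbifold of $\M$, and passing to a limit degenerates that cylinder, producing a point of $Y$ with fewer horizontal cylinders; iterating reaches a cylinder free surface of $Y$, whose horospherical leaf lies in $Y$ and is dense in $\M$ by Theorem~\ref{thm: densehorosphere}, so $Y = \M$. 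The genuine content, and the main obstacle for this half, is precisely this cylinder-degeneration step — escaping the locus of surfaces with a horizontal cylinder while staying inside the invariant subvariety — which is exactly what Theorem~\ref{thm: Apisa Wright} (proved in Appendix~\ref{appendix: extending cylinders}) supplies.
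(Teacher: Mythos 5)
Your reduction of the first statement to Theorem \ref{thm: classification} is the right idea, but the step you yourself flag as ``the real obstacle'' is exactly the missing content, and your proposed fix does not close it. After observing that $g_1$ permutes the sets $\{e^n\le\ell<e^{n+1}\}$, you must rule out the possibility that each carries the same \emph{positive} mass, i.e.\ that $\nu$ gives infinite mass to the locus of surfaces with horizontal saddle connections; the escape-of-mass heuristic (``$\nu$-a.e.\ orbit leaves every compact set under $g_{-t}$, which I expect to be incompatible\dots'') is not an argument, since nothing a priori prevents an infinite Radon horospherical measure from being carried by divergent $g_{-t}$-orbits, and the paper explicitly remarks that finiteness cannot be assumed here (otherwise Theorem \ref{thm: Forni question} would indeed finish in one line, as you note). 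The paper closes this gap with the nondivergence input you never invoke: an $A$-invariant horospherical measure $\nu$ is $U$-invariant, so Lemma \ref{nondivergence cor} (with any finite $c$ and $\vre=\tfrac12$) gives a compact $K$ with $\nu(\M_{\geq c})<2\nu(K)<\infty$; combined with $g_t$-invariance, which makes $\nu(\M_{[c,\infty)})$ independent of $c$, letting $c\to\infty$ forces this constant value to be $0$, hence $\nu(\M_{<\infty})=0$ and Theorem \ref{thm: classification} applies. Without Lemma \ref{nondivergence cor} (or an equivalent), the first half is incomplete.

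The second half is where your route genuinely fails. First, the claim that every surface of $Y=\overline{\mathcal W}$ has a horizontal cylinder is unjustified: having a horizontal cylinder is an open, not closed, condition, and under $g_t$ the heights of horizontal cylinders decay like $e^{-t}$, so limit points of $\mathcal W$ may be cylinder free. Second, Theorem \ref{thm: Apisa Wright} does not perform the cylinder degeneration you ascribe to it: it lengthens horizontal saddle connections inside a single strong-unstable leaf, its first assertion requires the surface \emph{not} to be horizontally periodic, and its strong form requires no horizontal cylinders at all; on a closed $G$-orbit, for example, every surface with a horizontal cylinder is horizontally periodic, so your descent never starts, yet the theorem still asserts those weak-unstable leaves are dense. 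The paper's proof of this half is different and needs no cylinder analysis: given $f\geq 0$ supported in the target open set, Propositions \ref{prop: pushofbox1} and \ref{prop: pushofbox2} provide a regular box $\BB$ containing a lift $\q$ of $q$ such that $\BB_t=g_t\BB$ has small thickness and distortion for all $t\geq 0$; mixing of the geodesic flow (Lemma \ref{mixing}) makes the normalized integral of $f$ over $\pi(\BB_t)$ close to $\int f\,dm_{\M}>0$, and Lemma \ref{comparison} transfers this to the single plaque of $g_t\q$ in $\BB_t$, whose projection lies in $W^{u}(q)$ because the geodesic direction is contained in the weak-unstable leaf; hence $W^u(q)$ meets every open set. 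Nondivergence and Theorem \ref{thm: Apisa Wright} are needed only for the strong-unstable statement, Theorem \ref{thm: densehorosphere}, not here.
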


\begin{rem}
    Note that we do not assume that the measure is finite in Theorem \ref{cor: maximal entropy}. If we assumed finiteness, then the first statement would follow immediately from Theorem \ref{thm: Forni question}. Also note that Theorem \ref{thm: Forni question} is false for infinite measures, as the following example shows. Take $\M = \GL q \simeq \GL/\Gamma$ a closed orbit of a Veech surface with Veech group $\Gamma$; then a horospherical measure in this case is just a $U$-invariant measure. For $s>0$, let $\nu_s$ be a normalized length measure on a periodic $U$-orbit of length $s$, and let $\mu = \int_{0}^1 a(s) \nu_s \, ds$, where the function $a:[0,1] \to \R_{>0}$ satisfies $\int_0^1 a(s) \, ds = \infty$. Clearly $\mu $ is infinite and Radon. For $\eta>0$, we define $K$ to be the compact subset of $Gq$ consisting of surfaces without saddle connections shorter than $\eta$. Then $g_{t*}\mu(K) \to_{t \to\infty} \infty,$ and hence any weak-* limit of $g_{t*}\mu$ is not Radon. 
\end{rem}



\subsection{Further motivation, prior work, and some ideas from the proofs} 

The work of Eskin, Mirzakhani and Mohammadi gives a very detailed understanding of invariant measures and sets for the $G$-action and the $P$-action on strata of translation surfaces. A central remaining open problem is to understand horocycle invariant ergodic measures. Such an understanding would have an application to the fundamental problem of asymptotic growth of saddle connections on translation surfaces or rational billiards (see \cite{eskin_masur_2001}).
As we will see in \S \ref{sec: horospherical measures}, horospherical measures are horocycle-invariant; thus understanding horospherical measures can be seen as a contribution to the problem of understanding  general horocycle-invariant measures.

A previous measure rigidity result for horospherical measures was obtained in 2008, independently by Lindenstrauss and Mirzakhani \cite{lindenstrauss2008ergodic} and by Hamenst\"adt \cite{hamenstadt2009invariant}. They were interested in understanding mapping class group invariant measures on the space of measured laminations. By a `duality principle' (see \cite[\S 5]{lindenstrauss2008ergodic}) this question is very closely related to the problem of  classifying horospherical measures on the principal stratum. 

%
%
%
%
%
%

Our  argument for Theorem \ref{thm: classification} follows \cite{lindenstrauss2008ergodic}, which in turn is inspired by ideas of Dani \cite{dani1978invariant} and Margulis \cite{Margulis_thesis} The main ingredients are the mixing of the $A$-action, the use of dynamical boxes and how they transform under the $A$-action,  and nondivergence results for the $U$-action (which in the present context were obtained in \cite{MW}). After the requisite preparations, this argument is given in \S \ref{sec: main proof}. In order to carry out the details of this argument, we give a precise description of horospherical measures and special flat measures, and their decomposition into conditional measures in flow boxes in \S \ref{sec: horospherical measures}.
Theorem \ref{thm: densehorosphere} is proved in \S\ref{sec: leaf closures}.
Theorems \ref{thm: Forni question} and \ref{cor: maximal entropy} are proved in \S\ref{sec: geo and fol}.

\subsection{Acknowledgements}
We are grateful to Paul Apisa and Alex Wright for providing the proof
of Theorem \ref{thm: Apisa Wright}. The proof is given in Appendix
\ref{appendix: extending cylinders}. We are also grateful to Giovanni Forni for useful comments. We acknowledge support from grants BSF 2016256,  ISF 2019/19
and ISF-NSFC 3739/21.

\section{Preliminaries}\label{sec: prelims}
In this section we introduce our objects of study and set up our
notation. There are many approaches to these definitions. In our
approach, the linear orbifold structure (or affine orbifold structure) given by period coordinates will be important and we will stress this point of view in what follows. A suitable reference for the theory utilizing this point of view is  \cite[\S2]{BSW}, and unless stated otherwise, our notation, terminology and assumptions are as in \cite{BSW}. 
See also 
\cite{MT, Zorich_survey, Forni-Matheus_survey, Wright_survey}.
See \cite{Goldman} for a general discussion of affine manifolds.

\subsection{Strata and period coordinates}\label{subsec: strata}

Let $S$ be a connected, compact orientable surface of genus $g$, $\Sigma = \{\xi_1, \ldots, \xi_k \} \subset S$ a finite set, $a_1, \ldots, a_k$ non-negative integers with $\sum a_i = 2g-2$, and $\HH = \HH(a_1, \ldots, a_k)$ the corresponding stratum of translation surfaces. We let $\HH_{\mathrm{m}}=\HH_{\mathrm{m}}(a_1, \ldots, a_k)$ denote the stratum of marked translation surfaces and $\pi : \HH_{\mathrm{m}} \to \HH$ the forgetful mapping. It will be useful to assume that singular points are labeled, or equivalently, $\HH = \HH_{\mathrm{m}} / \Mod(S, \Sigma)$, where $\Mod(S, \Sigma)$ is the group of isotopy classes of orientation-preserving homeomorphisms of $S$ fixing $\Sigma$, up to an isotopy fixing $\Sigma$. 
We will typically denote elements of $\HH$ by the letter $q$ when we want to consider them as points of $\HH$, and by the letter $M$ or $M_q$ when we want to consider their underlying topological or geometrical properties as spaces in their own right. Points in $\HH_{\mathrm{m}}$ will be typically denoted by boldface letters such as  $\q$. 

We recall the definition of the map $\dev: \HH_{\mathrm{m}} \to H^1\left(S, \Sigma; \R^2\right)$. 
For an oriented path $\gamma$ in $M_q$ which is either closed or has endpoints at singularities, let $\hol(M_q, \gamma) \df \left( \int_\gamma dx_q, \int_\gamma dy_q \right)$, where $dx_q$ and $dy_q$ are the 1-forms on $M_q$ inherited from the the forms $dx$ and $dy$ on the plane. 
Given $\q \in \HH_{\mathrm{m}}$ represented by $f: S \to M_q$, where $M_q$ is a translation surface, we define $\dev(\q) \df f^*(\hol(M_q, \cdot))$. The map $\dev$ is also known in the literature as the \textit{period map}. There is an open cover $\{\mathcal{U}_\tau\}$ of $\HH_{\mathrm{m}}$, indexed by triangulations $\tau$ of $S$ with triangles whose vertices are in $\Sigma$, such that the restricted maps
$$\varphi_\tau \df \dev|_{\mathcal{U}_\tau}, \ \ \varphi_\tau: \mathcal{U}_{\tau} \to H^1\left(S, \Sigma; \R^2\right)$$ 
are homeomorphisms onto their image. The charts $\varphi_\tau$ give an atlas with affine overlap maps and endow $\mathcal{H}_{\mathrm{m}}$ with a structure of affine manifold. This atlas of charts $\{(\mathcal{U}_\tau, \varphi_\tau)\}$ is known as the {\em period coordinate atlas}.  

The $\Mod(S,\Sigma)$-action on $\HH_{\mathrm{m}}$ is properly discontinuous and affine, and hence $\HH$ inherits the structure of affine orbifold, and the map $\pi: \HH_{\mathrm{m}} \to \HH$ is an orbifold covering map. We can associate to any affine manifold a {\em holonomy cover} and a {\em developing map}. In this case $\HHm$ is a cover with trivial holonomy and $\dev$ plays the role of a developing map of $\HH$ (see \cite{Goldman}). 

The group $\GL$ acts on translation surfaces in $\HH$  and $\HH_{\mathrm{m}}$ by modifying planar charts. It acts on $H^1\left(S, \Sigma; \R^2\right)$ via its action on the coefficients $\R^2$.  The $\GL$-action commutes with the $\Mod(S, \Sigma)$-action, and thus the map $\pi$ is $\GL$-equivariant for these actions. The $\GL$-action on $\HH_{\mathrm{m}}$ is free, since $\dev(g\q) \neq \dev(\q)$ for any nontrivial $g \in \GL$. 

We have a coordinate splitting of $\R^2$ and we write $\R^2 = \R_{\mathrm{x}} \oplus \R_{\mathrm{y}}$ to distinguish the two summands in this splitting. There is a corresponding splitting of cohomology
\begin{equation}\label{eq: splitting horizontal vertical}
  H^1\left(S, \Sigma; \R^2\right) = H^1(S, \Sigma; \R_{\mathrm{x}})  \oplus
  H^1\left(S, \Sigma; \R_{\mathrm{y}}\right). 
\end{equation}
We refer to the summands in this splitting as the {\em horizontal space} and {\em vertical space} respectively.

 It can also be useful to identify the coefficients with $\C$ and consider $H^1(S, \Sigma; \C)$. This is the most natural choice when we are considering Abelian differentials.
An {\em $\R$-structure} on a complex vector space $V$ is given by a choice of a real subspace $W\subset V$ so that $V=W\oplus \mathbf{i}W$. 
If $V$ is equipped with an $\R$-structure we say that a complex subspace $V'\subset V$ is {\em defined over $\R$} if $V'=W'\oplus {\mathbf{i}}W'$ for some real subspace $W'\subset W$.
We give the complex vector space $V=H^1(S, \Sigma; \C)$ the $\R$-structure corresponding to the real subspace $W=H^1(S, \Sigma; \R)=H^1(S, \Sigma; \R_{\mathrm{x}})\subset H^1(S, \Sigma; \C)$. In this language $\mathbf{i} W=\mathbf{i}H^1(S, \Sigma; \R)=H^1(S, \Sigma; \R_{\mathrm{y}})$.

More generally, if $V$ is  a complex vector space with an $\R$-structure, then $\GL$ acts on $V$, with the matrix 
\[
\begin{pmatrix}
    a&b\\
    c&d
\end{pmatrix}
\]
sending $v = w_1 + \mathbf{i} w_2$ to $(a w_1 + b w_2) + \mathbf{i}(c w_1 + d w_2)$. 

\begin{lem}\label{defined over R}
Let $V$ be a complex vector space with an $\R$-structure, and $V'$ be a real subspace. The following are equivalent:
\begin{enumerate}
    \item \label{item: 11}
    $V'$ is invariant under the action of $\GL$.
    \item \label{item: 12}
    $V'\subset H^1(S, \Sigma; \C)$ is a complex subspace defined over $\R$.
\end{enumerate}
\end{lem}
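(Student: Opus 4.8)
The plan is to test $\GL$-invariance against three specific elements of $\GL$: the quarter-turn $r \df \begin{pmatrix} 0 & -1 \\ 1 & 0 \end{pmatrix}$, which by the displayed formula for the $\GL$-action sends $w_1 + \mathbf{i}w_2$ to $-w_2 + \mathbf{i}w_1$, i.e.\ acts as multiplication by $\mathbf{i}$; and the two diagonal matrices $g_{\pm 1} = \begin{pmatrix} e^{\pm 1} & 0 \\ 0 & e^{\mp 1} \end{pmatrix}$, which send $w_1 + \mathbf{i}w_2$ to $e^{\pm 1}w_1 + \mathbf{i}\,e^{\mp 1}w_2$. (Throughout I read condition (\ref{item: 12}) as a statement about the given $V$ with its $\R$-structure $V = W \oplus \mathbf{i}W$, the case of interest being $V = H^1(S,\Sigma;\C)$, $W = H^1(S,\Sigma;\R)$.)

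The implication (\ref{item: 12}) $\Rightarrow$ (\ref{item: 11}) is immediate: if $V' = W' \oplus \mathbf{i}W'$ with $W' \subseteq W$ a real subspace, then for $v = w_1 + \mathbf{i}w_2$ with $w_1,w_2 \in W'$ and any $\begin{pmatrix} a & b \\ c & d \end{pmatrix} \in \GL$, the image $(aw_1 + bw_2) + \mathbf{i}(cw_1 + dw_2)$ again lies in $W' \oplus \mathbf{i}W' = V'$, since $W'$ is closed under real linear combinations.

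For (\ref{item: 11}) $\Rightarrow$ (\ref{item: 12}), assume $V'$ is $\GL$-invariant and set $W' \df V' \cap W$; the claim is that $V' = W' \oplus \mathbf{i}W'$. Since $r \in \GL$ acts as multiplication by $\mathbf{i}$, the real subspace $V'$ is stable under multiplication by $\mathbf{i}$, hence is a complex subspace; in particular $\mathbf{i}W' \subseteq V'$, so $W' \oplus \mathbf{i}W' \subseteq V'$. For the reverse inclusion, take $v = w_1 + \mathbf{i}w_2 \in V'$ with $w_1,w_2 \in W$, apply $g_1$ and $g_{-1}$, and form the combination $e\,g_{-1}v - e^{-1}g_1 v = \mathbf{i}(e^2 - e^{-2})w_2 \in V'$; since $e^2 - e^{-2} \neq 0$ this gives $\mathbf{i}w_2 \in V'$, then $w_2 \in V'$ (multiply by $-\mathbf{i}$, i.e.\ apply $r^{-1} \in \GL$), and finally $w_1 = v - \mathbf{i}w_2 \in V'$. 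Thus $w_1, w_2 \in V' \cap W = W'$, so $V' \subseteq W' \oplus \mathbf{i}W'$, and combining the two inclusions yields condition (\ref{item: 12}).

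I do not expect a genuine obstacle: the argument is purely linear-algebraic. The only points deserving a little care are to phrase the combination of $g_1 v$ and $g_{-1}v$ so that no appeal to closedness of $V'$ or to limits is needed (one could alternatively note $e^{-t}g_t v \to w_1$ as $t \to +\infty$, which is legitimate because $V'$ is finite-dimensional in the case of interest), and to observe that $W' = V' \cap W$ is automatically a real subspace of $W$ and that $W' \oplus \mathbf{i}W'$ is genuinely a direct sum inside $V = W \oplus \mathbf{i}W$.
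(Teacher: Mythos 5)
Your proof is correct. Both you and the paper handle (\ref{item: 12}) $\Rightarrow$ (\ref{item: 11}) by direct computation and both use the quarter-turn matrix to see that a $\GL$-invariant real subspace is stable under multiplication by $\mathbf{i}$; the difference lies in how the real and imaginary parts of a vector $v = w_1 + \mathbf{i}w_2 \in V'$ are extracted. The paper observes that $V'$, being closed in $V$, is invariant not just under $\GL$ but under limits of its elements, in particular under the singular matrices $b = \begin{pmatrix} 1 & 0 \\ 0 & 0 \end{pmatrix}$ and $c = \begin{pmatrix} 0 & 0 \\ 0 & 1 \end{pmatrix}$, which act precisely as the projections onto $W$ and $\mathbf{i}W$; together with $b + c = \mathrm{Id}$ this gives $V' = (V'\cap W) \oplus (V' \cap \mathbf{i}W)$. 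You instead stay entirely inside the group: the combination $e\,g_{-1}v - e^{-1}g_1 v = \mathbf{i}(e^2 - e^{-2})w_2$ lies in $V'$ because $V'$ is a real subspace, and from there $w_2$, $w_1 \in V' \cap W$ follow. Your route is purely algebraic and avoids the (mild) appeal to topology/closedness — which in the paper is harmless since a real subspace of a finite-dimensional space is automatically closed, though the paper's remark that invariance extends to \emph{any} $2$-by-$2$ matrix is a slight overstatement (matrices of negative determinant are not limits of elements of $\GL$; the matrices actually used are). What the paper's approach buys is conceptual transparency: the idempotents $b, c$ \emph{are} the projections defining the splitting, so the conclusion is immediate once they are available. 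What yours buys is that no limiting argument is needed at all, so the same proof works verbatim with $\GL$ replaced by smaller subgroups (the rotation together with the diagonal one-parameter group suffice) and without any finiteness or closedness hypothesis on $V'$.
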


\begin{proof} 
The implication  \eqref{item: 2} $\implies$ \eqref{item: 1} is clear from the definitions. We prove \eqref{item: 1} implies \eqref{item: 2}. If $V'$ is invariant under $\GL$, then since it is a closed subset of $V$, it is mapped into itself by any $2$-by-$2$ matrix, invertible or not.
Let 
\[a\df
\begin{pmatrix}
    0&-1\\
    1&0
\end{pmatrix}, \ \ \ b \df
\begin{pmatrix}
    1&0\\
    0&0
\end{pmatrix}, \ \ \ \text{ and } c \df aba^{-1} = 
\begin{pmatrix}
    0&0\\
    0&1
\end{pmatrix}.
\]
From the definition of the $\GL$ action, one sees that multiplication by $a$ corresponds to multiplication by $\mathbf{i}$, and multiplication by $b$ and $c$ correspond to projections onto 
the two summands in \eqref{eq: splitting horizontal vertical}. Invariance by $a$ implies that $V'$ is a complex subspace, and from the relations $bV' \subset V'$ and $cV' \subset V'$ and $b + c = \mathrm{Id}$, we see that $V'$ is defined over $\R$.
\end{proof}

\begin{rem}
 If $V$ has an $\R$-structure, then so does its dual space, so it makes sense to say that a linear function on $V$ is real. A complex subspace of $V$ is defined over $\R$ if and only it cut out by real linear functions. We will not use this description in this paper.
\end{rem}

We have a restriction map $\Res: H^1(S, \Sigma; \R^2) \to H^1(S; \R^2)$ (given by restricting a cochain to closed paths). Since $\Res$ is topologically defined, its kernel $\ker ( \Res)$ is $\Mod(S, \Sigma)$-invariant. 
Moreover our convention that singular points are marked implies that the $\Mod(S, \Sigma)$-action on $\ker ( \Res)$ is trivial.

Define the \emph{real REL space}
\begin{equation}\label{eq: def Z}
  Z \df\ker ( \Res )\cap H^1(S, \Sigma; \R_{\mathrm{x}}).
\end{equation}
For any $v \in Z$ the constant vector field on $H^1(S, \Sigma; \R^2)$ in direction $v$ pulls back to a well-defined vector field on $\HH_{\mathrm{m}}$ via the local diffeomorphism $\mathrm{dev}$. Since monodromy acts trivially on $Z$, this descends to a vector field on $\HH$. Integrating this vector field gives a locally defined {\em real REL flow (corresponding to $v$)} $(t, q) \mapsto \Rel_{tv}(q)$. For every $q \in \HH$ a trajectory is defined for $t \in I_q$, where the {\em domain of definition} $I_q  = I_q(v) $ is an open interval of $\R$ which contains $0$. This interval is all of $\R$ if the underlying surface $M_q$ has no horizontal saddle connections. If $q \in \HH, \, s \in \R$ and $t \in I_q$ then $t \in  I_{u_sq}$, and $\Rel_{tv}(u_sq) = u_s \Rel_{tv}(q)$. The set
\begin{equation}\label{eq: set where defined}
Z^{(q)} \df  \{v \in Z: \Rel_{v}(q) \text{ is defined} \} = \{v \in Z: 1 \in I_q(v)\},
\end{equation}
as well as the sets $I_q(v)$, are explicitly described in \cite[Thm. 6.1]{BSW}. 

\subsection{Invariant subvarieties}\label{subsec: invariant subvarieties}

In this subsection, we introduce our notion of {\em invariant subvarieties} and {\em irreducible invariant subvarieties}. It will be shown in \cite{SY}, using the work of Eskin-Mirzakhani \cite{EM} and Eskin-Mirzakhani-Mohammadi \cite{EMM}, that an irreducible invariant subvariety is exactly a $\GL$-orbit closure while an invariant subvariety is a finite union of such $\GL$-orbit closures. 

\begin{defin}
 A {\em $d$-dimensional linear manifold} is a submanifold $L$ of $\HHm$ which is a connected component of $\dev^{-1}(V)$ where $V$ is a $d$-dimensional complex subspace of $H^1\left(S,\Sigma;\R^2\right)$ defined over $\R$.
\end{defin}

 Since the developing map is equivariant and $\Mod(S,\Sigma)$ acts linearly on the space $H^1\left(S,\Sigma;\R^2 \right)$, it follows that $\Mod(S,\Sigma)$ takes a $d$-dimensional linear manifold to a $d$-dimensional linear manifold.
If $L$ is a linear manifold corresponding to $V_L \subset H^1\left(S,\Sigma;\R^2 \right)$, we denote by $\Gamma_L$ be the subgroup of $\Mod(S,\Sigma)$ that preserves $L$. Since the developing map $\dev$ is $\Mod(S,\Sigma)$-equivariant, we get an induced action of $\Gamma_L$ on $V_L$. We say that $L$ is an {\em equilinear manifold} if furthermore we have $\det  \left(\gamma|_{V_L} \right)= \pm 1$ for every $\gamma \in \Gamma_L$.  

\begin{defin}\label{defin: invariant subvariety}
A {\em $d$-dimensional invariant subvariety} is a subset  $\M \subset \HH$ such that $\pi^{-1}(\M)$ is a locally finite union of $d$-dimensional equilinear manifolds.
\end{defin}

We will write $d = \dim (\M)$; in some texts this is referred to as the complex dimension of $\M.$
The term ``invariant" in the definition  of invariant subvariety is justified by the following: 

\begin{prop}\label{prop: invariant by the action of GL}
An invariant subvariety is  closed and $\GL$-invariant.
\end{prop}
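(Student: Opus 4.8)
The plan is to verify the two assertions, closedness and $\GL$-invariance, by reducing everything to statements about the developing map $\dev$ and the linear subspaces $V_L$. First I would observe that $\pi : \HHm \to \HH$ is an orbifold covering map, so a subset $\M \subset \HH$ is closed if and only if $\pi^{-1}(\M)$ is closed in $\HHm$. By definition $\pi^{-1}(\M)$ is a locally finite union of $d$-dimensional equilinear manifolds; since each equilinear manifold is in particular a connected component of $\dev^{-1}(V_L)$ for a closed (complex) subspace $V_L$, it is itself closed in $\HHm$, and a locally finite union of closed sets is closed. Hence $\M$ is closed.

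For $\GL$-invariance, the key point is that the $\GL$-action on $\HHm$ is intertwined by $\dev$ with the linear $\GL$-action on $H^1(S,\Sigma;\R^2)$, and each $V_L$ is by hypothesis a complex subspace defined over $\R$, hence $\GL$-invariant by Lemma~\ref{defined over R}. So if $L$ is one of the equilinear manifolds making up $\pi^{-1}(\M)$, with $L$ a connected component of $\dev^{-1}(V_L)$, then for $g \in \GL$ the set $gL$ is contained in $\dev^{-1}(g V_L) = \dev^{-1}(V_L)$; since $g$ is a homeomorphism of $\HHm$, $gL$ is connected, so $gL$ is again a connected component of $\dev^{-1}(V_L)$, i.e.\ a $d$-dimensional linear manifold with the same associated subspace. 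I would then check that $gL$ is still \emph{equilinear}: the stabilizer satisfies $\Gamma_{gL} = \Gamma_L$ because the $\GL$-action commutes with the $\Mod(S,\Sigma)$-action and both preserve $\dev^{-1}(V_L)$, permuting its components; moreover the induced action of $\gamma \in \Gamma_{gL} = \Gamma_L$ on $V_{gL} = V_L$ is literally the same linear map as before (the mapping class group acts on cohomology independently of $g$), so the determinant condition $\det(\gamma|_{V_L}) = \pm 1$ is unchanged. Thus $g$ permutes the collection of equilinear manifolds comprising $\pi^{-1}(\M)$; local finiteness is preserved because $g$ is a homeomorphism. Since $\GL$ commutes with $\Mod(S,\Sigma) = \pi^{-1}(\mathrm{pt})$-deck group, $g \cdot \pi^{-1}(\M) = \pi^{-1}(g\M)$, and we conclude $g \M = \M$ for all $g \in \GL$, so $\M$ is $\GL$-invariant.

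I do not expect a serious obstacle here; the statement is essentially a bookkeeping exercise once one has Lemma~\ref{defined over R} and the equivariance of $\dev$. The one point requiring a little care is the interaction between the $\GL$-action and connected components of $\dev^{-1}(V_L)$: a priori $g$ could move $L$ to a \emph{different} component, which is why the argument is phrased at the level of the locally finite \emph{union} $\pi^{-1}(\M)$ rather than component-by-component, and why one must separately confirm that the equilinearity condition (which is a condition on $\Gamma_L$ acting on $V_L$) is insensitive to replacing $L$ by $gL$. A secondary subtlety is justifying that $\dev^{-1}(V)$ is closed for a complex subspace $V$ defined over $\R$: this holds because such a $V$ is in particular a closed (linear) subspace of $H^1(S,\Sigma;\R^2)$ and $\dev$ is continuous.
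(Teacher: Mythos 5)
Your closedness argument is fine and matches the paper: $\dev^{-1}(V_L)$ is closed because $V_L$ is a closed subspace and $\dev$ is continuous, components of a closed set are closed, a locally finite union of closed sets is closed, and $\pi$ is a quotient map, so $\M$ is closed. The problem is in the $\GL$-invariance part. You correctly show that for $g\in\GL$ the set $gL$ is again a connected component of $\dev^{-1}(V_L)$ (though your stated reason, ``$gL$ is connected, so it is a component,'' should be replaced by: $g$ restricts to a self-homeomorphism of $\dev^{-1}(V_L)$, hence permutes its components), and your check that $gL$ is again equilinear is fine. But the next assertion, ``thus $g$ permutes the collection of equilinear manifolds comprising $\pi^{-1}(\M)$,'' is exactly the point that needs proof and is never justified: you have shown that $gL$ is \emph{some} equilinear manifold modeled on $V_L$, not that it is one of the manifolds whose union is $\pi^{-1}(\M)$, i.e.\ not that $gL\subset\pi^{-1}(\M)$. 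You flag this yourself (``a priori $g$ could move $L$ to a different component'') but phrasing the argument ``at the level of the union'' does not close it, because the union is the union of a \emph{specific} collection of components, and nothing in your argument rules out $gL$ being a component of $\dev^{-1}(V_L)$ lying outside that collection. The final step $g\cdot\pi^{-1}(\M)=\pi^{-1}(g\M)\Rightarrow g\M=\M$ then rests on the unproved permutation claim, so as written the proof of invariance is incomplete.

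The missing idea is the one the paper uses: $\GL$ is \emph{connected}. For $\q\in L$ the orbit map $\GL\to\HHm$, $g\mapsto g\q$, is continuous, its image is connected, contains $\q$, and is contained in $\dev^{-1}(V_L)$ by equivariance and Lemma~\ref{defined over R}; hence the image lies in the connected component of $\q$, namely $L$. Thus $gL\subset L$ for every $g$, and applying $g^{-1}$ gives $gL=L$. In particular $g$ cannot move $L$ to a different component at all, each $L$ in the defining collection is itself $\GL$-invariant, and $\pi^{-1}(\M)$, being a union of such $L$'s, is $\GL$-invariant; equivariance of $\pi$ then gives $\GL$-invariance of $\M$ (and it also makes your separate equilinearity check for $gL$ unnecessary). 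With the connectedness observation inserted, your argument becomes correct and is essentially the paper's proof.
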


\begin{proof}
Since $V$ is a closed subset of $H^1\left(S,\Sigma;\R^2\right)$ it follows that  $\dev^{-1}(V)$ is a closed subset of $\HHm$. It follows that a linear manifold is a closed subset of $\HHm$.
The set $\pi^{-1}(\M)$ is closed because it is a locally finite union of closed sets, and this implies that $\M$ is closed.

Since $\pi$ is $\GL$-equivariant, it is enough to prove that $\pi^{-1}(\M)$ is $\GL$-invariant. Let $L$ be a linear submanifold contained in $\pi^{-1}(\M)$ which maps to $V_L$ under $\dev$. By definition, $V_L$ is defined over $\R$ and by Lemma \ref{defined over R} it is invariant under the action of $\GL$ on $H^1\left(S,\Sigma;\R^2 \right)$. Since $\dev$ is $\GL$-equivariant the action of $\GL$ on $\HHm$ preserves $\dev^{-1}(V_L)$. Since $\GL$ is connected, the action of $\GL$ on $\HHm$ preserves $L$. Since $\pi^{-1}(\M)$ is a union of linear sub-manifolds it follows that it is invariant under $\GL$.  
\end{proof}

\begin{defin}
    A $d$-dimensional invariant subvariety is said to be {\em irreducible} if it cannot be written as a union of two proper distinct $d$-dimensional invariant subvarieties. 
\end{defin}

We have the following equivalent characterization:

\begin{prop}\label{characterization of irreducible}
    Let $\M$ be a $d$-dimensional invariant subvariety. Then $\M$ is irreducible if and only if for any d-dimensional equilinear manifold $L \subset \pi^{-1}(\M)$, we have 
     \begin{equation}\label{eq: what we want equilinear}
    \bigcup_{\gamma \in \Mod(S,\Sigma)} L \cdot \gamma = \pi^{-1}(\M).
    \end{equation}
\end{prop}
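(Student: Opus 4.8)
The plan is to prove both implications by relating the decomposition of $\M$ into invariant subvarieties to the orbit structure of $\Mod(S,\Sigma)$ acting on the equilinear manifolds comprising $\pi^{-1}(\M)$. The key observation is that for a $d$-dimensional equilinear manifold $L \subset \pi^{-1}(\M)$, the union $\bigcup_{\gamma \in \Mod(S,\Sigma)} L \cdot \gamma$ is automatically a $\Mod(S,\Sigma)$-invariant set, and one should check that its image under $\pi$ is a $d$-dimensional invariant subvariety: it is a locally finite union of $d$-dimensional equilinear manifolds because $\pi^{-1}(\M)$ is, and the $\Mod(S,\Sigma)$-saturation of a subcollection of a locally finite collection is again locally finite (each $\gamma$ permutes the collection of equilinear manifolds). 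Write $\M_L \df \pi\left(\bigcup_{\gamma} L \cdot \gamma\right)$; then $\M_L$ is a $d$-dimensional invariant subvariety contained in $\M$.

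For the forward direction (irreducible $\implies$ \eqref{eq: what we want equilinear}), suppose for contradiction that $\pi^{-1}(\M) \neq \bigcup_\gamma L\cdot\gamma$ for some equilinear $L$. Then $\M_L \subsetneq \M$. The complementary piece is $\M' \df \pi\left(\bigcup \{L' \cdot \gamma : L' \text{ an equilinear manifold in } \pi^{-1}(\M), L' \not\subset \bigcup_\gamma L\cdot\gamma\}\right)$; I would argue $\M'$ is also a $d$-dimensional invariant subvariety (same local finiteness and equilinearity reasoning), that $\M = \M_L \cup \M'$, and that both are proper (properness of $\M_L$ is the hypothesis; properness of $\M'$ follows because $L \not\subset \pi^{-1}(\M')$, using that distinct equilinear manifolds in $\pi^{-1}(\M)$ are either equal or meet in lower dimension — here I need that an equilinear manifold is not contained in a locally finite union of other $d$-dimensional equilinear manifolds unless it equals one of them, which follows from each being a connected component of $\dev^{-1}(V)$ for the appropriate $V$). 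If $\M_L = \M'$ one instead notes they are then equal and proper, still contradicting irreducibility; otherwise they are distinct. This contradicts irreducibility.

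For the reverse direction, suppose \eqref{eq: what we want equilinear} holds for every equilinear $L \subset \pi^{-1}(\M)$, and suppose $\M = \M_1 \cup \M_2$ with $\M_i$ proper $d$-dimensional invariant subvarieties. Pick an equilinear manifold $L \subset \pi^{-1}(\M_1) \subset \pi^{-1}(\M)$ (one exists since $\M_1$ is $d$-dimensional, hence nonempty as an invariant subvariety). By hypothesis $\bigcup_\gamma L\cdot\gamma = \pi^{-1}(\M)$; but since $\pi^{-1}(\M_1)$ is $\Mod(S,\Sigma)$-invariant and contains $L$, it contains $\bigcup_\gamma L\cdot\gamma = \pi^{-1}(\M)$, so $\M_1 = \M$, contradicting properness. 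Hence $\M$ is irreducible.

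The main obstacle I anticipate is the bookkeeping in the forward direction: carefully showing that $\M'$ (the "leftover" set) is genuinely an invariant subvariety in the sense of Definition \ref{defin: invariant subvariety} — i.e. that after removing the $\Mod(S,\Sigma)$-orbit of $L$ from the locally finite collection of equilinear manifolds, what remains is still a \emph{locally finite} union whose $\pi$-image is \emph{closed} — and that it is proper. The properness and the "distinct or equal" dichotomy for equilinear manifolds both rely on the rigidity coming from each being a connected component of $\dev^{-1}(V_{L'})$; I would isolate this as a small lemma: if a $d$-dimensional equilinear manifold $L$ is contained in a finite union of $d$-dimensional linear manifolds, then $L$ equals one of them. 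Everything else is essentially formal manipulation of $\Mod(S,\Sigma)$-invariance.
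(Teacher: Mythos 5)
Your proposal is correct, and its skeleton matches the paper's: both arguments split $\pi^{-1}(\M)$ into the $\Mod(S,\Sigma)$-orbit of $L$ and the orbits of the remaining equilinear manifolds, and your reverse direction is essentially the paper's (take an equilinear manifold inside one piece of a putative decomposition, saturate it, and conclude that piece is all of $\M$). The genuine difference is the key lemma and where the Baire-category argument runs. The paper proves Lemma \ref{lem: meagre}, a statement downstairs: for distinct $d$-dimensional linear manifolds, $\pi(L)\cap\pi(L')$ is meager in $\pi(L)$; in the forward direction it uses irreducibility to force $\M=B$ (the union of the other orbits), writes $\pi(L)$ as a countable union of such meager intersections, and contradicts Baire in $\pi(L)$. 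You instead stay upstairs in $\HHm$: your ``small lemma'' (an equilinear manifold contained in a finite --- and, by localizing, a locally finite --- union of $d$-dimensional linear manifolds must equal one of them) rests on exactly the dichotomy the paper uses, namely that components of $\dev^{-1}(V)$ are equal or disjoint when the model subspaces coincide and meet in a closed nowhere dense set otherwise; this lets you show the complementary piece $\M'$ is proper and contradict irreducibility directly, avoiding the paper's step of pushing meagerness down through $\pi$ (openness of $\pi$, closedness of the intersections $(L\cdot\gamma)\cap(L'\cdot\gamma')$). What each buys: your route is slightly more self-contained for this proposition, while the paper's lemma lives downstairs and separately records that distinct lifts have almost disjoint images. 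Two small points to tighten: your claim that $\Mod(S,\Sigma)$ permutes the equilinear manifolds of the given locally finite decomposition (so that saturated subcollections stay locally finite) is itself a consequence of your small lemma, not of the definition, so it should be invoked there as well; and in the case $\M_L=\M'$ the contradiction is with properness of $\M_L$ (since then $\M=\M_L$), not with irreducibility, whose definition only constrains unions of two \emph{distinct} proper pieces.
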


For the proof of Proposition \ref{characterization of irreducible} we will need the following: 
    \begin{lem}\label{lem: meagre} If $L$ and $L'$ are distinct $d$-dimensional linear submanifolds, then  $\pi(L)\cap \pi(L')$ is a meager subset of $\pi(L)$ and of $\pi(L')$.
 \end{lem}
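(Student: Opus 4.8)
The plan is to reduce the statement to a local, linear-algebraic fact about the period coordinate charts. Fix a point $\q \in L \cap L'$ with $\pi(\q) \in \pi(L)\cap\pi(L')$, and choose a period coordinate chart $\varphi_\tau : \mathcal{U}_\tau \to H^1(S,\Sigma;\R^2)$ around $\q$. Inside this chart, $L$ corresponds to an open subset of an affine subspace parallel to $V_L$ and $L'$ to an open subset of an affine subspace parallel to $V_{L'}$ (both passing through $\varphi_\tau(\q)$ if we shrink $\mathcal{U}_\tau$). Since $\dim L = \dim L' = d$ and $L \ne L'$, these affine subspaces are distinct, hence their intersection is an affine subspace of strictly smaller dimension; in particular $L \cap L'$ is locally contained in a proper submanifold of $L$ and of $L'$, so $L\cap L'$ is a closed subset with empty interior in each, i.e. nowhere dense in $L$ and in $L'$. (One must be slightly careful that $V_L = V_{L'}$ is still possible even when $L \ne L'$, since linear manifolds are \emph{connected components} of $\dev^{-1}(V)$; but in that case $L$ and $L'$ are disjoint, so $L \cap L' = \varnothing$ and there is nothing to prove. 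Thus we may assume $V_L \ne V_{L'}$.)

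Next I would pass from $L \cap L'$ to $\pi(L)\cap\pi(L')$. The subtlety is that $\pi(L)\cap\pi(L')$ can be strictly larger than $\pi(L\cap L')$: a point of $\pi(L)\cap\pi(L')$ comes from some $\q_1 \in L$ and some $\q_2 \in L'$ with $\pi(\q_1) = \pi(\q_2)$, i.e. $\q_2 = \q_1 \cdot \gamma$ for some $\gamma \in \Mod(S,\Sigma)$, but there is no reason for $\gamma$ to fix $L$. So I would instead write
\[
\pi(L) \cap \pi(L') \ = \ \bigcup_{\gamma \in \Mod(S,\Sigma)} \pi\bigl( L \cap (L' \cdot \gamma)\bigr),
\]
a countable union. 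For each $\gamma$, either $L = L' \cdot \gamma$, or $L \ne L'\cdot\gamma$ and then by the local argument above $L \cap (L'\cdot\gamma)$ is nowhere dense in $L$, so $\pi(L \cap (L'\cdot \gamma))$ is nowhere dense in $\pi(L)$ (using that $\pi|_L$ is an open map, being a quotient by a properly discontinuous group action, or at worst a local homeomorphism off a meager set). The case $L = L'\cdot\gamma$ is exactly where one uses $L \ne L'$: if $L = L'\cdot\gamma$ for some $\gamma$, then $\gamma \notin \Gamma_{L'}$ (else $L = L'$), and I would argue that the set of such $\gamma$ contributes a genuinely lower-dimensional piece — more precisely, $\pi(L) = \pi(L')$ as sets in that case, which would contradict $L \ne L'$ unless... — hmm, this needs care.

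Let me restructure that last point, since it is the main obstacle. The real content is: if $L \ne L'$ then $L \cdot \Mod(S,\Sigma) \ne L' \cdot \Mod(S,\Sigma)$ need \emph{not} hold in general, but what we need is only that $\pi(L)$ and $\pi(L')$ are not \emph{equal up to a meager set}. So the cleanest approach is: suppose for contradiction that $\pi(L)\cap\pi(L')$ is non-meager in $\pi(L)$. By Baire, one of the countably many sets $\pi(L \cap (L'\cdot\gamma))$ is non-meager, hence (being the continuous image of a subset of the manifold $L \cap (L'\cdot\gamma)$) forces $L \cap (L'\cdot\gamma)$ to be non-meager in $L$, which by the local linear-algebra fact forces $L = L'\cdot\gamma$. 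But then $\dev(L) = \dev(L'\cdot\gamma) = \gamma^{-1}\dev(L')$, wait — $\dev(L'\cdot\gamma) = \dev(L')\cdot\gamma$ by equivariance; in any case $V_L = V_{L'}\cdot\gamma$ and $L = L'\cdot\gamma$ is a component of $\dev^{-1}(V_L)$. This says $L$ and $L'$ lie in the same $\Mod(S,\Sigma)$-orbit of linear manifolds. I expect this is still consistent with $L \ne L'$, so the statement as literally phrased would be about $\pi(L) \cap \pi(L')$ being meager — and if $L' \cdot \gamma = L$ then $\pi(L') = \pi(L'\cdot\gamma) = \pi(L)$, so $\pi(L)\cap\pi(L') = \pi(L)$ is certainly non-meager. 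This suggests the lemma implicitly assumes $L,L'$ are in distinct $\Mod(S,\Sigma)$-orbits, or that "distinct" should be read that way; I would flag this and, under that reading, the proof is: $\pi(L)\cap\pi(L') = \bigcup_\gamma \pi(L\cap(L'\cdot\gamma))$ with every term nowhere dense (since $L \ne L'\cdot\gamma$ for all $\gamma$ by the distinct-orbits hypothesis), hence the union is meager. The one genuine technical point to nail down is the openness/properness of $\pi|_L$ so that images of nowhere dense sets stay nowhere dense, which follows from proper discontinuity of $\Mod(S,\Sigma)$ on $\HHm$ together with the fact that $\Gamma_L$ acts on $L$ — this I would dispatch with a short paragraph citing the orbifold-covering structure established in \S\ref{subsec: strata}.
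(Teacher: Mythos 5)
Your proposal is correct and follows essentially the same route as the paper's own proof: decompose $\pi(L)\cap\pi(L')$ over the countable group $\Mod(S,\Sigma)$, observe that when the underlying $d$-dimensional subspaces differ the intersection lies in a positive-codimension subspace and is hence nowhere dense, treat equal subspaces via the disjoint-or-equal dichotomy for components of $\dev^{-1}(V)$, and push down using openness of $\pi$ together with a Baire argument. The caveat you flag about the case $L'\cdot\gamma=L$ is apt but does not indicate a gap relative to the paper: the paper's proof dismisses exactly this case with ``$\pi(L)=\pi(L')$ contrary to assumption,'' i.e.\ it reads ``distinct'' as distinct $\Mod(S,\Sigma)$-orbits (equivalently $\pi(L)\neq\pi(L')$), which is precisely how the lemma is invoked in the proof of Proposition \ref{characterization of irreducible}.
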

 \begin{proof}
 We first show that $\pi^{-1}\left(\pi(L)\cap \pi(L') \right)$ is a countable union of sets of dimension less than $d$. We have:
 $$
 \pi^{-1}(\pi(L))=\bigcup_{\gamma \in \Mod(S,\Sigma)} L\cdot \gamma \mathrm{\ \ \ and\ \ \ } 
 \pi^{-1}(\pi(L'))=\bigcup_{\gamma \in \Mod(S,\Sigma)} L'\cdot \gamma.
 $$

Now consider an intersection  $(L\cdot \gamma) \cap (L'\cdot \gamma')$. We have $\dev(L\cdot \gamma)\subset V$ and $\dev(L'\cdot \gamma)\subset V'$ for $d$-dimensional linear subspaces of $H^1(S,\Sigma;\R^2)$. 
 If $V=V'$ then $L\cdot \gamma$ and $ L'\cdot \gamma'$ are topological components of $\dev^{-1}(V)$ so they are either disjoint or equal. If $L\cdot \gamma=L'\cdot \gamma'$ then $L=L'\cdot \gamma'\gamma^{-1}$ so 
 $\pi^{-1}(\pi(L))=\pi^{-1}(\pi(L'))$ and $\pi(L)=\pi(L')$ contrary to assumption.
 
 If $V\ne V'$ then $(L\cdot \gamma) \cap (L'\cdot \gamma')\subset\dev^{-1}(V\cap V')$. This is a complex subspace of positive codimension so its inverse image is a nowhere dense subset of the $d$-dimensional manifolds $L\cdot \gamma$ and $ L'\cdot \gamma'$. 
 Thus $\pi^{-1}\left(\pi(L)\cap\pi(L')\right)$ is a meager subset of $\pi^{-1}(\pi(L))$ and $\pi^{-1}(\pi(L'))$. Since $\pi$ is open and the intersections $(L \cdot \gamma) \cap (L' \cdot \gamma')$ are closed, the projection $\pi(L)\cap \pi(L')$ is a meager subset of $\pi(L)$ and $\pi(L')$.
 \end{proof}
 

\begin{proof}[Proof of Proposition \ref{characterization of irreducible}]
Say that $\M$ is irreducible and let $L$ be a $d$-dimensional equilinear manifold in $\pi^{-1}(\M)$. If \eqref{eq: what we want equilinear} does not hold, we can write $\pi^{-1}(\M)$ as a countable union of orbits of distinct linear submanifolds $L_1,L_2,\ldots$, as 
$$
\pi^{-1}(\M)= \bigcup_{\ell} \bigcup_{\gamma \in \Mod(S,\Sigma)} L_\ell \cdot \gamma, 
 $$
where $L=L_1$ and the list $\{L_i\}$ contains more than one element. We have 
 $$
\M= \bigcup_{\ell} \pi( L_\ell).
 $$
 We define
 $$
 A \df  \pi(L_1) \mathrm{\ and\ } B \df \bigcup_{1<\ell} \pi(L_\ell).
 $$
Since $\M$ is irreducible, and since we have assumed that  \eqref{eq: what we want equilinear} fails, we have $\M=B$. This implies $A\subset B$, and hence  $\pi(L_1)=\bigcup_\ell \pi(L_1)\cap \pi(L_\ell)$.
 According to Lemma~\ref{lem: meagre} $\pi(L_1)\cap \pi(L_\ell)$ is a meager subset of $\pi(L_1)$ so our decomposition of $\pi(L_1)$ expresses $\pi(L_1)$ as a meager set and violates the Baire category theorem. We conclude that $\M=A$ which is what we wanted to show.

 Now assume that for any $d$-dimensional equilinear manifold $L \subset \pi^{-1}(\M)$ we have \eqref{eq: what we want equilinear}. Suppose we have a decomposition $\M=A\cup B$ where
$$
 A= \bigcup_{j} \pi(L_j) \mathrm{\ and\ } B=\bigcup_{k} \pi(L'_k),
 $$
  where both collections $\{L_i\}, \, \{L'_k\}$ are $\Mod(S, \Sigma)$-invariant and comprised of $d$-dimensional equilinear manifolds. By \eqref{eq: what we want equilinear}, $\pi^{-1}(A)$ and $\pi^{-1}(B)$ are either empty or equal to $\pi^{-1}(\M)$. Thus $A$ and $B$ are not proper subsets of $\M$. 
\end{proof}

It follows from Proposition \ref{characterization of irreducible} that if $\M$ is a $d$-dimensional irreducible invariant subvariety and $L$ is a $d$-dimensional equilinear manifold contained in $\pi^{-1}(\M)$, then $\pi(L)=\M$. This motivates the following definition that will be used throughout the text 

\begin{defin}
    Let $\M$ be a $d$-dimensional irreducible invariant subvariety. A {\em lift} of $\M$ is a $d$-dimensional equilinear manifold $L \subset \pi^{-1}(\M)$. 
\end{defin}

The following result 
establishes the link between $\GL$-orbit closures and invariant subvarieties. In the forthcoming 
\cite{SY}, it will be deduced from the results of \cite{EMM, EM}.  

\begin{thm}
Irreducible invariant subvarieties and $\GL$-orbit closures coincide. Furthermore, any invariant subvariety is a finite union of irreducible invariant subvarieties. 
\end{thm}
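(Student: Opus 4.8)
The plan is to deduce the statement from the Eskin--Mirzakhani measure classification \cite{EM} and the Eskin--Mirzakhani--Mohammadi orbit closure theorem \cite{EMM}, as the forthcoming \cite{SY} does, by establishing three facts: (A) every $\GL$-orbit closure is an irreducible invariant subvariety; (B) every irreducible invariant subvariety is a $\GL$-orbit closure; and (C) every invariant subvariety is a \emph{finite} union of irreducible ones. From \cite{EMM} I will use that for \emph{every} $q$ the set $\overline{\GL q}$ is, near each of its points, the $\dev$-preimage of one fixed complex subspace $V$ cut out by linear equations with real coefficients, that it carries a finite $\SL$-invariant (affine) measure, and that a stratum contains only countably many $\GL$-orbit closures; and I will use the quantitative nondivergence estimates of \cite{MW}.

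For (A), fix $q$, put $\N \df \overline{\GL q}$, and fix a lift $\tilde q \in \HHm$. Since $\GL$ is connected, $L \df \overline{\GL \tilde q}$ is connected and, by \cite{EMM}, locally equal near each of its points to $\dev^{-1}(V)$ for one fixed complex subspace $V$ with $d := \dim V = \dim \N$; hence $L$ is a connected component of $\dev^{-1}(V)$. As \cite{EMM} gives that $V=V_L$ is cut out by real linear equations, Lemma~\ref{defined over R} (or the Remark following it) shows $V_L$ is defined over $\R$, so $L$ is a $d$-dimensional linear manifold. Pulling back to $L$ the intrinsic $\SL$-invariant measure on $\N^{(1)}$ provided by \cite{EM} gives, in the $\dev$ coordinate, a positive multiple of Lebesgue measure on $V_L$; being intrinsic to $\N$ it is $\Gamma_L$-invariant, which forces $|\det(\gamma|_{V_L})| = 1$ for every $\gamma \in \Gamma_L$, so $L$ is equilinear. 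Proper discontinuity of $\Mod(S,\Sigma)$ makes $\bigcup_\gamma L\gamma$ a locally finite, hence closed, union of equilinear manifolds; since $\GL q$ is dense in $\N$ this union equals $\pi^{-1}(\N)$, so $\N$ is an invariant subvariety, and because the $\Mod(S,\Sigma)$-orbit of the single lift $L$ is all of $\pi^{-1}(\N)$, Proposition~\ref{characterization of irreducible} shows $\N$ is irreducible.

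For (B), let $\M$ be an irreducible invariant subvariety of dimension $d$; by Proposition~\ref{prop: invariant by the action of GL} it is closed and $\GL$-invariant. By \cite{EMM}, for every $q \in \M$ the set $\overline{\GL q}$ is a $\GL$-orbit closure contained in $\M$, hence by (A) an irreducible invariant subvariety contained in $\M$ of dimension at most $d$; as there are only countably many $\GL$-orbit closures, $\M = \bigcup_i \N_i$ with $\N_1, \N_2, \dots$ irreducible invariant subvarieties contained in $\M$, and each point of $\M$ lies in one of them. Were every $\N_i$ of dimension $<d$, each would be closed with empty interior in $\M$, expressing the Baire space $\M$ as a countable union of nowhere dense sets, a contradiction; hence some $\N_i$ has dimension $d$, and comparing the ambient $d$-dimensional subspaces exactly as in Lemma~\ref{lem: meagre} forces $\pi^{-1}(\N_i) = \pi^{-1}(\M)$, so $\M = \N_i$ is a $\GL$-orbit closure. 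For (C), partition the equilinear manifolds comprising $\pi^{-1}(\M)$ into $\Mod(S,\Sigma)$-orbits with representatives $L_\alpha$, and set $\M_\alpha \df \pi(L_\alpha)$; each $\M_\alpha$ is irreducible by Proposition~\ref{characterization of irreducible}, distinct $\alpha$ give distinct $\M_\alpha$ by the argument of Lemma~\ref{lem: meagre}, and $\M = \bigcup_\alpha \M_\alpha$. By (B) each $\M_\alpha$ is a $\GL$-orbit closure, so $\M_\alpha^{(1)}$ supports a finite $\SL$-invariant measure; by \cite{MW} every such measure gives mass at least $1/2$ to one fixed compact set $K_0 \subset \HH^{(1)}$ depending only on the stratum, so $\M_\alpha^{(1)} \cap K_0 \neq \emptyset$ for all $\alpha$. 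Choosing $p_\alpha$ in this intersection, if there were infinitely many $\M_\alpha$ the points $p_\alpha$ would accumulate in $K_0$, giving infinitely many distinct equilinear manifolds from the collection meeting one small neighbourhood in $\HHm$, contradicting local finiteness of $\pi^{-1}(\M)$. Hence the union is finite.

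The step I expect to be the genuine obstacle is the finiteness in (C): the rest is a formal unwinding of the definitions once \cite{EM, EMM} are in hand, but preventing the irreducible components from escaping to infinity really requires an input beyond linear algebra and Baire category --- namely the uniform, stratum-wide nondivergence of \cite{MW}, which pins every $\M_\alpha^{(1)}$ to one fixed compact set. A secondary delicate point is the globalization in (A): upgrading the purely local linear description of \cite{EMM} to the assertion that a lift is a full connected component of $\dev^{-1}(V)$ which is moreover equilinear, i.e.\ $\det(\gamma|_{V_L}) = \pm 1$ on its stabilizer, which relies on connectedness of $\GL$-orbits together with the finiteness and the intrinsic (monodromy-invariant) nature of the \cite{EM}-measure.
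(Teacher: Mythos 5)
The first thing to say is that the paper contains no proof of this theorem: it is stated with a pointer to the forthcoming \cite{SY}, where it is to be deduced from \cite{EM, EMM}. So there is no in-paper argument to measure you against; your (A)--(C) outline is a reasonable reconstruction of the intended route (EM/EMM structure theory plus nondivergence), and in shape it is what one expects \cite{SY} to do.

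There is, however, one step whose justification fails as written. In (A) you claim that proper discontinuity of the $\Mod(S,\Sigma)$-action makes $\bigcup_\gamma L\cdot\gamma$ a locally finite, hence closed, union. Proper discontinuity does not imply local finiteness of the translates of a closed \emph{non-compact} submanifold: compare $\mathrm{PSL}_2(\mathbb{Z})$ acting on $\mathbb{H}^2$, where the translates of a geodesic line whose projection to the modular surface is dense are properly discontinuously permuted yet accumulate everywhere, so their union is neither locally finite nor closed. Local finiteness is exactly what Definition \ref{defin: invariant subvariety} demands, and it has to be imported from \cite{EMM} itself: the orbit closure $\N=\overline{\GL q}$ is an affine invariant submanifold, i.e.\ it is closed and in each period coordinate chart of the stratum it coincides with a \emph{finite} union of linear pieces; pulling this back through evenly covered neighbourhoods of the orbifold cover $\pi$ is what yields that $\pi^{-1}(\N)$ is a locally finite union of sheets. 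The same input (not connectedness of $\GL$ alone) is what upgrades the local statement ``near each of its points the lift lies in $\dev^{-1}(V)$'' to the assertion that the lift is open in $\dev^{-1}(V)$, hence a full connected component, and that the $\Mod(S,\Sigma)$-translates of one lift exhaust $\pi^{-1}(\N)$; it is in fact cleaner to define the lifts directly as the connected components of $\pi^{-1}(\N)$ rather than as $\overline{\GL\tilde q}$.

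A smaller repair is needed in (C): Lemma \ref{nondivergence cor} only gives $\mu(K) > (1-\varepsilon)\,\mu(\M_{\geq c})$ for $U$-invariant $\mu$, so quoting \cite{MW} as ``every finite $\SL$-invariant measure gives mass at least $1/2$ to a fixed compact set'' still owes a lower bound on $\mu(\M_{\geq c})$. This is easy and worth a sentence: any rotation-invariant measure gives zero mass to surfaces with a horizontal saddle connection (each surface has only countably many saddle-connection directions, so average over $\mathrm{SO}(2)$ and apply Fubini), hence $\mu(\M_{\geq \infty})=\mu(\M)$ and the fixed compact set does the job. With these two repairs your argument goes through, and the finiteness mechanism you isolate --- pinning every irreducible component to one compact set by nondivergence and then invoking local finiteness of the sheets upstairs --- is indeed the essential non-formal input.
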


\begin{conv}
From now on we will make the standing assumption that all the invariant subvarieties we will consider are irreducible. 
\end{conv}

Let $\M$ be a $d$-dimensional invariant subvariety. We conclude this section by constructing a Radon measure supported on $\M$ which will be defined up to a multiplicative constant. This will require some constructions which are summarized in Appendix \ref{appendix: correspondence principle}. Let $L$ be a lift of $\M$, let $V_L = \dev(L)$ and let $\Gamma_L$ be the stabilizer in $\Mod(S,\Sigma)$ of $L$. Let $\alpha$ be a volume form on $L$ that is obtained as the pullback by $\dev$ of an element of the top degree exterior power of $V_L$. The group $\GL$ acts smoothly on $\HHm$. Denoting by $g^\ast$ the pull-back operator on differential forms corresponding to the action of $g \in \GL$ on $\HHm$, we have 
\begin{align}\label{eq: invariance property}
\forall g \in \GL, \ \ g^{\ast} \alpha = (\mathrm{det} \ g)^{d} \ \alpha.
\end{align}

\noindent The volume form $\alpha$ defines a measure on $L$ that we denote by $\mu_L$. Since $L$ is an equilinear manifold, the measure $\mu_L$ is $\Gamma_L$-invariant. Furthermore, since $\Mod(S,\Sigma)$ acts transitively on the set of irreducible components, it can be arranged that for any $\gamma \in \Mod(S,\Sigma)$, $\gamma_{\ast} m_{L} = m_{L \cdot \gamma}$. This means that the sum
$$ 
\tilde{\mu}_{\M} = \sum_{L \subset \pi^{-1}(\M)} \mu_{L}, 
$$ 
\noindent where the sum ranges over the lifts of $\M$,
is a $\Mod(S,\Sigma)$-invariant measure on $\HHm$.
The measure $\tilde{\mu}_{\M}$ is a Radon measure, which follows from the fact that the collection of irreducible components is locally finite. Using Proposition \ref{prop: correspondence}, there is a unique Radon measure $\mu_{\M}$ on $\HHm$ such that for any $f \in C_c(\HHm)$, we have
\begin{align*}
    \int_{\HHm} f \ d\tilde{\mu}_{\M} = \int_{\HH} \Big( \int_{\HHm} f \ d\theta_q \Big) \ d\mu_{\M}(q),
\end{align*}
where 
$$
\theta_q \df \sum_{\bq \in \pi^{-1}( q)} N(\bq) \cdot \delta_{\bq}, \ \ N(\bq) \df  |\{\gamma \in \Mod(S, \Sigma) : \bq = \bq \cdot \gamma\}|
$$
(as in equation~\eqref{eq: appendix counting measure}). 
The measure $\mu_{\M}$ is supported on $\M$. It follows from Lemma~\ref{lem:invariance} that it is $\SL$-invariant. We call it the {\em linear measure on $\M$}. Notice that this is a slight abuse of language as $\mu_{\M}$ is only determined up to a multiplicative constant.  
\subsection{Area one locus, cone construction, and special linear measures} Let $\q \in \HH_{\mathrm{m}}$, let $q =\pi(\q)$, and let $M= M_q$ be the  underlying translation surface. The area of $M$ can be expressed using period coordinates as follows.
We define a Hermitian form on $H^1(S, \Sigma ; \C)$ by 
\begin{equation}\label{eq: Hermitian form}
( \alpha, \beta ) = \frac{1}{2 \mathbf{i}} \int_{S} \alpha \wedge \bar \beta.
\end{equation}
(See \cite[\S 2.5]{BSW} for a topological interpretation of equation~\eqref{eq: Hermitian form}.) The area of $M$ is then given by $(\dev(\q), \dev(\q))$. This is thus a quadratic formula in period coordinates. 

For the purposes of this paper we will use a related {\em real valued} bracket $\langle \alpha, \beta \rangle $ involving the pairing of horizontal and vertical classes. Say that on a marked surface $M_q$ we have a 1-form $\alpha$ corresponding to an element of $H^1(S, \Sigma; \R_{\mathrm{x}})$ (a horizontal form) and a 1-form $\beta$ corresponding to an element of $H^1(S, \Sigma; \R_{\mathrm{y}})$ (a vertical form). Then 
\begin{equation*}
\langle \alpha, \beta \rangle =  \int_{S} \alpha \wedge \beta
\end{equation*}
and this gives
\begin{equation}\label{eq: area formula bracket}
\area(M_q)=\langle dx_q, dy_q \rangle =  \int_{S} dx_q \wedge dy_q .
\end{equation}

We denote the subset of surfaces in $\HHm$ and $\HH$ of area one by $\HHm^{(1)}$ and $\HH^{(1)}$. More generally, when $\M$ is an invariant subvariety and $L$ is a lift of $\M$, we also denote by $\M^{(1)}$ and $L^{(1)}$ their intersection with the area-one locus. The latter are $G$-invariant and invariant under real REL flows (where defined). 

We recall that there is a {\em rescaling action} of $\R_+^{\ast}$ on $\HH$ that corresponds to the action of the subgroup of $\GL$ of scalar matrices with positive coefficients. We consider the {\em cone measure} $m_{\M}$ on $\M^{(1)}$ defined for any Borel subset $A \subset \M^{(1)}$ by 
\begin{align}\label{eq: special flat measure}
    \cone(A) \df \mu_{\M}(\mathrm{cone}(A)), \ \ \text{where} \ \mathrm{cone}(A) \df \{t\cdot a : t \in (0,1], \ a \in A\}.
\end{align}
When $\M$ is the whole stratum $\HH$, the measure $m_{\HH}$ is called the {\em Masur-Veech measure}. More generally, we shall call the measure $\cone$ the {\em special flat measure} on $\M$. If $L$ is a lift of $\M$, we can perform the same cone construction with the measure $\mu_{L}$ and we denote by $m_{L}$ the corresponding measure. 
Let $\tilde{m}_{\M}$
be the pre-image of $m_{\M}$ under $\pi$, that is the unique measure 
on $\HHm$ such that 
for any $f \in C_c(\HHm)$,
\begin{align}\label{eq: lifted measure}
        \int_{\pi^{-1}(\M)} f \ d\tilde{m}_{\M} = \int_{\M} \Big(\int_{\tilde X} f \ d\theta_q \Big) \ dm_{\M}(q)
    \end{align}
    (see 
Definition \ref{def: preimage of measure}). It is easily verified that 
\begin{align}\label{irreducible components of the cone measure}
    \tilde{m}_{\M} = \sum_{L \subset \pi^{-1}(\M)} m_{L}.
\end{align}

\subsection{The sup-norm Finsler metric} \label{subsec: AGY metric}
We now recall the sup-norm  Finsler metric on $\HHm$.  This structure was studied by Avila, Gou\"ezel and Yoccoz, for proofs and more details see \cite{AGY} and \cite{avila2010small}. Let $\|\cdot \|$ denote the Euclidean norm on $\R^2$. For a translation surface $q$, denote by $\Lambda_q$ the collection of saddle connections on $M_q$ and let $\ell_q(\sigma)= \|\hol_q(\sigma)\|$ be the length of $\sigma \in \Lambda_q$. For $\beta\in H^1(M_q,\Sigma_q; \R^2)$ we set 
\begin{equation}\label{eq: define a norm downstairs}
\|\beta\|_q \df \sup_{\sigma \in \Lambda_q}
\frac{\|\beta(\sigma)\|}{\ell_q(\sigma)}.
\end{equation}

We now define a Finsler metric for $\HHm$. Let $f: S \to M_q$ be a marking map representing a marked surface $\q \in \HHm$. Using period coordinates we can  identify the tangent space to $\HHm$ at $\q$ with $H^1(S,\Sigma;\R^2)$.  
Then 
\begin{equation}\label{eq: define a norm upstairs}
\|\beta\|_{\q} \df \sup_{\tau \in \Lambda_{\q}} \frac{\|\beta(f(\tau))\|}{\ell_q(f(\tau))}
\end{equation}
is a norm on $H^1(S, \Sigma ; \R^2)$. It satisfies the equivariance property 
\begin{equation}\label{eq: equivariance}
\forall h \in \Mod(S,\Sigma), \ \ \|\beta\|_{\q} = \|h^*\beta\|_{\q \cdot h}, 
\end{equation}
where $\q \cdot h $ is represented by the marking map $f \circ h$. 
The map 
$$
T(\HH_{\mathrm{m}}) \to \R, \ \ \ \ \ \ \ (\q,\beta) \mapsto
\|\beta\|_{\q}
$$
is continuous.  
The Finsler metric  defines a distance function\footnote{In order to avoid confusion we use `distance function' to refer to what is often called a metric.} on $\HHm$ which we call the {\em sup-norm distance} and define as follows:
\begin{equation}\label{eq: Finsler integrate}
    \mathrm{dist}(\q_0, \q_1) \df \inf_{\gamma } \int_0^1 \|\gamma'(\tau)\|_{\gamma(\tau)} d\tau,
\end{equation}
where $\gamma$ ranges over smooth paths $\gamma:[0,1] \to \HH$ with $\gamma(0)=\q_0$ and $\gamma(1) = \q_1$.  The topology induced by the sup norm distance on  $\HHm$ is the one  induced by period coordinates, and the resulting metric space  is proper and complete. We can use the distance function on $\HHm$ to define a distance function on $\HH$ by
$$
\mathrm{dist}(q_0, q_1) = \inf \{\mathrm{dist}(\q_0, \q_1): \q_i \in
\pi^{-1}(q_i), \ i=0,1\}.
$$

\section{Horospherical measures}\label{sec: horospherical measures}

Let $\M$ be an invariant subvariety of dimension $n$. The goal of this section is to define the horospherical foliation on $\M$ and the related horospherical measures, which are our object of study in this paper. These objects will be defined via their counterparts for the irreducible components of $\pi^{-1}(\M)$.

\subsection{Boxes}\label{subsec: boxes} We now define a notion of {\em boxes}. They will be used throughout the text and will play two roles: boxes give local coordinates on invariant subvarieties (more precisely, on the irreducible components of their pre-image by $\pi$) that are convenient for the study of horospherical measures; additionally, they will be used in a mixing argument in the proof of Theorem \ref{thm: classification}.

From now on, we identify $H^{1}(S,\Sigma,\C)$ with $H^{1}(S,\Sigma,\R^2)$ as in \S \ref{subsec: strata}.  Let $V \subset H^{1}(S,\Sigma,\C) $ be a complex linear subspace defined over $\R$. We have 
\begin{equation}\label{eq: splitting}
    V = V_{\mathrm{x}} \oplus V_{\mathrm{y}},
  \end{equation}
where
$$
V_{\mathrm{x}} \df V \cap H^1(S, \Sigma; \R_{\mathrm{x}}) \ \  \text{ and}  \ \ V_{\mathrm{y}}   \df V \cap H^1(S, \Sigma; \R_{\mathrm{y}})
$$
are identified by the isomorphism $ V_{\mathrm{x}} \ni v \mapsto \mathbf{i} v \in V_{\mathrm{y}}.$ We define
$$
V^{(1)} \df \{(x,y) \in V: x \in V_{\mathrm{x}}, \  y \in V_{\mathrm{y}}, \ \langle x, y \rangle =1 \}, 
$$
and denote by
\begin{equation}\label{eq: def projections}
  \pi_{\x} : V \to V_{\x}, \ \ \ \ \  \pi_{\y}: V \to V_{\y}
  \end{equation}
the projections corresponding to the direct sum
decomposition \eqref{eq:  splitting}, and by $\pi'_{\x}$ the
projection from $\pi_{\x}^{-1} (V_{\x} \sm \{0\})$ to the
projective space $\mathbf{P}(V_{\x})$. Finally let
\begin{equation}\label{eq: def Psi}
  \Psi:V^{(1)}\to \mathbf{P}(V_{\x})\times V_{\y}, \ \ \ \ \Psi(q)  =
  (\pi'_{\x}(q),\pi_{\y}(q)).
  \end{equation}
\begin{lem} \label{lem: local diffeo}
  The map $\Psi$ is a local diffeomorphism.
\end{lem}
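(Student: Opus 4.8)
The plan is to exhibit $\Psi$ as a composition of maps that are manifestly smooth with smooth local inverses, and then to check the Jacobian is invertible by a dimension count together with an explicit description of the kernel. First I would observe that $V^{(1)}$ is a smooth (real) submanifold of $V$: it is the level set $\langle x, y\rangle = 1$ of the bilinear pairing restricted to $V_{\mathrm x}\times V_{\mathrm y}$, and since this pairing is nondegenerate (it computes areas, cf.\ \eqref{eq: area formula bracket}, so it is not identically zero, hence $1$ is a regular value), $V^{(1)}$ has real dimension $\dim_{\R} V - 1 = 2d-1$ where $d = \dim_{\C} V$. Meanwhile $\mathbf{P}(V_{\x})\times V_{\y}$ has real dimension $(d-1) + d = 2d-1$ as well, so the two manifolds have equal dimension and it suffices to prove that $d\Psi$ is injective at every point, or equivalently surjective.

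Next I would analyze $\Psi$ componentwise. The map $\pi_{\y}\colon V^{(1)}\to V_{\y}$ is the restriction of a linear projection, so it is smooth with $d\pi_{\y}$ equal to $\pi_{\y}$ itself on tangent vectors; the map $\pi'_{\x}\colon \pi_{\x}^{-1}(V_{\x}\sm\{0\})\to \mathbf{P}(V_{\x})$ is the standard smooth projectivization, and on $V^{(1)}$ it is defined because if $x=0$ then $\langle x,y\rangle = 0\neq 1$. So $\Psi$ is smooth. For injectivity of $d\Psi$ at a point $q = (x,y)$: a tangent vector to $V^{(1)}$ at $q$ is a pair $(\dot x,\dot y)\in V_{\x}\times V_{\y}$ with $\langle \dot x, y\rangle + \langle x,\dot y\rangle = 0$. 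If $d\Psi(\dot x,\dot y) = 0$, then $d\pi_{\y}$ kills it, so $\dot y = 0$; and $d\pi'_{\x}$ kills it, so $\dot x$ is tangent to the fiber of $\pi'_{\x}$ through $x$, i.e.\ $\dot x = \lambda x$ for some $\lambda\in\R$. Substituting into the tangency condition gives $\lambda\langle x,y\rangle = 0$, and since $\langle x,y\rangle = 1$ we get $\lambda = 0$, hence $(\dot x,\dot y) = 0$. Thus $d\Psi$ is injective, and by the dimension count it is an isomorphism, so by the inverse function theorem $\Psi$ is a local diffeomorphism.

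The step I expect to require the most care is the bookkeeping around the identification of $H^1(S,\Sigma;\C)$ with $H^1(S,\Sigma;\R^2)$ and the compatibility of the pairing $\langle\cdot,\cdot\rangle$ with the splitting \eqref{eq: splitting} — in particular, verifying that $\langle x,y\rangle$ for $x\in V_{\x}$, $y\in V_{\y}$ is genuinely nondegenerate on $V$, so that $1$ is a regular value and $V^{(1)}$ is a manifold of the expected dimension; this is where one must use that $V$ is defined over $\R$ (so that $V_{\x}\cong V_{\y}$ via multiplication by $\mathbf{i}$) and that the Hermitian form of \eqref{eq: Hermitian form}, whose imaginary part is built from $\langle\cdot,\cdot\rangle$, is nondegenerate on $V$. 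Everything else is a routine application of the inverse function theorem once these identifications are pinned down.
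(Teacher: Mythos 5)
Your proposal is correct in outline but takes a genuinely different route from the paper. You establish that $d\Psi$ is injective at every point and then combine a dimension count with the inverse function theorem, whereas the paper simply writes down an explicit smooth local inverse, $\psi([x],y)=\bigl(x/\langle x,y\rangle,\,y\bigr)$ as in \eqref{eq: local inverse1}, and checks $\Psi\circ\psi=\mathrm{Id}$. The explicit formula is not only shorter here; the same rescaling map $x\mapsto x/\langle x,y\rangle$ is reused later in the paper, for instance in the Jacobian computation of Proposition \ref{jacobian}, so the constructive proof does extra work for free. Your componentwise analysis of the kernel of $d\Psi$ (first $\dot y=0$, then $\dot x=\lambda x$ from the fiber of $\pi'_{\x}$, then $\lambda\langle x,y\rangle=\lambda=0$) is correct, as is the observation that $x\neq 0$ on $V^{(1)}$ so that $\pi'_{\x}$ is defined.

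The one step you flag as delicate is the place where your justification is actually wrong, although the gap is easy to fill. The pairing $\langle\cdot,\cdot\rangle$ is in general \emph{not} nondegenerate on $V$: if $v\in V_{\x}$ lies in the REL space \eqref{eq: def Z}, i.e.\ restricts trivially to absolute cohomology, then $v$ is exact on the closed surface $S$ and Stokes gives $\langle v,y\rangle=\int_S v\wedge y=0$ for every $y\in V_{\y}$; for the same reason the Hermitian form \eqref{eq: Hermitian form} is degenerate on any $V$ containing REL classes, which is the typical situation for invariant subvarieties. Moreover ``not identically zero'' does not by itself imply that $1$ is a regular value of a function. What rescues the claim is homogeneity: $f(x,y)=\langle x,y\rangle$ is bilinear, so at any point of the level set $f=1$ the derivative in the direction $(x,0)$ equals $\langle x,y\rangle=1\neq 0$ (or use the radial direction $(x,y)$, giving $2$). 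Hence $1$ is a regular value regardless of any nondegeneracy, $V^{(1)}$ is a smooth hypersurface of real dimension $2d-1$ with the tangent space you use, and with this substitution the rest of your argument goes through verbatim.
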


\begin{proof}
Say that $(x_0, y_0) \in V^{(1)}$ is mapped by $\Psi$ to $(\bar{x}_0,y_0)$ in $\mathbf{P}(V_{\x})\times V_{\y}$. We will construct a local inverse. Since $\langle x_0,y_0\rangle=1$ we can find neighborhoods $U_{\x}$ of $x_0$ in $V_{\x}$ and  $U_{\y}$ of $y_0$ in $V_{\y}$ so that $\langle x,y\rangle>0$ for $x\in U_{\x}$ and $y\in U_{\y}$. We define maps
\begin{equation}\label{eq: local inverse}
  \tilde{\psi}:U_{\x} \times U_{\y}\to V^{(1)}, \ \ \ \
  \tilde{\psi}(x,y)=\left(\frac{x}{\langle x,y\rangle},y \right)
\end{equation}
and
\begin{equation}\label{eq: local inverse1}
  \psi:U'_{\x} \times U_{\y}\to V^{(1)}, \ \ \ \
  \psi([x],y) \df \tilde{\psi}(x,y), \ \ \ \text{ where } U'_{\x} \df
  \pi'_{\x}(U_{\x}).
\end{equation}
The map $\tilde{\psi}$ is smooth and descends in a well-defined way to define $\psi$. We see that $\Psi \circ \psi$ is the identity map, i.e., $\psi$ is a local inverse of $\Psi$. 
\end{proof}

\begin{defin}[Boxes]\label{def: box}
Let $L$ be a lift of $\M$ and let $V = \dev(L)$. A {\em box in $L$} is a relatively compact subset $\BB \subset L^{(1)}$ together with a diffeomorphism $\varphi : \product \to \BB$ such that, in the notations above,
\begin{itemize}
  \item $U'_{\x}$ and $U_{\y}$ are open sets in $\mathbf{P}(V_{\x})$ and $V_{\y}$ respectively.
  \item $\Psi \circ \dev \circ \varphi = \mathrm{Id}$.
\end{itemize}
For $y  \in U_{\y}$, the {\em plaque of $y$ in $\BB$} is the set $\LL_y \df \varphi(U'_{\x}\times \{y\})$. 
\end{defin}

The composition in the second item in Definition \ref{def: box} makes sense since $\dev(L^{(1)}) \subset V^{(1)}$, in light of 
equation~\eqref{eq: area formula bracket}.
It should be understood as a choice of a suitable parameterization for boxes. Note that the data $\varphi, \product$ are implicit in the notion of a box, but in order to avoid excessive notation we simply write $\BB$.

More generally, a {\em box in $\pi^{-1}(\M)$} is a box in one of the irreducible components of $\pi^{-1}(\M)$. Such a box $\BB$ will be called {\em regular} if for any $\gamma \in \Mod(S,\Sigma)$ either $\BB \cdot \gamma \cap \BB = \emptyset$ or $\gamma \in \Gamma$, where $\Gamma$ is the stabilizer in $\Mod(S,\Sigma)$ of $\BB$ (\textit{i.e.} the set of $\gamma \in \Mod(S,\Sigma)$ such that $\BB \cdot \gamma = \BB$). When $\BB$ is regular, the map $\pi$ induces a homeomorphism $\BB / \Gamma \to \pi(\BB)$. In particular the image of a regular box by $\pi$ is an open subset of $\M$. Since $\Mod(S,\Sigma)$ acts diagonally on $\mathbf{P}(H^1(S,\Sigma,\R_\x)) \times H^1(S,\Sigma,\R_\y)$, the set of boxes is preserved by the action of $\Mod(S,\Sigma)$. Furthermore, a finite intersection of boxes is a box. Thus, by Lemma \ref{lem: local diffeo}, for every $\q \in \pi^{-1}(\M)$, there is a regular box in $\pi^{-1}(\M)$ containing $\q$. 


\begin{rem}\label{remark: asymmetry}
There is an asymmetry in the definition of a box; we could equally well define a box using $V_{\x}$ and $\mathbf{P}(V_{\y})$, but we will make no use of that kind of box.
\end{rem}

\subsection{Definition of the horospherical foliation}\label{subsec:
  horospherical foliation}
Recall that a smooth map of manifolds is a submersion if its
derivative is of full rank at every point. The implicit function
theorem implies that the connected components of the fibers of a
submersion are the leaves of a foliation. 

\begin{defin} \label{def: foliations upstair}
Let $L$ be a lift of $\M$ and let $V$ be the linear space on which $L$ is modeled. The foliations on $L^{(1)}$ induced by the submersions
$$
\pi'_{\x}  \circ \dev : L^{(1)} \to \mathbf{P}(V_{\x}) \ \ \text{and} \ \ \pi_{\y}\circ \dev : L^{(1)} \to V_{\y}, 
$$
\noindent are called the {\em weak stable} and {\em strong unstable} foliations. They are denoted respectively by $\WW^s$ and  $\WW^{uu}$. The leaf of $\q \in L^{(1)}$ for the weak stable foliation is denoted by $\WW^s(\q)$ and the leaf of $\q$ for the strong unstable foliation is denoted by $\WW^{uu}(\q)$. 
\end{defin}

It follows from Lemma \ref{lem: local diffeo} that these foliations are well-defined, and the leaves of these foliations are everywhere transverse.

\ignore{
\textcolor{red}{There was an idea to discuss this in the language of 
  orbifold foliations and $(G,X)$-structures. 
  I guess Peter's contribution in this direction is in the file `$(G, X)$ structures on 
  orbifolds'... but I didn't understand whether Peter wanted us to use 
  that, and if so, how... I used the stuff in \S 2.3 to remove the 
  need to work with orbifolds. Note that the boxes are only boxes in the 
  nonsingular part of $\M$ and similarly for the foliation. Please check}

Recall that  for $\q \in
  \HH_{\mathrm{m}}$, the {\em local group $\Gamma_{\q}$}
  for the orbifold structure on $\HH$ is the subgroup of $\Mod(S,
  \Sigma)$ fixing $\q$. We will say that a foliation on
  $\HH_{\mathrm{m}}$ induces an {\em orbifold foliation} on $\HH$ if
  the monodromy action permutes the leaves,  and each local group  $\Gamma_{\q}
$ preserves the 
  tangent space to the leaf of $\q$; the leaves of an orbifold
  foliation are thus orbifolds on their own right. 
}

\begin{lem} \label{lem: orbifold foliation}
The action of $\Mod(S, \Sigma)$ permutes the leaves of $\WW^{uu}$. For any leaf $F$, the restriction $\dev|_F$ is a local homeomorphism to an affine subspace of $V$ and with respect to this affine structure, the subgroup $\Gamma_L \df \{\gamma \in \Mod(S, \Sigma) : L \cdot \gamma = L\}$ acts on the leaves of $W_L^{uu}$ by affine maps. 
\end{lem}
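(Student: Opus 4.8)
The plan is to verify the three assertions in sequence, all of them flowing from the equivariance of $\dev$ and the linearity of the $\Mod(S,\Sigma)$-action on $H^1(S,\Sigma;\R^2)$. First I would check that $\Mod(S,\Sigma)$ permutes the leaves of $\WW^{uu}$. The leaf through $\q \in L^{(1)}$ is, by Definition~\ref{def: foliations upstair}, the connected component of $\q$ in the fiber $(\pi_\y \circ \dev)^{-1}(\pi_\y(\dev(\q)))$ inside $L^{(1)}$. Given $\gamma \in \Mod(S,\Sigma)$, the manifold $L \cdot \gamma$ is again a lift (of $\M$, since $\M$ is irreducible, or more generally of some component of $\pi^{-1}(\M)$), modeled on the linear subspace $V \cdot \gamma$; this $V\cdot\gamma$ is again defined over $\R$, hence by Lemma~\ref{defined over R} it is $\GL$-invariant, so in particular invariant under the matrices $b,c$ of that lemma and therefore splits as $(V\cdot\gamma)_\x \oplus (V\cdot\gamma)_\y$ compatibly with $V=V_\x\oplus V_\y$ under $\gamma^*$. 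Consequently $\gamma^*$ intertwines $\pi_\y$ on $V$ with $\pi_\y$ on $V\cdot\gamma$, and since $\dev(\q\cdot\gamma) = \gamma^*\dev(\q)$, the map $\gamma$ sends the fiber of $\pi_\y\circ\dev$ through $\q$ to the fiber through $\q\cdot\gamma$, hence (being a homeomorphism) sends the leaf $\WW^{uu}(\q)$ onto $\WW^{uu}(\q\cdot\gamma)$.

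Next I would establish that for a leaf $F$, the restriction $\dev|_F$ is a local homeomorphism onto an affine subspace of $V$. By definition $\dev|_{L^{(1)}}$ has image in $V^{(1)}$ and, by Lemma~\ref{lem: local diffeo}, $\Psi = (\pi'_\x, \pi_\y)$ is a local diffeomorphism on $V^{(1)}$; since the leaf $F$ is a connected component of a fiber of $\pi_\y\circ\dev$, the composite $\dev|_F$ followed by $\Psi$ lands in $\{\bar{x}\} \times \{y_0\}$ locally only after further quotienting — more precisely, $F$ is cut out inside $L^{(1)}$ by fixing the $V_\y$-coordinate, so $\dev(F)$ is contained in the affine slice $\{v \in V^{(1)} : \pi_\y(v) = y_0\}$, which is an affine subspace of $V$ (it is the preimage under the linear map $\pi_\y$ of a point, intersected with the quadric $V^{(1)}$ — but on this preimage the condition $\langle x, y_0\rangle = 1$ is a single affine equation in $x$, so the slice is genuinely affine, namely a coset of $\ker(\langle\,\cdot\,,y_0\rangle|_{V_\x})\oplus\{0\}$ translated by $y_0$). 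That $\dev|_F$ is a local homeomorphism onto this slice follows because $\dev|_{L^{(1)}}$ is a local homeomorphism onto $V^{(1)}$ (as $L$ is a connected component of $\dev^{-1}(V)$ and $L^{(1)}$ its area-one locus) and $F$, respectively its image slice, are the level sets of $\pi_\y\circ\dev$, respectively $\pi_\y$, through corresponding points.

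Finally, for the affine action of $\Gamma_L$: if $\gamma \in \Gamma_L$ then $L\cdot\gamma = L$, so $\gamma^*$ preserves $V$, and by the first paragraph $\gamma^*$ respects the splitting $V = V_\x\oplus V_\y$; hence $\gamma^*$ is a linear automorphism of $V$ preserving $V_\y$, so it carries each affine slice $\{\pi_\y = y_0\}$ to the slice $\{\pi_\y = \gamma^* y_0\}$ by an affine map (the restriction of a linear map to an affine subset). Transporting this through the local homeomorphisms $\dev|_F$ identified above shows that $\gamma$ acts on leaves of $\WW^{uu}_L$ by maps that are affine in the charts $\dev|_F$, which is the claimed affine structure. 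The one point requiring a little care — and the mildest obstacle here — is making sure the identification of $\dev(F)$ with a genuine affine subspace (rather than merely the area-one locus of a linear subspace, which is a quadric) is correct; this is exactly the observation that once $\pi_\y = y_0$ is fixed, the remaining area-normalization $\langle x, y_0\rangle = 1$ is \emph{affine-linear} in the free variable $x \in V_\x$, so the slice is an affine hyperplane-type subset of $V_\x$ translated into $V$. Everything else is a bookkeeping exercise in equivariance of $\dev$, already packaged in Lemma~\ref{defined over R} and Lemma~\ref{lem: local diffeo}.
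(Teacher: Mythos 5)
Your proof is correct and follows essentially the same route as the paper: $\dev$-equivariance, the fact that the monodromy preserves the splitting $V=V_\x\oplus V_\y$ and acts linearly, the identification of $\dev(F)$ with the affine slice $\{(x,y_0):\langle x,y_0\rangle=1\}$, and the observation that a linear map restricted to such a slice is affine. The only nitpick is that the compatibility $\gamma^*(V_\x)=(V\cdot\gamma)_\x$ does not follow merely from $V\cdot\gamma$ being defined over $\R$ (which only gives that $V\cdot\gamma$ itself splits); it follows from the fact that $\Mod(S,\Sigma)$ acts trivially on the $\R^2$-coefficients and hence commutes with the projections onto the horizontal and vertical summands, which is the fact the paper invokes directly.
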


\begin{proof}
The monodromy preserves the product splitting $V=V_{\x}\oplus V_{\y}$ and acts linearly on each factor. Thus the monodromy acts projectively on $\mathbf{P}(V_{\x})$. Since $\dev$ is monodromy equivariant, the leaves of the foliations $\WW^{s}$ and $\WW^{uu}$ are permuted by the action of $\Mod(S, \Sigma)$. 

For the second assertion, it is clear from the definitions that $\dev$ maps the leaf $F$ to a set of the form $\{ (x,y_0) \in V : x \in V_\x \ \text{and} \ \langle x, y_0 \rangle =1\} $ for some fixed $y_0 \in V_{\mathrm{y}}$, and by Lemma \ref{lem: local diffeo}, the map $\dev|_F$ is a local diffeomorphism. The last assertion follows from the $\Mod(S, \Sigma)$-equivariance of $\dev$ and the fact that $\Mod(S, \Sigma)$ preserves the bracket $\langle \cdot, \cdot \rangle.$ 
\end{proof}

\begin{rem}
Lemma \ref{lem: orbifold foliation} equips the leaves of the foliation $W^{uu}$ with an affine manifold structure. This structure need  not be geodesically complete. Using real Rel deformations, one easily constructs affine geodesics in a leaf $\WW^{uu}(\q)$ which contain a surface with a horizontal saddle connection whose length  goes to zero as one moves along the leaf. There are additional sources of non-completeness involving surfaces whose horizontal foliation is minimal but not uniquely ergodic, see \cite{MW2}. Furthermore, using \cite[Thm. 1.2]{MW2}, one can show that each leaf $\WW^{uu}(\q)$ is mapped by the developing map homeomorphically to an explicitly described convex domain in $H^1(S, \Sigma; \R_{\x})$, defined by finitely many linear inequalities and equalities. 
\end{rem}

It follows from Lemma \ref{lem: orbifold foliation} that the partition of $L^{(1)}$ given by the leaves $\WW^{uu}$ induces a partition of $\M^{(1)}$. We denote it by $W^{uu}$ and if $q \in \M$, we denote by $W^{uu}(q)$ the element of the partition  that contains $q$. We emphasize that $W^{uu}$ does not depend on the choice of a particular irreducible component used to define it. This is a consequence of the fact that $\Mod(S,\Sigma)$ acts on $H^1(S,\Sigma,\C)$ by real endomorphisms and thus preserves the splitting into real and imaginary parts of cohomology classes.  

\begin{defin}
    A {\em horosphere} is an element of the partition of $W^{uu}$. 
\end{defin}

\begin{rem}
Occasionally, we may call the partition $W^{uu}$ the {\em horospherical foliation of $\M$}, even though $\M$ is generally not a manifold. Even if this will play no role in the rest of the paper, we justify this choice of terminology for the sake of completeness: the invariant subvariety $\M$ can be seen to have the structure of a properly immersed manifold $\M$, i.e., is the image of a manifold $\N$ under a proper orbifold immersion $f : \N \to \HH$ and there is a foliation on $\N$ whose leaves are sent to horospheres by $f$. We can choose $\N$ to be the quotient of $L$ by a finite-index torsion-free normal subgroup $\Gamma_0 $ of $\Mod(S, \Sigma)$ and $f : L / \Gamma_0 \to \HH, \ \q\Gamma_0 \mapsto \pi(\q)$. By Lemma \ref{lem: orbifold foliation}, the horospherical foliation on $L$ descends to a foliation on the manifold $L /\Gamma_0$. The leaves of this foliation are indeed mapped to horospheres and $f$ is an orbifold immersion. The fact that it is proper follows from the fact that the collection of irreducible components of $\pi^{-1}(\M)$ is locally finite.
\end{rem}

Reversing the roles of $\pi_{\x}$ and $\pi_{\y}$, and defining $\pi'_{\y}$ in an analogous fashion, we also define the {\em strong stable} and {\em unstable} foliations $\WW^{ss}$ and $\WW^{u}$ as those induced by the submersion $\pi_{\x} \circ \dev, \, \pi'_{\y} \circ \dev$ respectively. Lemma \ref{lem: orbifold foliation} holds for these foliations as well, with obvious modifications. Summarizing: for every $\q \in L$ we have
$$
\WW^{ss}(\q) \subset \WW^{s}(\q), \ \ \ \WW^{uu}(\q) \subset \WW^{u}(\q),
$$
the leaves $\WW^{ss}(\q)$ and $\WW^{uu}(\q)$ have a natural affine structure and, for $n = \dim (\M)$, we have 
$$
\dim \WW^{ss}(\q) = \dim \WW^{uu}(\q) = n-1, \ \ \ \ \ \dim \WW^{s}(\q) = \dim
\WW^{u}(\q) = n. 
$$

As we saw in \S \ref{subsec: AGY metric}, the sup-norm Finsler metric induces a distance function on $\HH_{\mathrm{m}}$ as a path metric. We will induce distance functions on leaves of the stable and strong stable foliations using the same approach. For $\q_0, \q_1 \in \HH_{\mathrm{m}}$ belonging to the same stable (respectively, strong stable) leaf, we  define $\mathrm{dist}^{(s)}(\q_0, \q_1)$ (respectively, $\mathrm{dist}^{(ss)}(\q_0, \q_1)$) by the formula in equation~\eqref{eq: Finsler integrate}, but making the additional requirement that the entire path $\gamma$ is contained in the stable (respectively strong stable) leaf of the $\q_i$. 

We similarly define $\mathrm{dist}^{(s)}(q_0, q_1)$ and $\mathrm{dist}^{(ss)}(q_0, q_1)$ for $q_0, q_1 \in \HH$ belonging to the same stable (respectively, strong stable) leaf. We will call the distance functions $\mathrm{dist}^{(s)}, \, \mathrm{dist}^{(ss)}$ the {\em stable (resp. strong stable) sup-norm distance function.} 

These distance functions have the following properties:

\begin{prop}\label{prop: properties of restricted AGY metric}
Let $L$ a lift of $\M$ and let $\q_0, \q_1 \in L$.
  
\begin{enumerate}
    \item \label{item: 1} If $\q_0, \q_1$ are in the same stable (resp., strong stable leaf) leaf then $\mathrm{dist}(\q_0, \q_1) \leq \mathrm{dist}^{(s)} (\q_0, \q_1)$ (resp., $\mathrm{dist}(\q_0, \q_1) \leq \mathrm{dist}^{(ss)} (\q_0, \q_1)$).
    \item \label{item: 2} If $\q_0, \q_1$ are in the same strong stable leaf then for all $t \geq 0$, 
    
    $$
    \mathrm{dist}^{(ss)} (g_t \q_0, g_t \q_1) \leq \mathrm{dist}^{(ss)} (\q_0, \q_1).
    $$

    And the same holds for the strong unstable leaf. 
    
    \item \label{item: 2.5} If $\q_1 = g_t \q_0$ for some $t \in \R$ then $\mathrm{dist}^{(s)}(\q_0, \q_1) \leq |t|$.
    \item Statements ~\eqref{item: 1}, ~\eqref{item: 2} and ~\eqref{item: 2.5} also hold in $\HH$, for $q_0, q_1$ in place of $\q_0, \q_1$. 
\end{enumerate} 
  \end{prop}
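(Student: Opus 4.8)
\textbf{Proof proposal for Proposition \ref{prop: properties of restricted AGY metric}.}

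The plan is to establish each of the four items in turn, reducing everything to the definition of the sup-norm Finsler length in equation~\eqref{eq: Finsler integrate} and to the algebraic behavior of period coordinates under the geodesic flow. First I would prove \eqref{item: 1}: this is immediate, since $\mathrm{dist}(\q_0,\q_1)$ is an infimum over \emph{all} smooth paths from $\q_0$ to $\q_1$ in $\HHm$, whereas $\mathrm{dist}^{(s)}$ (resp. $\mathrm{dist}^{(ss)}$) is the infimum over the smaller family of paths constrained to lie in the stable (resp. strong stable) leaf; restricting the class of admissible paths can only increase the infimum. One does need the leaf to be path-connected and the restricted infimum to be over a nonempty set, which holds because $\q_0,\q_1$ are assumed to lie on a common leaf and leaves of a foliation are, by construction, connected immersed submanifolds.

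For \eqref{item: 2} the key point is that the strong stable foliation is defined by the submersion $\pi_{\x}\circ\dev$ and that $g_t$ acts on period coordinates by scaling the horizontal part by $e^t$ and the vertical part by $e^{-t}$. So if $\q_0,\q_1$ lie on the same strong stable leaf, then along any path $\gamma$ in that leaf the tangent vectors $\gamma'(\tau)$ lie in the kernel of $d\pi_{\x}$, i.e. are purely vertical cohomology classes; the map $g_t$ sends $g_t\gamma$ to a path on the strong stable leaf of $g_t\q_0$ with $(g_t\gamma)'(\tau) = e^{-t}\gamma'(\tau)$ (under the period-coordinate identification of tangent spaces). Then I would invoke the contraction property of the sup-norm at the level of the Finsler norm itself: for a purely vertical tangent vector $\beta$ one has $\|g_t{}_*\beta\|_{g_t\q} = e^{-t}\|\beta\|_{\q}$ for $t\ge 0$. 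This requires a short computation with \eqref{eq: define a norm upstairs}: scaling the vertical coordinate by $e^{-t}$ and the horizontal by $e^t$ changes each saddle connection length $\ell_q(f(\tau))$ by a factor that is at most $e^t$ (since $\|(e^t a, e^{-t} b)\| \le e^t\|(a,b)\|$ for $t \ge 0$), and changes $\|\beta(f(\tau))\|$ for a purely vertical class exactly by $e^{-t}$, so the ratio defining $\|\cdot\|_{\q}$ is multiplied by at most $e^{-t}$; taking a sup preserves the inequality. Integrating over $\tau\in[0,1]$ and taking the infimum over paths $\gamma$ in the leaf gives $\mathrm{dist}^{(ss)}(g_t\q_0, g_t\q_1)\le e^{-t}\mathrm{dist}^{(ss)}(\q_0,\q_1)\le \mathrm{dist}^{(ss)}(\q_0,\q_1)$. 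The strong unstable case is identical after swapping the roles of $\x$ and $\y$ and replacing $t$ by $-t$ (equivalently, using $t \le 0$), since there the relevant tangent vectors are purely horizontal.

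For \eqref{item: 2.5}, if $\q_1 = g_t\q_0$ then the geodesic segment $\tau\mapsto g_{\tau t}\q_0$, $\tau\in[0,1]$, is a path lying in a single stable leaf $\WW^s(\q_0)$ (indeed in a single weak-unstable orbit, but certainly in the stable leaf, since $\pi'_{\x}\circ\dev$ is constant along the $A$-orbit), connecting $\q_0$ to $\q_1$. Its sup-norm length is $\int_0^1 \|t\cdot X(g_{\tau t}\q_0)\|_{g_{\tau t}\q_0}\, d\tau$ where $X$ is the generator of $A$; since the $A$-action is by isometries of the sup-norm in the geodesic direction — more precisely $\|X(\q)\|_{\q} = 1$ for every $\q$ because the generator of $A$ acts on the diagonal period-coordinate directions and the sup-norm is $A$-equivariant in the sense that matches the hyperbolic metric on $\SL/\mathrm{SO}(2)$ — this integral equals $|t|$. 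Thus $\mathrm{dist}^{(s)}(\q_0,\q_1)\le|t|$. Finally, for item (4), all three statements pass to $\HH$: by definition $\mathrm{dist}$, $\mathrm{dist}^{(s)}$, $\mathrm{dist}^{(ss)}$ on $\HH$ are infima of the corresponding quantities over lifts, the $\GL$-action on $\HHm$ descends compatibly to $\HH$ via the $\GL$-equivariant map $\pi$, and the foliations downstairs are the images of those upstairs (Lemma \ref{lem: orbifold foliation}); so each inequality downstairs follows by taking infima of the corresponding inequality upstairs over all choices of lifts $\q_i\in\pi^{-1}(q_i)$.

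The main obstacle I anticipate is item \eqref{item: 2}: one must be careful that the contraction estimate at the level of the pointwise Finsler norm is genuinely valid for \emph{all} saddle connections simultaneously (not just a generic one), and that restricting to the strong stable leaf does not interfere with the path-length infimum — in particular that $g_t$ maps paths in $\WW^{ss}(\q_0)$ to paths in $\WW^{ss}(g_t\q_0)$, which is exactly the $A$-invariance of the foliation recorded implicitly in Definition \ref{def: foliations upstair}. The estimate $\|(e^t a, e^{-t}b)\|\le e^t\|(a,b)\|$ and its counterpart for purely vertical $\beta$ are elementary, but writing down the sup-norm bookkeeping cleanly — keeping track of which length is in the numerator and which in the denominator of \eqref{eq: define a norm upstairs} — is where the argument needs care. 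Items \eqref{item: 1} and \eqref{item: 2.5} are essentially formal once one observes, respectively, that a subset infimum dominates and that the $A$-orbit realizes distance at most $|t|$.
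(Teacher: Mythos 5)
Your route differs from the paper's, which simply records that item \eqref{item: 1} is immediate from the definitions and cites Avila--Gou\"ezel \cite[\S 5]{avila2010small} for items \eqref{item: 2} and \eqref{item: 2.5}; proving the flow estimates directly from equation~\eqref{eq: define a norm upstairs} is a legitimate alternative, and your treatments of \eqref{item: 1}, \eqref{item: 2.5} and the descent to $\HH$ are essentially fine (for \eqref{item: 2.5}, the fact you gesture at, $\|X(\q)\|_{\q}=1$ for the generator $X$ of $A$, is a one-line check: each saddle connection with holonomy $(x,y)$ contributes the ratio $\|(x,-y)\|/\|(x,y)\|=1$ to the supremum).

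The problem is in item \eqref{item: 2}, where your key estimate is run in the wrong direction and the intermediate claim is false. You assert that for a purely vertical tangent vector $\beta$ one has $\|(g_t)_*\beta\|_{g_t\q}= e^{-t}\|\beta\|_{\q}$ (or at most $e^{-t}\|\beta\|_{\q}$), deducing this from $\ell_{g_tq}(\sigma)\le e^t\ell_q(\sigma)$. But the saddle connection length sits in the \emph{denominator} of \eqref{eq: define a norm upstairs}, so an upper bound on the new length gives a \emph{lower} bound on each ratio, not an upper bound. To bound the new norm from above you need the opposite inequality $\ell_{g_tq}(\sigma)\ge e^{-t}\ell_q(\sigma)$ (valid because $\|(e^ta,e^{-t}b)\|\ge e^{-t}\|(a,b)\|$ for $t\ge 0$), and combining it with the factor $e^{-t}$ in the numerator yields only $\|(g_t)_*\beta\|_{g_t\q}\le\|\beta\|_{\q}$, i.e.\ a factor $1$, not $e^{-t}$. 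The factor $e^{-t}$ is genuinely unattainable: if the supremum in \eqref{eq: define a norm upstairs} is realized on a vertical saddle connection, with holonomy $(0,y)$, then numerator and denominator both scale by $e^{-t}$ and that ratio is unchanged, so there is in general no uniform exponential contraction of $\mathrm{dist}^{(ss)}$ under $g_t$, only non-expansion. Fortunately non-expansion is exactly what item \eqref{item: 2} asserts, so your argument becomes correct once the estimate is replaced by the one just described; the same correction applies to your strong-unstable case under $g_{-t}$.
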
 

\begin{proof}
Assertion \eqref{item: 1} is obvious from definitions, and assertions \eqref{item: 2} and \eqref{item: 2.5} are proved in \cite[\S 5]{avila2010small} (where what we call the strong stable foliation is referred to as the stable foliation). The assertions for $\HH$ follows from the corresponding ones for $\HH_{\mathrm{m}}$. 
\end{proof}

\begin{rem}
Almost everywhere, the horospheres $W^{uu}(q)$ and $W^{ss}(q)$ are actually the unstable and stable manifolds of the geodesic flow. That is, for any $q$, and almost every (with respect to the measure class induced by the affine structure on leaves) $q_1 \in W^{uu}(q), \, q_2 \in W^{ss}(q),$ we have 
$$
    \mathrm{dist} (g_t q, g_t q_1) \underset{t \to -\infty}{\longrightarrow} 0 \ \ \text{and} \ \ \mathrm{dist} (g_t q, g_t q_2) \underset{t \to \infty}{\longrightarrow} 0. 
    $$
This is proved in \cite{veech1986the} (see also \cite{Forni-Matheus_survey}) for $\M = \HH^{(1)}$. The same result for general invariant subvarieties can be proved by adapting the  arguments  used in \cite{Forni-Matheus_survey}. 
\end{rem}

\subsection{Definition of horospherical measures}\label{subsec:
  horospherical measures}

Let $L$ be a lift of $\M$ as in Subsection \ref{subsec: invariant subvarieties} and let $V \subset H^1(S,\Sigma,\C)$ be the subspace on which $L$ is modeled. We write $V = V_\x \oplus V_\y$ as in equation~\eqref{eq: splitting horizontal vertical}. Let $\eta_{\x}$ and $\eta_{\y}$ be the translation invariant volume forms on $V_{\x}$ and $V_{\y}$ determined by a choice of an element of the top degree wedge power of $V_{\x}$ and $V_{\y}$. Define
\begin{align}\label{eq: two forms}
    \alpha_{\x} \df (\pi_{\x} \circ \dev)^*(\eta_{\x}), \ \ \ \ \ \ \alpha_{\y} \df (\pi_{\y} \circ \dev)^*(\eta_{\y}).
\end{align}

We recall that the measure $\mu_{L}$ on $L$ was defined in Section \ref{subsec: invariant subvarieties} as the integral of a volume form $\alpha$. From now on, this form will be chosen so that $\alpha = \alpha_\x \wedge \alpha_{\mathrm{y}}$. We define the {\em Euler vector field} $E$ on $\HHm$ such that for any $\q \in \HHm,$
\begin{align}\label{eq: euler}
E(q)= E_x(\q) \df \frac{\partial}{\partial t}\Big|_{t=0} \begin{pmatrix} e^t & 0 \\ 0 & e^t 
    \end{pmatrix} \cdot \q.
\end{align}

This vector field can be thought of as the tangent vector to the rescaling action, which justifies our choice of terminology. Notice furthermore that the image of $E$ by $\dev$ is the usual Euler vector field $e(v) = v$ on $H^1(S,\Sigma;\C)$. This is due to the fact that $\dev$ is $\GL$-equivariant. Since $L$ is a linear manifold, the vector field $E$ is tangent to it.  We use this to define the form
$$
\xform \df\iota_E \alpha_{\x},
$$
i.e., the contraction of $\alpha_{\x}$ by the Euler field $E$. The restriction of $\xform$ to the leaves of $\WW^{uu}$ induces a volume form. We denote by $\nu_{\beta_{\x}}$ the induced measures. 
We emphasize that this defines a system of measures, one on each leaf $W^{uu}_L(\q)$,  so one should write $\nu_{\xform,\q}^L$ instead of $\nu_{\beta_{\x}}$; we omit this in our notation. We say that a measure $m$ on $L$ is {\em horospherical} if it is supported on $L^{(1)}$ and its conditional measures on the leaves of $\WW^{uu}$ are given by the measures $\nu_{\beta_{\x}}$. More precisely, this means that for any box $\BB$ in $L$, there is a measure $\lambda$ on $U_{\y}$ such that for any compactly supported continuous function $f:L \to \R$, 
\begin{equation}\label{integral upstair}
    \int_{\BB} f \ d\nu = \int_{U_{\y}} \left( \int_{\boldsymbol{L}_y} f \, d\nu_{\beta_{\x}} \right) \ d\lambda(y). 
\end{equation}
     \begin{rem}
The measure $\lambda$ is a so-called `transverse measure' for the horospherical foliation. This means it is a system of measures on sets tranverse to the foliation which is invariant under holonomy along leaves, see \cite[Vol. 1, 10.1.13 \& 11.5.2]{Candel_Conlon}. According to the theory of transverse measures  equation~\eqref{integral upstair} yields as a bijection between horospherical measures and transverse measures. We will not be using this point of view in this paper.

         \end{rem}
    
\begin{rem}
Let $\phi_t$ be a smooth flow acting on $L$. A measure $\nu$ is said to be invariant if for any $t \in \R$ we have $(\phi_{t})_* \nu = \nu$. This definition is equivalent to requiring that the conditional measures of $\nu$ on the orbits of $\phi_t$ be multiples of the Lebesgue measure $dt$, i.e., invariant under  the maps $\phi_s x \mapsto \phi_{t+s}x$ for any fixed $t$. The equivalence can be shown by disintegrating $\nu$ on flow boxes, i.e., boxes whose horizontal plaques are pieces of $\phi_t$-orbits. By Lemma \ref{lem: orbifold foliation}, leaves of $\WW^{uu}$ are modeled on linear subspaces, and thus one could try and define horospherical measures as those that are invariant under translation along the leaves. However, these translations are not part of a globally defined group action; for instance trajectories might escape to infinity in finite time. Our definition of horospherical measures is inspired by the second characterization of invariant measures, where the foliation by orbits of $\phi_t$ is replaced by the strong unstable foliation and the translation invariant measure $dt$ is replaced by $\xmeasure$. 
\end{rem} 

In order to define a notion of horospherical measures on $\M$, we first need some terminology: let $\nu$ be a Radon measure on $\M$ and let $\tilde{\nu}$ be its pre-image by $\pi$ as in equation~\eqref{eq: lifted measure} (see also Appendix \ref{appendix: correspondence principle}). By construction, the measure $\tilde{\nu}$ is supported on $\pi^{-1}(\M)$. If $L$ is a lift of $\M$, then the restriction of the measure $\tilde{\nu}$ to $L$ is called \textit{the lift of $\nu$} corresponding to $L$. More generally, a \textit{lift} of $\nu$ is a measure of the form $\tilde{\nu}|_{L}$ where $L$ is any lift of $\M$.  For instance, the measures $m_{L}$ in equation~\eqref{irreducible components of the cone measure} are the lifts of $m_{\M}$. 

\begin{defin}[Horospherical measure] A Radon measure $\nu$ on $\M$ is {\em horospherical} if its lifts are horospherical. 
\end{defin} 

By Proposition \ref{characterization of irreducible}, it is enough that one of the lifts is horospherical, as the action of $\Mod(S,\Sigma)$ preserves the set of horospherical measures on $\HH_{\mathrm{m}}$. By definition, a horospherical measure on $\M$ is supported on $\M^{(1)}$. We have the following useful local disintegration formula: 
\begin{prop}
    Let $\nu$ be a horospherical measure on $\M$. For any regular box
    $\varphi : \product \to \BB$
    in $\pi^{-1}(\M)$, there is a measure $\lambda$ on $U_{\y}$ such that for any compactly supported continuous function $f:\M \to \R$, denoting $B=\pi(\BB)$ we have
\begin{equation}\label{integral}
    \int_{\B} f \ d\nu = \int_{U_{\y}} \left( \int_{\boldsymbol{L}_y} f \circ \pi \ d\nu_{\xform} \right) \ d\lambda(y). 
\end{equation}

\end{prop}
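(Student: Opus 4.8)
The plan is to reduce the disintegration formula \eqref{integral} to its counterpart \eqref{integral upstair} on a lift $L$, by combining the preimage construction $\nu\mapsto\tilde\nu$ with the regularity of the box. First I would fix a regular box $\varphi:\product\to\BB$ contained in a lift $L$ of $\M$, and let $\Gamma\subset\Mod(S,\Sigma)$ be its stabilizer. Since $\bar\BB$ is compact and the $\Mod(S,\Sigma)$-action on $\HHm$ is properly discontinuous, the set $\{\gamma\in\Mod(S,\Sigma):\bar\BB\cdot\gamma\cap\bar\BB\neq\emptyset\}$ is finite; hence $\Gamma$ is finite, and $\pi$ restricts to a proper map $\BB\to B$ which factors through the homeomorphism $\BB/\Gamma\xrightarrow{\sim}B$. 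The crucial combinatorial input is the identity
\[
\sum_{\bq\in\pi^{-1}(q)\cap\BB} N(\bq)=|\Gamma|\qquad\text{for every }q\in B .
\]
Its proof: regularity forces $\mathrm{Stab}_{\Mod(S,\Sigma)}(\bq)=\mathrm{Stab}_\Gamma(\bq)$ for every $\bq\in\BB$, since a $\gamma$ fixing $\bq$ has $\bq\in\BB\cdot\gamma\cap\BB\neq\emptyset$; the identity then follows from the orbit--stabilizer theorem applied to the $\Gamma$-action on the fiber $\pi^{-1}(q)\cap\BB$.

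Next I would establish \eqref{integral} first for $f\in C_c(\M)$ whose support is contained in $B$. For such $f$, the function $h\df\tfrac1{|\Gamma|}\,(f\circ\pi)\cdot\mathbf{1}_{\BB}$, extended by $0$ outside $\BB$, is continuous with compact support contained in $\BB$ (here the properness of $\pi|_{\BB}$ is used), and the counting identity gives $\int h\, d\theta_q=f(q)$ for every $q\in\M$. The lift $\tilde\nu|_L$ is horospherical: it is a lift of the horospherical measure $\nu$, and by Proposition \ref{characterization of irreducible} it suffices that one lift be horospherical. Hence, by the defining relation \eqref{eq: lifted measure} for $\tilde\nu$ together with the counting identity, $\int_B f\, d\nu=\int_{\pi^{-1}(\M)} h\, d\tilde\nu=\int_{\BB} h\, d(\tilde\nu|_L)$; applying \eqref{integral upstair} to $\tilde\nu|_L$ with the box $\BB$ and test function $h$, and using $h|_{\LL_y}=\tfrac1{|\Gamma|}(f\circ\pi)|_{\LL_y}$, this equals
\[
\int_{U_\y}\Big(\int_{\LL_y} h\, d\nu_{\xform}\Big)\, d\lambda_0(y)=\frac1{|\Gamma|}\int_{U_\y}\Big(\int_{\LL_y} f\circ\pi\, d\nu_{\xform}\Big)\, d\lambda_0(y),
\]
where $\lambda_0$ is the measure on $U_\y$ furnished by \eqref{integral upstair}, depending only on $\BB$ and $\nu$ and not on $f$. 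Then $\lambda\df\tfrac1{|\Gamma|}\lambda_0$ is the desired measure.

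Finally I would remove the support restriction. For $f\ge 0$ in $C_c(\M)$, choose $\chi_n\in C_c(B)$ with $0\le\chi_n\uparrow\mathbf{1}_B$ pointwise; each $\chi_n f$ is supported in $B$, so \eqref{integral} holds for it with the fixed measure $\lambda$ from the previous step. Since $\pi(\LL_y)\subset B$ for every $y$, we have $(\chi_n f)\circ\pi\uparrow f\circ\pi$ on each $\LL_y$, so two applications of the monotone convergence theorem carry \eqref{integral} to the limit --- the left-hand side converging to $\int_B f\, d\nu$ --- and the case of general $f\in C_c(\M)$ follows by linearity. The main obstacle is the middle step: one must check that the naive lift $h$ of $f$ is genuinely continuous and compactly supported inside the \emph{single} box $\BB$ (which is precisely why the reduction to $\mathrm{supp}\,f\subset B$ is needed) and that $\tfrac1{|\Gamma|}$ is the right normalization, i.e.\ the counting identity $\sum_{\bq\in\pi^{-1}(q)\cap\BB}N(\bq)=|\Gamma|$; this is where regularity of the box and proper discontinuity of the $\Mod(S,\Sigma)$-action both enter. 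Everything else is routine manipulation of the disintegration formulas.
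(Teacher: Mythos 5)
Your proof is correct and follows essentially the same route as the paper's: lift a test function supported in $B$ to the box, use the regular-box stabilizer count to get $\int h\,d\theta_q=f(q)$ (the paper phrases this as $\int_{\HHm} h\,d\theta_q=|\Gamma|f(q)$ for $h=f\circ\pi$ on $\bar\BB$), apply the upstairs disintegration \eqref{integral upstair} to $\tilde{\nu}|_L$ with $\lambda=\frac{1}{|\Gamma|}\lambda_0$, and then remove the support restriction by an approximation argument. Your explicit orbit--stabilizer counting identity and the monotone-convergence finish (in place of the paper's dominated convergence with $\psi_n$ supported in $\bar B$) are only minor variations in detail, not a different method.
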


\begin{proof}
Let $L$ be a lift of $\M$ in which $\BB$ is contained. We denote by $\Gamma$ the stabilizer in $\Mod(S,\Sigma)$ of $\BB$, and by $\tilde{\nu}$  the pre-image of $\nu$ under $\pi$. By definition, the measure $\tilde{\nu}|_{L}$ is horospherical,  and we let $\lambda_0$ be a measure on $\U_{\y}$ as in equation~\eqref{integral upstair}. We set $\lambda \df \frac{1}{|\Gamma|} \lambda_0$, and claim that $\lambda$ satisfies equation~\eqref{integral}. Indeed, let $f \in C_c(\HH)$ and assume for now that the support of $f$ is contained in $\bar{B}$. Let $h$ be the function that is equal to $f \circ \pi$ on $\bar{\BB}$ and $0$ elsewhere. This function is continuous and its support is contained in $\bar{\BB}$ by construction. Using that the stabilizer of $\bar{\BB}$ in $\Mod(S,\Sigma)$ is also $\Gamma$, we calculate that for any $q \in \HH$, we have $\int_{\HHm} h \ d\theta_q = |\Gamma| f(q)$. We have
    \begin{align*}
    \int_B f \ d\nu &= \frac{1}{|\Gamma|} \int_{\HH} h \ d\tilde{\nu} = \frac{1}{|\Gamma|} \int_{U_{\y}} \left( \int_{\boldsymbol{L}_y} f \circ \pi \ d\nu_{\xform} \right) \ d\lambda_0(y),   
\end{align*} 
which is what we wanted. In  case the support of $f$ is arbitrary, we pick a sequence $\psi_n$ of uniformly bounded smooth functions with support contained in $\bar{\B}$ and that converge pointwise to $1_B$, the indicator function of $B$, and we apply the previous computation to $\psi_nf$ in place of $f$. We have
\begin{align*}
    \int_B \psi_n f \ d\nu = \int_{U_{\y}} \left( \int_{\boldsymbol{L}_y} \psi_n f \circ \pi \ d\nu_{\xform} \right) \ d\lambda(y).   
\end{align*} 
Passing to the limit using Lebesgue's dominated convergence, we obtain equation~\eqref{integral}.  
\end{proof}

\subsection{The special flat measures are horospherical}\label{sec: The special flat measures are horospherical}
In this subsection we prove Theorem \ref{thm: cone measures are horospherical}, which gives us our first  examples of horospherical measures. Namely we will show that the Masur-Veech measures on strata, and more generally, the special flat  measures defined in equation~\eqref{eq: special flat measure}, are horospherical.

Let $\M$ be an invariant subvariety and let $L$ be a lift of $\M$. In order to establish Theorem \ref{thm: cone measures are horospherical}, we shall first establish that the measure $m_{L}$ as in equation~\eqref{irreducible components of the cone measure} is horospherical. This will be achieved in Proposition \ref{prop: horospherical for irreducible component}. We need some preparatory results. We recall that the measure $m_{L}$ is obtained by the cone construction applied to $\mu_{L}$, i.e., for any Borel set $A \subset L^{(1)}$,
$$
m_{L}(A) = \mu_{L}(\mathrm{cone}(A)), 
$$
and the measure $\mu_{L}$ is itself obtained by integration of $\alpha = \alpha_\x \wedge \alpha_\y$, where $\alpha_\x$ and $\alpha_\y$ are as in equation~\eqref{eq: two forms}. Let $\beta \df \iota_{E} \alpha$. By construction, $\beta$ induces a volume form on $L^{(1)}$ and we denote by $\mu_{\beta}$ the measure obtained by integration of $\beta$. The following relates the measure $\mu_{\beta}$ and the cone measure $m_{L}$. 

\begin{lem}\label{cone} We have 
$$
\mu_{\beta} = 2\dim(\M) \cdot m_{L}. 
$$
\end{lem}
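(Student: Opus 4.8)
The statement to prove is that $\mu_\beta = 2\dim(\M)\cdot m_L$, where $\mu_\beta$ is the integral of $\beta = \iota_E\alpha$ over $L^{(1)}$, and $m_L$ is the cone measure built from $\mu_L = \int\alpha$. The natural strategy is to compare the two measures by writing down the cone map explicitly and computing how the volume form $\alpha$ pulls back under it. Concretely, let $n = \dim(\M)$, so $L$ has real dimension $2n$ and $L^{(1)}$ has real dimension $2n-1$. Define the cone parametrization $C : (0,1]\times L^{(1)} \to \mathrm{cone}(L^{(1)})$ by $C(t, q) = t\cdot q$, where $t\cdot q$ is the rescaling action (the scalar matrix $\operatorname{diag}(t,t)$ applied to $q$). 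Since $\dev$ is $\GL$-equivariant and $\dev$ conjugates the Euler field $E$ to the radial Euler field $e(v)=v$ on $H^1(S,\Sigma;\C)$, the whole computation can be pushed down to $V = \dev(L)$, where $C$ becomes literally $(t,v)\mapsto tv$ and $\alpha$ becomes a constant-coefficient $2n$-form (a generator of $\Lambda^{2n}V^*$).

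\medskip

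The key computation is the following: on $V$, with $C(t,v) = tv$, one has $C^*\alpha = t^{2n-1}\,dt\wedge \iota_e\alpha\big|_{V^{(1)}} + (\text{terms with no }dt)$, and the no-$dt$ terms are irrelevant because we are integrating over a $(2n)$-dimensional domain parametrized by $(t,v)\in(0,1]\times V^{(1)}$ so only the piece containing $dt$ survives. More precisely: at a point $v_0 \in V^{(1)}$, pick a basis where the first coordinate is the radial direction $e = v_0$ (so $dt$ corresponds to the radial coordinate) and the remaining $2n-1$ coordinates are tangent to $V^{(1)}$; under the scaling $v\mapsto tv$ the radial coordinate scales by a factor whose differential contributes the $dt$, while the $2n-1$ transverse coordinates each scale by $t$, contributing $t^{2n-1}$. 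The contraction $\iota_e\alpha$ restricted to the tangent space of $V^{(1)}$ is exactly $\beta$ (by definition $\xform = \iota_E\alpha_\x$ is built analogously, and $\beta = \iota_E\alpha = \iota_E(\alpha_\x\wedge\alpha_\y)$; one should check $\iota_E\alpha$ restricted to $V^{(1)}$ agrees with the construction of $\mu_\beta$, which is immediate from the definition of $\mu_\beta$ given just before the Lemma). Therefore
\begin{align*}
\mu_L(\mathrm{cone}(A)) = \int_{\mathrm{cone}(A)}\alpha = \int_0^1\!\!\int_A t^{2n-1}\,dt\,d\mu_\beta(q) = \left(\int_0^1 t^{2n-1}\,dt\right)\mu_\beta(A) = \frac{1}{2n}\,\mu_\beta(A),
\end{align*}
for any Borel $A\subset L^{(1)}$, which rearranges to $\mu_\beta = 2n\cdot m_L = 2\dim(\M)\cdot m_L$, as desired.

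\medskip

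\textbf{Where the work is.} None of this is deep, but two points require care. First, one must justify that the cone map $C$ is a diffeomorphism onto $\mathrm{cone}(L^{(1)})\smallsetminus\{0\}$ (or at least onto a full-measure subset), so that the change-of-variables formula applies and the Fubini-type splitting is legitimate; this uses that the rescaling action is free on $\HHm$ and that area scales as $t^2$ under $\operatorname{diag}(t,t)$, so each ray meets $L^{(1)}$ exactly once — this is where equation~\eqref{eq: area formula bracket}, $\area(M_q) = \langle dx_q, dy_q\rangle$, enters, since it shows $\area$ is the relevant degree-$2$ homogeneous function cutting out $V^{(1)}$. Second, one must verify that the $dt$-component of $C^*\alpha$ is genuinely $t^{2n-1}\,dt\wedge(\iota_e\alpha|_{V^{(1)}})$ with the correct power of $t$; the cleanest way is to use the homogeneity relation coming from \eqref{eq: invariance property}, namely $g^*\alpha = (\det g)^n\alpha$ applied to $g = \operatorname{diag}(t,t)$ giving $(\det g)^n = t^{2n}$, combined with Cartan's formula $\mathcal{L}_E\alpha = d\iota_E\alpha + \iota_E d\alpha = d\beta$ (since $\alpha$ is closed, being a top-degree pullback of a constant form composed with $2n = \dim L$, so $d\alpha = 0$) and the fact that $\mathcal{L}_E\alpha = 2n\,\alpha$ — this last identity is exactly the infinitesimal form of $g^*\alpha = t^{2n}\alpha$. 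I expect the main (minor) obstacle to be bookkeeping the factor of $2$: it comes from the scaling matrix being $\operatorname{diag}(t,t)$ (so $\det = t^2$) rather than $\operatorname{diag}(t,1)$, which is why the exponent is $2n$ and not $n$, and this is precisely the source of the $2\dim(\M)$ rather than $\dim(\M)$ in the statement.
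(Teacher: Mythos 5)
Your proof is correct, but it takes a genuinely different route from the paper. You compute in ``polar coordinates'': parametrize $\mathrm{cone}(A)$ by the map $C(t,q)=t\cdot q$ on $(0,1]\times L^{(1)}$, pull back the top form $\alpha$ (pushed to $V$ via $\dev$, where it has constant coefficients and the Euler field is radial) to get the $dt$-component $t^{2n-1}\,dt\wedge(\iota_E\alpha|_{L^{(1)}})$, and integrate out $t$ to obtain $m_L(A)=\mu_L(\mathrm{cone}(A))=\tfrac{1}{2n}\mu_\beta(A)$. The paper instead applies Stokes' theorem on $\mathrm{cone}(U)$ for small open $U\subset L^{(1)}$: since $E$ is tangent to every boundary piece except $U$ itself, $\int_{\mathrm{cone}(U)} d(\iota_E\alpha)=\int_U \iota_E\alpha=\mu_\beta(U)$, and Cartan's formula together with $d\alpha=0$ and the homogeneity $\mathcal{L}_E\alpha=2\dim(\M)\cdot\alpha$ (the infinitesimal form of your $g^*\alpha=(\det g)^{\dim(\M)}\alpha$) gives $d(\iota_E\alpha)=2\dim(\M)\,\alpha$, hence $\mu_\beta(U)=2\dim(\M)\,m_L(U)$. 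Both arguments hinge on the same degree-$2n$ homogeneity; yours is more elementary and self-contained but requires the global bookkeeping you correctly flag (the cone map is a diffeomorphism onto $\mathrm{cone}(L^{(1)})$, using freeness of the rescaling action, $\GL$-invariance of $L$, and the fact that area scales as $t^2$ so each ray meets the area-one locus once, plus Fubini/change of variables), whereas the paper's Stokes argument works chart by chart and avoids that, at the cost of invoking Stokes for manifolds with corners. Two trivial remarks: on the product $(0,1]\times L^{(1)}$ the pullback $C^*\alpha$ is already a top-degree form, so there are no ``terms with no $dt$'' to discard; and the closedness of $\alpha$ is automatic since it is a top-degree form on $L$, so that step of your consistency check is free.
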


\begin{proof}
The proof is an application of Stokes' theorem. It follows from equations~\eqref{eq: invariance property} and \eqref{eq: euler} that the Lie derivative of $\alpha$ with respect to the Euler vector field satisfies $\mathcal{L}_E(\alpha) = 2\dim(\M) \cdot \alpha$. Let $U$ be an open set in $L^{(1)}$ contained in one chart for the manifold structure on $L$. Notice that the only part of the boundary $\partial\mathrm{cone}(U)$ of $\mathrm{cone}(U)$ to which $E$ is not tangent is $U$ itself. In particular, the only part of $\partial\mathrm{cone}(U)$ on which $\iota_E\alpha$ does not vanish is $U$. We have from Stokes' formula (for manifolds with corners, see e.g. \cite{lee2012smooth}) that
$$
\int_{\mathrm{cone}(U)} d(\iota_E \alpha) = \int_{\partial\mathrm{cone}(U)} \iota_E\alpha = \int_U \iota_E\alpha = \mu_{\beta}(U). 
$$
It follows from the Cartan formula  that $\mathcal{L}_E(\alpha) = d \iota_E \alpha + \iota_E d \alpha$. Since $\alpha$ is closed, we deduce that $d \iota_E(\alpha)= \mathcal{L}_E(\alpha)$. Gathering everything, we obtain
$$
2\dim(\M) \cdot m_{L}(U) = \mu_{\beta}(U). 
$$ 
This is true for all $U$ as above, and these open sets generate the Borel $\sigma$-algebra on $L^{(1)}$.
\end{proof}

We introduce two new vector fields $E_\x$ and $E_\y$ on $\HHm$:
\begin{align}
   E_x(\q) \df 
   \frac{\partial}{\partial t}\Big|_{t=0}
   \begin{pmatrix} e^t & 0 \\ 0 & 1 
    \end{pmatrix} 
   \cdot \q \ \ \ \text{and} \ \ \  E_y(\q) \df \frac{\partial}{\partial t}\Big|_{t=0} \begin{pmatrix} 1 & 0 \\ 0 & e^t \end{pmatrix} \cdot \q.
\end{align}
By definition we have $E = E_x + E_y$, and for any $\q \in \HHm$, we have
\begin{align}\label{eq: derivative of the geodesic flow}
\frac{\partial}{\partial t}\Big|_{t=0} g_t \cdot \q = E_x(\q) - E_y(\q).
\end{align}

For the proof of Theorem \ref{thm: cone measures are horospherical}, we will also need the following calculation: 
\begin{lem} \label{lem: 3 forms} The restrictions of the forms
  $\alpha_{\x}$ and $\alpha_{\y}$ to $L^{(1)}$ satisfy
   $$
  \iota_E (\alpha_{\x} \wedge \alpha_{\y}) = 2 (\iota_E \alpha_{\x}) \wedge \alpha_{\y}.
  $$
  \end{lem}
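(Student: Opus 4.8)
The statement is a pointwise identity of differential forms on $L^{(1)}$, so the plan is to verify it at an arbitrary point $\q \in L^{(1)}$ by evaluating both sides on an arbitrary $(n-1)$-tuple of tangent vectors to $L^{(1)}$, where $n = \dim(\M)$. The key structural fact is that $\alpha_\x$ and $\alpha_\y$ are pullbacks of top-degree forms on $V_\x$ and $V_\y$ respectively, each of which has dimension $n$; thus $\alpha_\x$ has degree $n$, $\alpha_\y$ has degree $n$, and $\alpha_\x \wedge \alpha_\y$ has degree $2n$, which is $\dim L$. On $L^{(1)}$ we are working with the restriction, so we contract with $E$ first to land in the correct degree $2n-1 = \dim L^{(1)}$, and the identity to be proved is that this contracted form equals twice $(\iota_E\alpha_\x)\wedge \alpha_\y$ after restriction.

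\textbf{Key steps.} First I would apply the graded Leibniz rule for contraction of a vector field into a wedge product: for $E$ a vector field,
\[
\iota_E(\alpha_\x \wedge \alpha_\y) = (\iota_E \alpha_\x)\wedge \alpha_\y + (-1)^{\deg \alpha_\x}\, \alpha_\x \wedge (\iota_E \alpha_\y) = (\iota_E \alpha_\x)\wedge \alpha_\y + (-1)^{n}\, \alpha_\x \wedge (\iota_E \alpha_\y).
\]
So the lemma reduces to showing that, \emph{as forms restricted to $L^{(1)}$}, we have $(-1)^n \alpha_\x \wedge (\iota_E \alpha_\y) = (\iota_E \alpha_\x)\wedge \alpha_\y$. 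Next I would use the decomposition $E = E_\x + E_\y$ recorded just before the lemma, together with the fact that $\dev$ sends $E$ to the Euler field $e(v)=v$ on $H^1(S,\Sigma;\C)$, so that $\dev$ sends $E_\x$ to the Euler field on $V_\x$ and $E_\y$ to the Euler field on $V_\y$. Since $\alpha_\x = (\pi_\x\circ\dev)^*\eta_\x$ only depends on the $V_\x$-coordinates, $\iota_{E_\y}\alpha_\x = 0$, hence $\iota_E \alpha_\x = \iota_{E_\x}\alpha_\x$; similarly $\iota_E\alpha_\y = \iota_{E_\y}\alpha_\y$. The crucial input is then the constraint defining $L^{(1)}$: the bracket pairing satisfies $\langle x, y\rangle = 1$ on $L^{(1)}$, equivalently $\area = 1$ by equation~\eqref{eq: area formula bracket}. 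Differentiating this constraint along $L^{(1)}$ shows that for a tangent vector $w$ to $L^{(1)}$ at $\q$, writing $w = w_\x + w_\y$ with respect to the splitting, one has $\langle w_\x, y\rangle + \langle x, w_\y\rangle = 0$; and noting $\dev(E_\x) = x$, $\dev(E_\y) = y$ (the horizontal and vertical parts of the point), this says $\langle \dev(E_\x\text{-part shifted by }w), \ldots\rangle$ — more precisely it constrains how the two Euler directions relate on the area-one slice. The cleanest way to exploit this: pick an ordered basis of $T_\q L$ adapted to the splitting $V = V_\x \oplus V_\y$, compute both sides as multiples of the ambient volume form contracted appropriately, and use $\langle x,y\rangle = 1$ together with its derivative to see that the "$\iota_{E_\x}$ on the horizontal factor" and "$\iota_{E_\y}$ on the vertical factor" contributions to the restricted form are equal up to the sign $(-1)^n$, yielding the factor of $2$.

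\textbf{Main obstacle.} The genuinely delicate point is bookkeeping the restriction to $L^{(1)}$ correctly: the identity is \emph{false} on $L$ (there $\iota_E$ of a top form is just the Leibniz expansion with no factor of $2$), and the factor of $2$ appears precisely because on the codimension-one submanifold $L^{(1)}$ the two terms $(\iota_E\alpha_\x)\wedge\alpha_\y$ and $(-1)^n\alpha_\x\wedge(\iota_E\alpha_\y)$ become equal when evaluated on vectors tangent to $L^{(1)}$. Making this rigorous requires carefully using that tangent vectors to $L^{(1)}$ satisfy the linearized area constraint, and that $E$ itself is \emph{not} tangent to $L^{(1)}$ (it is the radial/rescaling direction transverse to it) — indeed $E(\area) = 2\area$, consistent with $\mathcal{L}_E(\alpha_\x\wedge\alpha_\y) = 2n\,(\alpha_\x\wedge\alpha_\y)$ from equation~\eqref{eq: invariance property}. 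So the argument must track which vectors lie in $TL^{(1)}$ versus the full $TL$, and I expect the bulk of the work to be a clean linear-algebra computation on a single fiber $T_\q L$ with the symplectic-type pairing $\langle\cdot,\cdot\rangle$ between $V_\x$ and $V_\y$, exploiting $\langle x,y\rangle = 1$. Once that fiberwise identity is established, it holds at every point of $L^{(1)}$ and the lemma follows.
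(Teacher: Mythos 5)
Your reduction is sound and in substance it is the same as the paper's: after the graded Leibniz rule and the observations $\iota_{E_{\y}}\alpha_{\x}=0$, $\iota_{E_{\x}}\alpha_{\y}=0$, the lemma comes down to showing that $(\iota_{E_{\x}}\alpha_{\x})\wedge\alpha_{\y}$ and $(-1)^{n}\,\alpha_{\x}\wedge(\iota_{E_{\y}}\alpha_{\y})$ agree after restriction to $L^{(1)}$, i.e.\ that $\iota_{E_{\x}-E_{\y}}(\alpha_{\x}\wedge\alpha_{\y})$ restricts to zero on $L^{(1)}$. The one place where your write-up stops short of a proof is exactly this step: you defer it to an unspecified ``clean linear-algebra computation on a single fiber'' using the linearized area constraint, and the sentence meant to explain how the constraint enters trails off. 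The clean way to finish --- and it is what the paper does --- is to notice that the linearized constraint $\langle w_{\x},y\rangle+\langle x,w_{\y}\rangle=0$ is satisfied by $E_{\x}-E_{\y}$ itself: at a point with $\dev(\q)=(x,y)$ this vector is $(x,-y)$ and $\langle x,y\rangle-\langle x,y\rangle=0$, so $E_{\x}-E_{\y}$ (the generator of the geodesic flow, cf.\ equation~\eqref{eq: derivative of the geodesic flow}) is tangent to $L^{(1)}$. Since $\alpha_{\x}\wedge\alpha_{\y}$ has top degree $2n=\dim_{\R}L$, evaluating $\iota_{E_{\x}-E_{\y}}(\alpha_{\x}\wedge\alpha_{\y})$ on $2n-1$ vectors tangent to the $(2n-1)$-dimensional $L^{(1)}$ means evaluating a $2n$-form on $2n$ vectors lying in a $(2n-1)$-dimensional subspace, which vanishes; no basis computation is needed. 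Writing $E=2E_{\x}-(E_{\x}-E_{\y})$ then yields the factor of $2$ at once, exactly as in the paper's proof.

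Two small inaccuracies to fix when you write this up: both sides of the identity are $(2n-1)$-forms, so the pointwise check is against $(2n-1)$-tuples of tangent vectors to $L^{(1)}$, not $(n-1)$-tuples; and the statement ``$\dev$ sends $E_{\x}$, $E_{\y}$ to the Euler fields on $V_{\x}$, $V_{\y}$'' should be phrased pointwise as $d\dev(E_{\x})=(x,0)$ and $d\dev(E_{\y})=(0,y)$ at $\dev(\q)=(x,y)$, which is what the vanishing statements $\iota_{E_{\y}}\alpha_{\x}=\iota_{E_{\x}}\alpha_{\y}=0$ actually use.
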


\begin{proof}
Let $n = \dim(\M)$. We begin by observing that on restriction to $L^{(1)}$, we have
  \begin{equation}\label{eq: observing}
    \iota_{(E_{\x}-E_{\y})}(\alpha_{\x}\wedge\alpha_{\y})=0.
\end{equation}
Indeed, we deduce from equation~\eqref{eq: derivative of the geodesic flow} that $E_x-E_y$ is tangent to $L^{(1)}$. In particular, since $L^{(1)}$ has dimension $2n-1$ (as a real vector space), any family of $2n-1$ linearly independent vector fields that are tangent to $L^{(1)}$ contain $E_\x - E_\x$ in their span. This implies equation~\eqref{eq: observing}.

Now we calculate:
\[
\begin{split}
    \iota_E(\alpha_{\x}\wedge\alpha_{\y}) &= \iota_{2E_{\x} - (E_{\x}-E_{\y})} (\alpha_{\x}\wedge\alpha_{\y})\\
    &\stackrel{\eqref{eq: observing}}{=} 2\iota_{E_{\x}}(\alpha_{\x}\wedge\alpha_{\y}) = 2\iota_{E_{\x}}\alpha_{\x} \wedge\alpha_{\y} + (-1)^{n} \alpha_\x \wedge\iota_{E_x} \alpha_\y.
\end{split}
\]
The last equality follows from the Leibniz formula for contractions 
\[
\iota_V(\alpha\wedge\beta)=(\iota_V\alpha)\wedge\beta+(-1)^{\deg(  \alpha)}\alpha\wedge\iota_V\beta.
\]
Now, notice that $E_\x$ is tangent to the fibers of $\pi_\y \circ \dev$. Since $\alpha_\y = (\pi_\y \circ \dev)^{\ast} \eta_\y$, we deduce that $\iota_{E_\x} \alpha_\y = 0$. Similarly, we prove that $\iota_{E_{\y}}\alpha_{\x} = 0$ and thus 
$$
\iota_E(\alpha_{\x}\wedge\alpha_{\y}) = 2\iota_{E_\x} \alpha_{\x} = 2\iota_{E} \alpha_{\x}.
$$ 
\end{proof}

\begin{prop}\label{prop: horospherical for irreducible component}
    The measure $m_{L}$ is horospherical. 
\end{prop}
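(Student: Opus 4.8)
The plan is to show that $m_L$, the cone measure on $L^{(1)}$ obtained from $\mu_L$, has conditional measures on the leaves of $\WW^{uu}$ given by the system $\nu_{\beta_\x}$, using the disintegration of the volume form $\alpha = \alpha_\x \wedge \alpha_\y$ along boxes. First I would fix a box $\BB$ in $L$ with parameterization $\varphi : \product \to \BB$, so that $\Psi \circ \dev \circ \varphi = \mathrm{Id}$. By Lemma \ref{cone} we have $\mu_\beta = 2\dim(\M)\cdot m_L$ where $\beta = \iota_E \alpha$, so it suffices to understand $\mu_\beta$. By Lemma \ref{lem: 3 forms}, on restriction to $L^{(1)}$ we have $\beta = \iota_E(\alpha_\x \wedge \alpha_\y) = 2\, (\iota_E \alpha_\x)\wedge \alpha_\y = 2\, \xform \wedge \alpha_\y$. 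Thus $\mu_\beta$ is (up to the constant $2$) the measure obtained by integrating the product form $\xform \wedge \alpha_\y$ over the box $\BB$.

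Next I would pull everything back to $\product$ via $\varphi$ and use Fubini. The form $\alpha_\y = (\pi_\y \circ \dev)^*(\eta_\y)$ pulls back, using $\pi_\y \circ \dev \circ \varphi = \pi_{U_\y}$ (the projection to the $U_\y$ factor, which follows from $\Psi \circ \dev \circ \varphi = \mathrm{Id}$ together with the definition of $\Psi$), to the pullback of $\eta_\y$ from the $U_\y$ factor; in particular it is a form that only depends on and only involves the $U_\y$ coordinates. On the other hand $\xform = \iota_E \alpha_\x$ restricts, on each plaque $\LL_y = \varphi(U'_\x \times \{y\})$, to the volume form whose induced measure is by definition $\nu_{\xform}$ (this is exactly how $\nu_{\beta_\x}$ was defined: the restriction of $\iota_E \alpha_\x$ to leaves of $\WW^{uu}$, and the plaques of a box are pieces of such leaves). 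Applying Fubini to the product form on $U'_\x \times U_\y$, the integral of $f$ against $\mu_\beta$ over $\BB$ becomes $2\int_{U_\y}\left(\int_{\LL_y} f \, d\nu_{\xform}\right) d\lambda(y)$, where $\lambda$ is the measure on $U_\y$ obtained by integrating $\eta_\y$ (equivalently, pushing forward). Combining with $\mu_\beta = 2\dim(\M)\, m_L$, this yields the disintegration formula \eqref{integral upstair} for $m_L$ with transverse measure $\frac{1}{\dim(\M)}\lambda$, which is precisely the statement that $m_L$ is horospherical.

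The main obstacle I expect is the careful bookkeeping of which coordinates each form "sees" after pulling back by $\varphi$, so that Fubini genuinely applies: one must verify that $\varphi^* \alpha_\y$ is a pullback from the $U_\y$ factor (so it contributes the transverse measure and nothing on the plaques) and that $\varphi^* \xform$ restricts on each slice $U'_\x \times \{y\}$ to the form defining $\nu_{\xform}$. The first point rests on $\pi_\y \circ \dev \circ \varphi$ being the coordinate projection, which is immediate from the box axiom $\Psi\circ\dev\circ\varphi = \mathrm{Id}$ and the definition \eqref{eq: def Psi} of $\Psi$; the second point is essentially the definition of the leafwise measures $\nu_{\beta_\x}$ together with the fact that plaques of a box are open pieces of $\WW^{uu}$-leaves, which follows from $\pi'_\x \circ \dev \circ \varphi$ being constant in the $U_\y$ variable. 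Once these two identifications are in place, the Fubini step and the bookkeeping of constants (one factor of $2$ from Lemma \ref{lem: 3 forms}, the factor $2\dim(\M)$ from Lemma \ref{cone}) are routine, and there are no convergence issues since $f$ is compactly supported and $\BB$ is relatively compact.
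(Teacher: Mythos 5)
Your proposal is correct and follows essentially the same route as the paper: both reduce to Lemmas \ref{cone} and \ref{lem: 3 forms} to write $m_L$ (up to constant) as integration of $\beta_\x\wedge\alpha_\y$, then pull back by the box parameterization $\varphi$, observe that $\alpha_\y$ (resp.\ $\beta_\x$) only involves the $U_\y$ (resp.\ $U'_\x$) coordinates, and conclude by Fubini with transverse measure a constant multiple of integration of $\eta_\y$. Your bookkeeping of the constants is in fact consistent (the exact constant is immaterial for the definition of horospherical), so nothing is missing.
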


\begin{proof}
It follows from Lemma \ref{cone} that $m_{L}$ is given, up to a multiplicative constant, by integration of the differential form $\beta = \iota_E( \alpha_\x \wedge \alpha_\y)$. Lemma \ref{lem: 3 forms} implies that $\beta = 2 \beta_\x \wedge \alpha_\y$.  Notice that both the forms $\beta_\x$ and $\alpha_\y$ are {\em basic}, i.e., they are obtained by pullback of forms on $\mathbf{P}(V_\x)$ and $V_\y$ by the projections $\dev \circ \pi_\x'$ and $\dev \circ \pi_\y$. Indeed, we have $\alpha_\y = (\dev \circ \pi_\y)^{\ast} \eta_y$ and using Lemma \ref{lem: local diffeo}, we can build a differential form $\beta_\x'$ on $\mathbf{P}(V_\x)$ such that $(\dev \circ \pi_\x')^{\ast} \beta_\x' = \beta_x$.

Now, let $\varphi : \product \to \BB$ be a box in $L^{(1)}$ and let $f \in C_c(L)$. Notice that $\varphi^{\ast} \alpha_\y = \eta_\y$ and $\varphi^{\ast}\beta_\x = \beta_\x'$. We have: 
\begin{align*}
\int_{\BB} f \cdot \beta_\x \wedge \alpha_\y &= \int_{\product} f \circ \varphi \cdot \beta_\x' \wedge \eta_\y \\
&=  \int_{U_{\y}} \Big( \int_{U_\x' \times \{y\}} f \circ \varphi \cdot \beta_\x' \Big) \cdot \eta_\y \\
&=  \int_{U_{\y}} \Big( \int_{\LL_y} f \cdot \beta_\x \Big) \cdot \eta_\y.
\end{align*}
If we let $\lambda$ be the measure on $U_\y$ given by integration of the form $\frac{2}{\dim(\M)} \cdot \eta_y$, we obtain 
$$
\int_{\BB} f \ dm_{L} = \int_{U_{\y}} \Big( \int_{\LL_y} f \ d\xmeasure \Big) \ d \lambda(y).
$$
\end{proof}

\begin{proof}[Proof of Theorem \ref{thm: cone measures are
    horospherical}]

By definition, in order to prove that $m_{\M}$ is horospherical, we need to show that the lifts of $m_{\M}$ are horospherical. We recall from equation~\eqref{irreducible components of the cone measure} that the lifts of $m_{\M}$ are given by the $m_{L}$. Theorem \ref{thm: cone measures are horospherical} is then a consequence of Proposition \ref{prop: horospherical for irreducible component}.  
\end{proof}

\subsection{The horocycle flow, real REL, and horospherical measures}
In this subsection we will show that the horocycle flow and the real Rel deformations move points in their horospherical leaf, and preserve horospherical measures.

\begin{prop}\label{prop: horocycle invariance}
Horospheres and horospherical measures are horocycle flow-invariant. 
\end{prop}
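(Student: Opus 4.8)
The plan is to prove both statements by working on a lift $L$ of $\M$ and using the explicit description of the horocycle flow, the strong unstable foliation, and the form $\xform$ in period coordinates.

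First I would recall that in period coordinates the horocycle flow $u_s$ acts on $V = V_\x \oplus V_\y$ by $v = w_1 + \mathbf{i}w_2 \mapsto (w_1 + s w_2) + \mathbf{i}w_2$; in particular $\pi_\y \circ \dev$ is $u_s$-invariant, while $\pi_\x \circ \dev$ is translated by $s\,\pi_\y(\dev(\q))$ inside the fiber. This immediately shows that $u_s$ preserves each fiber of $\pi_\y \circ \dev$, hence maps each leaf $\WW^{uu}(\q)$ of the strong unstable foliation to itself; descending to $\M$ via $\pi$ (which is $\GL$-equivariant and permutes the leaves, by Lemma~\ref{lem: orbifold foliation}), this gives that horospheres are $u_s$-invariant. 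Note also that $u_s$ preserves the area bracket $\langle\cdot,\cdot\rangle$, so it preserves $L^{(1)}$ and acts on each leaf as an \emph{affine} self-map with linear part the identity on $V_\x$ — i.e., as a translation of the affine subspace $\{x : \langle x, y_0\rangle = 1\}$ by the fixed vector $s y_0 \in V_\x$ (identifying $V_\y \cong V_\x$). Here I must be slightly careful: $u_s$ need not preserve a given box $\BB$, so I would phrase the argument leaf-by-leaf, or pass to a possibly smaller box and use that boxes are preserved by intersection.

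Next, to handle the measure, the key computation is that $u_s^* \xform = \xform$ as a form on the leaves of $\WW^{uu}$, equivalently that $u_s^*\alpha_\x = \alpha_\x$ and $u_s$ commutes appropriately with the Euler field $E$. Since $\alpha_\x = (\pi_\x \circ \dev)^*\eta_\x$ and $\eta_\x$ is a translation-invariant volume form on $V_\x$, while the derivative of $u_s$ on $V$ restricted to the $V_\x$-direction is the identity (the shear only adds a $V_\x$-component depending on the $V_\y$-coordinate, which is $u_s$-fixed), we get $u_s^*\alpha_\x = \alpha_\x$ on $L$; combined with $(u_s)_* E = E$ (both $u_s$ and the rescaling action lie in the abelian-ish relevant subgroup, and more directly $\dev$ intertwines $E$ with the Euler field $v \mapsto v$, which is $u_s$-equivariant since $u_s$ is linear), this yields $u_s^*\xform = \xform$ on each leaf. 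Then, given a horospherical measure $\nu$ on $L^{(1)}$ with local disintegration \eqref{integral upstair} over a box $\BB$, I would check $(u_s)_*\nu$ against a box and verify: the conditional measures are pushed forward by $u_s$ restricted to plaques, which are translations of leaves; by the form-invariance just established these push forward $\xmeasure$ to $\xmeasure$; and the transverse measure $\lambda$ on $U_\y$ is unchanged because $\pi_\y\circ\dev$ is $u_s$-invariant. Hence $(u_s)_*\nu$ is again horospherical with the same disintegration data, and in fact $(u_s)_*\nu = \nu$ since the disintegration determines the measure.

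Finally I would descend to $\M$: a horospherical measure $\nu$ on $\M$ has horospherical lifts $\tilde\nu|_L$, which are $u_s$-invariant by the above, and since $\pi$ is $\GL$-equivariant the invariance passes down to $\nu$ via the defining relation \eqref{eq: lifted measure}. The analogous statement for real REL is completely parallel: a real REL vector field for $v \in Z \subset H^1(S,\Sigma;\R_\x)$ is, in period coordinates, the constant vector field in a direction lying in $V_\x$ (when $v \in V_\x$; otherwise the REL flow does not preserve $\M$ and there is nothing to prove), so $\Rel_{tv}$ preserves $\pi_\y \circ \dev$ and hence the strong unstable leaves, preserves $\langle\cdot,\cdot\rangle$ and hence $L^{(1)}$, and acts on each leaf as a \emph{translation} in $V_\x$, under which both $\xform$ (a translation-invariant form pulled back from $V_\x$) and the transverse coordinate $\pi_\y\circ\dev$ are invariant — so the same disintegration argument applies. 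I expect the main obstacle to be the bookkeeping around boxes and orbifold points: $u_s$ and $\Rel_{tv}$ do not preserve a fixed box, the REL flow is only locally defined, and one must be careful that the leafwise affine maps are genuinely measure-preserving for $\xmeasure$; once these are handled, the invariance of $\xform$ and of the transverse coordinate are short linear-algebra facts.
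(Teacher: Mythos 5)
Your geometric half (leaf preservation via $\dev(u_s\q)=(x+sy,y)$, equivariance of $\pi$) and your leafwise computation ($u_s^*\alpha_\x=\alpha_\x$, $u_s$-equivariance of the Euler field, hence $u_s^*\xform=\xform$ and invariance of $\xmeasure$) are exactly the ingredients the paper uses. The gap is in the step where you conclude measure invariance: you write that $(u_s)_*\nu$ ``is again horospherical with the same disintegration data, and in fact $(u_s)_*\nu=\nu$ since the disintegration determines the measure.'' Being horospherical does not determine a measure --- the transverse measures $\lambda$ in \eqref{integral upstair} are arbitrary, and distinct horospherical measures abound --- so everything hinges on showing that $(u_s)_*\nu$ and $\nu$ have the \emph{same} transverse data, which is precisely what is not automatic. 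Concretely, the disintegration of $(u_s)_*\nu$ over a box $\BB$ is governed by $\nu$ on $u_{-s}\BB$, and $u_{-s}\BB$ is not a box in the same product chart (the shear $x\mapsto x-sy$ mixes the $\x$-coordinate with $y$, so the image of $U'_\x\times U_\y$ is no longer a product, and $u_s$ need not map plaques of one box to plaques of another). Saying ``the transverse measure is unchanged because $\pi_\y\circ\dev$ is $u_s$-invariant'' therefore compares measures on two different transversals related by holonomy along leaves; making that comparison rigorous is the content of the proof, not bookkeeping. (Appealing to the general theory of holonomy-invariant transverse measures would close this, but you would then have to prove that \eqref{integral upstair} yields a well-defined holonomy-invariant transverse measure, which is again the same work.)

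The paper fills this gap by a local-to-global argument you do not supply: for a point whose $U$-orbit segment of length $2s_0$ is embedded, it thickens the segment into a regular box $\BB$ and chooses a smaller neighborhood $\U$ so that for $|s|\le s_0$ the map $u_s$ sends $\mathrm{supp}(f\circ\pi)\cap\LL_y$ \emph{into the same plaque} $\LL_y$; only then does leafwise invariance of $\xmeasure$ give $\int f\circ u_s\,d\nu=\int f\,d\nu$ for $f$ supported in $\U$, with the same $\lambda$ appearing on both sides because one never leaves the fixed box. It then passes to arbitrary $f$ and arbitrary $s$ via a partition of unity, the observation that $s_f>0$ (no arbitrarily short periodic $U$-orbits meet $\mathrm{supp}f$) and $s_{f}=s_{f\circ u_s}$, and the iteration $s=ks'$ with $|s'|\le s_f$. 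This periodic-orbit issue, and the need to iterate small steps, are invisible in your proposal; note that working entirely on a lift $L$ (where the $\GL$-action is free) and descending via the appendix would remove the periodic-orbit complication, but you would still need the fixed-box plaque-into-plaque argument and the small-step iteration, so the essential missing piece remains the same. The same caveat applies verbatim to your parallel treatment of real Rel.
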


\begin{proof}
 For $\q \in \HH_{\mathrm{m}}$ with $\dev(\q) = (x, y)$ and $s \in \R$, we have 
  \begin{equation}\label{action of us}
     \dev(u_s \q) = (x + sy, y).
 \end{equation}
 This implies that the horocycle flow maps $\WW^{uu}(\q)$ to itself and since $\pi$ is $\GL$-equivariant, we deduce that horospheres are preserved by the horocycle flow.  

We also deduce from equation~\eqref{action of us} that the horocycle flow preserves the form $\alpha_\x$. Since the horocycle flow commutes with the rescaling action, it also preserves the Euler vector field, from which we deduce that for any $s \in \R$ we have ${u_s}^{\ast} \xform = \xform$. In particular, the horocycle flow preserves the measures $\nu_{\xform}$.

Let $\nu$ be a horospherical measure and let $s_0 >0$. We claim that for any $q \in \M$ such that the orbit segment $\{u_sq : s\in \R, \ |s| \leq s_0 \}$ is embedded, i.e., $Uq$ is not a periodic horocycle orbit with period of length smaller than $2s_0$, there is an open set $\U \subset \M$ containing $q$ such that for any compactly supported continuous function $f$ with support contained in $\mathcal{U}$ and any $s \in \R$ with $|s| \leq s_0$,  we have:  
\begin{equation}\label{eq: this gives}
    \int_{\M} f\circ u_{s} \ d\nu = \int_{\M} f \ d\nu. 
\end{equation}
To see this, let $\q \in \HHm$ be such that $\pi(\q)=q$ and define
$$
\boldsymbol{\sigma} \df \{ u_s \q : s \in \R, \ |s| \leq s_0 \}.
$$

Let $\Gamma$ be the stabilizer in $\Mod(S,\Sigma)$ of $\q$. Since the $\GL$-action on $\HHm$ commutes with $\Mod(S,\Sigma)$, the group $\Gamma$ acts trivially on $\boldsymbol{\sigma}$ and since $Uq$ is not a periodic orbit with period smaller than $2s_0$, we have that that $\boldsymbol{\sigma} \cdot \gamma \cap \boldsymbol{\sigma} = \emptyset$ for any $\gamma \in \Mod(S,\Sigma)$, unless $\gamma \in \Gamma$. By thickening $\boldsymbol{\sigma}$, we can find a box $\BB \subset \pi^{-1}(\M)$ containing $\boldsymbol{\sigma}$ and up to replacing $\BB$ with  $\cap_{\gamma \in \Gamma} \BB \cdot \gamma$, we can assume that $\BB$ is regular. By construction, for any $s \in \R$ with $|s| \leq s_0$, the surface $u_sq$ belongs to $B = \pi(\BB)$ and lies on the plaque of $q$. Since the horocycle flow acts continuously, there is a neighborhood $\U \subset \B$ around $q$ such that for any $\q' \in \pi^{-1}(\U) \cap \BB$ and $|s| \leq s_0$, we have $u_s \q' \in \BB$. Let $\lambda$ be a transverse measure on $U_{\y}$, i.e., a measure as in equation~\eqref{integral}, and let $f$ be a continuous function with support contained in $\mathcal{U}$. It follows from equation~\eqref{integral} that
\begin{align*}
    \int_{\M} f \circ u_{s} \ d \nu &= \int_{U_{\y}}  \left(
    \int_{\boldsymbol{L}_y} f \circ u_{s} \circ \pi \ d \nu_{\xform} \right) \ d\lambda(y)\\ 
    &= \int_{U_{\y}}  \left( \int_{\boldsymbol{L}_y} f  \circ \pi \ d {u_s}_{\ast} \nu_{\xform} \right) \ d\lambda(y)\\ 
    &= \int_{U_{\y}}  \left( \int_{\boldsymbol{L}_y} f  \circ \pi \ d  \nu_{\xform} \right) \ d\lambda(y) = \int_{\M} f \ d\nu.
\end{align*}
The second equality follows from the fact that $\pi$ is $\GL$-equivariant and the fact that for any $y \in U_{\y}$, the action of $u_s$ maps $\mathrm{supp}(f \circ \pi) \cap \boldsymbol{L}_y$ inside $\boldsymbol{L}_y$, which in turns is implied by our choice of $\U$ and the fact that the horocycle flow maps the leaves of $\WW^{uu}$ into themselves.

For any $f \in C_c(\HH)$, define
$$
s_f \df \inf \{s >0 : \mathrm{supp}(f) \ \text{contains a periodic surface of period} \ s \}.
$$
It is easy to see that $s_f$ is always positive, and using the first part of the proof together with a partition of unity argument, we can show that equation~\eqref{eq: this gives} holds for $f$ and $s \in \R$ with $|s| \leq s_f$. Furthermore, notice that for any $s \in \R$, we have $s_{f} = s_{f \circ u_s}$. Writing $s=ks'$ with $k \in \mathbb{N}$ and $|s'| \leq s_f$, we obtain
$$
\int_{\M} f \circ u_{s} \ d\nu = \int_{\M} f \circ u_{(k-1)s'} \ d\nu = \cdots = \int_{\M} f \ d\nu.
$$
This proves that $\nu$ is horocycle flow-invariant. 
\end{proof}

For any irreducible invariant subvariety $\M$, we let 
\begin{equation}\label{eq: def ZM}
    Z_{\M} \df V \cap Z,
\end{equation}
where $V$ is the model space of some lift of $\pi^{-1}(\M)$ and $Z$ is the real REL space. Notice that the space $Z_{\M}$ actually does not depend on the choice of particular lift. This is a consequence of Proposition \ref{characterization of irreducible} together with with the fact that $\Mod(S,\Sigma)$ acts trivially on $\ker(\Res)$. 

\begin{prop}\label{prop: real REL invariance}
Let $v \in Z_{\M}$, $q \in \M$, and suppose  the Rel flow $\Rel_v(q)$ is defined. Then $\Rel_v(q) \in W^{uu}(q)$. If $\nu$ is a horospherical measure and $\Rel_v(q)$ is defined for $\nu$-a.e.\;$q$, then $\nu$ is invariant  under the (almost everywhere defined) map $q \mapsto \Rel_v(q)$.
\end{prop}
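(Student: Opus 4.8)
The plan is to mimic the structure of the proof of Proposition~\ref{prop: horocycle invariance}, replacing the horocycle flow $u_s$ by the real REL flow $\Rel_v$. First I would establish the ``geometric'' statement: recall that in period coordinates, adding a class $v \in Z \subset H^1(S,\Sigma;\R_\x)$ to $\dev(\q) = (x,y)$ changes it to $(x+v, y)$, since $v$ lies in the horizontal space. Hence $\dev(\Rel_{tv}(\q)) = (x+tv, y)$ as long as the flow is defined, so $\Rel_v(\q)$ lies on the leaf $\WW^{uu}(\q)$ (the fiber of $\pi_\y \circ \dev$ through $\q$); since $v \in Z_\M$, this deformation stays inside the model space $V$, hence inside $L$, and since monodromy acts trivially on $\ker(\Res)$ this descends to the statement $\Rel_v(q) \in W^{uu}(q)$. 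This also shows the REL flow commutes with the horocycle flow (consistent with the relation $\Rel_{tv}(u_s q) = u_s \Rel_{tv}(q)$ quoted in \S\ref{subsec: strata}), and more importantly, that within each leaf $\WW^{uu}(\q)$ the REL flow acts by a translation in the affine structure of Lemma~\ref{lem: orbifold foliation}: it is translation by $tv$ in the affine subspace $\{(x,y_0) : \langle x, y_0\rangle = 1\}$.

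Next I would prove that $\Rel_v$ preserves the conditional measures $\nu_{\xform}$ on leaves. Since $\xform = \iota_E \alpha_\x$ and $\alpha_\x = (\pi_\x \circ \dev)^*(\eta_\x)$ with $\eta_\x$ translation-invariant on $V_\x$, and since the REL flow acts on $\dev$-images by translation by $tv$ (which is volume-preserving for $\eta_\x$ and, being a translation, commutes with the Euler field up to the same relation used for $u_s$), we get $(\Rel_{tv})^* \xform = \xform$ wherever the flow is defined. Therefore $\Rel_v$ preserves the leafwise volume form and hence the system of measures $\nu_{\xform}$; more precisely, for any box $\BB$ and plaque $\LL_y$, the restricted map $\Rel_{tv}: \LL_y \cap \Rel_{-tv}(\BB) \to \LL_y$ preserves $\nu_{\xform}$.

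Then I would run the same disintegration/box argument as in Proposition~\ref{prop: horocycle invariance}: given $q$ with $\Rel_v(q)$ defined, the REL trajectory $\{\Rel_{sv}(q) : s \in [0,1]\}$ (suitably reparametrized, or a short subsegment) is a compact arc inside a single horospherical leaf; lift it to $\HHm$, thicken it to a regular box $\BB$, and use the local disintegration formula \eqref{integral}. For a test function $f$ supported in a small enough neighborhood $\U$ of $q$ so that the relevant REL-images stay inside the box and inside the plaques, the integral $\int f \circ \Rel_{v} \, d\nu$ unwinds through \eqref{integral} to $\int_{U_\y}(\int_{\LL_y} f\circ \pi \, d (\Rel_{v})_*\nu_{\xform}) \, d\lambda(y) = \int_{U_\y}(\int_{\LL_y} f\circ \pi \, d\nu_{\xform})\,d\lambda(y) = \int f\, d\nu$. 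A partition-of-unity argument then removes the smallness restriction on the support of $f$, for $v$ replaced by a small enough multiple $v/N$; composing $N$ times gives invariance under $\Rel_v$ itself. One must also handle the a.e.\ caveat: the set where $\Rel_v$ is defined is open and, by hypothesis, $\nu$-conull, and the identity $\int f \circ \Rel_v \, d\nu = \int f \, d\nu$ only needs $f$ supported there.

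The main obstacle, and the place where this differs from the horocycle case, is the incompleteness of the REL flow: the domain of definition $I_q(v)$ need not be all of $\R$, trajectories can escape in finite time when a horizontal saddle connection shrinks to zero length (as noted in the remark after Lemma~\ref{lem: orbifold foliation}), and the ``period'' argument used for $u_s$ (writing $s = ks'$ and iterating) has no exact analogue because there is no global group action. The care needed is: (i) to argue that one can choose a single small $t_0 > 0$ and a neighborhood on which $\Rel_{t_0 v}$ is defined and box-compatible — here relative compactness of boxes and continuity of the flow (openness of the domain of definition, from \cite[Thm.~6.1]{BSW}) do the work; and (ii) to chain finitely many such small steps from $q$ to $\Rel_v(q)$ along the (compact, hence finitely coverable) trajectory segment, invoking the cocycle property $\Rel_{(s+t)v} = \Rel_{sv} \circ \Rel_{tv}$ where defined. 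Once the trajectory $\{\Rel_{sv}(q): s\in[0,1]\}$ is known to exist and be compact, covering it by finitely many boxes and applying the local invariance on each overlap yields the global statement.
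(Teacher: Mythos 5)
Your proposal is correct and follows essentially the same route as the paper: the paper's proof simply observes that the argument for Proposition \ref{prop: horocycle invariance} only used that the flow preserves horospheres and acts on them by translations (in the affine structure of Lemma \ref{lem: orbifold foliation}), and that $\Rel_v$ with $v \in Z_\M$ has these same properties since $\dev(\Rel_v \q) = (x+v, y)$ and area is preserved. Your additional care about the incompleteness of the Rel flow (chaining small steps along the compact trajectory segment and handling the a.e.\ domain of definition) fills in details the paper leaves implicit in its ``same proof'' reduction, but does not change the approach.
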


\begin{proof}
Let $Z^{(q)}$ be as in equation~\eqref{eq: set where defined}. Since $Z_{\M} \subset V$, where $V$ is the subspace that $\M$ is modeled on, we have $\Rel_{v}(q) \in \M$ if $q \in \M$ and $v \in Z_{\M} \cap Z^{(q)}$. The only properties of the horocycle flow which were used in the proof of Proposition \ref{prop: horocycle invariance} are that $u_{s_0}$ preserves the horospheres, and  acts on them by translations. The same properties are valid for the action of $\Rel_v$ for $v \in Z_{\M}$. Indeed, $\Rel_v$ sends surfaces of area one to surfaces of area one, and if $\dev (\q) = (x,y)$ then $\dev (\Rel_v \q) = (x + v , y)$. 
\end{proof}

 If a measure $\mu$ on $\M$ is saddle-connection free, then for $\mu$-a.e. $q \in \M$, $\Rel_v(q)$ is defined for every $v \in Z_\M,$ and satisfies the `group law' property 
$$\forall v_1, v_2 \in Z_\M, \ \ \Rel_{v_1}\left( \Rel_{v_2}(q) \right) = \Rel_{v_1+v_2}(q).$$
Following \cite{Wright_cylinders}, we say that an irreducible invariant subvariety $\M$ {\em is of rank one} if $\dim(\Res(V))=2,$
where $V$ is the model space of any lift of $\pi^{-1}(\M)$. In the rank one case we have the following converse to Propositions \ref{prop: horocycle invariance} and \ref{prop: real REL invariance}: 
\begin{prop}
    If $\M$ is an invariant subvariety of rank one and $\mu$ is a saddle connection-free measure, then $\mu$ is horospherical if and only if it is invariant under the horocycle and the real Rel flows. 
\end{prop}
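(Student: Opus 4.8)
The plan is this. The forward implication requires nothing new and does not use the rank one hypothesis: if $\mu$ is saddle connection-free and horospherical, then for $\mu$-a.e.\ $q$ the deformation $\Rel_v(q)$ is defined for every $v\in Z_{\M}$, so Proposition~\ref{prop: real REL invariance} gives invariance of $\mu$ under each $\Rel_v$, $v\in Z_{\M}$, while Proposition~\ref{prop: horocycle invariance} gives horocycle invariance.

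For the converse, assume $\mu$ is saddle connection-free and invariant under all $u_s$ and all $\Rel_v$, $v\in Z_{\M}$. Fix a lift $L$ with model space $V$, put $n=\dim(\M)$, and let $\M_0\subset\M$ be the conull set of surfaces with no horizontal saddle connection. On $\M_0$ the maps $\Rel_v$ ($v\in Z_{\M}$) are everywhere defined, obey the group law in $v$, and commute with the horocycle flow, so the abelian group $H\df Z_{\M}\times\R$ acts on $\M_0$ via $(v,s)\cdot q\df u_s\Rel_v(q)$, and $\mu$ is $H$-invariant. The central step is to show that the $H$-orbits are precisely the horospherical leaves and that $H$ acts on each of them simply transitively by translations in the affine structure of Lemma~\ref{lem: orbifold foliation}. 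If $\dev(q)=(x,y)$, identify $y$ with $\tilde y\in V_{\x}$ via $v\mapsto\mathbf{i}v$; then $\dev(\Rel_v q)=(x+v,y)$ and $\dev(u_s q)=(x+s\tilde y,y)$, so the $\dev$-image of $H\cdot q$ is the affine plane $x+(Z_{\M}+\R\tilde y)$. Rank one enters here: since $\dim_{\C}\Res(V)=2$, the image of $\Res|_{V_{\x}}$ is $2$-dimensional, so $Z_{\M}=\ker(\Res|_{V_{\x}})$ has real dimension $n-2$; since $\ker\Res$ lies in the radical of $\langle\cdot,\cdot\rangle$ (because $\int_S\alpha\wedge\beta$ depends only on $\Res(\alpha)$ and $\Res(\beta)$), both $Z_{\M}$ and $\tilde y$ lie in $\ker\langle\cdot,\mathbf{i}\tilde y\rangle$; and $\tilde y\notin\ker\Res$, since otherwise $\area(q)=\langle x,\mathbf{i}\tilde y\rangle$ would vanish. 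Counting dimensions gives $Z_{\M}\oplus\R\tilde y=\ker\langle\cdot,\mathbf{i}\tilde y\rangle$, which by Lemma~\ref{lem: orbifold foliation} is exactly the linear model of $W^{uu}(q)$; since $\dev$ is injective on strong unstable leaves, $H\cdot q=W^{uu}(q)$, with $H$ acting by affine translations, simply transitively.

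It remains to identify the conditional measures of $\mu$ on the leaves of $W^{uu}$ with multiples of $\nu_{\beta_{\x}}$, which is the defining property~\eqref{integral} of a horospherical measure. I would verify the lifted form~\eqref{integral upstair} in an arbitrary box $\BB\subset L$: disintegrate the restriction to $\BB$ of the lift of $\mu$ to $L$ over the transverse parameter space $U_{\y}$ (a manifold, so there is no measurability difficulty) to get conditional measures on the plaques $\LL_y$. On a plaque $\dev$ is a homeomorphism onto an open subset of the affine model, and the elements of $H$ near the identity act there by genuine translations realizing all small translation vectors; hence the conditional on $\LL_y$ is invariant under all sufficiently small translations, and a connected open subset of affine space on which a measure is invariant under all small translations must carry a constant multiple of Lebesgue measure. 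Since $\nu_{\beta_{\x}}$ is itself Lebesgue-type on the leaf---the form $\beta_{\x}$ being preserved by $u_s$ and by $\Rel_v$, as seen in the proofs of Propositions~\ref{prop: horocycle invariance} and~\ref{prop: real REL invariance}---the conditional is a multiple of $\nu_{\beta_{\x}}$, and absorbing the multiple into the transverse measure $\lambda$ on $U_{\y}$ yields~\eqref{integral upstair}.

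The main obstacle is the linear algebra in the second paragraph, namely checking that in the rank one case the Rel directions $Z_{\M}$ and the single horocycle direction $\tilde y$ together span the tangent space of the horospherical leaf. This is exactly where $\dim_{\C}\Res(V)=2$ is indispensable: in higher rank $Z_{\M}\oplus\R\tilde y$ is a proper subspace of the leaf model, so the $H$-orbits are proper subsets of leaves and $H$-invariance fails to determine the conditional measures---this is why the statement is confined to rank one. Once the $H$-action on leaves is understood, the disintegration argument of the last paragraph is routine.
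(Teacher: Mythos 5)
Your proposal is correct and follows essentially the same route as the paper: the forward direction from Propositions \ref{prop: horocycle invariance} and \ref{prop: real REL invariance}, and the converse by the rank-one dimension count showing that the horocycle and real Rel flows act transitively by translations on each horosphere, so that translation-invariance of the conditional measures forces them to be multiples of $\nu_{\beta_{\x}}$. You simply flesh out details the paper leaves implicit (the explicit computation that $Z_{\M}\oplus\R\tilde y$ is the linear model of the leaf, injectivity of $\dev$ on leaves as in the paper's remark citing \cite{MW2}, and the disintegration step), which is fine.
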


\begin{proof}
By a dimension count, we see that when $\M$ has rank one, the dimension of horospheres is the same as $\dim(Z_\M)+1.$ This means that the horosphere $W^{uu}(q)$ satisfies 
$$
W^{uu}(q) = \{\Rel_v(u_sq): s \in \R, \ v \in Z_\M\},$$
that is the group action generated by the  horocycle flow and real Rel acts transitively on the horospheres. 
As we saw in the proofs of Propositions \ref{prop: horocycle invariance} and \ref{prop: real REL invariance}, this action is by translations, with respect to the affine structure on $W^{uu}(q)$ afforded by Lemma \ref{lem: orbifold foliation}. Since the measures $\nu_{\beta_{\x}}$ are the unique (up to scaling) translation-invariant  measures on the affine manifolds $L^{(1)},$ the invariance of $\mu$ under the horocycle and real Rel flows implies that the conditional measures on the plaques in a box are given by $\nu_{\beta_{\x}}.$
    \end{proof}

\subsection{Further properties}
Let $X$ be a manifold with a foliation, and a Borel measure $\mu$. We say that $\mu$ is {\em ergodic for the foliation} if any Borel subset $B$ which is a union of leaves satisfies either $\mu(B)=0$ or $\mu(X \sm B)=0$. For instance we have:

\begin{prop}
The special flat measure on an invariant subvariety is ergodic for the horospherical foliation.
\end{prop}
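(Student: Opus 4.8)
The plan is to deduce ergodicity of the special flat measure $m_{\M}$ for the horospherical foliation from a mixing property of the geodesic flow $A$ on $\M^{(1)}$, together with the fact (Theorem \ref{thm: cone measures are horospherical}) that $m_{\M}$ is horospherical and hence, by Proposition \ref{prop: horocycle invariance}, invariant under the horocycle flow and, more generally, transforms nicely under the stable/unstable foliations. The classical Hopf-type argument goes: a bounded measurable function that is constant on horospherical leaves is, after flowing by $g_t$ and using Proposition \ref{prop: properties of restricted AGY metric}(2), essentially constant along strong stable leaves as well; combined with $A$-invariance considerations and the local product structure of boxes (Definition \ref{def: box}), this forces the function to be constant $m_{\M}$-a.e.

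More concretely, first I would reduce to showing that any $A$-invariant bounded measurable function $\phi$ on $\M^{(1)}$ which is constant along $W^{uu}$-leaves is $m_{\M}$-a.e.\ constant; indeed if $B$ is a union of horospheres then $\phi = 1_B$ is constant on $W^{uu}$-leaves, and one checks (using that the geodesic flow permutes horospheres, since $g_t$ scales $\pi_{\x}$ and $\pi_{\y}$ and hence preserves the fibers of $\pi'_{\x}\circ\dev$) that $1_B$ can be modified on a null set to be $A$-invariant — more carefully, one works with $\tilde\phi(q) := $ a suitable $A$-ergodic average or simply notes that the saturation of $B$ under $A$ is again a union of horospheres. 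Second, I would invoke mixing of the $A$-action on $(\M^{(1)}, m_{\M})$ (a consequence of the work of Eskin--Mirzakhani--Mohammadi, or can be cited from \cite{EMM}); alternatively one can invoke Theorem \ref{thm: Forni question} or \ref{cor: maximal entropy} already in hand. Third, for a box $\BB$ as in Definition \ref{def: box} and its image $B = \pi(\BB)$, I would use the local disintegration \eqref{integral} of $m_{\M}$ over $U_{\y}$ with leafwise measures $\nu_{\xform}$: a function constant on $W^{uu}$-leaves descends to a function of $y \in U_{\y}$, and I must show this function is a.e.\ constant. Applying $g_t$ for $t \to +\infty$ contracts the strong stable direction and expands the strong unstable one; pushing the box forward and using Proposition \ref{prop: properties of restricted AGY metric}(2) shows that two typical points in the same box become strong-stable asymptotic after flowing, so by $A$-invariance plus Birkhoff/mixing the transverse function cannot distinguish nearby $y$'s — hence it is locally constant, and by connectedness of $\M^{(1)}$ (irreducibility) globally constant a.e.

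The main obstacle I anticipate is the standard technical friction of the Hopf argument in this non-homogeneous, possibly non-compact setting: one does not have a globally defined strong stable flow (the leaves carry only an affine, generally incomplete structure, as the remarks after Lemma \ref{lem: orbifold foliation} emphasize), so the "stable = unstable after flowing" step must be run purely through the metric estimates of Proposition \ref{prop: properties of restricted AGY metric} and the box coordinates, with care about sets of points escaping to the cusp. I would handle this by restricting to a large compact set of positive measure where the nondivergence estimates (as in \cite{MW, MW2}) and uniform continuity of the box charts give uniform control, running the argument there, and letting the compact set exhaust $\M^{(1)}$. A cleaner alternative, which I would actually prefer to write up, is to derive ergodicity directly from Theorem \ref{cor: maximal entropy} or Theorem \ref{thm: Forni question}: if $m_{\M} = c_1 m_1 + c_2 m_2$ were a nontrivial decomposition into horospherical measures supported on disjoint unions of horospheres, then each $m_i$ would be a finite horospherical measure, so by Theorem \ref{thm: Forni question} $g_{t*} m_i \to m_{\M}$ for both $i$; but $g_{t*} m_i$ is supported on the (horosphere-saturated, hence $g_t$-invariant up to the permutation of horospheres) support of $m_i$, contradicting that the two supports are disjoint — this gives the result in a few lines assuming the decomposition pieces are finite, which the saddle-connection-free classification (Theorem \ref{thm: classification}) supplies when combined with the fact that $m_{\M}$ is saddle connection free.
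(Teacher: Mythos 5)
Your preferred ("cleaner") argument does not work as written. If $m_{\M}$ failed to be ergodic you would take $m_i = m_{\M}|_{B_i}$ for a leaf-saturated set $B_1=B$ and its complement $B_2$; these measures are mutually singular, but their \emph{supports} (closed sets) need not be disjoint — both could well be all of $\M^{(1)}$. Moreover a union of horospheres is not $g_t$-invariant: $g_t$ maps $W^{uu}(q)$ to $W^{uu}(g_tq)$, which need not lie in $B$ (only a weak-unstable-saturated set would be preserved), so the claim that $g_{t*}m_i$ remains supported on $\mathrm{supp}\, m_i$ is false. Finally, even granting mutual singularity of $g_{t*}m_1$ and $g_{t*}m_2$ for each $t$, two mutually singular sequences can converge weak-$*$ to the same limit, so Theorem \ref{thm: Forni question} yields no contradiction. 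Note that if you are willing to invoke Theorem \ref{thm: classification} (as you do to get finiteness), you do not need Theorem \ref{thm: Forni question} at all: $m_{\M}|_B$ is a nonzero saddle-connection-free horospherical measure, hence by Theorem \ref{thm: classification} proportional to $m_{\M}$, which is incompatible with $0<m_{\M}(B)<m_{\M}(\M^{(1)})$. That patch is logically admissible (the proof of Theorem \ref{thm: classification} only uses ergodicity of the measure being classified, via Proposition \ref{prop: ergodic decomposition}, not ergodicity of $m_{\M}$), but it inverts the paper's order and uses far heavier machinery than needed.

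Your Hopf-type sketch also has a gap at the reduction step. Replacing $B$ by its $A$-saturation $\bigcup_t g_tB$ produces an $A$-invariant union of horospheres, but its measure can be strictly larger than $m_{\M}(B)$, so concluding that the saturation is null or conull says nothing about $B$; and the Birkhoff average $\tilde\phi$ of $\mathbf{1}_B$ along $g_t$ is a.e.\ constant simply by $A$-ergodicity of $m_{\M}$, which again does not force $m_{\M}(B)\in\{0,m_{\M}(\M^{(1)})\}$. The missing bridge is the one the paper uses: a union of horospheres is $U$-invariant (Proposition \ref{prop: horocycle invariance}), and the Mautner phenomenon for the unitary $G$-representation on $L^2(m_{\M})$ (using that $m_{\M}$ is $G$-invariant) upgrades $U$-invariance of $\mathbf{1}_B$ to invariance under $A$, so the known $A$-ergodicity of the special flat measure finishes the proof in one line — no mixing, boxes, nondivergence, or contraction estimates are needed. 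Carrying out your mixing/box argument honestly would essentially reproduce the work of Section \ref{sec: main proof}, which is much more than this proposition requires.
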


\begin{proof}
This follows from Proposition \ref{prop: horocycle invariance}, the ergodicity of the special flat measure with respect to the $A$-action (see e.g. \cite[Section 4]{Forni-Matheus_survey}), and the Mautner phenomenon (see \cite{Einsiedler_Ward}).
\end{proof}

Denote by  $\mathcal{P}^{(uu)}\left(\M\right)$ the collection of horospherical measures on $\M$ with total mass at most one. The following standard results in ergodic theory are valid in the context of horospherical measures:

\begin{prop}\label{prop: ergodic decomposition}
For the horospherical foliation on any invariant subvariety $\M^{(1)}$, we have:
\begin{enumerate}
    \item
    The space $\mathcal{P}^{(uu)}\left(\M\right)$, with the weak-$*$ topology, is a compact convex set. 
    \item A horospherical probability measure is ergodic if and only if it is an extreme point of  $\mathcal{P}^{(uu)}\left(\M\right)$. 
    \item For any probability measure $\mu \in \mathcal{P}^{(uu)}\left(\M\right)$ there is a probability  space $(\Theta, \eta)$ and a measurable map $\Theta \to \mathcal{P}^{(uu)}\left(\M\right), \ \theta \mapsto \nu_\theta$, such that $\nu_\theta$ is ergodic and a probability measure for $\eta$-a.e. $\theta$, and $\mu = \int_{\Theta} \nu_\theta \, d\eta(\theta)$.
    \item If $\mu_1, \mu_2 \in \mathcal{P}^{(uu)}\left(\M\right)$ such that $\mu_1 \ll \mu_2$, and $\mu_2$ is  ergodic, then $\mu_1 = c\mu_2$ for some $c \geq 0$. 
\end{enumerate}
  
\end{prop}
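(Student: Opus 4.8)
The plan is to establish the four assertions of Proposition \ref{prop: ergodic decomposition} by reducing them to standard facts about invariant measures for group actions, via the local disintegration formula \eqref{integral} and the transverse measure correspondence. The key observation is that although the horospherical foliation is not globally the orbit foliation of a group action, each horospherical measure is, by \eqref{integral} and Proposition \ref{prop: horocycle invariance}, determined by its transverse measures on a countable family of regular boxes, and these transverse measures behave like invariant measures in each box. I would begin by fixing a countable cover of $\M^{(1)}$ by images of regular boxes $\{B_k\}$, so that a Radon measure $\nu$ on $\M^{(1)}$ is horospherical precisely when in each $B_k$ it disintegrates as in \eqref{integral} against some transverse measure $\lambda_k$ on $U_{\y}^{(k)}$, and the $\lambda_k$ are mutually compatible via the foliation holonomy.

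For assertion (1), convexity is immediate since the disintegration formula is linear in $\nu$; weak-$*$ compactness of $\mathcal{P}^{(uu)}(\M)$ requires showing that a weak-$*$ limit of horospherical measures of mass $\le 1$ is again horospherical. Here I would argue box by box: restricting to a fixed regular box $B_k$ and passing to the limit in \eqref{integral}, the transverse measures $\lambda_k$ converge (after passing to a subsequence, using that they have locally bounded mass) and the limiting measure still disintegrates with conditionals $\nu_{\xform}$, because the integrand $\int_{\LL_y} f\circ\pi \, d\nu_{\xform}$ is continuous in $y$; a diagonal argument over the countably many boxes finishes it. For assertions (2), (3) and (4): these are the standard Choquet-type statements about the simplex of invariant measures, and I would obtain them by invoking the abstract ergodic decomposition theory for the equivalence relation (or the pseudogroup) generated by the foliation holonomy together with the horocycle and real Rel flows, as in Proposition \ref{prop: horocycle invariance} and Proposition \ref{prop: real REL invariance}. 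More concretely, (4) (a horospherical measure absolutely continuous with respect to an ergodic one is proportional to it) follows from the fact that the Radon–Nikodym derivative is a measurable function constant on leaves, hence constant a.e.\ by ergodicity; (2) follows from (4) in the usual way (an ergodic measure cannot be a nontrivial convex combination, and conversely a non-ergodic $\mu$ splits along an invariant set $B$ with $0<\mu(B)<1$ into $\mu|_B + \mu|_{B^c}$, both horospherical by the disintegration formula); and (3) is the measurable ergodic decomposition, obtained by applying the ergodic decomposition for the countable equivalence relation generated by holonomy and the flows, and checking that each component $\nu_\theta$ inherits the conditional measures $\nu_{\xform}$ from $\mu$.

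The main technical obstacle I anticipate is assertion (1), specifically verifying that horosphericity is closed under weak-$*$ limits when the measures are allowed to be infinite (only their total mass is bounded by one, but on $\M^{(1)}$ which need not be compact); one must be careful that mass does not escape in a way that destroys the disintegration, and one must check that the transverse measures $\lambda_k$ do not blow up on compact subsets of the $U_{\y}^{(k)}$. A clean way around this is to note that within a fixed relatively compact box $\BB$, the total mass $\nu(B)$ controls $\lambda_k$ since the conditional measures $\nu_{\xform}$ of the relatively compact plaques are uniformly bounded below on a sub-box, so local weak-$*$ precompactness of the $\lambda_k$ follows, and then the identity \eqref{integral} passes to the limit by dominated convergence. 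The remaining assertions (2)–(4) are then routine once the correspondence between horospherical measures and holonomy-invariant transverse measures (i.e.\ the remark following \eqref{integral upstair}) is in hand, since they are exactly the classical statements for invariant measures of a measured equivalence relation.
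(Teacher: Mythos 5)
Your proposal is correct and follows essentially the same route as the paper: convexity is immediate from linearity of the disintegration condition \eqref{integral}, compactness comes from the closedness of that condition together with $\mu(\M)\leq 1$, and assertions (2)--(4) are the standard Choquet-type/ergodic-decomposition facts for holonomy-invariant transverse measures (the paper simply cites Choquet's theorem and \cite[Chap.~2.6]{Candel_Conlon} for these). Your box-by-box verification of closedness under weak-$*$ limits and the Radon--Nikodym argument for (4) are just fuller versions of the steps the paper leaves as ``standard.''
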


\begin{proof}
It is clear from definitions that if $\mu_1, \mu_2 \in \mathcal{P}^{(uu)}\left(\M\right)$, and $\alpha \in (0,1)$, then $\alpha \mu_1 +(1- \alpha)\mu_2 \in \mathcal{P}^{(uu)}\left(\M\right)$. It is also clear that condition \eqref{integral}, and the condition $\mu(\M) \leq 1$, are both closed conditions. This proves the first assertion. The remaining assertions follow from Choquet's theorem by standard arguments, see e.g. \cite[Chap. 2.6]{Candel_Conlon}. 
\end{proof}

We say that two surfaces $q, q' \in \HH$ are {\em horizontally equivalent} if there is a homeomorphism $ M_{q} \to M_{q'}$ of the underlying surfaces that preserves the labels of singularities and maps the union of the horizontal saddle connections of $M_q$ bijectively to the union of those of $M_{q'}$.  Note that a horizontal equivalence only preserves {\em certain} horizontal structure. It preserves saddle connections but need not preserve the horizontal foliation.
  
\begin{prop}\label{prop: same saddle connections}
Any two surfaces in the same horospherical leaf are horizontally equivalent. 
\end{prop}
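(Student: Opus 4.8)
The goal is to show that if $q' \in W^{uu}(q)$ then $q$ and $q'$ are horizontally equivalent, i.e. there is a label-preserving homeomorphism of the underlying surfaces taking the union of horizontal saddle connections of $M_q$ onto that of $M_{q'}$. The key observation is that membership in the same horospherical leaf is a \emph{local} condition that can be connected by a path, so it suffices to prove the statement for $q'$ close to $q$ on the same plaque of a box, and then use connectedness of leaves together with transitivity of horizontal equivalence (which is an equivalence relation). So first I would lift to $\HHm$: pick a lift $L$ of $\M$ and marked representatives $\q, \q'$ in the same leaf $\WW^{uu}(\q)$ of $L^{(1)}$. By Definition~\ref{def: foliations upstair}, this means $\dev(\q) = (x, y_0)$ and $\dev(\q') = (x', y_0)$ have the \emph{same vertical part} $y_0 = \pi_\y(\dev(\q))$, and by the remark after Lemma~\ref{lem: orbifold foliation} the whole leaf develops into the affine slice $\{(x, y_0) : \langle x, y_0 \rangle = 1\}$, which is connected; hence $\q$ and $\q'$ are joined by a path inside $\WW^{uu}(\q)$ along which $\dev$ moves only the horizontal coordinate.

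\textbf{Main step: horizontal coordinate doesn't change horizontal saddle connections.} The crux is that along such a path the set of horizontal saddle connections, as a subset of the topological surface $S$, is constant (up to the mapping-class-group identifications). A horizontal saddle connection on $M_q$ is represented by a relative homology class $c \in H_1(S, \Sigma; \Z)$ whose holonomy is horizontal, i.e. $\langle \dev(\q), c\rangle$ has zero $\y$-component; equivalently $\pi_\y(\dev(\q))$ evaluated on $c$ vanishes, i.e. $y_0(c) = 0$. Along the path in $\WW^{uu}(\q)$ the vertical coordinate $y_0$ is fixed, so the \emph{condition} ``$c$ has horizontal holonomy'' is unchanged; moreover whether such a class $c$ is actually realized by a saddle connection is, for nearby surfaces, determined by the combinatorics of a triangulation adapted to the horizontal foliation --- and here I would invoke exactly the description already cited in the paper: the leaf develops homeomorphically onto a convex domain cut out by finitely many linear equalities and inequalities (the remark after Lemma~\ref{lem: orbifold foliation}, via \cite{MW2}), where the equalities record precisely which classes stay horizontal saddle connections and the inequalities record positivity of lengths of transverse segments. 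On the interior of a maximal such face the combinatorial type of the horizontal saddle connection configuration is locally constant, which gives the homeomorphism $M_q \to M_{q'}$ (it may be taken to be the marking composition $f' \circ f^{-1}$ adjusted by an isotopy), label-preserving by our standing convention that singularities are marked.

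\textbf{Packaging.} Concretely I would argue: (1) reduce to $\q, \q'$ on a common plaque $\LL_{y_0}$ of a box, connected by a segment; (2) note that on the plaque the vertical periods and hence the ``horizontal locus'' data (which relative classes are horizontal, which are horizontal saddle connections) is constant, since adding a horizontal cohomology class $v \in H^1(S,\Sigma;\R_\x)$ to $\dev(\q)$ changes no $\y$-period and, for $v$ small, preserves the cell structure of the horizontal foliation; (3) this yields a label-preserving homeomorphism matching up horizontal saddle connections; (4) pass from plaques to general pairs in a leaf by a chain of plaques (the leaf is connected and covered by plaques) and transitivity of the relation ``horizontally equivalent''; (5) descend to $\HH$ using $\GL$- and $\Mod(S,\Sigma)$-equivariance of $\dev$ and $\pi$. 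I expect the only genuinely delicate point to be step~(2): making precise that a small perturbation of the horizontal coordinate does not create, destroy, or merge horizontal saddle connections. This is where one leans on the convex-domain/finitely-many-linear-conditions description from the remark following Lemma~\ref{lem: orbifold foliation} (or, alternatively, a direct argument using a triangulation in which all horizontal saddle connections appear as edges: moving within $\WW^{uu}$ keeps the $\y$-coordinates of all edges fixed, hence keeps horizontal edges horizontal and non-horizontal edges non-horizontal, while the triangulation remains valid on a neighborhood). Everything else is bookkeeping with the equivalence relation and the orbifold covering $\pi$.
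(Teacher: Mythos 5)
Your reduction to a local statement along a plaque, followed by connectedness of the leaf and transitivity, only works if the local statement is two-sided: for $\q'$ in the leaf and close to $\q$, the horizontal saddle connections of $M_q$ and $M_{q'}$ must correspond bijectively. Your triangulation argument (take a triangulation in which all horizontal saddle connections of $M_q$ are edges, note that the $\y$-coordinates of all edges are frozen along the leaf) proves only one direction: every horizontal saddle connection of $M_q$ persists as a horizontal saddle connection of $M_{q'}$. It does not rule out that $M_{q'}$ carries \emph{additional} horizontal saddle connections not coming from edges of your triangulation: a relative class with zero vertical holonomy (a property which is indeed frozen along the leaf) may fail to be realized by a single saddle connection on $M_q$ (its geodesic representative may pass through a singularity, or consist of non-horizontal segments with cancelling vertical parts) and yet be realized by one on $M_{q'}$. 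This is exactly the delicate point you flag in step (2), and neither of your proposed fixes closes it. In particular, the appeal to the remark after Lemma \ref{lem: orbifold foliation} overreads it: that remark (via \cite{MW2}) says only that $\dev$ maps each leaf homeomorphically onto a convex domain cut out by finitely many linear conditions; it says nothing about which classes are horizontal saddle connections at which points of the leaf, and your gloss that the equalities ``record precisely which classes stay horizontal saddle connections'' is essentially the statement to be proved, so invoking it is circular (or at best defers all the content to an unproved strengthening of the cited result).

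The paper closes this gap by a different mechanism: it never proves a two-sided local statement at an arbitrary point. It picks $\q_{\max}$ in the leaf with the maximal number of horizontal saddle connections (finite, since the number of saddle connections in a fixed direction is bounded), and shows the set of leaf points horizontally equivalent to $\q_{\max}$ is open --- your one-sided persistence argument plus maximality upgrades the injection to a bijection there --- and closed: along a convergent sequence in the leaf the horizontal saddle connections have lengths bounded above and below, their limits are horizontal saddle connections or concatenations of such, and distinct ones have distinct limits because they emanate from different singularities or different prongs, so the limit surface has at least as many horizontal saddle connections and, by maximality, lies again in the set; connectedness of the leaf finishes the proof. To repair your chain-of-plaques scheme you would need either to prove the missing direction (no new horizontal saddle connections are created under small moves in the leaf) or to adopt such a maximality/open--closed argument.
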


\begin{proof}
It suffices to show this upstairs; that is, we let $\q, \q' \in \HH_{\mathrm{m}}$ with $\bq' \in \WW^{uu}(\q)$, let $f: S \to M_q$ and $f': S \to M_{q'}$ be marking maps representing $\q, \q'$ and show that, if $f$ and $f'$ are carefully chosen, $f' \circ f^{-1}$ gives a bijection of  horizontal saddle connections. We first discuss $\q' \in \WW^{uu}(\q)$ which are sufficiently close to $\q$. Let $f: S \to M_q$ be a marking map representing $\q$ and let $\sigma_1, \ldots, \sigma_r$ be the horizontal saddle connections on $M_q$. Let $\tau$ be a triangulation of $S$ such that the segments $f^{-1}(\sigma_i)$ are edges of triangles. Let $f'$ be constructed from $\tau$, so that the map $f' \circ f^{-1}$ is affine on each triangle of $\tau$ (see the discussion of comparison maps in \cite[\S 2.4]{BSW}). Let $\mathcal{U} = \mathcal{U}_\tau$ be the corresponding neighborhood of $\q$. Then for any surface $\q' \in \mathcal{U}$, represented by $f': S \to M_{q'}$, the paths $f'\circ f^{-1}(\sigma_i)$ are represented by saddle connections on $M_{q'}$. Furthermore, if $\q' \in \mathcal{U} \cap \WW^{uu}(\q)$ then these paths are horizontal saddle connections, so that $f' \circ f^{-1}: M_q \to M_{q'}$ is a homeomorphism mapping the horizontal saddle connections of $M_q$ injectively to horizontal saddle connections on $M_{q'}$.

Now choose $\q_{\max} \in \WW^{uu}(\q)$ so that it has the maximal number of horizontal saddle connections. We will show that the set $\mathcal{V}$ of surfaces in $\WW^{uu}(\q)$ which are horizontally equivalent to $\q_{\max}$ is open and closed, and this will conclude the proof. By the preceding discussion, $\mathcal{V}$ is open in $\WW^{uu}(\q)$. Furthermore, if $\q_n \to \q_\infty$ is a convergent sequence of surfaces in $\WW^{uu}(\q)$, with $\q_n \in \mathcal{V}$, then the horizontal saddle connections on the surfaces $M_{q_n}$ have length bounded uniformly from above and below, and so converge to paths on $M_{q_\infty}$ which are represented by horizontal saddle connections or by finite concatenations of horizontal saddle connections. Sequences of paths which are distinct on the surfaces $M_{q_n}$ cannot converge to the same paths on $M_{q_\infty}$ because they issue from different singularities, or from different prongs at the same singularity. Thus $\q_\infty$ has at least the same number of saddle connections as $\q_{\max}$, and so, by maximality, $\q_\infty \in \mathcal{V}$. This completes the proof. 
\end{proof}

From Proposition \ref{prop: same saddle connections} we deduce: 
  
\begin{cor}\label{cor: equivalent diagrams}
If $\nu$ is an ergodic horospherical measure then there is a subset $\M' \subset \M$ of full $\nu$-measure such any two surfaces in $\M'$ are horizontally equivalent. 
\end{cor}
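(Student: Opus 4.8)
The statement follows by combining the ergodicity of $\nu$ for the horospherical foliation (which holds by definition of ergodicity for a foliation, applied to the measure $\nu$) with Proposition \ref{prop: same saddle connections}. The plan is to use the fact that horizontal equivalence is, when restricted to a single horospherical leaf, an equivalence relation whose classes are precisely the whole leaf (by Proposition \ref{prop: same saddle connections}), and then to show that at the level of $\M$ the partition into horizontal equivalence classes is coarse enough to interact well with the leaf partition.

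First I would fix, for each pair of positive integers $(m,\ell)$ and each ``combinatorial type'' of a configuration of horizontal saddle connections, the set $\M_{m,\ell,\ast} \subset \M$ of surfaces realizing that configuration; these sets are Borel, they are unions of subsets of horospherical leaves (again by Proposition \ref{prop: same saddle connections}, two surfaces on the same leaf have the same horizontal saddle connection data up to homeomorphism), and there are only countably many of them. More precisely, I would argue that the horizontal equivalence relation on $\M$ has only countably many classes, since a horizontal equivalence class is determined by the isomorphism type of the surface-with-marked-graph given by the union of horizontal saddle connections, and there are countably many such types for a fixed stratum. Each equivalence class $C$ is then a Borel set which is a union of horospherical leaves: indeed if $q \in C$ and $q' \in W^{uu}(q)$ then $q'$ is horizontally equivalent to $q$ by Proposition \ref{prop: same saddle connections}, so $q' \in C$.

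Now apply ergodicity of $\nu$ for the horospherical foliation: since $\M = \bigsqcup_i C_i$ is a countable partition into Borel sets, each a union of leaves, and the $C_i$ are pairwise disjoint, ergodicity forces all but one of them to have $\nu$-measure zero — because for each $i$ the set $C_i$ is a union of leaves, so $\nu(C_i) = 0$ or $\nu(\M \sm C_i) = 0$, and these cannot both fail to hold for two distinct indices since the $C_i$ are disjoint. Letting $\M' = C_{i_0}$ be the unique class of full measure, we get that $\M'$ has full $\nu$-measure and any two of its surfaces are horizontally equivalent, as desired.

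The main obstacle, and the only genuine point requiring care, is the measurability and countability of the partition into horizontal equivalence classes: I need that horizontal equivalence is a Borel equivalence relation on $\M$ with countably many classes. Countability of the class set comes from the fact that a class is determined by a finite combinatorial datum (the homeomorphism type of the surface together with its distinguished collection of horizontal saddle connections and their incidences at singularities), and there are finitely many such data for each bound on the number of saddle connections, hence countably many in total. Borel measurability of each class can be seen by noting that ``having a prescribed combinatorial configuration of horizontal saddle connections'' is a Borel (in fact locally closed) condition in period coordinates, essentially as in the proof of Proposition \ref{prop: same saddle connections}. With this in hand the argument above is immediate, so I would spend the bulk of the write-up establishing exactly this measurability/countability statement and then invoke ergodicity.
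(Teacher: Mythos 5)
Your argument is correct and is essentially the deduction the paper intends (the corollary is stated there as an immediate consequence of Proposition \ref{prop: same saddle connections}, with no written proof): horizontal equivalence classes form a countable Borel partition of $\M$ into sets saturated under the horospherical foliation, and ergodicity then singles out a single class of full $\nu$-measure. The only point to make explicit is that some class must have positive (hence full) measure, since otherwise countable additivity of the partition would force $\nu=0$.
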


\begin{rem}
In \cite[Def. 5.1]{BSW}, using boundary marked surfaces, {\em topological horizontal equivalence} is  introduced. In this definition the homeomorphism $M_q \to M_{q'}$ is required to preserve additional structure, e.g. the angular differences between saddle connections at each singular point. Proposition \ref{prop: same saddle connections} and Corollary \ref{cor: equivalent diagrams} hold for this finer notion of equivalence as well.  
\end{rem}

\section{Saddle connection free horospherical measures}\label{sec: main proof} 
In this section we will prove Theorem \ref{thm: classification}. We first state and prove some auxiliary statements.

\subsection{The Jacobian distortion in a box}\label{subsec: distortion}

The different plaques in a box can be compared to each other using the structure of a box. Namely, let $\varphi: \product \to \BB \subset L^{(1)}$ be a box. For any point $y \in U_{\y}$ we define
$$
\varphi_{y}: U'_{\x} \to \LL_y, \ \ \ \ \varphi_y([x] ) \df  \varphi([x],y),
$$ 
where $\LL_y$ is the plaque of $y $ in $\BB$ (see Definition \ref{def: box}). 

For any two points $y_0$ and $y_1$ in $U_{\y}$, the map $\varphi_{y_0,y_1} \df \varphi_{y_1} \circ \varphi_{y_0}^{-1}$ is a diffeomorphism between the plaques $\LL_{y_0}$ and $\LL_{y_1}$ in $B$, identifying points parameterized by the same point in $U'_{\x}$. Define
$$
\delta_{y_1} : \LL_{y_0} \to \R, \ \ \ \delta_{y_1} (\q ) \df  \langle x_{\q}, y_1 \rangle^{-\dim(\M)},
$$
where $x_{\q} = \pi_\x \circ \dev(\q)$. The diffeomorphism $\varphi_{y_0,y_1}$ is not measure preserving. Instead, we have the following: 

\begin{prop}\label{jacobian} (Jacobian calculation) 
For any two points $y_0, y_1 \in U_{\y}$ we have
\begin{equation}\label{eq: appearing in}
(\varphi_{y_0,y_1})^{\ast} \left(\xform|_{\LL_{y_1}} \right) = \delta_{y_1} \cdot \left(\xform|_{\LL_{y_0}} \right). 
\end{equation}

\end{prop}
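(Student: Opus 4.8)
The plan is to compute the pullback by working in the local model. Recall from Lemma~\ref{lem: local diffeo} that $\Psi \circ \dev \circ \varphi = \mathrm{Id}$ on $\product$, and that the local inverse $\psi$ of $\Psi$ is given by $\psi([x],y) = \left(\tfrac{x}{\langle x,y\rangle},\, y\right)$ in the affine coordinates on $V^{(1)} \subset V_{\x} \oplus V_{\y}$. Consequently, for a fixed $y \in U_{\y}$, the developing image of the plaque $\LL_y$ is $\dev(\varphi_y([x])) = \left(\tfrac{x}{\langle x,y\rangle},\, y\right)$, and the comparison map $\varphi_{y_0,y_1}$ corresponds, after applying $\dev$ and reading off the $V_{\x}$-component, to the map $\frac{x}{\langle x,y_0\rangle} \mapsto \frac{x}{\langle x,y_1\rangle}$; in other words, on the level of points $w = \tfrac{x}{\langle x,y_0\rangle} \in \dev(\LL_{y_0}) \cap V_{\x}$, it sends $w \mapsto \tfrac{w}{\langle w, y_1\rangle}$ (using that $\langle x,y_0\rangle=1$ on $\dev(\LL_{y_0})$ and the bilinearity of $\langle\cdot,\cdot\rangle$). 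So the whole question reduces to a computation in the vector space $V_{\x}$: how does the radial rescaling $R_{y_1}: w \mapsto \tfrac{w}{\langle w,y_1\rangle}$, restricted to the affine hyperplane $\{\langle w, y_0\rangle = 1\}$ and landing in $\{\langle w, y_1\rangle=1\}$, distort the volume form $\xform = \iota_E \alpha_{\x}$?

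The key step is thus to understand $\xform$ intrinsically. Since $\alpha_{\x} = (\pi_{\x}\circ\dev)^*(\eta_{\x})$ is the pullback of a translation-invariant top-form on $V_{\x}$ (of degree $n = \dim\M$), and since $\dev$ carries the Euler field $E$ to the radial Euler field $e(v)=v$ on cohomology, the form $\xform = \iota_E\alpha_{\x}$ is, on restriction to any plaque, the pullback under $\pi_{\x}\circ\dev$ of the standard ``cone'' or ``solid angle'' $(n-1)$-form $\iota_e \eta_{\x}$ on $V_{\x}\sm\{0\}$. This form $\iota_e\eta_{\x}$ is the exterior derivative-free generator of the volume of cones: for a radial map it scales in a controlled way. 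Concretely, I will verify the homogeneity property: if $R_\lambda$ denotes multiplication by the scalar function $\lambda(w) = \langle w, y_1\rangle^{-1}$, then pulling back $\iota_e\eta_{\x}$ along $w \mapsto \lambda(w)w$ multiplies it by $\lambda(w)^{n}$, because $\iota_e\eta_{\x}$ is the contraction of a degree-$n$ form with the radial field, and along a hypersurface transverse to rays the radial component is unaffected while the $n-1$ remaining directions each contribute a factor $\lambda$. More carefully, one writes $\eta_{\x}$ in polar-type coordinates $w = r\theta$ ($r>0$, $\theta$ in a section of the projectivization), so that $\iota_e\eta_{\x} = r^{n-1}\,(\text{form in }\theta)$ up to the standard identification; since $R_\lambda$ fixes $\theta = [w]$ and changes $r$ by the factor $\lambda(w)$, and since along the hypersurface $\{\langle w,y_0\rangle=1\}$ one parametrizes by $\theta$ alone, the pullback multiplies by $\lambda(w)^{\,n-1}\cdot(\text{one more factor }\lambda\text{ from }dr\text{ if present})$ — I need to track whether the surviving form has a $dr$ or not, which determines the exponent $n-1$ versus $n$. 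Matching against the claimed exponent $\dim(\M) = n$ and the definition $\delta_{y_1}(\q) = \langle x_\q, y_1\rangle^{-n}$ tells me the bookkeeping must produce $\lambda^n$ — and indeed the extra factor comes because the plaque hypersurface $\{\langle w, y_0\rangle = 1\}$ is \emph{not} a level set of $r$ for the $y_1$-rescaling, so when one reparametrizes both source and target hypersurfaces by $[x]\in U'_{\x}$ the radial coordinate itself shifts and contributes the missing power.

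The main obstacle I anticipate is precisely this careful bookkeeping of powers of $\langle x, y_1 \rangle$: one must be scrupulous about which affine hyperplane is the domain and which is the codomain, since $\varphi_{y_0}$ and $\varphi_{y_1}$ parametrize \emph{different} hypersurfaces in $V_{\x}$ by the \emph{same} projective chart $U'_{\x}$, so the naive ``contraction with Euler field scales by $r^{n-1}$'' count is off by one and the correction is exactly what yields the exponent $\dim(\M)$ rather than $\dim(\M)-1$. I would organize the proof to avoid polar coordinates altogether: pick a local section $[x]\mapsto x([x])$ of $\pi'_{\x}$ near a point of $U'_{\x}$, so that $\varphi_{y_i}([x])$ developes to $\tfrac{x([x])}{\langle x([x]),y_i\rangle}$, pull back $\iota_E\alpha_{\x} = (\pi_{\x}\circ\dev)^*(\iota_e\eta_{\x})$ directly through these explicit formulas, use the Leibniz rule for the contraction $\iota_e(\eta_{\x})$ together with the fact that the Euler field $e$ pulls back under $w\mapsto \mu(w) w$ (for a scalar function $\mu$) to $\mu\cdot e$ plus a term proportional to $e$ (hence still radial), and collect the Jacobian factor. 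The upshot is that $(\varphi_{y_0})^* \xform|_{\LL_{y_0}}$ and $(\varphi_{y_1})^*\xform|_{\LL_{y_1}}$ differ, as $n-1$ forms on $U'_{\x}$, exactly by the scalar $\bigl(\tfrac{\langle x, y_0\rangle}{\langle x, y_1\rangle}\bigr)^{n} = \langle x, y_1\rangle^{-n}$ once we restrict to $\langle x, y_0\rangle = 1$, which is the asserted identity $(\varphi_{y_0,y_1})^*(\xform|_{\LL_{y_1}}) = \delta_{y_1}\cdot(\xform|_{\LL_{y_0}})$.
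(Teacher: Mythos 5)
Your proposal follows essentially the same route as the paper's proof: transfer everything via $\pi_{\x}\circ\dev$ to the affine hyperplanes $\{\langle x,y_i\rangle=1\}$ in $V_{\x}$, write the comparison map as $x\mapsto x/\langle x,y_1\rangle$, and pull back $\iota_{e_{\x}}\eta_{\x}$ directly through this formula, the derivative-of-the-scalar terms (proportional to the radial vector $x$) being killed by antisymmetry against the Euler slot — exactly the paper's computation, producing the factor $\langle x,y_1\rangle^{-\dim(\M)}=\delta_{y_1}$. The one point to tighten is your polar-coordinate aside: the exponent $\dim(\M)$ is not an ``extra'' power caused by the hyperplane failing to be a level set of $r$, but appears simply because the contraction is with the Euler field $e(w)=w$ rather than the unit radial field (so $\iota_e\eta_{\x}=r^{\dim(\M)}\,d\sigma$, not $r^{\dim(\M)-1}\,d\sigma$); equivalently, in the direct computation with $d=\dim(\M)$ the Euler slot itself contributes one factor of $h$, since $\eta_{\x}\bigl(h(x)x,\,h(x)v_1,\dots,h(x)v_{d-1}\bigr)=h(x)^{d}\,\eta_{\x}(x,v_1,\dots,v_{d-1})$.
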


\begin{proof} 
For any $y \in U_{\y}$, write 
$$
\bar{L}_y \df \pi_{\x} \circ \dev (\LL_y),
$$
where $\pi_{\x}$ is the projection in equation~\eqref{eq: def projections}. Then $\bar{L}_y$ is an open subset of the affine hyperplane
\begin{equation}\label{eq: translate of}
  \{x \in V_{\x}: \langle x, y \rangle =1\}. 
\end{equation}
By Definition \ref{def: box} the map
$$
F: \bar{L}_y \to \LL_y, \ \ \ \ F(x ) \df  \varphi([x], y)
$$
is a diffeomorphism with inverse $\pi_{\x} \circ \dev$. We denote by $e_\x$ the Euler vector field on $V_\x$. Notice that $ F^* \beta_{\x}$ is the restriction to $\bar L_y$ of $\iota_{e_{\x}}\eta_\x$. Indeed, we calculate
\[\begin{split}
    F^* \beta_{\x} = F^* \iota_E (\pi_{\x} \circ \dev^* \eta_{\x})  =  F^* (\pi_{\x} \circ \dev) ^* \left(\iota_{e_{\x}} \eta_{\x} \right) = \iota_{E_{\x}} (\eta_{\x}).
\end{split}\]
The map $F$ gives a chart of $\LL_y$ in which $\beta_\x$ is  $\iota_{e_\x} \eta_\x$.  We shall perform our calculation in these charts and verify equation~\eqref{eq: appearing in} in $\bar{L}_y$ instead of $L_y$. Let $y_0, y_1 \in U_{\y}$, and set
$$
h: \bar{L}_{y_1} \to \R, \ \ \ \ \ \ h(x) \df \frac{1}{\langle x, y_1 \rangle}.
$$
The map $\varphi_{y_0, y_1} : L_{y_0} \to L_{y_1}$ is expressed in charts simply as the map 
$$
\bar{\varphi}_{y_0, y_1}: \bar{L}_{y_0} \to \bar{L}_{y_1}, \ \ \
\bar{\varphi}_{y_0, y_1}(x) =  h(x) \, x. 
$$
This implies by the product rule that
$$
(D \bar{\varphi}_{y_0, y_1})_x(v) = h(x) \, v + (Dh)_x(v) \, x.
$$
Hence, denoting $d = \dim(\M)$, for $v_1, \ldots, v_{d-1}$ in the tangent space to $\bar{L}_{y_0}$ at $x$  we have:
\[\begin{split}
    &\left((\bar{\varphi}_{y_0, y_1})^* \iota_{e_\x} \eta_x \right)_{x} (v_1,\ldots , v_{d-1})  \\ = & (\iota_{e_\x} \eta_x)_{\bar{\varphi}_{y_0, y_1}(x)}\Big(D_x\bar{\varphi}_{y_0, y_1}(v_1), \ldots, D_x\bar{\varphi}_{y_0, y_1}(v_{d-1}) \Big) \\
    = &(\eta_\x)_{h(x)x} \Big (h(x)x, h(x)v_1 + D_xh(v_1) \, x, \ldots, h(x)v_{d-1}+ D_xh(v_{d-1}) \, x\Big)\\ 
    =&(\eta_\x)_{h(x)x} \Big (h(x)x, h(x)v_1,\ldots,h(x)v_{d-1} \Big ) =h(x)^d (\iota_{e_\x} \eta_\x)_x(v_1,\ldots,v_{d-1}).
 \end{split}              
\]
This is Formula  \eqref{eq: appearing in}. 
\end{proof}

Notice that for any $y_0,y_1 \in U_{\y}$ and $[x] \in U_{\x}'$, we have 
$$
\delta_{y_1} \circ \varphi_{y_0}([x]) = \Big( \frac{\langle x, y_0\rangle}{\langle x, y_1\rangle} \Big)^{\dim(\M)}.
$$
This leads us to define the {\em distortion of $\BB$} as follows: 
$$
\delta_{\BB} \df \mathrm{sup} \left\{ \left|1 - \left(\frac{\langle x, y_0 \rangle }{\langle x, y_1 \rangle} \right) ^d \right| : [x] \in U'_{\x}, \  y_0, y_1 \in U_{\y} \right\}. 
$$ 

\begin{rem} 
The quantity $\frac{\langle x, y_0\rangle}{\langle x, y_1\rangle}$
has the following geometric interpretation. The points $\varphi([x],y_0)$ and $\varphi([x],y_1)$ are in the same weak stable leaf, and this leaf is further foliated by strong stable leaves. The geodesic flow maps a given weak stable leaf to itself, permuting the strong stable leaves inside it. The choice $t = \log \left(\frac{\langle x, y_0\rangle}{\langle x, y_1\rangle} \right)$ is the value of $t\in \R$ for which $g_t$ maps $\varphi([x], y_0)$ to the strong stable leaf of $\varphi([x], y_1)$.  
\end{rem}

The distortion can be used to bound the variation of the mass of the horospherical plaques of $\BB$ with respect to the measures $\nu_{\xform}$. Indeed, by an easy change of variables, using $\varphi_{y_0,y_1}$ we have  
\begin{align*}
    \left  | \xmeasure (\LL_{y_1}) - \xmeasure(\LL_{y_0}) \right| \leq \delta_{\BB}\, \xmeasure(\LL_{y_0}).
\end{align*}
From this it follows that
\begin{equation}\label{variationofmass}
    \left| \frac{\xmeasure(\LL_{y_1})}{\xmeasure(\LL_{y_0})} - 1 \right| \leq \delta_{\BB}.
\end{equation}

The distortion of a box is well-behaved with respect to the geodesic flow. For $t \in \R$ and any $\BB$ in $\pi^{-1}(\M)$, we write
$$\BB_t \df g_t(\BB) \ \ \ \text{ and } \ \ B_t \df \pi(\BB_t).$$

\begin{prop}\label{prop: pushofbox1}
Let $\BB$ be a box in $\pi^{-1}(\M)$. Then $\BB_t$ is a box with $\delta_{\BB_t} = \delta_{\BB}$ and it is regular whenever $\BB$ is.  
\end{prop}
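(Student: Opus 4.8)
The plan is to unwind the definition of a box and check that the three required properties --- being a box, having the same distortion, and preserving regularity --- are each stable under applying $g_t$. The key observation is that $g_t$ acts on $V = V_\x \oplus V_\y$ by scaling $V_\x$ by $e^t$ and $V_\y$ by $e^{-t}$, and hence descends to a map on $\mathbf{P}(V_\x) \times V_\y$ which is the identity on the first factor and multiplication by $e^{-t}$ on the second. Thus $g_t$ commutes with $\Psi$ up to this explicit automorphism of the target.

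First I would set up the parameterization of $\BB_t$. Given the box $\varphi : \product \to \BB$ with $\Psi \circ \dev \circ \varphi = \mathrm{Id}$, I would define $U_{\y,t} \df e^{-t} U_{\y}$ and $\varphi_t : U'_{\x} \times U_{\y,t} \to \BB_t$ by $\varphi_t([x], y) \df g_t \cdot \varphi([x], e^t y)$. Since $g_t$ is a diffeomorphism of $L^{(1)}$ (it preserves area one and the linear structure), $\varphi_t$ is a diffeomorphism onto $\BB_t = g_t(\BB)$, which is relatively compact in $L^{(1)}$. Checking the defining identity $\Psi \circ \dev \circ \varphi_t = \mathrm{Id}$ is then just the computation that $\dev(g_t \cdot \q) = g_t \cdot \dev(\q)$ (equivariance of $\dev$), combined with the fact that on $V^{(1)}$, $\pi'_\x(g_t w) = \pi'_\x(w)$ and $\pi_\y(g_t w) = e^{-t} \pi_\y(w)$; these cancel the $e^t$ and $e^{-t}$ factors in the definition of $\varphi_t$ so that the composition is the identity on $U'_{\x} \times U_{\y,t}$. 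This establishes that $\BB_t$ is a box, and identifies its plaques: the plaque of $y \in U_{\y,t}$ in $\BB_t$ is $g_t$ applied to the plaque of $e^t y$ in $\BB$.

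Next, the distortion. By definition $\delta_{\BB_t}$ is the supremum of $\left| 1 - (\langle x, y_0\rangle / \langle x, y_1\rangle)^d \right|$ over $[x] \in U'_\x$ and $y_0, y_1 \in U_{\y,t}$. Writing $y_i = e^{-t} y'_i$ with $y'_i \in U_{\y}$, the ratio $\langle x, y_0\rangle / \langle x, y_1\rangle = \langle x, y'_0\rangle / \langle x, y'_1\rangle$ is unchanged --- the scaling factor $e^{-t}$ cancels. Hence the supremum is over exactly the same set of values, and $\delta_{\BB_t} = \delta_{\BB}$. (One should be mildly careful that the horizontal coordinate representatives $[x]$ range over the same projective set $U'_\x$, which is clear since $g_t$ acts trivially on $\mathbf{P}(V_\x)$.)

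Finally, regularity. The stabilizer of $\BB_t = g_t \BB$ in $\Mod(S, \Sigma)$ equals the stabilizer $\Gamma$ of $\BB$, because the $\GL$-action commutes with the $\Mod(S,\Sigma)$-action: $\BB_t \cdot \gamma = g_t(\BB \cdot \gamma)$, so $\BB_t \cdot \gamma = \BB_t$ iff $\BB \cdot \gamma = \BB$. Likewise, for any $\gamma \in \Mod(S,\Sigma)$ we have $\BB_t \cdot \gamma \cap \BB_t = g_t(\BB \cdot \gamma \cap \BB)$, which is empty precisely when $\BB \cdot \gamma \cap \BB = \emptyset$; so if $\BB$ is regular, each such intersection is empty or $\gamma \in \Gamma$, and the same dichotomy holds for $\BB_t$. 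I do not expect a serious obstacle here: the only thing to watch is bookkeeping of the scaling factors in the $V_\x$ versus $V_\y$ directions and confirming that $g_t$ genuinely preserves $L^{(1)}$ and its affine/foliation structure, all of which follow from the $\GL$-equivariance of $\dev$ and the fact that $A \subset \SL$ preserves the area form.
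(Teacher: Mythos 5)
Your proposal is correct and follows essentially the same route as the paper's proof: the same reparameterization $\varphi_t([x],y)=g_t\cdot\varphi([x],e^t y)$ on $U'_{\x}\times e^{-t}U_{\y}$, the same cancellation of the $e^{-t}$ factor in the ratio $\langle x,y_0\rangle/\langle x,y_1\rangle$ for the distortion, and the same appeal to the commutation of the $\GL$- and $\Mod(S,\Sigma)$-actions for regularity. No gaps; your write-up is simply a more detailed version of the argument in the paper.
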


\begin{proof} Let $\varphi: U'_{\x} \times U_{\y} \to L^{(1)}$ be the parametrization of $\BB$, where $L$ is an irreducible component of $\pi^{-1}(\M)$ and let $\bar{U}_{\y}$ be the image of $U_{\y}$ under multiplication by $e^{-t}$, and let $\bar{\varphi} ([x], y) \df g_t \circ \varphi ([x], e^ty)$. Using the fact that the geodesic flow preserves the splitting into stable and horospherical foliation, and acts on $V_{\y}$ by multiplication by $e^{-t}$, we see that $\bar{\varphi}: U'_{\x} \times \bar{U}_{\y} \to L^{(1)}_1$ is a parameterization of $\BB_t$ as in Definition \ref{def: box}. Also, for $i=0,1$,  if $([x], \bar{y}_i) \in U'_{\x} \times \bar{U}_{\y}$, where $\bar{y}_i = e^{-t}y_i$, then
$$
\frac{\langle x, \bar{y}_0 \rangle}{\langle x, \bar{y}_1 \rangle} =
\frac{\langle x, y_0 \rangle}{\langle x, y_1 \rangle}. 
$$

This implies that the distortion of $\BB$ is the same as the distortion of $\BB_t$. 

The last statement follows from the fact that the actions of $\GL$ and $\Mod(S,\Sigma)$ commute. 
\end{proof}

\subsection{Thickness of a box} 
We now introduce the notion of the thickness of a box.
To define this quantity we use the sup-norm Finsler metric of \S \ref{subsec: AGY metric} to induce a distance function on leaves of the stable foliation. We rely on work of Avila and Gouezel \cite[\S 5]{avila2010small}, who defined a similar distance function on the leaves of the strong stable foliation.

For a subset of a stable leaf, we denote by $\mathrm{diam}^{(s)}$ its diameter with respect to the distance function $\mathrm{dist}^{(s)}$.  We define the {\em thickness} of the box $\BB$ as
$$
\tau_{\BB} \df \underset{[x] \in U'_{\x}}{\sup} \mathrm{diam}^{(s)} \ \varphi( \{[x]\} \times U_{\y}) ;
$$
that is, the maximal diameter of a plaque for the stable foliation. 

We will need boxes whose thickness is also well-behaved under the geodesic flow. Similarly to Proposition \ref{prop: pushofbox1}, we have: 
\begin{prop}\label{prop: pushofbox2}
For any $\vre >0$ and any $\q \in \pi^{-1}(\M^{(1)})$, there is a regular box $\BB$ in $\pi^{-1}(\M)$ containing $\q$ such that for any $t\geq 0$, $\tau_{\BB_t} \leq \vre.$  
\end{prop}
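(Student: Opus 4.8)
\textbf{Proof plan for Proposition \ref{prop: pushofbox2}.}

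The plan is to exploit the contraction property of the geodesic flow on the strong stable foliation, Proposition \ref{prop: properties of restricted AGY metric}\eqref{item: 2}, together with the decomposition of a stable leaf into a strong stable leaf and a one-parameter geodesic direction (Proposition \ref{prop: properties of restricted AGY metric}\eqref{item: 2.5}). The key point is that a plaque $\varphi(\{[x]\} \times U_{\y})$ of the stable foliation in $\BB$ sits inside the weak stable leaf $\WW^s(\q)$, which is foliated by strong stable leaves $\WW^{ss}$ permuted by $g_t$; moving along the $U_{\y}$ direction changes the strong stable leaf, and the quantity $\langle x, y\rangle$ tracks exactly which strong stable leaf one lands on (as recorded in the Remark following Proposition \ref{jacobian}). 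So the diameter of such a plaque, measured in $\mathrm{dist}^{(s)}$, can be bounded by the sum of a ``strong stable part'' (which contracts under $g_t$ for $t\geq 0$) and a ``geodesic part'' of size at most $\sup_{y_0,y_1\in U_{\y}} \left|\log\frac{\langle x,y_0\rangle}{\langle x,y_1\rangle}\right|$. The latter is precisely controlled by the distortion $\delta_{\BB}$, and it is \emph{preserved} (not contracted) under $g_t$ by Proposition \ref{prop: pushofbox1}. The way to handle this is to start with any regular box $\BB_0$ containing $\q$, and shrink it in the $U_{\y}$ direction until both its distortion and the strong-stable diameters of its plaques are small; then under $g_t$ the strong stable part decays and the geodesic part stays small, so the total stays below $\vre$.

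Concretely, first I would fix $\vre > 0$ and, using Lemma \ref{lem: local diffeo} and the construction following Definition \ref{def: box}, choose a regular box $\BB_0 \ni \q$ with parametrization $\varphi_0 : U'_{\x,0} \times U_{\y,0} \to L^{(1)}$. Next I would shrink $U_{\y,0}$ to a smaller neighborhood $U_{\y}$ of the relevant point $y_*$ (keeping $U'_{\x}$, or shrinking it too if convenient), obtaining a sub-box $\BB \subseteq \BB_0$; by shrinking enough I can guarantee simultaneously that $\delta_{\BB} < \vre$ and hence, since $u \mapsto \log(1+u)$ is continuous at $0$, that $\sup_{[x],y_0,y_1}\left|\log\frac{\langle x,y_0\rangle}{\langle x,y_1\rangle}\right| < \vre/2$, and also that the strong-stable diameter of every plaque of $\BB$ is less than $\vre/2$ (this uses only continuity of the $\varphi_0$ parametrization and compactness of $\overline{\BB_0}$). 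A sub-box of a regular box is regular, so $\BB$ is still regular. The bound on thickness at time $t$ then follows: for $\q_1, \q_2$ in the same stable plaque of $\BB_t$, write the connecting path inside $\WW^s$ as a concatenation of a strong stable segment and a geodesic segment; apply Proposition \ref{prop: properties of restricted AGY metric}\eqref{item: 2} to the first (it has length $\leq \vre/2$ at time $0$, hence $\leq \vre/2$ at time $t$ after pushing forward, since by Proposition \ref{prop: pushofbox1} the strong stable leaves of $\BB_t$ are images of those of $\BB$) and Proposition \ref{prop: properties of restricted AGY metric}\eqref{item: 2.5} to the second (its $g_t$-image is again a geodesic segment of length $\leq \vre/2$, because the geodesic parameter $\log\frac{\langle x, \bar y_0\rangle}{\langle x, \bar y_1\rangle}$ is unchanged by Proposition \ref{prop: pushofbox1}). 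Summing, $\mathrm{diam}^{(s)}\varphi_t(\{[x]\}\times \bar U_{\y}) \leq \vre$ for all $t \geq 0$.

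The main obstacle I anticipate is making the decomposition ``stable plaque = strong stable part $\times$ geodesic part'' precise at the level of the Finsler \emph{path} metric $\mathrm{dist}^{(s)}$, rather than just at the level of the underlying product coordinates. One must check that a path connecting two points of a stable plaque can be chosen so that its $g_t$-image still has controlled length for all $t\geq 0$ simultaneously, which is exactly where Proposition \ref{prop: properties of restricted AGY metric}\eqref{item: 2} and \eqref{item: 2.5} are designed to be applied; the subtlety is that \eqref{item: 2.5} bounds $\mathrm{dist}^{(s)}$ of two points differing by $g_t$, but here one needs a path that \emph{stays} in a single stable leaf while its geodesic displacement is bounded — this is fine since the geodesic flow preserves stable leaves, but writing it cleanly requires identifying, for each $[x]$, a basepoint strong stable leaf in the plaque and parametrizing the other strong stable leaves by the geodesic time needed to reach them. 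Once that bookkeeping is set up, everything else is a routine shrinking argument.
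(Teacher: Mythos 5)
Your plan is correct and follows essentially the same route as the paper's proof: both decompose the displacement within a weak stable plaque into a geodesic segment (whose time parameter, $\log\frac{\langle x,y_0\rangle}{\langle x,y_1\rangle}$, is unchanged by $g_t$) plus a strong stable segment (non-increasing under $g_t$ by Proposition \ref{prop: properties of restricted AGY metric}), and then shrink the initial box so that both contributions are below $\vre/2$. The only cosmetic difference is that the paper bounds the geodesic part directly by choosing a small time threshold $\vre_1$ around a fixed horospherical slice through $\q$, rather than routing the bound through the distortion $\delta_{\BB}$, and it carries out explicitly the basepoint-and-concatenation bookkeeping you flag as the remaining obstacle.
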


\ignore{
\textcolor{blue}{In the thickness discussion we would like to know
  that under the forward geodesic flow lengths of paths in the leaves
  of the weak stable foliation do not grow when we apply the geodesic
  flow. AG prove this for the strong stable foliation. (Recall the
  confusing notation remark.) We also know (AG) that the lengths of
  geodesic paths do not grow. To connect two points in a weak stable
  leaf we can concatenate a weak stable path and a geodesic path. The
  length of this path is an upperbound for distance. If our boxes have
  controlled diameter in both the strong stable direction and the
  geodesic direction we can show that the lengths of weak stable paths
  do not grow under the geodesic flow. Observe that the ``geodesic
  width" is bounded by $|\log \frac{\langle x, y_0  \rangle }{\langle
   x,  y_1  \rangle}|$ (see previous ``geometric interpretation") so in
  fact we have already bounded it in the discussion giving a bound for
  $\delta$.} 

\textcolor{blue}{Connection between $d^s$ and $d$: $d\le d^s$ and if
  both are small they are comparable in size for points in the strong
  stable manifold. It follows from AG Prop. 5.3 that distance less
  than 1/25 implies distortion less than 2.} 
\textcolor{red}{add the above discussion to the proof of Prop. 4.2.}

}
\begin{proof}
Let $L$ be a lift of $\M$ that contains $\q$ and let $\Gamma$ be the stabilizer in $\Mod(S,\Sigma)$ of $\q$. Since $\Mod(S,\Sigma)$ acts properly discontinuously on $\HHm$, there is a neighborhood $\V$ containing $\q$ such that for any $\gamma \in \Mod(S,\Sigma)$, either $\V \cdot \Gamma \cap \V = \emptyset$ or $\gamma \in \Gamma$. By Lemma \ref{lem: local diffeo}, let $\bar \BB \subset \V$ be a box containing $\q$ and let $\bar{\varphi} :  \bar{U}'_{\x} \times \bar{U}_{\y} \to \bar{\BB}$ be the parametrization of $\bar{\BB}$. Let $\dev(\bq) = (x_0, y_0)$, let $\hat{U}'_{\x}$ be a neighborhood of $[x_0]$ whose closure is contained in $\bar{U}'_{\x}$, and let

$$
\mathbf{C} \df \overline{\varphi} \left(\hat{U}'_{\x}  \times \{y_0\}  \right).
$$
That is, $\mathbf{C}$ is a bounded subset of a horospherical leaf, contained in a plaque of $\bar{\BB}$, and with closure in the interior of $\bar{\BB}$. Let $\vre_1 \in \left (0, \frac{\vre}{4}\right)$ be small enough so that 
$$
\mathbf{C}_1 \df \bigcup_{|t| \leq \vre_1}g_t (\mathbf{C}) \subset \bar {\BB},
$$
and let $\vre_2 \in \left(0, \frac{\vre}{4} \right)$ such that 
$$
\mathbf{C}_2 \df \bigcup_{\q_1 \in \mathbf{C}_1} \left\{ \q_2 \in \WW^{ss}(\q_1)  : \mathrm{dist}^{(ss)}(\q_1, \q_2) < \vre_2 \right\} 
$$
is contained in $\bar \BB$. Such numbers $\vre_1, \vre_2$ exist because $\mathbf{C}$ is bounded, and $\mathbf{C}_2$ contains a neighborhood of $\q$. We can therefore let $U'_{\x} \subset \hat{U}'_{\x}$ and $U_{\y}\subset \bar{U}_{\y}$ be small enough open sets so that $\BB = \bar{\varphi} (U'_{\x} \times U_{\y})$ contains $\q$ and is contained in $\mathbf{C}_2$. Since $\BB$ is contained in $\V$, we may replace $\BB$ by $\cap_{\gamma \in \Gamma} \BB \cdot \gamma$ and we can assume that $\B$ is regular, with stabilizer $\Gamma$. 

For $\q \in \BB$, let $\LL^s(\q)$ be the plaque through $\q$ for the weak stable foliation, that is, the connected component of $\q$ in $\BB \cap \WW^{s}(\q)$. For each $\q_2 \in \BB$ there is a point $\q_0$, which is the unique point in the intersection $\mathbf{C} \cap \LL^s(\q_2)$, and a path from $\q_0$ to $\q_2$ which is a concatenation of two paths $\gamma_1$ and $\gamma_2$. The path $\gamma_1=\{g_t\q_0 : t \in I \}$ from $\q_0$ to $\q_1$ goes along a geodesic arc, where $I$ is an interval of length at most $\vre_1$. The path $\gamma_2$ from $\q_1$ to $\q_2$ has sup-norm length at most $\vre_2$ and is contained in $\WW^{ss}(\q_2)$. Since $\vre_1, \vre_2 < \frac{\vre}{4}$, each point in any stable plaque in $\BB$ is within distance at most $\frac{\vre}{2}$ from the unique point at the  intersection of this plaque with $\mathbf{C}$, where the distance is measured using the distance function $\mathrm{dist}^{(s)}$. Concatenating such paths we see that the diameter of any stable plaque in $\BB$ is at most $\vre$, and this implies the same bound for stable plaques in $B$. That is, the thickness of $B$ is less than $\vre$. By Proposition \ref{prop: properties of restricted AGY metric}, the lengths of geodesic paths and of paths in strong  stable leaves, do not increase when pushed by $g_t$ for $t \geq 0$. Thus the same argument (using the pushes of $\gamma_1$ and $\gamma_2$ by $g_t$) give the required upper bound on the thickness of $\BB_t$. 
\end{proof}

For a compactly supported continuous function $f$ on $\M$, we denote by $\omega_f$ its continuity modulus with respect to the sup-norm distance function. In particular, $\omega_f(t) \to 0$ as ${t \to 0+}$ and 
$$
|f(q_1) - f(q_2)| \leq \omega_f(\mathrm{dist}(q_1, q_2)) \ \ \ \text{ for any } q_1, q_2 \in \M.
$$

The following key lemma says that for any horospherical measure $\nu$, any regular box $\BB$ and any test function $f$, the integral of $f$ with respect to $\nu|_B$ can be approximated by the integral of $f \circ \pi$ with respect to $\xmeasure$ on any one horospherical plaque of $\BB$, provided that $\BB$ has small distortion and small thickness.  We recall that $B \subset \M$ is defined as the image of $\BB$ by $\pi$. 

\begin{lem}\label{comparison}
Let $\nu$ be a horospherical measure, let $f \in C_c \left(\M^{(1)} \right)$ and let $\BB$ be a regular box such that $\nu(B)>0$. Then for any $y \in U_{\y}$,

\begin{equation*}
    \left| \frac{1}{\nu(B)}\int_{B} f \, d\nu - \frac{1}{\xmeasure(\LL_{y})} \int_{\LL_{y}} f \circ \pi \ d\xmeasure \right| \leq  \omega_f(\tau_{\BB}) + 2\|f\|_{\infty}\delta_{\BB}.
  \end{equation*}
\end{lem}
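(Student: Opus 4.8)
The plan is to use the local disintegration formula \eqref{integral} to rewrite $\int_B f\,d\nu$ as a double integral over $U_{\y}$ and the plaques $\LL_y$, and then to control two independent sources of error: the variation of $f$ as one moves within a stable plaque (governed by the thickness $\tau_{\BB}$), and the variation of the plaque masses $\xmeasure(\LL_y)$ as $y$ ranges over $U_{\y}$ (governed by the distortion $\delta_{\BB}$ via \eqref{variationofmass}). First I would fix a regular box $\BB$, a transverse measure $\lambda$ on $U_{\y}$ as in \eqref{integral}, and use the diffeomorphisms $\varphi_{y_0,y_1}$ of \S\ref{subsec: distortion} to transport everything to a single reference plaque $\LL_{y}$. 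Concretely, for each $y'\in U_{\y}$ and each point $\q\in\LL_{y'}$, the point $\varphi_{y,y'}^{-1}(\q)\in\LL_{y}$ lies on the same stable plaque $\varphi(\{[x]\}\times U_{\y})$ (by Definition~\ref{def: box}, since they share the same $\mathbf{P}(V_{\x})$-coordinate), hence $\mathrm{dist}^{(s)}$ between them is at most $\tau_{\BB}$, and so $|f(\pi\q)-f(\pi\varphi_{y,y'}^{-1}(\q))|\le\omega_f(\tau_{\BB})$ by the definition of the continuity modulus and Proposition~\ref{prop: properties of restricted AGY metric}\eqref{item: 1}.

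The core computation is then a change of variables. Using \eqref{integral} and then Proposition~\ref{jacobian} to pull back $\xform|_{\LL_{y'}}$ to $\xform|_{\LL_{y}}$, one gets
\begin{align*}
\int_B f\,d\nu = \int_{U_{\y}}\left(\int_{\LL_{y}} (f\circ\pi\circ\varphi_{y,y'})\,\delta_{y'}\,d\xmeasure\right)d\lambda(y').
\end{align*}
Replacing $f\circ\pi\circ\varphi_{y,y'}$ by $f\circ\pi$ on $\LL_{y}$ costs at most $\omega_f(\tau_{\BB})\int_{\LL_{y}}\delta_{y'}\,d\xmeasure$ in absolute value, and $\int_{\LL_{y}}\delta_{y'}\,d\xmeasure = \xmeasure(\LL_{y'})$ by Proposition~\ref{jacobian}. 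So up to an error of $\omega_f(\tau_{\BB})\int_{U_{\y}}\xmeasure(\LL_{y'})\,d\lambda(y') = \omega_f(\tau_{\BB})\,\nu(B)$, the integral $\int_B f\,d\nu$ equals $\int_{U_{\y}}\xmeasure(\LL_{y'})\,d\lambda(y')\cdot\big(\text{average of }f\circ\pi\text{ over }\LL_{y}\big)$ — no, more precisely it equals $\int_{U_{\y}}\left(\int_{\LL_{y}}f\circ\pi\,d\xmeasure\right)\frac{\xmeasure(\LL_{y'})}{\xmeasure(\LL_{y})}\,d\lambda(y')$. Dividing by $\nu(B) = \int_{U_{\y}}\xmeasure(\LL_{y'})\,d\lambda(y')$ and using \eqref{variationofmass}, which gives $\big|\tfrac{\xmeasure(\LL_{y'})}{\xmeasure(\LL_{y})}-1\big|\le\delta_{\BB}$, the ratio $\frac{1}{\nu(B)}\int_B f\,d\nu$ differs from $\frac{1}{\xmeasure(\LL_{y})}\int_{\LL_{y}}f\circ\pi\,d\xmeasure$ by at most $\omega_f(\tau_{\BB})$ (from the plaque-transport step) plus a term controlled by $\delta_{\BB}$ and $\|f\|_\infty$; bounding $|\int_{\LL_{y}}f\circ\pi\,d\xmeasure|\le\|f\|_\infty\xmeasure(\LL_{y})$ and absorbing constants yields the stated bound $\omega_f(\tau_{\BB})+2\|f\|_\infty\delta_{\BB}$.

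The main obstacle I anticipate is bookkeeping rather than conceptual: one must carefully track which quantities are being compared on which plaque, since the disintegration \eqref{integral} expresses $\int_B f\,d\nu$ as an integral of $\int_{\LL_{y'}}f\circ\pi\,d\nu_{\xform}$ over the varying plaques $\LL_{y'}$, whereas the statement compares against a \emph{fixed} plaque $\LL_{y}$; the two distinct error mechanisms (the $\omega_f$ term, coming from moving $f$ along stable plaques via $\varphi_{y,y'}$, and the $\delta_{\BB}$ term, coming from the non-uniformity of the total masses $\xmeasure(\LL_{y'})$) must be kept separate and estimated independently, and one should be slightly careful about the case $\lambda(U_{\y})=0$, which cannot occur when $\nu(B)>0$. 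A minor point worth checking is that $\varphi_{y,y'}$ indeed carries each stable plaque of $\BB$ into itself fiberwise over $\mathbf{P}(V_{\x})$, which is immediate from the condition $\Psi\circ\dev\circ\varphi=\mathrm{Id}$ in Definition~\ref{def: box}.
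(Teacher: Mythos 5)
Your proposal is correct and takes essentially the same route as the paper's proof: disintegrate $\nu|_B$ via equation~\eqref{integral}, transport plaque integrals with the maps $\varphi_{y,y'}$ using the Jacobian formula of Proposition~\ref{jacobian}, bound the variation of $f$ along stable plaques by $\omega_f(\tau_{\BB})$, and control the mass variation via \eqref{variationofmass}, then average over $y'$. The only caveat is that your intermediate ``equals'' (replacing $\int_{\LL_y}(f\circ\pi)\,\delta_{y'}\,d\xmeasure$ by $\bigl(\int_{\LL_y}f\circ\pi\,d\xmeasure\bigr)\,\xmeasure(\LL_{y'})/\xmeasure(\LL_y)$) is only an approximation, but since $|\delta_{y'}-1|\leq\delta_{\BB}$ pointwise the discrepancy is absorbed into the $\|f\|_\infty\delta_{\BB}$ error term, with the same level of bookkeeping as the paper's own argument.
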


\begin{proof}
For $y, y'\in U_{\y}$, let $\varphi_{y, y'} : \LL_y \to \LL_{y'}$ be as in \S \ref{subsec: distortion}. On the one hand, for any $y' \in U_{\y}$ we have
\begin{align*}
    \left| \int_{\LL_{y'}} f \circ \pi \, d \xmeasure - \int_{\LL_{y'}} f \circ \pi \circ \varphi_{y',y} \, d\xmeasure \right|  &\leq  \int_{\LL_{y'} } \left| f \circ \pi -f \circ \pi \circ \varphi_{y',y} \right| \, d\xmeasure \\
   &\leq  \omega_f(\tau_{\BB})\xmeasure(\LL_{y'}).
\end{align*}

The second inequality follows from the fact that, by definition of the thickness, for any $[x] \in U'_{\x}$, the distance between the points $\varphi([x])$ and $\varphi(\varphi_{y',y}([x]))$, with respect to the distance function $\mathrm{dist}^{(s)}$, is at most $\tau_{\BB}$ and thus also with respect to the distance function $\mathrm{dist}$, together with the fact that $\pi$ is a contraction.

On the other hand, by the definition of $\delta_{\BB},$ we have:  

\begin{align*}
    \left| \int_{\LL_{y'}} f \circ \pi \circ \varphi_{y',y} \, d \xmeasure - \int_{\LL_{y}} f \circ \pi \, d\xmeasure \right| &=  \left| \int_{\LL_{y}} f \circ \pi \, d\varphi_{y,y'}^{\ast} \xmeasure - \int_{\LL_{y}} f \circ \pi \, d\xmeasure \right| \\ 
    &\leq \|f\|_{\infty}\delta_{\BB}\xmeasure(\LL_{y}).
\end{align*}

The last inequality follows from Proposition \ref{jacobian} and the definition of $\delta_{B}$. Using  equation~\eqref{variationofmass} we deduce that for any $y,y'\in V_{\y}$,
\begin{equation*}
   \left| \frac{1}{\xmeasure(\LL_{y'})} \int_{\LL_{y'}} f \circ \pi \, d\xmeasure  -
     \frac{1}{\xmeasure(\LL_{y})} \int_{\LL_{y}} f \circ \pi \, d\xmeasure  \right|
   \leq \omega_f(\tau_{\BB}) + 2\|f\|_{\infty}\delta_{\BB}. 
\end{equation*}

Let $y_0 \in U_{\y}$ and let $\lambda$ be a  measure on $U_{\y}$ as in
equation~\eqref{integral}. Notice that $\nu(B) = \int_{U_{\y}} \xmeasure(\LL_y)\ d\lambda(y)$. Therefore
\begin{align*}
    & \left| \frac{1}{\nu(\B)} \int_{\B} f \ d\nu -
      \frac{1}{\xmeasure(\LL_{y_0})} \int_{\LL_{y_0}} f \circ \pi \, d\xmeasure \right| \\ 
    \leq & \left| \frac{1}{\nu(\B)} \int_{U_{\y}} \left( \int_{L_{y}} f \circ \pi \, d\xmeasure \right) d\lambda(y) - \frac{1}{\xmeasure(\LL_{y_0})} \int_{\LL_{y_0}} f \circ \pi \, d\nu_{ \xform} \right|  \\ 
    \leq & \frac{1}{\nu(B)} \int_{U_{\y}} \xmeasure(\LL_y) \left| \frac{1}{\xmeasure(\LL_{y})} \int_{\LL_{y}} f \circ \pi \, d\xmeasure  - \frac{1}{\xmeasure(\LL_{y_0})} \int_{\LL_{y_0}} f \circ \pi \, d\xmeasure \right| \ d\lambda(y) \\
    \leq & \omega_f(\tau_{\BB}) + 2\|f\|_{\infty}\delta_{\BB}.
\end{align*}
\end{proof}

\subsection{Mixing of geodesics, nondivergence of horocycles}
We recall the following useful results:

\begin{lem}[Nondivergence of the horocycle flow \cite{MW}] \label{nondivergence}
For any $\varepsilon>0$ and $c>0$ there is a compact $K \subset \M^{(1)}$ such that for any $q \in \M^{(1)}$, one of the following holds: 

\begin{itemize}
    \item $\underset{T \to \infty}{\mathrm{liminf}} \ \frac{1 }{T} \int_0^T \mathbf{1}_K(u_s  q) \, ds> 1 - \varepsilon$ (where $\mathbf{1}_K$ is the indicator of $K$). 
    \item The surface $q$ has a horizontal saddle connection of length smaller than $c$. 
\end{itemize}
   
\end{lem}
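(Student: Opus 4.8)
The plan is to obtain the lemma from the quantitative nondivergence theorem for the horocycle flow on invariant subvarieties established in \cite{MW}; if that theorem is stated there in exactly this form there is nothing to do, and otherwise only some bookkeeping with parameters is required. The one structural remark to isolate first is that horizontal saddle connections, and their lengths, are $U$-invariant: by equation~\eqref{action of us}, if $\sigma$ is a saddle connection on $M_q$ with $\hol(q,\sigma)=(a,b)$ then $\hol(u_sq,\sigma)=(a+sb,b)$, so $\sigma$ is horizontal on $M_q$ iff it is horizontal on $M_{u_sq}$, with the same length; in particular the second alternative in the lemma is a property of the whole orbit $Uq$.

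A typical quantitative nondivergence statement, of Dani--Margulis type, provides constants $C=C(\M)>0$, $\alpha=\alpha(\M)>0$ and $\rho_0>0$ with the following property. Writing $K_\rho \df \{q\in\M^{(1)} : M_q \text{ has no saddle connection of length} < \rho\}$ for the (compact) sub-level sets of the systole, for every $q\in\M^{(1)}$, every $T>0$ and every $\rho\in(0,\rho_0)$, \emph{either} some saddle connection $\sigma$ on $M_q$ satisfies $\sup_{0\le s\le T}\|\hol(u_sq,\sigma)\|<\rho$, \emph{or}
\[
\frac1T\bigl|\{s\in[0,T] : u_sq\notin K_\rho\}\bigr| \;\le\; C\Bigl(\tfrac{\rho}{\rho_0}\Bigr)^{\alpha}.
\]

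To conclude, fix $\varepsilon>0$ and $c>0$, pick $\rho\in(0,\min(\rho_0,c))$ with $C(\rho/\rho_0)^\alpha<\varepsilon$, and set $K \df K_\rho$; it remains to check that any $q$ failing the first alternative of the lemma satisfies the second. So suppose $M_q$ has no horizontal saddle connection of length $<c$. I claim the good-behaviour case of the dichotomy holds for all large $T$: otherwise there are arbitrarily large $T$ with a saddle connection $\sigma_T$, $\hol(q,\sigma_T)=(a_T,b_T)$, such that $\|(a_T+sb_T,b_T)\|<\rho$ for all $s\in[0,T]$; taking $s=0$ gives $|a_T|,|b_T|<\rho$ and taking $s=T$ gives $|b_T|<2\rho/T$, so $(a_T,b_T)\to(a_\infty,0)$ along a subsequence with $|a_\infty|\le\rho$. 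Since the holonomies of saddle connections on the fixed surface $M_q$ form a closed discrete subset of $\R^2$, this sequence is eventually constant, hence $\sigma_T$ is eventually a fixed horizontal saddle connection of length $|a_\infty|\le\rho<c$ --- contradiction. Therefore $\frac1T|\{s\in[0,T]:u_sq\in K\}|\ge 1-C(\rho/\rho_0)^\alpha>1-\varepsilon$ for all large $T$, and passing to $\liminf_{T\to\infty}$ gives the lemma.

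I expect the only real obstacle to lie entirely outside this derivation, namely in \cite{MW} itself (the quantitative nondivergence estimate for the $U$-action inside an invariant subvariety); within the argument above the sole delicate point is the limiting step converting ``a saddle connection whose holonomy stays $<\rho$ on arbitrarily long time intervals'' into ``an honest horizontal saddle connection of length $<c$'', which uses only discreteness of the holonomy spectrum of a fixed translation surface, and everything else is elementary manipulation of the parameters $\varepsilon$, $c$, $\rho$.
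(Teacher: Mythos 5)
The paper does not actually prove this lemma: it is quoted directly from \cite{MW} (Minsky--Weiss), so there is no internal argument to compare yours against; what you have written is the routine deduction of the stated form from the quantitative nondivergence theorem of \cite{MW}, and it is essentially correct. Two points of bookkeeping deserve attention. First, your paraphrased dichotomy conflates two parameters: in the actual theorem the ``threshold'' appearing in the first alternative (some saddle connection whose length stays below it on all of $[0,T]$) and the ``level'' defining the compact set are distinct, and the bound is $C(\text{level}/\text{threshold})^{\alpha}$; with the level equal to the threshold, as you wrote it, the bound degenerates to $C$ and the statement is not what \cite{MW} gives. The fix is exactly the bookkeeping you anticipated: take the threshold $\rho_1<\min(\rho_0,c)$ and then a level $\varepsilon_1<\rho_1$ with $C(\varepsilon_1/\rho_1)^{\alpha}<\varepsilon$, set $K=K_{\varepsilon_1}$; your limiting argument (which is fine --- the holonomy vectors of saddle connections on the fixed surface $M_q$ form a closed discrete set not containing $0$, so the convergent sequence stabilizes) then produces a horizontal saddle connection of length at most $\rho_1<c$, as required. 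Second, \cite{MW} is stated for strata, whereas the lemma asks for $K\subset\M^{(1)}$; since $\M^{(1)}$ is closed (Proposition \ref{prop: invariant by the action of GL}) and $U$-invariant, replacing $K$ by $K\cap\M^{(1)}$ preserves compactness and does not affect the time averages, so the restriction is immediate. With these adjustments your derivation matches the intended provenance of the lemma.
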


For any $0 < c \leq \infty$, let $\M_{<c}$ be the subset the subset of $\M^{(1)}$ consisting of surfaces which have a horizontal saddle connection of length smaller than $c$, and let  $\M_{\geq c} = \M^{(1)} - \M_{<c}$. We deduce the following corollary.
\begin{lem} \label{nondivergence cor}
For any $0 < \varepsilon < 1 $ and $0 < c \leq \infty$, there is a compact $K \subset \M^{(1)}$ such that for any $U$-invariant measure $\mu$ on $\M^{(1)}$, $$\mu(K) > (1-\varepsilon)\mu(\M_{\geq c}).$$
\end{lem}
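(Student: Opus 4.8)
The plan is to derive Lemma~\ref{nondivergence cor} from Lemma~\ref{nondivergence} (Nondivergence of the horocycle flow) by an averaging argument along $U$-orbits, using the ergodic decomposition of $\mu$ with respect to the horocycle flow. First I would fix $\vre$ and $c$, apply Lemma~\ref{nondivergence} with these parameters to obtain a compact set $K \subset \M^{(1)}$, and enlarge $K$ slightly if necessary (still compact) so that it is ``fat'' in the sense that its indicator is bounded below by a continuous function; this is a harmless technical step and can in fact be skipped if one works with the closed set $K$ directly and notes that $\mathbf{1}_K$ is upper semicontinuous, hence a decreasing limit of continuous functions, so that $q \mapsto \liminf_{T} \frac1T\int_0^T \mathbf{1}_K(u_sq)\,ds$ is measurable.

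The core of the argument is as follows. Let $\mu$ be a $U$-invariant (Radon, sub-probability if we wish to normalize) measure on $\M^{(1)}$. Define
\[
A \df \left\{ q \in \M^{(1)} : \underset{T\to\infty}{\liminf}\ \tfrac1T\int_0^T \mathbf{1}_K(u_sq)\,ds > 1-\vre \right\}.
\]
By Lemma~\ref{nondivergence}, $\M^{(1)} = A \cup \M_{<c}$, so in particular $\M_{\geq c} \subset A$. Now I would integrate the $\liminf$ inequality against $\mu$ over $A$: since $\mathbf{1}_K \leq 1$, Fatou's lemma gives
\[
\mu(A) \cdot (1-\vre) \leq \int_A \underset{T\to\infty}{\liminf}\ \tfrac1T\int_0^T \mathbf{1}_K(u_sq)\,ds\ d\mu(q) \leq \underset{T\to\infty}{\liminf}\ \int_A \tfrac1T \int_0^T \mathbf{1}_K(u_sq)\,ds\ d\mu(q).
\]
By Tonelli's theorem and the $U$-invariance of $\mu$, for each $T$ we have $\int_A \frac1T\int_0^T \mathbf{1}_K(u_sq)\,ds\,d\mu(q) \leq \frac1T\int_0^T \mu(K)\,ds = \mu(K)$ (the inequality because we restrict the outer integral to $A \subseteq \M^{(1)}$ and $\mathbf{1}_K \geq 0$). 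Hence $\mu(K) \geq (1-\vre)\mu(A) \geq (1-\vre)\mu(\M_{\geq c})$. To get the strict inequality claimed in the statement, one either replaces $\vre$ by a slightly smaller $\vre' < \vre$ at the outset (applying Lemma~\ref{nondivergence} with $\vre'$), so that $\mu(K) \geq (1-\vre')\mu(\M_{\geq c}) > (1-\vre)\mu(\M_{\geq c})$ whenever $\mu(\M_{\geq c}) > 0$, or observes that the $\liminf$ in the definition of $A$ is a strict inequality so one may absorb an extra factor; the cleanest is the former.

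I do not expect a serious obstacle here, as this is a routine Fatou/Tonelli argument; the only point requiring a little care is the measurability of the time-average $\liminf$ function (handled by semicontinuity of $\mathbf{1}_K$ and the fact that the finite-$T$ averages are Borel in $q$, being integrals of jointly measurable functions), and the harmless passage from $\vre$ to $\vre'$ to upgrade $\geq$ to $>$. One should also note that Lemma~\ref{nondivergence cor} as stated does not require $\mu$ to be finite: the argument above only used $\mu(K) < \infty$ (true since $K$ is compact and $\mu$ is Radon) and the $U$-invariance, and all integrals over $A$ that appear are bounded by $\mu(K)$ or by $\mu(A)$, which may be infinite but the inequality $\mu(K) \geq (1-\vre')\mu(\M_{\geq c})$ still makes sense and is proved by truncation if $\mu(\M_{\geq c}) = \infty$ (in which case $\mu(K) = \infty$ as well, so there is nothing to prove, since $K$ being compact forces the left side finite only if we additionally know $\mu$ is Radon — in that degenerate case $\mu(\M_{\geq c})$ must actually be finite, giving a contradiction unless $\mu$ is not Radon, so the statement is vacuous or immediate).
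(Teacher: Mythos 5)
Your argument is correct, and it reaches the conclusion by a more elementary route than the paper. The paper's proof also starts from Lemma \ref{nondivergence}, but it then invokes a Birkhoff-type ergodic theorem for locally finite invariant measures (citing Krengel) to produce an a.e.\ limit $f$ of the time averages with $\|f\|_{L^1(\mu)} \leq \mu(K)$, and integrates the pointwise bound $f > 1-\vre$ on $\M_{\geq c}$. You replace the ergodic theorem by Fatou's lemma together with Tonelli and $U$-invariance, which yields directly that the integral of the liminf of the time averages is at most $\mu(K)$, without needing the a.e.\ existence of the limit; this is a genuine simplification, at the cost of first getting only the weak inequality, which you repair by running the argument with some $\vre' < \vre$ (the strict inequality is in any case only meaningful when $\mu(\M_{\geq c}) > 0$, an edge case that the paper's own proof glosses over in exactly the same way). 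Two small points. First, for $c = \infty$ Lemma \ref{nondivergence} should be applied with an arbitrary finite $c'$, as the paper notes, since $\M_{\geq \infty} \subset \M_{\geq c'}$; your phrase ``apply Lemma \ref{nondivergence} with these parameters'' should be read in that sense. Second, your closing discussion of a possibly infinite $\mu(\M_{\geq c})$ is unnecessary and somewhat muddled: Fatou and Tonelli are valid for $[0,\infty]$-valued integrands, so the chain $\mu(K) \geq (1-\vre')\mu(A) \geq (1-\vre')\mu(\M_{\geq c})$ holds as stated, and since $K$ is compact and $\mu$ is Radon the left-hand side is finite, so the inequality itself shows $\mu(\M_{\geq c}) < \infty$; no truncation or case analysis is needed.
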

\begin{proof}
Given $\varepsilon$ and $c$, let $K$ be a compact set given by Lemma \ref{nondivergence} (if $c= \infty$, we can apply Lemma \ref{nondivergence} to any finite $c$). An application of a generalisation  of the Birkhoff ergodic theorem for locally finite measures (see \cite[Thm. 2.3]{Krengel} for a general formulation) to the invariant measure $\mu$ and the function $\mathbf{1}_K$ shows that there is a non-negative function $f \in L^1(\mu)$ such that $\|f\|_{L^1(\mu)}  \leq \|\mathbf{1}_{K}\|_{L^1(\mu)} = \mu(K)$ and for $\mu$-almost every $q \in \M^{(1)}$, 
\begin{equation*}
    \frac{1}{T} \left| \{ s \in [0,T] : \ u_s  q \in K\} \right| \underset{T \to \infty}{\longrightarrow} f(q). 
\end{equation*}
By Lemma \ref{nondivergence},  we have that for almost every $q \in \M_{\geq c}$, $f(q)>1 - \varepsilon$. As a consequence,
$$
\mu(K) \geq \int_{\M} f \, d\mu >(1-\varepsilon)\mu(\M_{\geq c}).
$$ 
\end{proof}

Lemma \ref{nondivergence cor} will be used at several places in this text. The first fact we deduce from it is the following:  

\begin{lem}\label{proportion} Let $\nu$ be a saddle connection free horospherical measure and let $\delta>0$. Then there is a regular box $\BB \subset \pi^{-1}(\M)$, a constant $c>0$ and an unbounded increasing sequence of times $t_i$ such that:

\begin{enumerate}[(a)]
    \item \label{item: lemma 1} For all $i \geq 0$, $\nu(\B_{t_i}) > c\nu (\M)$, where $\B_{t_i} = \pi(\BB_{t_i})$.
    \item \label{item: lemma 2} Both the thickness and distortion of each $\BB_{t_i}$ are smaller than $\delta$. 
  \end{enumerate}
In particular it follows from (a) that $\nu$ is finite. 

\end{lem}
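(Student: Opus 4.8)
The plan is to use a box $\BB$ of the type produced by Proposition~\ref{prop: pushofbox2}, so that automatically $\tau_{\BB_t}\le\delta$ for all $t\ge 0$, and to choose $\BB$ small enough at the outset that its distortion $\delta_{\BB}$ is already $<\delta$; by Proposition~\ref{prop: pushofbox1}, $\delta_{\BB_t}=\delta_{\BB}<\delta$ for every $t$, so part~\eqref{item: lemma 2} is handled. The only real content is part~\eqref{item: lemma 1}: producing a fixed $c>0$ and times $t_i\to\infty$ with $\nu(B_{t_i})>c\,\nu(\M)$. First I would replace $\nu$ by $\nu|_{\M^{(1)}}$ (it is already supported there) and observe that by Proposition~\ref{prop: horocycle invariance} the measure $\nu$ is $U$-invariant, so Lemma~\ref{nondivergence cor} applies. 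Since $\nu$ is saddle connection free, $\nu(\M_{\ge c})=\nu(\M)$ for every finite $c$, hence for any $\vre\in(0,1)$ there is a compact $K=K_\vre\subset\M^{(1)}$ with $\nu(K)>(1-\vre)\nu(\M)$; in particular (taking $\vre=\tfrac12$, say) there is a \emph{single} compact set $K$ with $\nu(K)>0$.

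Next I would cover $K$ by finitely many regular boxes of the Proposition~\ref{prop: pushofbox2} type: for each $\q$ in (the preimage of) $K$ pick such a box $\BB^{(\q)}$ with thickness $\le\delta$ for all forward times and, shrinking it, distortion $<\delta$; the images $\pi(\BB^{(\q)})$ are open and cover the compact $K$, so finitely many $B^{(1)},\dots,B^{(N)}$ suffice. Then $\nu(K)>0$ forces $\nu(B^{(j)})>0$ for some $j$; fix that box and call it $\BB$, with $B=\pi(\BB)$. Now apply Lemma~\ref{nondivergence cor} once more, this time with a new small parameter, to get a compact $K'\subset\M^{(1)}$ such that $\nu(K')>(1-\vre')\nu(\M)$ for the $U$-invariant measure $\nu$; the point of this step is that $g_{t}$ contracts the stable direction, so $g_{-t}(B)$ "spreads out" and, for a positive-density set of times $t$, $g_{-t}(B)$ (equivalently $B_{t}=g_t(\BB)$ after the appropriate bookkeeping) must capture a definite proportion of the $\nu$-mass that sits in $K'$. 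Concretely, the weak-stable plaques of $\BB$ shrink under $g_t$ only in the $V_{\y}$-direction and the horospherical plaque is carried to a horospherical plaque of comparable mass (Proposition~\ref{jacobian}, equation~\eqref{variationofmass}, using $\delta_{\BB_t}<\delta$), so $\nu(B_t)$ is, up to the bounded distortion factor, the integral over a transverse arc of a quantity that does not decay; averaging $\nu(B_t)$ over $t\in[0,T]$ and using mixing of the $A$-action together with the $U$-nondivergence estimate of Lemma~\ref{nondivergence cor} keeps the average bounded below by a fixed $c>0$, whence infinitely many individual times $t_i\to\infty$ satisfy $\nu(B_{t_i})>c\,\nu(\M)$.

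The main obstacle is the last step: turning "a compact set carries most of the mass" plus "$g_t$ contracts stable leaves" into the uniform lower bound $\nu(B_{t_i})>c\,\nu(\M)$ with $c$ independent of $i$. This is where mixing of the geodesic flow (for the special flat measure) and the nondivergence of horocycles enter, exactly as flagged in \S\ref{subsec: distortion}; the delicate point is that $\nu$ is only \emph{a priori} Radon, not finite, so one cannot normalize it and must argue with the ratios $\nu(B_t)/\nu(\M)$ directly, controlling them via the disintegration~\eqref{integral} and the distortion bound~\eqref{variationofmass}, while using Lemma~\ref{nondivergence cor} to prevent the mass from escaping to the cusp along the orbit. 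Once part~\eqref{item: lemma 1} is established, finiteness of $\nu$ is immediate: $\nu(\M)\ge\nu(B_{t_i})>c\,\nu(\M)$ is vacuous, so instead one notes that each $B_{t_i}$ is relatively compact with $\nu(B_{t_i})\ge c\,\nu(\M)$ and $\nu(B_{t_i})$ is finite (the box is relatively compact and $\nu$ is Radon), forcing $\nu(\M)<\infty$.
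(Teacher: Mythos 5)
Your setup for part~(b) is fine (Propositions \ref{prop: pushofbox2} and \ref{prop: pushofbox1} do exactly what you say), and your derivation of finiteness from part~(a) is correct. But the core of part~(a) has a genuine gap. First, mixing of the geodesic flow cannot enter here: mixing is a statement about the special flat measure $m_{\M}$, while the quantity you need to bound from below is $\nu(B_t)$ for the unknown measure $\nu$, which is not assumed $A$-invariant and whose relation to $m_{\M}$ is precisely what Theorem \ref{thm: classification} is trying to establish. Averaging $\nu(B_t)$ over $t\in[0,T]$ and invoking mixing is therefore circular/unusable. Second, you fix the box $\BB$ at time $0$ using only $\nu(B)>0$; nothing prevents the $\nu$-mass from drifting away from that particular box under $g_t$, so there is no reason this fixed box satisfies $\nu(B_{t})>c\,\nu(\M)$ for even one large $t$, let alone along a sequence $t_i\to\infty$. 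The disintegration \eqref{integral} and the distortion bound \eqref{variationofmass} control the plaque masses $\xmeasure(\LL_y)$ inside a box, but give no control whatsoever on the transverse mass that $\nu$ assigns to the box $\BB_t$, which is exactly the quantity that could decay.

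The paper's mechanism, which your sketch gestures at but does not carry out, is a pigeonhole over a fixed finite cover applied to the \emph{pushforwards} of $\nu$. Set $\nu_t \df (g_{-t})_*\nu$. Since $g_t$ normalizes $U$ and $\nu$ is $U$-invariant (Proposition \ref{prop: horocycle invariance}) and saddle connection free, each $\nu_t$ is again $U$-invariant and saddle connection free, so the \emph{single} compact set $K$ from Lemma \ref{nondivergence cor} (with $\vre=\tfrac12$, $c=\infty$) satisfies $\nu_t(K)>\tfrac12\nu_t(\M)=\tfrac12\nu(\M)$ \emph{uniformly in} $t$ — this uniformity over all $U$-invariant measures is the crucial point, and it replaces any appeal to mixing. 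Then cover $K$ by the $\pi$-images of finitely many regular boxes $\BB_1,\dots,\BB_N$ with the good thickness/distortion properties for all forward times; for each $t$ there is $j(t)$ with $\nu_t\bigl(B_{j(t)}\bigr)\ge \nu_t(K)/N > \nu(\M)/(2N)$, i.e. $\nu\bigl(\pi(g_t\BB_{j(t)})\bigr)>c\,\nu(\M)$ with $c=\tfrac1{2N}$, and finally one passes to a subsequence $t_i\to\infty$ along which $j(t_i)$ is constant; that constant box is the $\BB$ of the lemma. Your proposal chooses the box before seeing the times and then tries to propagate a lower bound forward in $t$, which is the step that fails.
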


\begin{proof}
Let $K$ be a compact subset as in Lemma \ref{nondivergence cor} for $\varepsilon = \frac12, c=\infty$, and denote $\nu_t \df (g_{-t})_{\ast}\nu$. By Proposition \ref{prop: horocycle invariance}, $\nu$ is $U$-invariant, and since $g_t$ normalizes $U$, the same holds for $\nu_t$. Since $\nu$ is saddle-connection free, so is $\nu_t$. So, applying Lemma \ref{nondivergence cor} to $\nu_t$,
$$\nu_t(K) > \frac{\nu_t(\M_{\geq \infty})}{2} = \frac{\nu_t(\M)}{2}.
$$



For every $\delta>0$, using Proposition \ref{prop: pushofbox2}, $K \cap \M^{(1)}$ can be covered by the image by $\pi$ of regular boxes $\BB_1, \ldots, \BB_N$ whose distortion is smaller than $\delta$, and for which the thickness of $g_t(\BB_j)$ is smaller than $\delta$, for each $j$ and each $t \geq 0$. By Lemma \ref{prop: pushofbox1}, the distortion of $g_t(\BB_j)$ is also less than $\delta$ for each $j$ and each $t \geq 0$. Let $c \df \frac{1}{2N}$. For each $t$, there is $j=j(t) \in \{1, \ldots, N\}$ such that
$$\nu_t\left(B_j \right) \geq \frac{\nu_t(K) }{N}> c \, \nu_t(\M).
$$
Let $t_i
\to \infty$ be a sequence along which $j = j(t_i)$ is constant. Then \ref{item:
  lemma 1} and \ref{item: lemma 2} hold  for $\BB = \BB_j$. 
\end{proof}

\begin{lem}[Mixing of the geodesic flow] \label{mixing} 
For any invariant subvariety $\M$, the geodesic flow is mixing with
respect to the special flat measure on 
$\M^{(1)}$. 
\end{lem}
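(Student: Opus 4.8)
The plan is to deduce mixing of the geodesic flow on $\M^{(1)}$ with respect to the special flat measure $\cone$ from the known ergodicity and spectral-gap properties of the $\SL$-action. First I would invoke that the special flat measure $\cone$ is $\SL$-invariant, ergodic for the $\SL$-action, and finite on $\M^{(1)}$ (the finiteness of $\cone$ on $\M^{(1)}$ is part of the structure theory, as $\M^{(1)}$ carries a unique ergodic $G$-invariant finite smooth measure, noted in the introduction). The group $G=\SL$ is a connected simple Lie group with finite center, generated (up to center) by unipotent one-parameter subgroups.

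Next I would recall the standard fact from representation theory of $\SL$: for any unitary representation of $G$ with no nonzero $G$-invariant vectors, the matrix coefficients $\langle g_t v, w\rangle$ decay to $0$ as $t \to \pm\infty$ (the Howe--Moore theorem, or equivalently vanishing of matrix coefficients at infinity for $\SL$). Apply this to the representation of $G$ on $L^2_0(\M^{(1)}, \cone)$, the orthogonal complement of the constants. Ergodicity of $\cone$ under $G$ means there are no nonzero $G$-invariant vectors in $L^2_0$, so Howe--Moore gives $\langle g_{t*} \varphi, \psi\rangle \to 0$ for $\varphi, \psi \in L^2_0$. Unwinding, for $F_1, F_2 \in L^2(\cone)$ this says
\begin{equation*}
\int_{\M^{(1)}} F_1(g_t q) F_2(q)\, d\cone(q) \xrightarrow[t\to\infty]{} \frac{1}{\cone(\M^{(1)})}\int F_1\, d\cone \int F_2\, d\cone,
\end{equation*}
which is precisely mixing of the geodesic flow $A = \{g_t\}$.

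The one point requiring care — and the likely main obstacle — is ensuring that the hypotheses of Howe--Moore are genuinely in force, namely that $\cone$ is finite and $G$-ergodic on $\M^{(1)}$. Finiteness is cited as part of the definition of the special flat measure. For $G$-ergodicity: this follows from the structure theory of Eskin--Mirzakhani--Mohammadi, since $\M^{(1)}$ is (by our standing irreducibility assumption) a single $G$-orbit closure and the special flat measure is the unique ergodic $G$-invariant probability measure on it; alternatively one cites the classical ergodicity results (Masur, Veech) in the case of strata and their extension to affine invariant submanifolds. Given these inputs, the proof is a direct quotation of Howe--Moore; I would simply cite a standard reference such as \cite{Einsiedler_Ward} or \cite{Forni-Matheus_survey} for the vanishing of matrix coefficients and conclude. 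If one prefers to avoid Howe--Moore, an alternative is to first establish mixing of the $U$-action (or the ergodicity of $A$ via the Mautner phenomenon, already invoked earlier in the paper) and then bootstrap, but the representation-theoretic route is cleanest and is the one I would present.
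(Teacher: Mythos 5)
Your argument is correct. The paper itself does not prove Lemma \ref{mixing}; it simply refers the reader to \cite[Chap. 4]{Forni-Matheus_survey} and \cite{EMM_JEMS}, and the qualitative mixing statement in those sources is obtained exactly by the route you describe: the special flat measure is by definition the finite, ergodic, $G$-invariant smooth measure on the orbit closure, so the unitary representation of $\SL$ on $L^2_0\left(\M^{(1)}, \cone\right)$ has no nonzero invariant vectors, and the Howe--Moore theorem on vanishing of matrix coefficients gives decay of $\int F_1(g_t q)F_2(q)\,d\cone$ to the product of means, i.e.\ mixing of the geodesic flow (the quantitative strengthening in \cite{EMM_JEMS} uses a spectral gap, which you do not need here). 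The only inputs you must be careful to have in hand --- finiteness and $G$-ergodicity of $\cone$ --- are indeed supplied by the structure theory cited in the paper, so there is no gap; your proposal is a complete proof of the stated lemma, matching the standard argument behind the paper's citation.
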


For a proof and detailed discussion of this result and its
quantitative strengthenings, see
\cite[Chap. 4]{Forni-Matheus_survey} or \cite{EMM_JEMS}.

\subsection{Putting it all together}
We have gathered all the ingredients needed to give the proof of one of our main results.  

\begin{proof}[Proof of Theorem \ref{thm: classification}]
Let $\nu$ be a saddle connection free horospherical measure. We assume first that $\nu$ is ergodic for the horospherical foliation. We will show that the special flat measure $m_{\M}$ is absolutely continuous with respect to $\nu$. To see this, let $A$ be a Borel set of positive measure for $m_{\M}$. Since $m_{\M}$ is a Radon measure, in particular inner regular, there is a compact $K$ contained in $A$ such that $m_{\M}(K) > 0$. Let $U$ be an open set that contains $A$ and let  $f:\M^{(1)} \to [0,1]$ be a continuous function whose support is contained in $U$ and that evaluates to $1$ on $K$. Such a function exists by Urysohn's Lemma. Let $\varepsilon >0$, and choose $\delta>0$ so that

$$
\omega_f(\delta) + 2\|f\|_\infty \delta < \vre.
$$

By Lemma \ref{proportion}, there is $c>0$,  a regular box $\BB$ and $t_i \to \infty$ such that
for each $i$, $\tau_{\BB_{t_i}} < \delta$ and $\delta_{\BB} < \delta, $ and $\nu(B_{t_i}) \geq c \nu (\M).$ Applying Lemma
\ref{comparison} to both  $\nu$ and $m_{\M}$ we obtain

$$
\left| \frac{1}{\nu(B_{t_i})} \int_{\B_{t_i}} f \ d\nu -\frac{1}{m_{\M} (\B_{t_i})} \int_{B_{t_i}} f \ dm_{\M} \right| <2\varepsilon. 
$$

By mixing of the geodesic flow with respect to $m_{\M}$, there is $i>0$ large enough such that $m_{\M}(B_{t_i} \cap K) > m_{\M}(B) (m_{\M}(K) - \varepsilon)$. Therefore: 

\[\begin{split}
    \frac{\nu(U)}{c \, \nu(\M)}   \geq &  \frac{\nu(U)}{\nu(\B_{t_i})} \geq \frac{1}{\nu(\B_{t_i})}\int_{\B_{t_i}} f \, d\nu \\  
     > & \frac{1}{m_{\M}(\B_{t_i})}\int_{\B_{t_i}} f \, dm_{\M} - 2\varepsilon  \\
     \geq  & \frac{m_{\M}(\B_{t_i} \cap K)} {m_{\M}(\B_{t_i})} -2\varepsilon > m_{\M}(K)-3\varepsilon.  
\end{split}\] 
Since $\varepsilon$ was chosen arbitrarily, we have proven  $\nu(U)
\geq c \, \nu (\M) \, m_{\M}(K)$. Since this holds for an arbitrary open $U$ containing $A$, and $\nu(\M)$ is finite, we deduce by outer regularity of the
measure $\nu$ that $\nu(A)$ is positive. This completes the proof that $m_{\M} \ll \nu$.

It follows from Proposition \ref{prop: ergodic decomposition} that $m_{\M} = c\nu$ for some $c \geq 0$, and since $m_{\M}$ is nonzero, $c>0$ and $\nu = \frac{1}{c} m_{\M}$.  For general $\nu$, we obtain from the case just discussed  that all the ergodic components of the measure $\nu$ are proportional to the special flat measure and thus $\nu$ itself is proportional to the special flat measure. 
\end{proof}

\section{Examples of horospherical measures}\label{sec: examples}
The simplest example of a horospherical measure which is not the special flat measure occurs when $\M$ is a closed $\GL$-orbit. In this case the leaves of the horospherical foliation are the $U$-orbits, and the length measure on a closed periodic $U$-orbit is a horospherical measure; indeed, in this case, the transverse measure $\lambda$ in equation~\eqref{integral} is atomic.

In order to obtain more complicated examples, we use the following:

\begin{prop}\label{prop: a measure on a closed horosphere}
Let $W^{uu}(q)$ be a closed horosphere in $\M$. Then $W^{uu}(q)$ is the support of a horospherical measure $\nu$ whose lifts are the measures $\nu_{\xform,\q}^L$ where $L$ is a lift of $\M$ and $\pi(\q) = q$.   
\end{prop}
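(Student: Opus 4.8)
The plan is to build the measure first on a single lift of $\M$, check that it is horospherical there, and then push it down to $\M$ by means of the correspondence principle of Appendix~\ref{appendix: correspondence principle}.

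First I would fix a lift $L$ of $\M$ and a preimage $\q\in\pi^{-1}(q)\cap L$, let $F\df\WW^{uu}(\q)\subset L^{(1)}$ be the horospherical leaf of $\q$, and regard its volume measure $\nu_{\xform,\q}^{L}$ as a Borel measure $m_0$ on $L$ concentrated on $F$. The crucial observation is that, by Definition~\ref{def: foliations upstair}, the leaves of $\WW^{uu}$ are exactly the connected components of the level sets of $\pi_\y\circ\dev$ on $L^{(1)}$. Consequently, if $\BB$ is a box in $L$ (which we may take to have connected base $U'_\x$) and $\dev(\q)=(x_0,y_0)$, then $F\cap\BB$ is either empty --- if $y_0\notin U_\y$, or if the single plaque $\LL_{y_0}$ lies on a leaf of $\WW^{uu}$ other than $F$ --- or else equal to the whole plaque $\LL_{y_0}$; in the latter case $m_0(\BB)=\xmeasure(\LL_{y_0})<\infty$ since $\LL_{y_0}$ is relatively compact and $\xform$ is continuous. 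In both cases $m_0|_{\BB}$ has the form prescribed by equation~\eqref{integral upstair}, with the atomic transverse measure $\lambda=\delta_{y_0}$ (resp.\ $\lambda=0$). Hence $m_0$ is a Radon horospherical measure on $L$ --- the higher-rank analogue of the length measure on a closed periodic $U$-orbit inside a closed $\GL$-orbit.

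Next I would assemble these leaf measures into a $\Mod(S,\Sigma)$-invariant measure on $\pi^{-1}(\M)$. The leaves $F$ of $\WW^{uu}$ with $\pi(F)=W^{uu}(q)$ form a single $\Mod(S,\Sigma)$-orbit: given two such leaves $F,F'$, pick preimages $\q_1\in F$, $\q_1'\in F'$ of a common point of $W^{uu}(q)$; then $\q_1'=\q_1\gamma$ for some $\gamma\in\Mod(S,\Sigma)$, so $F'=F\gamma$. Moreover the $\xform$-volume on leaves is $\Mod(S,\Sigma)$-invariant: this uses the $\Mod(S,\Sigma)$-equivariance of $\dev$, the fact that monodromy preserves the splitting $V=V_\x\oplus V_\y$ and acts linearly on each factor (Lemma~\ref{lem: orbifold foliation}), and equilinearity, which forces the relevant Jacobian to have absolute value one. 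Therefore $\tilde\nu\df\sum_F\nu_F$, the sum over all leaves $F$ of $\WW^{uu}$ with $\pi(F)=W^{uu}(q)$ of the $\xform$-volume measure $\nu_F$ on $F$, is a $\Mod(S,\Sigma)$-invariant measure on $\pi^{-1}(\M)$ whose restriction to each lift $L$ is a sum of leaf measures of the form $\nu_{\xform,\q}^{L}$, hence is horospherical on $L$ by the previous paragraph. Since $\pi^{-1}(W^{uu}(q))=\bigcup_F F$ is closed, $\tilde\nu$ has support exactly $\pi^{-1}(W^{uu}(q))$. Applying Proposition~\ref{prop: correspondence} (see also Definition~\ref{def: preimage of measure}) to $\tilde\nu$ yields a Radon measure $\nu$ on $\M$ with preimage $\tilde\nu$; its lifts are then the (locally finite sums of) measures $\nu_{\xform,\q}^{L}$, so $\nu$ is horospherical by Proposition~\ref{characterization of irreducible}, and $\operatorname{supp}(\nu)=\pi(\operatorname{supp}\tilde\nu)=W^{uu}(q)$ because $\pi$ is closed.

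The step I expect to be the main obstacle is verifying that $\tilde\nu$, equivalently $\nu$, is Radon --- i.e.\ that the family of leaves making up $\pi^{-1}(W^{uu}(q))$ is locally finite, or equivalently that every box meets only finitely many of its plaques. This is exactly where the \emph{closedness} of the horosphere is used: a leaf of a foliation that is a closed subset is a properly embedded submanifold (a standard fact in foliation theory, cf.\ \cite{Candel_Conlon}), so the closed horosphere $W^{uu}(q)$ meets each box in $\M$ in finitely many plaques, and the required local finiteness upstairs --- hence the fact that the transverse measures $\lambda$ of equation~\eqref{integral upstair} are genuine Radon measures --- follows. With this in hand the remaining points (transitivity of the $\Mod(S,\Sigma)$-action on the relevant leaves, the Jacobian computation behind $\Mod(S,\Sigma)$-invariance of leaf volumes, and the multiplicity bookkeeping in the correspondence principle) are routine.
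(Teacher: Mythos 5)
Your proposal is correct and follows essentially the same route as the paper: sum the leaf volume measures $\nu_{\xform,\q}^L$ over the leaves of $\WW^{uu}$ lying above $W^{uu}(q)$, use closedness of the horosphere to get local finiteness (hence a Radon, $\Mod(S,\Sigma)$-invariant measure upstairs), and push down via Proposition~\ref{prop: correspondence}. The paper's proof is in fact terser than yours --- it asserts the equivalence ``closed iff the collection of lifted leaves is locally finite'' and the $\Mod(S,\Sigma)$-invariance without elaboration --- so your extra verifications (atomic transverse measures in boxes, the $\Mod(S,\Sigma)$-orbit structure of the leaves, and the equilinearity-based Jacobian argument) only add detail to the same argument.
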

  
\begin{proof}
The horosphere $W^{uu}(q)$ is closed if and only the collection $\bigl\{ \WW^{uu}(\q) \bigr\}$ is locally finite, where $\q$ ranges over $\pi^{-1}(q)$ and $L$ ranges over the lifts of $\M$ that contain $\q$. Each of the $\WW^{uu}(\q)$ carries the Radon measure $\nu_{\xform,\q}^L$ and the measure 
$$
\tilde{\nu} \df \sum \nu_{\xform,\q}^L
$$
is a $\Mod(S,\Sigma)$-invariant Radon measure on $\pi^{-1}(\M)$. Let $\nu$ be the Radon measure on $\M$ whose lift is $ \tilde{\nu}$ (see Proposition \ref{prop: correspondence}). The measure $\nu$ is horospherical by construction. 
\end{proof}

To construct an example of a closed horosphere, we use {\em horizontally periodic} surfaces, i.e., surfaces which can be represented as a finite union of horizontal cylinders. Let $\M = \mathcal{H}(1,1)$. This stratum is an invariant subvariety of dimension 5 (see Definition \ref{defin: invariant subvariety}), and thus its horospherical leaves have real dimension 4. Let $a,b$ be real numbers with $a, b  \in (0,1) $ and $0 < b < \min(a, 1-a) $, let $\tau_1, \tau_2 \in \mathbb{S}^1 \df \R /\mathbb{Z}$,  and set $\bar \tau_1  \df a\tau_1$ and $\bar \tau_2 \df (1-a)\tau_2$, so that $\bar \tau_1, \bar \tau_2$ take values in circles of circumference $a, 1-a$ respectively. Define the surface $q=q_{a,b, \tau_1, \tau_2} \in \M$ by the polygonal representation shown in Figure \ref{fig: figure 1}. In the horizontal direction it is comprised of two cylinders, each of height $0.5$, and of areas $0.5a$ and $0.5(1-a)$. The parameters $\tau_1, \tau_2$ are called {\em twist parameters}. Changing them by adding an integer amounts to performing the corresponding number of Dehn twists in the two cylinders, and thus does not change the surface $q$.

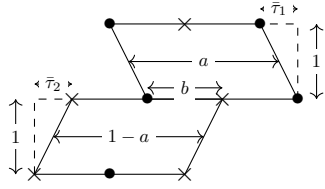
\begin{figure}
\begin{center}
\begin{tikzpicture}

\draw (0,0) -- (2,0) -- (2.5,1) -- (.5,1) -- cycle;
\draw (1.5,1) -- (1,2) -- (3,2) -- (3.5,1) -- cycle;

\draw [dashed] (0,0) -- (0,1) -- (.5,1) ;
\draw [dashed] (3.5,1) -- (3.5,2) -- (3,2);

\draw[<->] (0,1.2) -- (0.5,1.2) node[midway, fill=white, scale=0.7]
{$\bar \tau_2$};
\draw[<->] (3,2.2) -- (3.5,2.2) node[midway, fill=white, scale=0.7]
{$\bar \tau_1$};
\draw[<->] (1.5,1.15) -- (2.5,1.15) node[midway, fill=white, scale=0.7] {$b$};
\draw[<->] (.25,.5) -- (2.25,.5) node[midway, fill=white, scale=0.7] {$1-a$};
\draw[<->] (1.25,1.5) -- (3.25,1.5) node[midway, fill=white, scale=0.7] {$a$};
\draw[<->] (-0.25,0) -- (-.25,1) node[midway, fill=white, scale=0.7] {$1$};
\draw[<->] (3.75,1) -- (3.75,2) node[midway, fill=white, scale=0.7] {$1$};

\draw (0,0) node{$\times$};
\draw (2,0) node{$\times$};
\draw (2.5,1) node{$\times$};
\draw (.5,1) node{$\times$};
\draw (2,2) node{$\times$};

\draw (1.5,1) node{$\bullet$};
\draw (1,2) node{$\bullet$};
\draw (3,2) node{$\bullet$};
\draw (3.5,1) node{$\bullet$};
\draw (1,0) node{$\bullet$};

\end{tikzpicture}
\end{center}
\caption{A completely periodic surface in $\HH(1,1)$. The two singularities are marked with $ \bullet$ and $\times$.  } \label{fig: figure 1}
\end{figure}

It is clear that varying  the parameters $a, b, \tau_1, \tau_2$ results in surfaces that belong to the horospherical leaf of $q$, and thus, by a dimension count, they locally parameterize the leaf of $q$. In either of the cases $ b\to 0$ or $b \to \min(a, 1-a)$, the surfaces $q_{a,b, \tau_1, \tau_2}$ have shorter and shorter horizontal saddle connections on the boundaries of the cylinders, and thus exit compact subsets of $\M^{(1)}$. This means that the horosphere $W^{uu}(q)$ is closed and that the map 
$$
\mathbb{S}^1 \times \mathbb{S}^1 \times \{(a,b) \in (0,1)^2  :  0<  b < \min(a, 1-a)\} \to \M^{(1)}\ \ \ (a,b,\tau_1, \tau_2) \mapsto q_{a,b,\tau_1, \tau_2} 
$$ 
is a proper embedding whose image is $W^{uu}(q)$.

It can be checked that in this case the map $(a,b, \bar \tau_1, \bar \tau_2)
\mapsto \dev(q_{a,b,\tau_1, \tau_2} )$ is affine in charts. Thus the horospherical measure can be written explicitly (up to scaling) as  $d\nu(q_{a,b,\tau_1, \tau_2}) =  da \, db \, d\bar{\tau}_1 \, d\bar{\tau}_2$. 

\begin{rem}
For the horospherical measure constructed in the preceding example, the space $Z_{\M}$ (defined in equation~\eqref{eq: def ZM}) is one dimensional, and for every surface $q$ in  the support of this measure, $Z^{(q)}$ (defined in equation~\eqref{eq: set where defined}) is a bounded interval. Moreover, for any $v \in Z_{\M}$ there is a positive measure set of surfaces $q$ (with small values of $a$) for which $\Rel_v(q)$ is not defined. This shows that the hypothesis in Proposition \ref{prop: real REL invariance}, that $\Rel_v(q)$ is defined, is not always satisfied. (More explicitly, a leaf of the real REL foliation is given by varying $b$.)
\end{rem}

It is no coincidence that the closed horospheres in the two preceding examples consist of horizontally periodic surfaces. 

\begin{prop}\label{prop: properly embedded}
For any $\M$ and any $q \in \M$, the surface $M_q$ is horizontally periodic if and only if  $W^{uu}(q)$ is closed. In this case every surface in $W^{uu}(q)$ is horizontally periodic, and the horospherical measure on $W^{uu}(q)$ constructed in Proposition \ref{prop: a measure on a closed horosphere} is finite. 
\end{prop}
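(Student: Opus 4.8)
The plan is to prove the equivalence and the two supplementary assertions in four steps: (a) if $M_q$ is horizontally periodic then every surface in $W^{uu}(q)$ is horizontally periodic, with the same combinatorial cylinder decomposition; (b) under this hypothesis an explicit parametrization of $W^{uu}(q)$ shows that it is closed; (c) the same parametrization, together with the area-one constraint, shows the horospherical measure on it is finite; (d) conversely, if $W^{uu}(q)$ is closed then $M_q$ is horizontally periodic. I would carry out (a), (b), (c) in that order and leave (d) for last, as it is where the real content lies.

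\textbf{Step (a).} By Proposition \ref{prop: same saddle connections}, any $q'\in W^{uu}(q)$ is horizontally equivalent to $q$: there is a homeomorphism $h\colon M_q\to M_{q'}$ preserving the labels of singularities and carrying the union $\Gamma_q$ of horizontal saddle connections of $M_q$ onto the corresponding union $\Gamma_{q'}$ of $M_{q'}$. Since $M_q$ is horizontally periodic, $M_q\smallsetminus\Gamma_q$ is a disjoint union of open cylinders, all singularities lie on $\Gamma_q$, and therefore $M_{q'}\smallsetminus\Gamma_{q'}$ is a disjoint union of open annuli, none containing a singularity. Let $A$ be the metric completion of one such annulus; it is a compact flat surface with boundary, with $\chi(A)=0$, no interior cone points, and boundary a concatenation of horizontal saddle connections meeting at singular vertices. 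At each such vertex the interior angle of $A$ is a positive multiple of $\pi$, as it is bounded by two horizontal rays; in particular it is $\ge\pi$, and Gauss--Bonnet then gives $\sum_v(\pi-\theta_v)=2\pi\chi(A)=0$, forcing every interior angle to equal $\pi$. Hence $\partial A$ is geodesic and $A$ is a flat cylinder, which is horizontal since its boundary is. Thus $M_{q'}$ is a finite union of horizontal cylinders, and the combinatorics (number of cylinders, and the arrangement of their boundary saddle connections) is transported by $h$, hence is the same for every $q'\in W^{uu}(q)$.

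\textbf{Steps (b) and (c).} Fix the common combinatorial type: cylinders $C_1,\dots,C_k$ with heights $h_i$ and circumferences $w_i$, and horizontal saddle connections indexed by a finite set $\mathcal S$ with lengths $\ell_\sigma$. By Lemma \ref{lem: orbifold foliation} the developing map sends $\WW^{uu}(\q)$ into a hyperplane $\{y=y_0\}$, so all vertical periods are constant along $W^{uu}(q)$; in particular the heights $h_i$, being $dy$-periods of crossing arcs, are constant, and we set $h_{\min}=\min_i h_i>0$. On $\M^{(1)}$ we have $\sum_i w_ih_i=1$, so $w_i\le 1/h_{\min}$, and since each $\ell_\sigma$ is at most the circumference of a cylinder it bounds, $\ell_\sigma\le 1/h_{\min}$ uniformly on $W^{uu}(q)$. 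Reconstruction of a horizontally periodic surface from its combinatorial type is affine in period coordinates, as in the preceding examples, so one obtains a continuous surjection $\Xi\colon\mathcal D\times(\R/\Z)^k\to W^{uu}(q)$, where $\mathcal D$ is the bounded convex polytope of admissible length vectors $(\ell_\sigma)_\sigma$ and the torus factor records the normalized twists. Now if $q_n\in W^{uu}(q)$ and $q_n\to q_\infty$ in $\M^{(1)}$, continuity of the systole on $\HH^{(1)}$ gives $\ell_\sigma(q_n)\ge\tfrac12\,\mathrm{sys}(q_\infty)>0$ for large $n$, so after passing to a subsequence the length vectors converge inside $\mathcal D$ and, the torus being compact, the twists converge as well; by continuity of $\Xi$ the limit $q_\infty$ lies in $W^{uu}(q)$. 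This proves that $W^{uu}(q)$ is closed. For finiteness of the horospherical measure $\nu$ of Proposition \ref{prop: a measure on a closed horosphere}: in these coordinates $\nu$ has a continuous positive density with respect to the product of Lebesgue measure on $\mathcal D$ and Haar measure on the twist torus fibres (the Jacobian factor being the one computed in Proposition \ref{jacobian}), and these fibres have volumes $\prod_i w_i\le(1/h_{\min})^k$; since $\mathcal D$ is bounded and its closure compact, $\nu(\M)=\nu(W^{uu}(q))<\infty$.

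\textbf{Step (d).} Suppose $W^{uu}(q)$ is closed, and let $\nu$ be the horospherical measure supported on it furnished by Proposition \ref{prop: a measure on a closed horosphere}. If $M_q$ had no horizontal saddle connection, then by Proposition \ref{prop: same saddle connections} $\nu$ would be saddle connection free, hence by Theorem \ref{thm: classification} proportional to the special flat measure $m_\M$; but $m_\M$ has full support in $\M^{(1)}$, whereas $\mathrm{supp}(\nu)\subseteq W^{uu}(q)$ is a proper closed subset, being a closed $(n-1)$-dimensional leaf inside the $(2n-1)$-dimensional $\M^{(1)}$ — a contradiction. So $M_q$ has a horizontal saddle connection, and if it is not horizontally periodic it then has a minimal component $P$ for its horizontal foliation. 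If in addition $M_q$ has no horizontal cylinder it is cylinder-free, so by Theorem \ref{thm: densehorosphere} the leaf $W^{uu}(q)$ is dense, hence by dimension not closed — again a contradiction. The remaining case, in which $M_q$ carries both a minimal component and at least one horizontal cylinder, is the main obstacle: here I would show that the renormalization dynamics of the interval exchange attached to $P$, which recurs by minimality, forces a sequence of horocycle pushes of $q$ to return arbitrarily close to $q$ while the twist parameters of the cylinders sweep through a compact torus, producing a limit point of $W^{uu}(q)$ lying outside the leaf. Making this winding argument precise — ruling out closedness as soon as a minimal component is present — is the delicate point of the proof.
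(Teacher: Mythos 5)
Your forward direction (steps (a)--(c)) is essentially correct and runs parallel to the paper's proof, with one genuinely different ingredient: in step (a) you deduce that every $q'\in W^{uu}(q)$ is horizontally periodic from Proposition \ref{prop: same saddle connections} together with a Gauss--Bonnet computation on the metric completions of the components of $M_{q'}\smallsetminus\Gamma_{q'}$, whereas the paper runs a connectedness argument along a path in the leaf, tracking the (constant) heights and the crossing saddle connections. Your variant is clean and correct. Two small caveats on (b)--(c): the fibre of twist parameters need not be the full torus $(\R/\Z)^k$, since the achievable twists are cut out by the linear equations defining $\M$; and one must check that the whole admissible parameter domain is actually realized inside $W^{uu}(q)$ (this is where the paper's path/connectedness argument earns its keep). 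Neither issue is fatal, because each $\tau_j$ can in any case be taken in $[0,c_j)$, so the parameter domain is bounded, the surfaces degenerate (short horizontal saddle connections) as the length parameters approach the boundary, and both properness of the embedding and finiteness of the measure follow.

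The genuine gap is in step (d), and you have correctly identified it yourself: the case where $M_q$ carries both a horizontal cylinder and a minimal component of the horizontal foliation is left unproved, and the proposed ``winding'' argument via renormalization of the interval exchange on the minimal piece would be delicate to make precise (one would need quantitative recurrence of the renormalization compatible with the cylinder twists, which is not available off the shelf). The missing idea is to quote the theorem of Smillie--Weiss \cite{SW_calanque}: for every $q$ the horocycle orbit closure $\overline{Uq}$ contains a horizontally periodic surface $q'$. Since $Uq\subset W^{uu}(q)$ by Proposition \ref{prop: horocycle invariance}, we get $q'\in\overline{W^{uu}(q)}$; and if $q'$ belonged to $W^{uu}(q)$ then $W^{uu}(q')=W^{uu}(q)$ and your step (a) applied to the horizontally periodic surface $q'$ would force $M_q$ to be horizontally periodic, a contradiction. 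Hence $q'\in\overline{W^{uu}(q)}\smallsetminus W^{uu}(q)$ and the leaf is not closed. This single argument disposes of all three of your cases in (d) at once, making the detours through Theorem \ref{thm: classification} and Theorem \ref{thm: densehorosphere} unnecessary.
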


\begin{proof}
Suppose first that $M_q$ is horizontally periodic, and let $f: S \to M_q$ be a marking map representing $\q \in \pi^{-1}(q)$. Let $C_1, \ldots, C_s$ be the horizontal cylinders on $M_{q}$, and let $c_j, h_j$ denote respectively the circumference and height of $C_j$. Since the area of $M_q$ is one,
\begin{equation}\label{eq: area is one}
\sum_{j=1}^s  c_j h_j =1. 
\end{equation}

Let $\alpha_1, \ldots, \alpha_r, \alpha_{r+1}, \ldots, \alpha_{r+s}$ be a collection of oriented paths in $S$ with endpoints in $\Sigma$ which satisfy the following:
\begin{itemize}
    \item The collection $\{f(\alpha_i): i =1, \ldots, r+s\}$ consists of saddle connections.
    \item The collection $\{f(\alpha_i): i = 1, \ldots, r\}$, is the set of all the horizontal saddle connections on cylinder boundaries, and these are oriented so that the horizontal coordinate increases.
    \item For $j = 1, \ldots, s$, the saddle connection $f(\alpha_{r+j})$ is contained in $C_j$, crosses $C_j$, and is oriented so that the vertical coordinate increases. 
\end{itemize}

If $C$ is a cylinder and $\sigma$ is a saddle connection on a translation surface, we say that {\em $\sigma$ crosses $C$} if it intersects all the core curves of $C$.

These paths represent classes in $H_1(S, \Sigma)$ and they give a generating set for $H_1(S, \Sigma)$.
Write the holonomies $\hol(M_{\q}, \alpha_i)$ as 

\begin{equation}\label{eq: bounds cylinders}
  \begin{split}
\hol(M_{\q}, \alpha_i) = & (t_i, 0) \ \ \ i =1, \ldots, r \\
\hol(M_{\q}, \alpha_{r+j}) = & (\tau_j, h_j) \ \ \ j=1, \ldots, s.
    \end{split}
  \end{equation}
  For each $j$ and each boundary component of $C_j$, we
  have
  \begin{equation}\label{eq: on boundary}
    \sum_{i \in \mathcal{I}} t_i = c_j,
  \end{equation}
  where $\mathcal{I}$ is
  a subset of $\{1, \ldots, r\}$ containing the saddle connections
  comprising the boundary component. The numbers $t_i, h_j, \tau_j$
  also satisfy some linear equations $L_1, \ldots, L_t$, which
  describe the space that $L$ is modeled on, in a neighborhood of $\q$. 

Let $\q' \in \WW^{uu}(\q)$. We first show that the underlying surface
$M_{q'}$ is horizontally periodic. There is a continuous path $\sigma
\mapsto \q(\sigma)$, 
with $\q(0) = \q$ and $\q(1)=\q'$, such that $\q(\sigma) \in \WW^{uu}(\q)$ for
every $\sigma \in [0,1]$. 
By definition of the horospherical foliation, for any $\sigma \in [0,1]$,
$\dev(\q(\sigma)) - \dev(\q) \in H^1(S, \Sigma; \R_{\x})$. 
That is to say, there are $t_1(\sigma), \ldots, t_r(\sigma), \tau_1(\sigma),
\ldots, \tau_s(\sigma) \in \R$ such 
that equation~\eqref{eq: bounds cylinders} holds for $\q(\sigma)$. Note that
$h_j$ is independent of $\sigma$, that the numbers $t_i(\sigma),
h_j, \tau_j(\sigma) $ also
satisfy the equations $L_1, \ldots, L_t$, and that the numbers
$c_j(\sigma)$ defined by equation~\eqref{eq: on boundary} also satisfy
equation~\eqref{eq: area is one}.

Assume first that
\begin{equation}\label{eq: positivity holds}
t_i(\sigma)>0 \  \ \ \text{ for all } \sigma \in [0,1] \ \text{ and }
i \in \{1, \ldots, r\}.
\end{equation}
For each $j \in \{1, \ldots, s\}$, the
set
$$\{\sigma \in [0,1] : \text{ the curve  } \alpha_{r+j} \text{
  crosses a horizontal cylinder on } \q(\sigma) \}$$
is open (this is a general property of cylinders, see e.g. \cite[\S
4.1]{MT}) and 
closed (since the heights $h_j$ are fixed). Therefore, by a connectedness
argument, $\q'$ is also made of $s$ horizontal cylinders. By 
equation~\eqref{eq: area is one}, these 
cylinders give a set of full measure in $\q(\sigma)$, and thus $\q(\sigma)$ is
horizontally periodic. 

Now if equation~\eqref{eq: positivity holds} fails, let $\sigma_{\min}$ be the
smallest value of $\sigma$ for which it fails. When $\sigma$ increases to
$\sigma_{\min}$ from below, the surfaces $\q(\sigma)$ have shorter and shorter
horizontal saddle connections on the boundaries of cylinders, and this
means that the surfaces $\q(\sigma)$ cannot converge to $\q
(\sigma_{\min})$. This shows that equation~\eqref{eq: positivity holds} holds and
proves that all surfaces in $\WW^{uu}(\q)$ are horizontally periodic. 
  \ignore{
As
long as there are numbers $c'_i$ satisfying equation~\eqref{eq: area is one},
and numbers $t'_i>0$ satisfying the equations \eqref{eq: on boundary} (with
$c_i, t_i$ replaced by $c'_i, t'_i$), there is a surface $M_{\q'}$ made
of $s$ cylinders with circumferences $c'_j$, heights $h_j$, glued in
the same way that they are glued on $M_{\q}$. Clearly we then have $\q'
\in \WW^{uu}(\q)$.

\textcolor{red}{Florent asks whether injectivity of the developing map
  is implicitly being used here. Also there are more equations coming
  from $\M$ that are missing.}
}

The set of parameters $t_i, \tau_j$ giving surfaces in $W^{uu}(q)$ is bounded. Indeed, the $c_i$ defined by equation~\eqref{eq: on boundary} are bounded by equation~\eqref{eq: area is one},  and this implies that the numbers $t_i \in (0, \max_j c_j)$ are bounded. Changing the $\tau_j$ by adding an integer multiple of $c_j$ amounts to performing Dehn twists in the cylinder $C_j$ and does not change the projection of the surface to $\M$. That is, the numbers $\tau_j$  can be taken to lie in the bounded set $[0, c_j )$. Also, as the parameters $t_i$ leave compact subsets of the bounded domain described above, at least one of the horizontal saddle connections on the corresponding surface has length going to zero. This implies that the bounded set of surfaces we have just described by varying the parameters $t_i, \tau_j$ projects to the entire leaf $W^{uu}(q)$, that this leaf is properly embedded, and that all surfaces in this leaf are horizontally periodic.

Furthermore, we can use equation~\eqref{eq: area is one} to express $c_1$ as a function of $c_2, \ldots, c_s$ (a constant function when $s=1$), and using the linear equations defining $L$, we can write some of the variables $c_j, \tau_j, t_i$ as linear combinations of a linearly independent set of variables. We can then write the horospherical measure up to scaling as $d\nu(q) = \prod_{j=\mathcal{J}_1} dc_j  \, \prod_{j\in \mathcal{J}_2} d\tau_j \, \prod_{i \in \mathcal{J}_3} dt_i $, for some subsets of indices, and thus the preceding discussion shows that the total measure of the leaf is bounded.   

Now suppose  that $M_q$ is not horizontally periodic. According to \cite{SW_calanque}, the horocycle orbit $Uq$ consists of surfaces that are not horizontally periodic, but there is $q' \in \overline{Uq}$ such that $M_{q'}$ is horizontally periodic. By Proposition \ref{prop: horocycle invariance}, $Uq \subset W^{uu}(q)$, and thus $q' \in \overline{W^{uu}(q)}$. Since $M_q$ is not horizontally periodic, according to the first part of the proof, $q' \notin W^{uu}(q)$. This shows that the leaf $W^{uu}(q)$ has an accumulation point that is not contained in the leaf, which is to say that $W^{uu}(q)$ is not closed. 
\end{proof}

\subsection{Classification of horospherical measures in the eigenform loci in
  $\HH(1,1)$}\label{subsec: eigenform loci classification}
The stratum $\HH(1,1)$ contains a countable collection of complex $3$-dimensional invariant
subvarieties known as {\em eigenform loci}. This terminology is due to
McMullen, who gave a complete classification of these invariant
subvarieties in a sequence of papers (see \cite{McMullen_SL2} and references
therein), following the first such examples 
discovered by Calta \cite{Calta}. The horocycle invariant measures and
orbit-closures for the $U$-action on an eigenform locus, were
classified in \cite{BSW} (these classification results require Theorem
\ref{thm: classification} of the present work). We can classify the
horospherical measures inside eigenform loci as follows:

\begin{thm}\label{thm: eigenform locus}
Let $\M$ be an eigenform locus in $\HH(1,1)$, and let $\nu$ be an ergodic horospherical measure on $\M$. Then either $\nu$ is the special flat measure $m_{\M}$ or $\nu$ is the measure given by Proposition \ref{prop: a measure on a closed horosphere} on a closed horosphere $W^{uu}(q)$ of a horizontally periodic surface $q \in \M$. 
\end{thm}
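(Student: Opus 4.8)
The plan is to dichotomize according to whether $\nu$ is saddle connection free. If it is, then since an eigenform locus is an irreducible invariant subvariety, hence a $G$-orbit closure, Theorem~\ref{thm: classification} applies directly and gives $\nu = c\,m_{\M}$ for some $c>0$. So the substance lies in the case where $\nu$ is \emph{not} saddle connection free. Here I would first observe that, by Proposition~\ref{prop: same saddle connections}, the set of surfaces in $\M$ carrying a horizontal saddle connection is a union of leaves of the horospherical foliation; since $\nu$ is ergodic for that foliation, in this case $\nu$-a.e.\ surface has a horizontal saddle connection.

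The key geometric input is Calta's complete periodicity theorem for genus two: a surface lying in a genus-two eigenform locus which possesses a horizontal saddle connection is horizontally periodic (see \cite{Calta}, and \cite{McMullen_SL2}). This is exactly the feature that distinguishes eigenform loci from a general invariant subvariety of $\HH(1,1)$. Granting it, $\nu$-a.e.\ surface $q\in\M$ is horizontally periodic, so by Proposition~\ref{prop: properly embedded} the horosphere $W^{uu}(q)$ is closed for $\nu$-a.e.\ $q$ and carries the finite horospherical measure $\nu_q$ furnished by Proposition~\ref{prop: a measure on a closed horosphere}. By Proposition~\ref{prop: properly embedded} horizontal periodicity is constant along a horospherical leaf, so the set of such $q$ is a union of leaves of full $\nu$-measure.

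It then remains to pin $\nu$ down to a single one of the measures $\nu_{q_0}$. I would restrict $\nu$ to the conull, leaf-saturated set of horizontally periodic surfaces and disintegrate it along the horospherical foliation: by definition of a horospherical measure (equation~\eqref{integral} and the transverse-measure discussion following equation~\eqref{integral upstair}), $\nu$ is an integral of the conditional measures $\xmeasure$ over a transverse measure $\lambda$, and on a closed leaf $W^{uu}(q)$ the conditional $\xmeasure$ is precisely the finite, ergodic horospherical measure $\nu_q$ supported on that single leaf. Thus $\nu$ is an integral of the mutually singular ergodic horospherical measures $\nu_q$ over $\lambda$; since $\nu$ is itself ergodic and Radon (hence $\sigma$-finite), this integral must be concentrated at a single $q_0$. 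Equivalently, some leaf $W^{uu}(q_0)$ has positive $\nu$-measure, hence full $\nu$-measure by ergodicity, and since $\nu$ is horospherical its conditional on that leaf is a multiple of $\xmeasure=\nu_{q_0}$, so $\nu = c\,\nu_{q_0}$ (one may also invoke Proposition~\ref{prop: ergodic decomposition}(4) with $\nu_{q_0}\ll\nu$).

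The hard part, and the only place where more than routine bookkeeping is needed, will be the second step: correctly invoking complete periodicity for genus-two eigenform loci. This is essential — for $\HH(1,1)$ itself no such classification is known, precisely because it is open whether there exist horospherical measures supported on surfaces with horizontal saddle connections but no horizontal cylinders. The rest (the local-to-global disintegration and the ergodicity argument, including its $\sigma$-finiteness caveats) is standard once Propositions~\ref{prop: properly embedded}, \ref{prop: a measure on a closed horosphere} and \ref{prop: ergodic decomposition} are in hand.
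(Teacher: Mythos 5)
There is a genuine gap, and it sits exactly at the step you yourself identify as the crux. Calta's complete periodicity theorem for genus-two eigenforms does \emph{not} say that a direction containing a saddle connection is periodic; it says that a direction containing a \emph{cylinder} is completely periodic. For the eigenform loci in $\HH(1,1)$ these are genuinely different: a surface in $\M$ can be presented as a connected sum of two tori along a horizontal slit in such a way that the horizontal flow on each torus is minimal. Such a surface has two homologous horizontal saddle connections but no horizontal cylinder, so it is not horizontally periodic, and the Veech dichotomy is unavailable because eigenforms in $\HH(1,1)$ are not Veech surfaces. Consequently your dichotomy ``saddle-connection free $\Rightarrow$ special flat'' versus ``has a horizontal saddle connection $\Rightarrow$ horizontally periodic $\Rightarrow$ closed-leaf measure'' omits precisely the intermediate regime (one horizontal saddle connection, or a horizontal slit, and no horizontal cylinder), which is where all the work in this theorem lies; your approach would in effect require the open strengthening of Theorem \ref{thm: classification} from ``saddle connection free'' to ``cylinder free'' mentioned in the introduction. (The first and last steps of your outline — ergodicity plus Proposition \ref{prop: same saddle connections} to saturate the saddle-connection set, and the concentration on a single closed leaf via Propositions \ref{prop: properly embedded} and \ref{prop: ergodic decomposition} — are fine.)

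The paper closes this gap differently: it invokes the classification of $U$-ergodic measures on eigenform loci from \cite[Thm.~9.1]{BSW}. The horizontally periodic and saddle-connection-free cases are handled as you do, and in the remaining cases a.e.\ surface has exactly one horizontal saddle connection or one horizontal slit, so the real Rel flow (Proposition \ref{prop: real REL invariance}) is defined a.e.\ and lengthens or shortens all horizontal saddle connections simultaneously. Rel-invariance of $\nu$ then forces $\nu(\M_{\geq c})$ to be independent of $c$; Lemma \ref{nondivergence cor} makes this common value finite, and letting $c \to 0$ and $c \to \infty$ gives $\nu(\M) = \nu(\M_{\geq \infty}) = 0$, a contradiction. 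If you want to salvage your outline, you would need either this Rel-plus-nondivergence argument or some other mechanism that excludes horospherical measures concentrated on the slit-torus locus; Calta's theorem alone cannot do it.
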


\begin{proof}

Suppose that $\nu$ is neither the special flat measure nor the measure supported on a closed horosphere of a horizontally periodic surface. This immediately rules out cases (1), (2), and (7) of the classification of $U$-ergodic measures in \cite[Thm. 9.1]{BSW}. By Theorem \ref{thm: classification}, $\nu$ cannot be saddle-connection free, which rules out case (5). In each of the three remaining cases (3), (4), and (6), $\nu$-a.e. surface has exactly one horizontal saddle connection or exactly two homologous horizontal saddle connections forming a horizontal slit. We conclude that in each of these three cases, $\nu(\M_{\geq \infty}) = 0$, and moreover that for $\nu$-a.e. surface we can lengthen or shorten all horizontal saddle connections by moving in the real REL leaf. It follows that $\mathrm{Rel}_s(\M_{\geq c})$ and $\M_{\geq c+s}$ differ on a set of $\nu$ measure zero. Since $\nu$ is REL-invariant, $\nu(\M_{\geq c}) = \nu(\mathrm{Rel}_s(\M_{\geq c}))$, and we conclude that the quantity $\nu(\M_{\geq c})$ does not depend on $c$ for any finite $c$. 

By Lemma \ref{nondivergence cor} applied to any positive $c$, this quantity is bounded by $\nu(K)$ for some compact set $K$, and is therefore finite. So, taking the limit as $c \to 0$, we see that $\nu(\M) = \nu(\M_{\geq c})$ for any $c$, and then taking the limit as $c \to \infty$, we conclude that $\nu(\M) = \nu(\M_{\geq \infty})$, which is equal to 0 from above. This absurdity rules out the remaining cases (3), (4), and (6).

      \end{proof}

\subsection{An example of horospherical measure in $\HH(2)$}\label{subsec: H(1,1)
  classification}
  Since there is currently no classification of horospherical measures in $\HH(2)$,
  it is of interest to give examples.
In this subsection we construct an ergodic horospherical measure which is not
the special flat measure and is not supported on one properly embedded
horospherical leaf. Its support is contained  in the four-dimensional
invariant subvariety $\M = \HH(2)$, the genus two
stratum consisting of surfaces with one singular point of order two. 

Recall from Corollary \ref{cor: equivalent diagrams} that for a given
ergodic horospherical measure, almost all surfaces are horizontally
equivalent. In Figure \ref{fig: in H(2)} we
show a typical surface $q$ for our horospherical measure, and a typical
topological picture of its horizontal
saddle connections. These saddle connections will be denoted by
$\delta$ and $\delta'$. They disconnect the surface into
a horizontal cylinder $C$, shaded gray in Figure \ref{fig: in H(2)},
and a torus $T$.

\begin{figure}[h]
  \begin{tikzpicture}
\begin{scope}[xshift=-1cm,yshift=1cm] 
\coordinate (A) at (0,0);
\coordinate (B) at (0,-1.3028);
\coordinate (C) at (1.3028,-1.3028);
\coordinate (D) at (1.3028,0);
\coordinate (E) at (1.8,-0.41);
\coordinate (F) at (2.8,0.59);
\coordinate (G) at (2.3028,1);
\coordinate (H) at (1,1);
\coordinate (GE) at (2.1514,0);
\coordinate (EG) at (3.1514,1);
\coordinate (G1) at (2.3028,0.);
\coordinate (G2) at (3.3028,1);
\coordinate (G3) at (3.3028,0);
\coordinate (G4) at (0.3028,1);
\coordinate (G5) at (0.3028,0);
\fill [gray!10] (A) -- (B) -- (C) -- (D) -- cycle; 
\draw (A) -- (D);
\draw (A) node {\textbullet}
--
(B) node {\textbullet}
--
(C) node {\textbullet}
--
(D) node {\textbullet}
--
(E) node {\textbullet}
--
(F) node {\textbullet}
--
(G) node {\textbullet}
--
(H) node {\textbullet}
--
(0,0) -- cycle;
\draw[->, >=stealth, thick] (A)--(D) ;
\draw[->, >=stealth, thick] (B)--(C) ;
\draw[->, >=stealth, thick] (H)--(G) ;
\draw[->, >=stealth, thick, densely dotted]  (D)--($(D)+(0.7,0)$);
\draw[->, >=stealth, thick, densely dotted]  ($(F)-(0.7,0) $)--(F);
\end{scope}

\begin{scope}[xshift = 6cm]
\coordinate (A) at (1,1);
\coordinate (B) at (-1,1);
 \node (black) at (0,1) [circle,draw,fill=black,inner sep=0pt,minimum size=1.5mm] {};
\draw[->,>=stealth, thick] (black) to [out=325,in=270,looseness=1.5] (A);
\draw[->,>=stealth, thick] (B) to [out=90,in=145,looseness=1.5] (black);
\draw[->, >=stealth, thick] (A) to [out=90,in=35,looseness=1.5] (black);
\draw[->, >=stealth, thick] (black) to [out=205,in=270,looseness=1.5] (B);
\draw[->, >=stealth, thick, densely dotted]  ($ (black)-(0,.9) $)--(black);
\draw[->, >=stealth, thick, densely dotted]   (black)--($ (black)-(0,
-.9) $);
\end{scope}
\end{tikzpicture}
\caption{A surface in $\HH(2)$ with two horizontal saddle connections,
  bounding a horizontal cylinder. On the right, the corresponding horizontal saddle connection diagram.}\label{fig: in H(2)}
\end{figure}
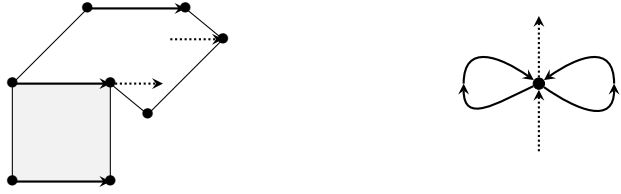

Let $x$ be the length of $\delta$ and $\delta'$, let $\eta$ be a saddle
connection passing from top to bottom of the cylinder $C$, and let its
holonomy be $(a, \tau)$. Fix $\q \in \pi^{-1}(q)$. The height of $C$
is constant and equal to $a$ in a neighborhood of $\q$ in
$\WW^{uu}(\q)$. The area of $C$ is $ax$, and hence
\begin{equation}\label{eq: bedD}
0< x < \frac{1}{a}.
\end{equation}
Moreover, changing $\tau$ by an integer multiple of $x$ amounts to
performing a Dehn twist in $C$ so does not change the surface
$M_q$. Thus we 
may take 
\begin{equation}\label{eq: bedD2}
\tau \in [0, x).
\end{equation}
When varying surfaces within their horospherical leaves, we change
horizontal components of all saddle connections, and thus changing
$\tau$ and $x$ we stay in the horospherical leaf. Similarly, by
Proposition \ref{prop: horocycle invariance}, $u_{s'}q \in W^{uu}(q)$ for
every $s'$.  Moreover, if $M_q=C \cup T$ as
above, the surface $u^{(T)}_s M_q$ obtained by performing the
horocycle flow on $T$ and leaving $C$ unchanged is also in
$W^{uu}(q)$. 
It is easy
to check that changing the three parameters $x,\tau, s$ gives a linear
mapping in period coordinates, and that the three corresponding
tangent directions in  directions in $T_q(\M)$ are linearly independent. Since
$\dim (\M)=4$, the dimension of the horospherical leaves in $\M$ is
three, so the variables $x, \tau, s$ give an affine parameterization of a neighborhood of
$q$ in $W^{uu}(q)$.

Since the height $a$ of $C$ remains constant in $W^{uu}(q)$, by equations~\eqref{eq:
  bedD} and \eqref{eq: bedD2}, the variables $x,\tau$ take values in the
bounded domain
$$
\Delta \df \left\{(x,\tau): 0 \leq  \tau < x < \frac{1}{a} \right\}.
$$
We construct a bundle $\mathcal{B}$ with base $\Delta$, and a homogeneous space
fiber, as follows.
Let  $\mathrm{Tor} \df G/\mathrm{SL}_2(\mathbb{Z})$, the space of tori of
some fixed area. This area is usually taken to be one, but by
rescaling, can be taken to be any fixed number. For each $x \in
\left(0, \frac{1}{a} \right)$, let 
$\mathrm{Tor}(x)$ denote the space of tori of area $1-ax$ and with an
embedded horizontal segment of length $x$. This is the complement in
$\mathrm{Tor}$ of a closed set with empty interior (consisting of
periodic horocycles of period at most $x$). Define $\mathcal{B}$ to be 
the bundle with base $\Delta$ and such that the fiber 
over $(x, \tau) \in \Delta$ is $\mathrm{Tor}(x)$.

Let  
$\mu$ be the $G$-invariant probability measure on
$\mathrm{Tor}$. Since the set of surfaces which do not admit an
embedded horizontal segment of some length is of $\mu$-measure zero,
we can also think of $\mu$ as a probability measure $\mu_x$ on
$\mathrm{Tor}(x)$. 
For  $(x, \tau) \in \Delta$ let $C = C(x,\tau)$ be a
cylinder  of height $a$, circumference $x$ and
twist $\tau$. We have a map 
$$\Psi: \mathcal{B} \to 
\mathcal{H}(2)$$
defined by 
gluing the torus $T$ from $\mathrm{Tor}(x)$, with a slit of length $x$, to the
cylinder $C(x, \tau)$. Let
$$\nu \df \int_0^{1/a} \int_0^x
\Psi_*(\mu_x) \, d\tau \, dx.$$
The image $\Psi(\mathcal{B})$ is a five-dimensional properly embedded
submanifold of $\M$, consisting of all surfaces that can be presented 
as in Figure \ref{fig: in H(2)} for some fixed choice of $a>0$. Along
any sequence of elements  
$(x,\tau) \in \Delta$ leaving compact subsets, we have either $x \to 0$
or the area $1-ax$ of $T$ goes to zero, and in both cases the surfaces in
the image of $\Psi$ have short saddle connections. This shows that
$\Psi(\mathcal{B})$ is properly embedded. Since $\nu$ is
invariant under translations 
using the affine coordinates $x,\tau,s$, it is a finite
horospherical measure supported on $\Psi(\mathcal{B})$. 

\ignore{
We now claim that any measure
$\nu$ which is strong-stable invariant 
and ergodic, with $\Xi(\nu)$ of type D, is the above measure up to
multiplication by a scalar. 
To see this, note that the number $a = a(M)$ described
above for a surface $M \in \mathcal{H}(2)$ with $\Xi(M)$ of type D, is
a function on $\mathrm{supp} \, \nu$ which is fixed on strong stable leaves,
so is constant. So we may assume it takes a fixed value $a$ for $\nu$
a.e. surface. Let
$\mathcal{L}_a$ be the subset of $\mathcal{H}(2)$ consisting of
surfaces with $\Xi(M)$ of type D, and $a(M)=a$. Then one can show
using the above discussion that $\mathcal{L}_a$
is the image of the set of parameters $x, \tau, M,$ where $x$ and
$\tau$ satisfy equation~\eqref{eq: bedD} and equation~\eqref{eq: bedD2} and $M \in
\mathrm{Tor}(1-ax,x)$. Moreover this set of parameters embeds
properly, i.e. one goes to infinity in $\mathcal{L}_a \subset
\mathcal{H}(2)$ exactly when $x$ goes to either $0$ or $1/a$, or $M$
goes to infinity in $\mathrm{Tor}(1-ax,x)$. So the functions assigning
to a surface its parameters $x, \tau$ are well-defined functions, let
$\mathcal{L}_{a,x,\tau}$ be the level sets of such a function,
i.e. the subset of $\mathcal{L}_a$ with coordinates $x, \tau$ two
fixed numbers. Then the measure $\nu$ decomposes into conditional
measures on each $\mathcal{L}_{a,x,\tau}$ which are invariant under
the intersection of the strong stable leaf with this set. As discussed
above, each $\mathcal{L}_{a,x,\tau}$ is the pre-image of
$\Psi_{x,\tau}$, i.e. is $\mathrm{Tor}(1-ax,x)$, and so $\nu$
decomposes into horocycle invariant measures on each
$\mathrm{Tor}(1-ax,x)$. By Dani's measure classification \cite{Dani},
this measure is either $\nu(x,\tau)$ or a measure supported on a
closed horocycle. But the surfaces with closed horocycles have
additional horizontal saddle connection so, since $\Xi(\nu)$ is of
type D (not F), we cannot assign positive mass to closed horocycles in
a positive measure's worth of $\mathrm{Tor}(1-ax,x)$. So the measure
$\nu$ decomposes into the measures $\nu(x,\tau)$, and by invariance
under the flows in direction $x$ and $\tau$, are equal to the measure 
 $\int_0^{1/a} \int_0^x f(x)
d\nu(x,\tau) \, d\tau \, dx$ up to a constant factor. 

To discuss orbit closures use a similar analysis, using Dani-Smillie
\cite{DS} in each $\mathrm{Tor}(1-ax,x)$ instead of 
\cite{Dani}. The orbit closure is $\mathcal{L}_a$. 

}

\section{The geodesic flow and weak unstable foliation}\label{sec: geo and fol}

\begin{proof}[Proof of Theorem \ref{thm: Forni question}]
Let $\mu$ be a finite horospherical measure, and let $\mu_t \df g_{t*}\mu$. Our goal is to show that $\mu_t \to_{t \to \infty} m_{\M}$.
In order to prove that $\mu_t \to m_{\M}$, it is enough to show that in any subsequence $t_n \to \infty$ one can find a further subsequence $t'_n$ so that $\mu_{t'_n} \to m_{\M}$. This will be accomplished in two steps. In the first step we will pass to a subsequence along which $\mu_{t'_n} \to \mu_\infty,$ and show that $\mu_\infty$ is also a probability measure. In the second step we show that $\mu_\infty$ is saddle connection free. Since $\mu_\infty$ is also horospherical by item (1) of Proposition \ref{prop: ergodic decomposition}, an application of Theorem \ref{thm: classification} then completes the proof.

Since $\mu(\M)$ is finite, we can renormalize so that $\mu(\M) = 1$. For the first step, we need to show that the sequence of measures $\{\mu_{t_n}\}$ is {\em tight}, i.e., for any $\vre>0$ there is a compact $K \subset \M$ such that for all large enough $n$, $\mu_{t_n}(K) \geq 1-\vre.$ For this we will use Lemma \ref{nondivergence cor}.

Since $\mu(\M) = 1$, there is a $c$ small enough that $\mu(\M_{\geq c}) > 1 - \frac\vre2$. By Lemma \ref{nondivergence cor} there is a compact $K \subset \M$ such that 
$$
\nu(K) > \left(1-\frac\vre2 \right)\nu(\M_{\geq c})
$$
for ever $U$-invariant measure $\nu$. Applying this to $\nu = \mu_t$ for any $t \geq 0$ gives
\begin{align*}
\mu_t(K) &> \left(1-\frac\vre2 \right)\mu_t(\M_{\geq c})\\
&= \left(1 - \frac{\vre}{2} \right)\mu(\M_{\geq e^{-t}c})\\
&> \left(1-\frac{\vre}{2} \right)^2\\
&> 1 - \vre,
\end{align*}
where the penultimate inequality uses $\M_{\geq c} \subset \M_{\geq e^{-t}c}$.

By tightness, there exists a subsequential limit that is a probability measure. Now, letting $\mu_\infty$ be any limit along a subsequence $t_n$, it remains to show that $\mu_\infty$ is saddle-connection free. We will show that for any $\vre > 0$ and any $C < \infty$, $\mu_\infty(\M_{<C}) < \vre$. Choose $c$ small enough that 
$$
\mu(\M_{<c}) < \frac\vre2.
$$
Next, choose $n$ large enough so that $e^{-t_n} C < c$ and 
$$|\mu_{t_n}(\M_{<C}) - \mu_\infty(\M_{<C})| < \frac\vre2.$$
Then
$$
\mu_\infty(\M_{<C}) < \mu_{t_n}(\M_{<C}) + \frac\vre2 = \mu(\M_{<e^{-t_n} C}) + \frac\vre2 < \vre.
$$
where the last inequality uses $\M_{<e^{-t_n}C} \subset \M_{<c}$. Since $\vre$ and $C$ were arbitrary, we conclude that $\mu_\infty$ is saddle-connection free, and this concludes the proof.
\end{proof}

\begin{proof}[Proof of Theorem \ref{cor: maximal entropy}]
Let $\nu$ be a horospherical measure that is invariant by the geodesic flow. We will show that $\nu(\M_{< \infty}) = 0$. Since $g_{-t}(\M_{[c,\infty)}) = \M_{[e^{-t}c,\infty)}$ and $(g_t)_*\nu = \nu$, we see that $\nu(\M_{[c,\infty)})$ does not depend on $c$. Here the notation $\M_{[a,b)}$ means $\M_{\geq a} \cap \M_{< b}$. By Lemma \ref{nondivergence cor} applied to any particular finite $c$ and $\vre = \frac12$, there is a compact set $K$ such that
$$
\nu(\M_{[c,\infty)}) < \frac{\nu(K)}{2},
$$
and so in particular it is finite. Therefore, in the limit as $c \to \infty$, we see that $\nu(\M_{[c,\infty)}) = 0$, and then again taking the limit as $c \to 0$ we conclude $\nu(\M_{<\infty}) = 0$. Finally, by Theorem \ref{thm: classification} we conclude that $\nu$ is the special flat measure.


We now show that any leaf for the weak-unstable foliation is dense. Let $q \in \M_1^{(1)}$, let $U$ be an open set contained in $\M^{(1)}$ and let $f$ be a nonzero non negative compactly supported function whose support is contained in $U$. In order to show $U \cap W^{u}(q) \neq \emptyset$ we will show that there is $p \in W^{u}(q)$ such that $f(p)>0$. Let $\varepsilon \df \int_{\M} f \ d \mu_{\M}>0$, let $\omega_f$ denote the continuity modulus of $f$ with respect to the sup-norm distance function, and let $\q \in \pi^{-1}(q)$. Using Propositions \ref{prop: pushofbox1} and \ref{prop: pushofbox2}, let $\BB$ be a regular box containing $\q$ such that for any $t \geq 0$, the box $\BB_t \df g_t(\BB)$ satisfies $\omega_f(\tau_{\BB_t}) + 2\|f \|_{\infty} \delta_{\BB_t} < \frac{\varepsilon}{4}$. Let $m_\M$ be the special flat measure on $\M^{(1)}$. By mixing of the geodesic flow (Proposition \ref{mixing}), there is $T>0$ such that for any $t>T$, we have  

$$
\left| \frac{1}{m_{\M}(\B)} \int_{\B_t} f \, dm_{\M} - \int_{\M}f \, dm_{\M} \right| < \frac{\varepsilon}{4}. 
$$

Applying Lemma
\ref{comparison} to the special flat measure $m_{\M}$, and denoting by $\LL_t$ the plaque of $g_t \q$ in $\BB_t$, we have 

$$
\left|\frac{1}{m_{\M}(\B)}\int_{\B_t} f \, dm_{\M} - \frac{1}{\xmeasure(\LL_t)} \int_{\LL_t} f \circ \pi \ d\xmeasure \right| < \frac{\varepsilon}{4},
$$

and consequently
$$
\left|\int_{\M} f \, dm_{\M} - \frac{1}{\xmeasure(\LL_t)} \int_{\LL_t} f \circ \pi \ d\xmeasure \right | < \frac{\varepsilon}{2}.
$$

This implies $\int_{\LL_t} f \circ \pi \ d\xmeasure > 0$ and since $\pi(\LL_t)$ is
contained in $W^{u}(q)$, we obtain that there is $p \in W^{u}(q)$ such that $f(p) >0$. \end{proof}

\section{Closures of horospherical leaves}\label{sec: leaf closures}
The goal of this section is to prove Theorem \ref{thm:
  densehorosphere}. First, in order to explain the idea, we will prove
the following weaker result.

\begin{thm}\label{thm: special case first}
Let $q \in \M^{(1)}$ be a surface without horizontal saddle connections. Then $W^{uu}(q)$ is dense in $\M^{(1)}$.  
\end{thm}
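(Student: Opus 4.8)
The plan is to deduce this from the measure classification in Theorem~\ref{thm: classification}. Set $W\df\overline{W^{uu}(q)}\subseteq\M^{(1)}$. Since the leaves of $\WW^{uu}$ vary continuously (Lemma~\ref{lem: local diffeo}), $W$ is a union of full horospheres; by Proposition~\ref{prop: horocycle invariance} it is $U$-invariant, and since $q$, hence by Proposition~\ref{prop: same saddle connections} every surface in $W^{uu}(q)$, has no horizontal saddle connections, the real $\Rel$ flows are globally defined on $W^{uu}(q)$ and $W$ is $\Rel_v$-invariant for all $v\in Z_{\M}$ by Proposition~\ref{prop: real REL invariance}. It therefore suffices to produce a saddle connection free horospherical probability measure $\nu$ with $\mathrm{supp}(\nu)\subseteq W$: Theorem~\ref{thm: classification} then forces $\nu$ to be a multiple of $m_{\M}$, and since $\mathrm{supp}(m_{\M})=\M^{(1)}$ this gives $W=\M^{(1)}$.

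To build $\nu$, fix $\q$ over $q$ in a lift $L$ and consider the single leaf $F=\WW^{uu}(\q)$ with its intrinsic measure $\xmeasure$. By Proposition~\ref{prop: properly embedded}, $F$ is not closed and $\xmeasure(F)=\infty$; however, restricted to any box $\xmeasure|_F$ already satisfies the disintegration identity~\eqref{integral upstair}, so it is horospherical in the local sense and fails to be Radon only because $F$ accumulates on itself. I would exhaust $F$ by relatively compact Borel sets $E_n\nearrow F$ with $\xmeasure(E_n)\to\infty$ chosen to be F{\o}lner for the affine translations of $F$ — using the explicit convex-domain model of a leaf from \cite[Thm.~1.2]{MW2} to make such a choice — put $\tilde\nu_n\df\xmeasure(E_n)^{-1}\,\xmeasure|_{E_n}$, push down to $\M$, and let $\nu$ be a weak-$*$ subsequential limit. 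Non-divergence of the horocycle flow (Lemmas~\ref{nondivergence} and \ref{nondivergence cor}), applied along $F$ (all of whose surfaces are saddle connection free) together with a Fubini argument writing $\xmeasure|_F$ as a superposition of horocycle length measures, gives tightness of $\{\tilde\nu_n\}$, so $\nu$ is a probability measure. Since each $\tilde\nu_n$ satisfies~\eqref{integral} up to an error concentrated on plaques meeting $\partial E_n$, of relative weight $o(1)$ by the F{\o}lner property, and condition~\eqref{integral} is weak-$*$ closed (Proposition~\ref{prop: ergodic decomposition}(1)), the limit $\nu$ is horospherical.

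The hard part will be to guarantee that $\nu$ is saddle connection free, i.e.\ that $\nu(\M_{<C})=0$ for every finite $C$; equivalently, that the $\xmeasure$-mass of $E_n\cap\M_{<C}$ is $o(\xmeasure(E_n))$. This is a non-divergence statement for the horospherical translations rather than merely for the horocycle flow, and I expect to reduce it to Lemma~\ref{nondivergence} by slicing $E_n$ into horocycle arcs and using that through a saddle connection free surface a long horocycle arc spends a vanishing proportion of its time in $\M_{<C}$ — which requires the F{\o}lner exhaustion to be arranged so that $\xmeasure$-most of $E_n$ is covered by long horocycle arcs. Pinning down this quantitative choice of exhaustion is the main obstacle; the remaining ingredients are soft.
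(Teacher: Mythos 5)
Your overall strategy — build a horospherical, saddle connection free probability measure supported on $\overline{W^{uu}(q)}$ by weak-$*$ limits of normalized F{\o}lner averages of $\nu_{\beta_{\x}}$ along the leaf, then invoke Theorem \ref{thm: classification} and full support of $m_{\M}$ — is logically coherent, and it is genuinely different from the paper's proof, which is direct: there one uses Lemma \ref{nondivergence} to find $p_n=u_{s_n}g_{-n}q\in K\cap W^{uu}(g_{-n}q)$, notes $g_np_n\in W^{uu}(q)$, and combines mixing of the geodesic flow with the box comparison Lemma \ref{comparison} (via Propositions \ref{prop: pushofbox1} and \ref{prop: pushofbox2}) to show the pushed plaque through $g_np_n$ meets any given open set. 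However, your plan has a genuine gap at exactly the step you flag, and the proposed reduction to Lemma \ref{nondivergence} cannot close it. To conclude that the limit $\nu$ is saddle connection free you must show $\nu(\M_{<C})=0$ for \emph{every} finite $C$. The sets $\M_{<C}$ are neither open nor closed (horizontality of a saddle connection is destroyed by perturbation), so the fact that each $\tilde\nu_n$ is supported on a leaf with no horizontal saddle connections gives nothing about the limit via portmanteau; and Lemma \ref{nondivergence} only produces mass in a compact set $K$, i.e.\ it controls \emph{short} saddle connections, while for moderate $C$ the set $\M_{<C}$ meets every compact set (a surface can lie deep in $K$ and carry a horizontal saddle connection of length $1$). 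Ruling out limit mass on the closed horospheres of (partially) horizontally periodic surfaces — which is precisely the competing case in Theorem \ref{thm: classification} and in the eigenform classification — requires a quantitative non-concentration estimate near the locus of surfaces having a nearly horizontal saddle connection of bounded length, uniform over the basepoints appearing in your Fubini slicing; no such estimate is available from the tools in this paper, and slicing into horocycle arcs does not supply it.

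There are secondary soft spots of the same quantitative nature: Lemma \ref{nondivergence} is a pointwise $\liminf$ statement with no uniformity in $T$ over basepoints, so tightness of the $\tilde\nu_n$ via Fubini needs an additional uniformity (or Egorov-type) argument; the horosphericality of the limit needs the F{\o}lner boundary control to hold relative to a countable family of boxes, and since the leaf is only a convex domain for a partially defined translation action, the existence of such an exhaustion must be argued, not assumed; and the claim $\nu_{\beta_{\x}}(F)=\infty$ does not follow from Proposition \ref{prop: properly embedded} as cited (though it does follow easily from completeness of the horocycle flow on the leaf). In short, your route would amount to proving an equidistribution/non-divergence statement for horospherical leaf averages that is strictly stronger than anything established here, whereas the paper's argument avoids this entirely by pushing a single well-placed plaque forward with $g_t$ and using mixing.
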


\begin{proof}
Let $U$ be any open set contained in $\M^{(1)}$ and let $f$ be a nonzero non-negative function whose support is contained in  $U$.  It is enough to show that there is $p \in W^{uu}_{q}$ such that $f(p)>0$. Let $\varepsilon = \int_{\M} f \, dm_{\M} >0$, let $c = 1$,  and let $K$ be a compact subset as in Lemma \ref{nondivergence}. For any $n>0$, the surface $g_{-n}q$ does not have horizontal saddle connections and thus there is $s_n >0$ such that $p_n \df u_{s_n} g_{-n}  q $ satisfies 

\begin{equation}\label{eq: what p must satisfy}
p_n \in K \cap W^{uu}(g_{-n} q).
\end{equation}
 
The horocycle flow preserves the horospheres and the geodesic flow permutes them. As a consequence $g_n p_n \in W^{uu}(q)$. Since $K \cap \M_1^{(1)}$ can be covered by the image by $\pi$ of finitely many arbitrarily small boxes, by passing to a subsequence and using Propositions \ref{prop: pushofbox1} and \ref{prop: pushofbox2}, we can assume that there is a box $\BB \subset \pi^{-1}(\M)$ such that the translates $\BB_n = g_n(\BB)$ satisfy $\omega_f(\tau_{\BB_n}) + 2\|f\|_{\infty} \delta_{\BB_n} < \frac{\varepsilon}{4}$  and $p_n \in \pi(\BB)$, for all $n \in \mathbb{N}$. Denote by $\LL_n$ a plaque of $\BB_n = g_{n}(\BB)$ whose image by $\pi$ contains $g_n(p_n)$. By mixing of the geodesic flow, for all large enough $n$:

$$
\left| \frac{1}{m_{\M}(\B)} \int_{\B_n} f \, dm_{\M} - \int_{\M}f \, dm_{\M} \right| < \frac{\varepsilon}{4}.  
$$

It thus follows from Proposition \ref{comparison} applied to the special flat measure $m_{\M}$ that

$$
\left|\frac{1}{m_{\M}(B)}\int_{B_n} f \, dm_{\M} - \frac{1}{\xmeasure(\LL_n)} \int_{\LL_n} f \circ \pi \xmeasure \right| < \frac{\varepsilon}{4} .  
$$

Consequently, for large enough $n$, 
$$
\left|\int_{\M} f \, dm_{\M} - \frac{1}{\xmeasure(\LL_n)} \int_{\LL_n} f \circ \pi \xmeasure \right| < \frac{\varepsilon}{2},
$$

and thus

$$
\frac{1}{\xmeasure(\LL_n)} \int_{\LL_n} f \circ \pi \xmeasure > \int_{\M} f \, dm_{\M} - \frac{\varepsilon}{2} > 0.
$$ 
 
This implies that there is a $p \in \pi(\LL_n) \subset W^{uu}(q)$ such that $f(p) >0$. 
\end{proof}

  In order to upgrade Theorem \ref{thm: special case first} to Theorem \ref{thm:
 densehorosphere}, we will need the following result, proven in the appendix
    \ref{appendix: extending cylinders}:
  \begin{thm}[Paul Apisa and Alex Wright]\label{thm: Apisa Wright}
Let  $\M$ be an invariant subvariety and suppose that $q \in \M^{(1)}$
has horizontal saddle connections, but is not horizontally
periodic. Then there is $q' \in W^{uu}(q)$ 
such that all horizontal saddle connections on $M_{q'}$ are longer than
the shortest horizontal saddle connection on $M_q$. If in addition
$M_q$ has no horizontal cylinders, then for any $T>0$  there is $q' \in W^{uu}(q)$
such that the shortest horizontal saddle connection on $M_{q'}$ is
longer than $T$.  
    \end{thm}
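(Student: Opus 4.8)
\emph{Reduction to horizontal period coordinates.} First I would fix a lift $\q$ of $q$ in a component $L$ of $\pi^{-1}(\M)$ modelled on $V = V_\x \oplus V_\y$, write $\dev(\q) = (x_0, y_0)$, and recall (Lemma~\ref{lem: orbifold foliation} and the Minsky--Weiss description \cite{MW2} quoted just after it) that $\dev$ identifies $\WW^{uu}(\q)$ with a convex open set $D$ in the affine slice $\{x \in V_\x : \langle x, y_0 \rangle = 1\}$. By Proposition~\ref{prop: same saddle connections} the horizontal saddle connections $\sigma_1, \dots, \sigma_r$ of $M_q$ persist as marked relative cycles over the whole leaf, the length of $\sigma_i$ on the surface with coordinate $x$ is the linear functional $\ell_i(x) = x([\sigma_i]) > 0$, and these are the \emph{only} horizontal saddle connections present on each surface of the leaf. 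The essential feature of the Minsky--Weiss description for us is that leaving $D$ forces some horizontal saddle connection to shrink to zero length (no non-horizontal one can shrink, since its vertical holonomy is fixed), so that $\phi \df \min_i \ell_i$ vanishes along $\partial D$. Writing $\ell \df \min_i \ell_i(x_0)$, the first assertion says $\sup_D \phi > \ell$ and the second says $\sup_D \phi = +\infty$ when $M_q$ has no horizontal cylinders.

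\emph{The deformation.} The engine of the proof is the construction, at every $x \in D$, of a tangent direction $w \in V_\x$ with $\langle w, y_0 \rangle = 0$ such that $w([\sigma_i]) > 0$ for every $i$ with $\ell_i(x)$ below some threshold --- i.e.\ an infinitesimal deformation inside $\M$ fixing the vertical data and lengthening all of the currently short horizontal saddle connections. Here I would split the horizontal foliation of $M_x$ into its cylinders $C_1, \dots, C_k$ and its minimal components $X_1, \dots, X_m$; since $M_q$ (hence, by the persistence above, every $M_x$) is not horizontally periodic we have $m \geq 1$. On the cylinders one applies Wright's cylinder deformation theorem \cite{Wright_cylinders}: for each $\M$-equivalence class of horizontal cylinders the ``standard shear'' and ``standard stretch'' are tangent to $\M$, and a suitable real combination of these redistributes holonomy around each cylinder boundary so as to lengthen its short boundary saddle connections while fixing the cylinder heights (which are vertical data). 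On a minimal component $X_j$ one uses the ``extending saddle connections'' lemma of Apisa and Wright, proved in Appendix~\ref{appendix: extending cylinders}: minimality of the horizontal straightline flow on $X_j$ allows its boundary saddle connections to be independently lengthened within $\M$. Combining these and projecting to $\langle\,\cdot\,,y_0\rangle = 0$ produces $w$; crucially, $w$ need not change the total area, so the normalisation $\langle x, y_0\rangle = 1$ is not an obstruction.

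\emph{The first statement.} Now I would run the adaptive flow $\dot x = w(x)$, with $w(x)$ chosen as above to lengthen every horizontal saddle connection of length $< \ell$ at $x$. Along this flow no $\ell_i$ can decrease to $0$: it would first have to drop below $\ell$, after which it is among the functionals being lengthened. Hence the flow never reaches $\partial D$, and, tracking the deficit $\Phi(x) = \sum_i \max(0, \ell - \ell_i(x))$, one has $\dot\Phi < 0$ while $\Phi > 0$; choosing $w(x)$ of controlled size (possible away from the horizontally periodic locus, which is excluded) makes $\dot\Phi$ bounded away from $0$, so $\Phi$ hits $0$ in finite time and a further push makes all $\ell_i > \ell$. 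Equivalently and more invariantly: $\phi$ is concave on $D$ and vanishes on $\partial D$, so were $\sup_D \phi \leq \ell$ it would be attained at an interior point at which no tangent direction lengthens all the active saddle connections; by Gordan's alternative this forces a positive combination of the active $\ell_i$ to be constant on $D$, hence the active saddle connections to enclose a rigid positive-area horizontal subsurface, which --- being rigid under all horospherical deformations --- must be a union of cylinders exhausting $M_q$. That is horizontal periodicity (constant along the leaf, by the argument of Proposition~\ref{prop: properly embedded}), contradicting the hypothesis; so $\sup_D \phi > \ell$.

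\emph{The second statement, and the main obstacle.} If $M_q$ has no horizontal cylinders then $k = 0$, the area is carried entirely by minimal components, and lengthening the boundary saddle connections of a minimal component does not inflate its area (a long horizontal slit in a minimal torus has fixed area). Concretely, $\phi$ is then unbounded on $D$: otherwise the previous paragraph again produces an interior maximum forcing horizontal periodicity, impossible with no cylinders. Running the same flow past any level $T$ yields $q' \in W^{uu}(q)$ whose shortest horizontal saddle connection exceeds $T$. The genuinely hard part is the minimal-component input of the second paragraph --- producing a deformation inside $\M$ (not merely inside the ambient stratum) that lengthens the boundary saddle connections of a minimal component, and controlling which \emph{new} horizontal saddle connections can appear along it so that $D$ really does extend far enough (indefinitely, in the cylinder-free case). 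This is precisely the content supplied by Apisa and Wright in Appendix~\ref{appendix: extending cylinders}; the cylinder part, by contrast, is a direct application of the cylinder deformation theorem.
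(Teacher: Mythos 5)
There is a genuine gap, and it sits exactly where you place the weight of the argument: the existence, at each point of the leaf, of a horizontal, area-preserving direction tangent to $\M$ that lengthens all short horizontal saddle connections. Neither of the two inputs you invoke supplies it. For the cylinder part, Wright's cylinder deformation theorem gives you shears and stretches of horizontal cylinders, and neither of these moves the functionals $\ell_i$: a shear changes only the twists, leaving every horizontal holonomy of a boundary saddle connection fixed, and a stretch changes only vertical data (heights), so ``a real combination redistributing holonomy around each cylinder boundary so as to lengthen its short boundary saddle connections'' is not something the cylinder deformation theorem produces. For the minimal-component part, the lemma you attribute to Appendix~\ref{appendix: extending cylinders} (``minimality of the horizontal flow on $X_j$ allows its boundary saddle connections to be independently lengthened within $\M$'') does not appear there; the appendix proves Theorem~\ref{thm: Apisa Wright} by a different mechanism. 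Its actual content, Proposition~\ref{prop: provides cylinders}, replaces $q$ by a horocycle translate close to a horizontally periodic surface in $\overline{Uq}$ (Smillie--Weiss) and extracts a class of $\M$-parallel, nearly horizontal but \emph{non}-horizontal cylinders disjoint from every horizontal saddle connection, with the associated class $\sigma_{\{C_i,h_i\}}$ tangent to $\M$; one then shrinks the area of these cylinders by a surgery that fixes the vertical data and does not touch the horizontal saddle connections, and restores area one with $\left(\begin{smallmatrix} t & 0\\ 0 & 1\end{smallmatrix}\right)$, $t>1$, which multiplies every horizontal length by $t$; in the cylinder-free case the uniform area bound $A_0$ makes $t-1$ uniformly positive, so the step can be iterated past any $T$. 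So the step you defer to the appendix is precisely the step your proposal was supposed to establish, and what the appendix proves is a different statement.

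The surrounding soft analysis also has leaps that would need the same structure theory to repair. The claim that $\phi=\min_i\ell_i$ vanishes on $\partial D$ is not justified: the paper's remark after Lemma~\ref{lem: orbifold foliation} notes that the leaf can fail to be complete for reasons other than a horizontal saddle connection shrinking (minimal non-uniquely-ergodic horizontal foliations), so your adaptive flow could in principle exit $D$ with all $\ell_i$ bounded below, and the asserted uniform lower bound on $\dot\Phi$ is exactly the quantitative content that is missing. Likewise, a concave function bounded above on an open convex set need not attain its supremum, and the inference from ``a positive combination of the active $\ell_i$ is constant on $D$'' (via Gordan) to ``a rigid positive-area horizontal subsurface, hence a union of cylinders exhausting $M_q$, hence horizontal periodicity'' is essentially a restatement of the theorem rather than a proof of it; making it rigorous is where one would again need the Smillie--Weiss horocycle limit and Wright's cylinder theory that the paper's proof actually uses. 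The reduction to the convex domain $D$ with linear length functionals and the use of Proposition~\ref{prop: same saddle connections} are fine, but the core deformation-existence step is unproved and, as formulated for horizontal cylinders, incorrect.
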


\begin{proof}[Proof of Theorem \ref{thm: densehorosphere}]
We repeat the arguments given in the proof of Theorem \ref{thm: special case first}. In that proof, the only place where we used the assumption that $q$ has no horizontal saddle connections, is to ensure the existence of $p_n$ satisfying condition~\eqref{eq: what p must satisfy}. For this, using Proposition \ref{prop: horocycle invariance} and Lemma \ref{nondivergence}, it is enough to show that  there is $q_n' \in W^{uu}(g_{-n}q)$ such that the shortest horizontal saddle connection in $M_{q'_n}$ has length at least one. By our assumption, $M_q$ has no horizontal cylinders and therefore neither does $M_{g_{-n}q}$, and thus we can conclude using the second assertion of Theorem \ref{thm: Apisa Wright}.  
\end{proof}

\begin{rem}\label{remark: leaf closures} 
Theorem \ref{thm: densehorosphere} describes the leaf-closure of cylinder-free surfaces. More generally, if $q \in \M^{(1)}$ is a surface with horizontal cylinders $C_1, \ldots, C_s$, its leaf-closure $\overline{W^{uu}(q)}$ can be described as a properly embedded bundle $\mathcal{B}$ in $\M^{(1)}$, generalizing the discussion in \S \ref{subsec: H(1,1) classification}. We sketch the argument here. Any surface in $q' \in W^{uu}(q)$ also has corresponding cylinders  $C'_1, \ldots, C'_s$ (see the proof of Proposition \ref{prop: same saddle connections}). Let $c'_j, a'_j, \tau'_j$ denote respectively their circumferences, heights, and twists. We have $a'_j = a_j$, that is the heights of the cylinders are the same on the surfaces $M_q$ and $M_{q'}$. Arguing as in the proof of Proposition \ref{prop: properly embedded}, the numbers $c'_j, \tau'_j$ belong to a bounded subset $\Delta \subset \R^{2s}$. Let $\Delta_{q}$ be the subset of $\Delta$ describing cylinders that can arise for $q' \in W^{uu}(q)$. This set is the base of $\mathcal{B}$. The fiber over $\vec{c}, \vec{\tau} \in \Delta_q$ is described as follows.  Let $q' \in W^{uu}(q)$ have cylinders $C'_1, \ldots, C'_s$ whose geometry is prescribed by $\vec{c}, \vec{\tau}$, and  let $q''$ be the surface obtained by removing the cylinders $C'_1, \ldots, C'_s$ and regluing the boundary components to each other by a translation. The translation in each cylinder is chosen so that singularities on opposite sides of a cylinder are not glued to each other. This surface can be alternatively described as the limit $t \to 0$ of the cylinder stretch map $\bar g$, for $g=\left( \begin{matrix} t & s \\ 0 & 1\end{matrix} \right)$, for some $s$  (see Proposition \ref{prop: cylinder deformation}). The surface $q''$ belongs to some invariant subvariety $\M''$, independent of $\vec{c}, \vec{\tau}$, in a lower dimensional stratum. It has no horizontal cylinders, so by Theorem \ref{thm: densehorosphere}, its horosphere is dense in $\M''$. Note that the area of $q''$ is not one and so we apply Theorem \ref{thm: densehorosphere} after rescaling. Thus the fibers of $\mathcal{B}$ are all isomorphic to $\M''$. As $\vec{c}, \vec{\tau}$ leaves compact subsets of $\Delta_q$, the corresponding surfaces have shorter and shorter horizontal saddle connections, and thus $\mathcal{B}$ is properly embedded in $\M$.
\end{rem}

\appendix

\section{Extending saddle connections (Apisa and Wright)}\label{appendix: extending cylinders}  

In this section we give the proof of Theorem \ref{thm: Apisa Wright}. We need some auxiliary statements. A {\em horizontal cylinder} on a translation surface is a cylinder whose core curve is horizontal. We say that a cylinder and a saddle connection are {\em disjoint} if they do not intersect, except perhaps at singular points. Our convention is that cylinders are closed, and thus a cylinder and a saddle connection on one of its boundary components are not considered to be disjoint.

We recall the notion of $\M$-equivalence of cylinders, introduced in \cite{Wright_cylinders}.  Let $\M$ be an invariant subvariety, let $q \in \M$ and let $C_1, C_2$ be two parallel cylinders in $M_q$. The cylinders are called {\em $\M$-parallel} if there is a neighborhood $\mathcal{U}  $ of $q$ in $\M$, such that $C_1, C_2$ remain parallel for all $q' \in \mathcal{U}$. More precisely: 

\begin{itemize}
    \item there is a lift $L$ of $\M$ and open $\mathcal{V} \subset L$ and $\mathcal{U} \subset \M$ such that $q \in \mathcal{U}$, $\pi|_{\mathcal{V}}: \mathcal{V} \to \mathcal{U}$ is a homeomorphism and $\dev$ is injective on $\mathcal{V}$;
    \item for $\q \in \mathcal{V}$ with $q = \pi(\q)$, represented by a marking map $f: S \to M_q$, and for any $\q' \in \mathcal{V}$, represented by $f': S \to M_{q'}$, the sets $f' \circ f^{-1}(C_i), \ i=1,2$ are parallel cylinders on $q' = \pi(\q')$. 
\end{itemize} 
Being $\M$-parallel is clearly an equivalence relation.

For a cylinder $C$ on a translation surface $M$, we denote by $G_C$ the subgroup of $\GL$ fixing the holonomy of the core curve of  $C$. Clearly $G_{C_1} = G_{C_2}$ if $C_1, C_2$ are parallel. If $C_1, \ldots, C_r$ are parallel on $M$ and $g \in G_{C_1}$ then the {\em cylinder surgery corresponding to $g, C_1, \ldots, C_r$} is a modification of the surface $M$ obtained by applying $g$ to the $C_i$ and leaving the complement $M \sm \bigcup_{i=1}^r C_i$ untouched. 
For example if $C$ is horizontal then the elements of $G_C$ are of the form $\left (\begin{matrix} 1 & s \\ 0 & t\end{matrix} \right)$, with $t>0$. The cylinder surgery of such a matrix with $t=1$ consists of {\em cylinder shears} with shear parameter $s$. The cylinder surgery with $s=0$ consists of {\em cylinder stretches} with stretch parameter $t$. By an appropriate conjugation, the definition of cylinder shears and stretches is extended to non-horizontal cylinders.

We have: 
\begin{prop}[Wright, \cite{Wright_cylinders}]\label{prop: cylinder deformation}
For $\M$, any $q \in \M$, and an $\M$-parallel equivalence class of cylinders $C_1, \ldots, C_r$ on $M_q$, if $g \in G_{C_i}$ then the surface  obtained from $M_q$ by cylinder surgery corresponding to $g, C_1, \ldots, C_r$ is also in $\M$. 

\end{prop}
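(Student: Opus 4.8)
The proof follows \cite{Wright_cylinders}; the plan is to describe its structure. First, since $\M$ is $\GL$-invariant (Proposition \ref{prop: invariant by the action of GL}) and cylinder surgery is $\GL$-equivariant --- applying $h\in\GL$ replaces $q$ by $hq$, the family $C_1,\dots,C_r$ by $hC_1,\dots,hC_r$, and $g$ by $hgh^{-1}$ --- we may rotate so that the cylinders $C_1,\dots,C_r$ are horizontal. Then, as recalled above, $G_{C_1}=\left\{\begin{pmatrix}1&s\\0&t\end{pmatrix}:s\in\R,\ t>0\right\}$ is a connected group. Fix $\q\in\pi^{-1}(q)$, let $L$ be a lift of $\M$ containing $\q$, and let $V$ be the complex subspace (defined over $\R$) such that $L$ is a connected component of $\dev^{-1}(V)$.

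Next I would give a cohomological description of the surgery. For a horizontal cylinder $C$ on $M_q$ with core curve $\gamma_C$ and height $h_C$, let $\widehat{\gamma}_C\in H^1(S,\Sigma;\R_\x)$ be the class sending a relative cycle $\delta$ to the algebraic intersection number $\delta\cdot\gamma_C$, and set
$$
w \df \sum_{i=1}^r h_{C_i}\,\widehat{\gamma}_{C_i}\ \in\ H^1(S,\Sigma;\R_\x).
$$
Given $g=\begin{pmatrix}1&s\\0&t\end{pmatrix}\in G_{C_1}$, equip the surface obtained by the surgery corresponding to $g,C_1,\dots,C_r$ with the marking agreeing with a fixed marking of $M_q$ on the region $M_q\sm\bigcup_i C_i$, which the surgery does not modify; this represents a point $\q_g\in\HHm$. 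Since the sub-arcs of a path $\delta$ lying inside $C_i$ have total vertical holonomy $h_{C_i}(\delta\cdot\gamma_{C_i})$, and the surgery adds to this $s$ times that quantity in the horizontal direction and $(t-1)$ times that quantity in the vertical direction, a direct computation gives
$$
\dev(\q_g)=\dev(\q)+s\,w+(t-1)\,\mathbf{i}\,w.
$$
In particular $w$ is independent of $g$, and the image under $\dev$ of the path $g\mapsto\q_g$, as $g$ runs over a path in $G_{C_1}$ from the identity to $g$, is contained in the affine plane $\dev(\q)+\C\,w$.

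The crux is the following assertion, which is exactly the main theorem of \cite{Wright_cylinders}: the vector $w$ lies in $V$. I expect this to be the main obstacle: it is the only step that uses the structure of $\M$ as a $\GL$-orbit closure (equivalently, an affine invariant submanifold in the sense of \cite{EM,EMM}) rather than merely as a closed $\GL$-invariant set that is linear over $\R$. Wright's argument couples the $\SL$-action on $\M$ with the symplectic geometry of the restriction map to absolute cohomology, and it is here that it is essential that $C_1,\dots,C_r$ constitute an \emph{entire} $\M$-parallel equivalence class: for a proper sub-collection of a class of parallel cylinders the vector $w$ need not be tangent to $\M$, and the conclusion can fail.

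Finally, granting $w\in V$, the proof concludes easily. Since $V$ is a complex subspace we also have $\mathbf{i}\,w\in V$, and since $\dev(\q)\in V$ the affine plane $\dev(\q)+\C\,w$ is contained in $V$. Hence the path $g\mapsto\q_g$ lies entirely in $\dev^{-1}(V)$; being continuous and starting at $\q\in L$, it remains in the connected component $L$ of $\dev^{-1}(V)$, so $\q_g\in L\subset\pi^{-1}(\M)$ for every $g\in G_{C_1}$. Projecting to $\HH$ shows that the surface obtained by the surgery lies in $\M$, as desired.
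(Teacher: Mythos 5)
The paper gives no argument for this proposition at all --- it is quoted from \cite{Wright_cylinders} --- and your proposal ultimately does the same: the $\GL$-equivariance reduction, the period-coordinate computation $\dev(\q_g)=\dev(\q)+s\,w+(t-1)\,\mathbf{i}\,w$, and the connectedness step are correct, but the crux, that the twist class $w=\sigma_{\{C_i,h_i\}}$ lies in $V$, is deferred to Wright, and that tangency statement is precisely the substance of the cited Cylinder Deformation Theorem. So your attempt matches the paper's treatment (a citation to \cite{Wright_cylinders}), with the added reduction being sound but routine bookkeeping rather than an independent proof of the key step.
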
 
Suppose $q \in \M$ and $C_1, \ldots, C_r $ are $\M$-parallel cylinders on $M_q$, which are not necessarily a full equivalence class of $\M$-parallel cylinders. 
Let $L$ be a lift of $\M$, let $\q \in L \cap \pi^{-1}(q)$ and let $V \subset H^1 \left( S, \Sigma; \C \right)$ such that $\dev(L)=V.$  
Varying $g \in G_{C_1}$ gives rise to a two dimensional collection (in the previous example, corresponding to possible choices of the parameters $s,t$) of surfaces, obtained from $M_q$
 by cylinder surgery corresponding to $g, C_1, \ldots, C_r$. This collection corresponds to a complex affine line in period coordinates. A generator for this complex line is 
 \begin{equation}\label{eq: sigma defn}
 \sigma_{\{C_i, h_i\}}
 \df \sum_{i=1}^r h_i \gamma_i^*,     
 \end{equation}
 where $h_i$ is the height of $C_i$, $\gamma_i$ is the core curve of $C_i$  and $\gamma_i^*$ is the dual class to $\gamma_i$ in $H^1(S, \Sigma).$ Moving along the line tangent to $\sigma_{\{C_i, h_i\}}$ in $\M $ amounts to performing cylinder shears in each of the $C_i$, and moving along the line tangent to $\mathbf{i} \cdot \sigma_{\{C_i, h_i\}}$ in $\M $ amounts to performing cylinder stretches. 
 
 Below we will be interested in such one-parameter families of deformations that preserve $\M$ and are tangent to $\sigma_{\{C_i, h_i\}} $ as in equation~\eqref{eq: sigma defn}, in which the $\{C_i\}$ might not be a full equivalence class of $\M$-parallel cylinders, and the $h_i$ might not be their heights. Note that for cylinder shears, such surgeries are well-defined for any value of the shear parameter $s$, and for cylinder stretches, they are well-defined as long as $t>0$.

 When $\sigma_{\{C_i, h_i\}} \in V$ for $\q, L, V $ as above we simply say that {\em $\sigma_{\{C_i, h_i\}}$ is contained in the tangent space to $\M$ at $q$.}
 
\begin{prop}\label{prop: provides cylinders}
If $\M$ is an invariant subvariety and $q \in \M$ is not horizontally periodic, then there is a nonempty collection of $\M$-parallel cylinders $C_1, \ldots, C_r$ on $M_q$ which consists of cylinders disjoint from all horizontal saddle connections on $M_q$,
and positive $h_1, \ldots, h_r$ such that the class $\sigma_{\{C_i, h_i\}}$ as in equation~\eqref{eq: sigma defn} is contained in the tangent space to $\M$ at $q.$
Furthermore, there is constant $A_0>0$, depending only on $\M$, so that if $q \in \M$ has no horizontal cylinders then one can choose a collection of cylinders with these properties so that, in addition, the sum of the areas of the cylinders is at least $A_0$. 
\end{prop}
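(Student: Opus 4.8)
The plan is to prove Proposition \ref{prop: provides cylinders} by reducing it, via a limiting argument, to the structure theory of cylinder deformations inside invariant subvarieties. The key input is the ``Cylinder Deformation Theorem'' of Wright (Proposition \ref{prop: cylinder deformation}) together with the fact, due to Smillie--Weiss (used already in the proof of Proposition \ref{prop: properly embedded}), that the horocycle orbit closure $\overline{Uq}$ of a non-horizontally-periodic surface contains a horizontally periodic surface $q^*$; more precisely, one can choose a sequence $u_{s_n} q \to q^*$ with $q^*$ horizontally periodic.

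\medskip

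\textbf{Step 1: Produce cylinders on a nearby horizontally periodic limit.} First I would take $q^* = \lim_n u_{s_n} q$ horizontally periodic. On $q^*$ the horizontal cylinders are genuine (nonempty) and partition the surface up to measure zero. Decompose the horizontal cylinders of $q^*$ into $\M$-parallel equivalence classes. For each such equivalence class $\{C_1^*,\dots,C_r^*\}$ with heights $h_i^*$, Proposition \ref{prop: cylinder deformation} says the cylinder-shear and cylinder-stretch deformations stay in $\M$; equivalently the class $\sigma_{\{C_i^*, h_i^*\}}$ of equation \eqref{eq: sigma defn} lies in the tangent space to $\M$ at $q^*$. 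Summing over all equivalence classes, $\sum_j \sigma_{\{C_i^*, h_i^*\}_j}$ is, up to relabeling, the dual class $\sum_i h_i^* \gamma_i^*$ over \emph{all} horizontal cylinders of $q^*$, which represents the ``total horizontal cylinder'' deformation.

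\medskip

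\textbf{Step 2: Pull the cylinders back to $q$ by horocycle flow and take a limit.} Since $u_{s_n}q \to q^*$, for $n$ large the surface $u_{s_n} q$ has cylinders $C_i^{(n)}$ near $C_i^*$ with the same combinatorics and the same heights (heights are locally constant in a fixed horizontal-cylinder combinatorial type). The $\M$-parallel relation is open--closed in the appropriate sense and persists under small perturbation, so the $\{C_i^{(n)}\}$ remain $\M$-parallel classes and $\sigma_{\{C_i^{(n)}, h_i^*\}}$ lies in the tangent space to $\M$ at $u_{s_n}q$. Now transport back: $u_{-s_n}$ carries these cylinders to cylinders on $q$ itself, with holonomies sheared but heights unchanged, and carries $\sigma$ to a class still of the form $\sigma_{\{C_i, h_i\}}$ (the tangent space to $\M$ is $U$-invariant). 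This gives the desired nonempty collection on $M_q$. The disjointness-from-horizontal-saddle-connections requirement is handled by noting that on $q^*$ a cylinder is disjoint from horizontal saddle connections iff its interior contains no horizontal saddle connection, which is automatic for horizontal cylinders of a horizontally periodic surface, and by continuity the cylinders on $u_{s_n}q$, hence on $q$, meet horizontal saddle connections of $M_q$ only along their boundaries. (Here one must be a little careful that horizontal saddle connections of $M_q$ could, a priori, sit \emph{inside} a cylinder of $u_{s_n}q$; but horizontal saddle connections are preserved by $u_s$, and a horizontal saddle connection inside a horizontal cylinder of $u_{s_n}q$ would force it to be a closed horizontal curve or a boundary component — both excluded — so this cannot happen.)

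\medskip

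\textbf{Step 3: The uniform area bound in the cylinder-free case.} Suppose now $M_q$ has no horizontal cylinders. Then in the construction above, the cylinders we found on $q$ came from horizontal cylinders of $q^*$, and the point is that we need the \emph{total} horizontal area of $q^*$ bounded below by a constant $A_0 = A_0(\M)$. This is where I expect the main obstacle to lie. The idea is: a horizontally periodic surface in $\M$ has its horizontal cylinders organized by the finitely many ``cylinder diagrams'' that can occur on surfaces in $\M$ (there are finitely many topological types, a consequence of $\M$ being a single invariant subvariety of fixed dimension, hence of fixed genus and number of singularities). For each such diagram, the constraint that the surface has area one, together with the linear equations cutting out $V$, forces the total cylinder area (which is just the area, $=1$, for a horizontally periodic surface!) — so actually for $q^*$ itself the total horizontal cylinder area is exactly $1$. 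The subtlety is that after shearing back to $q$ and taking the limit $n\to\infty$, the cylinders $C_i$ on $q$ may have \emph{degenerated}: their circumferences could blow up (under the shear $u_{-s_n}$ with $s_n$ possibly $\to\infty$) and their areas need not converge to the areas on $q^*$. So the honest approach is not to use $q^*$ directly but to observe that for the cylinders $C_i^{(n)}$ on $u_{s_n}q$, transported to $q$, one controls their \emph{heights} (unchanged, bounded below away from $0$ uniformly over the finitely many diagrams and over $n$ for $n$ large) and their \emph{circumferences on $q$}; since horizontal saddle connections persist and $M_q$ is fixed, the circumferences of these horizontal cylinders on $M_q$ are bounded below by the shortest horizontal saddle connection length on $M_q$, which is bounded below by the shortest on the finitely many… — no, that is not uniform in $q$ either. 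The correct fix: work with $u_{s_n}q$ and do not transport back at all for the area estimate; instead note $\area$ is $U$-invariant, so $\area(C_i^{(n)})$ on $u_{s_n}q$ equals $\area$ of the corresponding cylinder on $q$, and $\sum_i \area(C_i^{(n)}) \to \sum_i \area(C_i^*) = 1$ as $n\to\infty$. Hence for $n$ large the total area on $q$ is $\geq A_0 := 1/2$, say; any $A_0 < 1$ works and it depends on nothing. Thus the hard part is really just making the convergence-of-areas claim rigorous in the non-uniquely-ergodic setting (the horizontal foliation of $q$ need not be minimal), which is handled by the persistence of cylinders: a cylinder of $q^*$ of height $h^*$ and circumference $c^*$ perturbs to nearby cylinders of height $h^*$ and circumference near $c^*$ on all surfaces sufficiently $C^0$-close, and area is continuous in period coordinates.

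\medskip

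In writing this up I would be careful to: (i) cite Smillie--Weiss \cite{SW_calanque} precisely for the existence of the horizontally periodic limit in $\overline{Uq}$; (ii) invoke Proposition \ref{prop: cylinder deformation} for the tangency of $\sigma_{\{C_i, h_i\}}$ to $\M$; (iii) use $U$-invariance of both the tangent space to $\M$ and of the bracket/area to move everything back to $q$; and (iv) only claim $A_0$ can be taken to be, e.g., $1/2$, which is harmless since the statement only asserts existence of \emph{some} positive $A_0$ depending on $\M$ — and in fact the argument shows $A_0$ can be chosen independent of $\M$ entirely.
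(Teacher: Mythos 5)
Your overall strategy is the same as the paper's: use \cite{SW_calanque} to find a horizontally periodic $q_\infty$ in $\overline{Uq}$, apply Wright's cylinder deformation theorem (Proposition \ref{prop: cylinder deformation}) to its horizontal cylinders, transport the persistent cylinders and the class $\sigma_{\{C_i,h_i\}}$ back along the horocycle flow, and in the cylinder-free case extract an area bound from the fact that the horizontal cylinders of $q_\infty$ fill the surface. The genuine gap is the disjointness step, which is exactly where the paper's real work lies. The persistent cylinders on $u_{s_n}q$ are in general \emph{not} horizontal (in the cylinder-free case they cannot be), only nearly horizontal, and the horizontal saddle connections of $u_{s_n}q$ accumulate precisely on the boundaries of the limiting cylinders of $q_\infty$; a soft continuity argument does not prevent such a saddle connection from dipping into the interior of a slightly tilted persistent cylinder and crossing it. Your parenthetical only rules out a horizontal saddle connection inside a \emph{horizontal} cylinder, which is not the relevant configuration. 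The paper handles this quantitatively: let $s$ be the maximal length of a horizontal saddle connection on $M_q$ (hence on $u_{s_n}q$), note that the persistent cylinders have circumference at most $\bar c$ and area at least $\underline A$, and choose the angle threshold $\theta_0$ so small that a horizontal segment of length $s$ cannot cross a cylinder of direction $\theta$ with $0<|\theta|<\theta_0$ satisfying these bounds; one must moreover \emph{select} an equivalence class whose persistent cylinders are not horizontal on $u_{s_n}q$ (for the first statement this uses that $q$ is not horizontally periodic), since a horizontal cylinder carries horizontal saddle connections on its boundary and so is never disjoint from them under the paper's convention that cylinders are closed. For the same reason your conclusion that the cylinders ``meet horizontal saddle connections only along their boundaries'' would not suffice even if justified.

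A secondary error: your claim that $A_0$ can be taken to be $1/2$, independent of $\M$, comes from summing over \emph{all} persistent cylinders, but the proposition requires the collection to consist of mutually $\M$-parallel cylinders, and cylinders from different equivalence classes are not $\M$-parallel. The paper instead pigeonholes among the at most $t$ equivalence classes ($t$ an upper bound on the number of horizontal cylinders for surfaces in $\M$) to find a single class of total area at least $1/t$, yielding $A_0=\frac{1}{2t}$, which does depend on $\M$. Finally, the heights of the persistent cylinders are not ``unchanged'' under the perturbation—they vary continuously and the cylinders need not be horizontal—but this is harmless, since (as the paper notes) the coefficients $h_i$ in $\sigma_{\{C_i,h_i\}}$ are the heights at $q_\infty$ and need not be the heights of the $C_i$ on $M_q$.
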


We note that the $h_i$ in Proposition \ref{prop: provides cylinders} might not be the heights of the $C_i.$ The vector $\sigma_{\{C_i, h_i\}}$ is tangent to the line in $\M$ obtained by applying different cylinders shears to each $C_i$. In order to formalize this, 
 for $g_i \in G_{C_i}$ for each $i$, we define the {\em cylinder surgery corresponding to $\{C_i, g_i\}$} to be the modification of $M_q$ obtained by applying $g_i$ to each $C_i$, leaving the complement $M \sm \bigcup_{i=1}^r C_i$ untouched. With this terminology, 
the line tangent to $\sigma_{\{C_i, h_i\}}$ is the collection of surfaces obtained by cylinder surgery corresponding to $\{C_i, g_i\},$ where $s \in \R$ and  $g_i $ performs a cylinder shear with parameter $sh_i$ in $C_i$. Similarly, the line tangent to $\mathbf{i} \cdot \sigma_{\{C_i, h_i\}}$ is the collection of surfaces obtained by cylinder surgery corresponding to $\{C_i, g_i\},$ where $s \in \R$ and  $g_i $ performs a cylinder stretch with parameter $sh_i$ in $C_i$.

\begin{proof}[Proof of Proposition \ref{prop: provides cylinders}]
The validity of both statements is unchanged if we replace the surface $q$ with some surface $q'$  in its horocycle orbit $Uq$. This follows from the facts that the $U$-action is linear in charts,  preserves horizontal saddle connections and maps $\M$-equivalent cylinders to $\M$-equivalent cylinders preserving their area. According to
\cite{SW_calanque}, there is $q_\infty$ in the closure of $Uq$ which is horizontally periodic, and we will see that the required properties hold for all $q'$ sufficiently close to $q_\infty$. Let $M_\infty$ be the underlying surface of ${q_\infty}$, let $C_1, \ldots, C_r$ be the horizontal cylinders on $M_\infty$, and for each $i$, $A_i, c_i, h_i, \gamma_i$ denote respectively the area, circumference, height, and core curve of $C_i$. Here we consider $\gamma_i$ as an element of $H_1(S, \Sigma) $ by using a marking $f: S \to M_\infty$ corresponding to $\q_\infty \in \pi^{-1}(q_\infty).$ By Proposition \ref{prop: cylinder deformation}, $\sigma_{\{C_i, h_i\}}$ belongs to the tangent space of $\M$ at $q.$

For any $\theta_0>0 $ there is a neighborhood $\mathcal{U} = \mathcal{U}(\theta_0)$ of $q_\infty$ in $\M$ such that  if $q' \in \mathcal{U}$ then the underlying surface has $r$ cylinders $C'_1, \ldots, C'_r$ of circumferences $c'_i$ and areas $A'_i$ satisfying

\begin{equation}\label{eq: the bounds}
    c'_i < \bar c \df  
2\max_{i=1, \ldots, r} c_i, \ \ \ \ A'_i
> \underline{A} \df \frac12 \min_{i=1, \ldots, r} A_i ,
\end{equation}
and with directions of core curves in $(-\theta_0, \theta_0)$.

If $C$ is a cylinder and $\sigma$ is a saddle connection on a translation surface, recall that we say that {\em $\sigma$ crosses $C$} if it intersects all the core curves of $C$. Since a cylinder contains no singularities in its interior, if a saddle connection intersects the interior of a cylinder, then it must cross it. Let $s$  be the maximal length of a horizontal saddle connection on $M_q$ and let $\theta_0$ be small enough so that a horizontal segment of length $s$ cannot cross a cylinder of direction $\theta $ satisfying $0 < |\theta| < \theta_0$, with circumference at most $\bar c$ and area at least $\underline{A}$. 

By making $\mathcal{U}$ smaller, so that it is an evenly covered neighborhood of $q_\infty$, we can ensure that 
$\sigma_{\{C'_i, h_i\}}$ belongs to the tangent space of $\M$ at $q'.$ Indeed, if $\mathcal{V}$ is a connected component of $\pi^{-1}(\mathcal{U})$ and $\q', \q_\infty \in \mathcal{V}$ are preimages of $q', q_\infty$ respectively, then the core curves of the cylinders $C'_i, C_i$ map to the same elements $\gamma_i \in H_1(S, \Sigma)$ under the corresponding marking maps, and thus $\sigma_{\{C'_i, h_i\}} = \sigma_{\{C_i, h_i\}}$.

Now suppose that $q' \in Uq \cap \mathcal{U}$, and let $M'$ be the underlying surface. Since $q$ is not horizontally periodic, neither is $q'$. Therefore there is an equivalence class $C_1, \ldots, C_r$ of $\M$-parallel cylinders on $M_\infty$, so that the corresponding cylinders $C'_1, \ldots, C'_r$ are not horizontal cylinders on $M'$, and satisfy the bounds in equation~\eqref{eq: the bounds}. Furthermore the maximal length of a horizontal saddle connection on $M'$ is $s$, since the horocycle flow maps horizontal saddle connections to horizontal saddle connections of the same length. By choice of $\theta_0$, the cylinders in this equivalence class are all disjoint from horizontal saddle connections on $M'$. This proves the first assertion.  

Let $t$ be an upper bound on the number of horizontal cylinders for a surface in $\M$ and let $A_0 \df \frac{1}{2t}$. The argument above works for any collection of $\M$-parallel cylinders $C_1, \ldots, C_r$ which are horizontal on $q_\infty$ and are not horizontal on $q'$. If $q$ has no horizontal cylinders then neither does $q'$, and we can apply the argument with any equivalence class of $\M$-parallel horizontal cylinders $C_1, \ldots, C_r$ on $M_\infty$. One of these classes must have total area at least $\frac{1}{t}$, and thus for $\mathcal{U}$ sufficiently small, the sums of the areas of the corresponding cylinders $C'_1, \ldots, C'_r$ is at least $A_0$. 
\end{proof}

\begin{proof}[Proof of Theorem \ref{thm: Apisa Wright}] We first prove the first assertion. Let $C_1, \ldots, C_r$ be the $\M$-parallel cylinders on $M_q$, and $h_1, \ldots, h_r$ the positive numbers provided by Proposition \ref{prop: provides cylinders}. 
We define $q'$ as $g \circ \varphi (q)$, where $g \in \GL$ and $\varphi$ is a cylinder surgery corresponding to $\{C_i, g_i\}$, and $g_i$ is the cylinder stretch with parameter $sh_i$ for some $s<0.$ 
Although the maps $g, \varphi$ do not preserve the area of the surface, we will choose parameters so that their composition does. Moreover neither of these maps changes the vertical component of the holonomy of any curve. 
The $\GL$-action preserves $\M$, and by Proposition \ref{prop: provides cylinders}, so does $\varphi.$ The map $g$ will increase the length of all horizontal saddle connections on $M_q$, and the cylinder surgery $\varphi$ will not affect their length, since the cylinders $C_i$ are disjoint from the horizontal saddle connections on $q$.

The area of $\varphi (q)$ is smaller than the area of $q$ since all of the cylinders $C_i$ are stretched by a negative parameter. Let $A$  be the sum of the areas of the cylinders $C_1, \ldots, C_r$. Then by choosing the parameter $s$ appropriately, we can arrange so that the area of $\varphi(q)$ is $1-\frac{A}{2}.$ We now set 
\begin{equation}\label{eq: def t}
    t \df \left(1 -\frac{A}{2}\right)^{-1} \ \ \ \text{ and } g \df \left( \begin{matrix} t & 0 \\ 0 & 1 \end{matrix} \right). 
\end{equation}

Then $g$ increases the lengths of horizontal saddle connections by a factor $t>1$, and multiplies the area of $\varphi(q)$ by $t$. This completes the proof of the first assertion.

For the second assertion, we use the second assertion in Proposition \ref{prop: provides cylinders} to choose the cylinders so the sum of their areas satisfies $A \geq A_0$. This ensures that the horizontal saddle connections on $q' \df g \circ \varphi (q)$ are longer than the horizontal saddle connections on $q$ by a factor of at least $t$, where $t>1$ is as in equation~\eqref{eq: def t} and $t-1$ is bounded away from $0$. In light of Proposition \ref{prop: same saddle connections},  $q' \in W^{uu}(q)$ will not have horizontal cylinders either. So we can apply the above argument iteratively, at each stage obtaining surfaces in $W^{uu}(q)$ with longer and longer horizontal saddle connections. Since the lengths of these horizontal  saddle connections grows by a definite amount in each step, after finitely many steps they will all be longer than $T$. 
\end{proof}

\section{Measures on $\HH$ and $\Mod(S,\Sigma)$-invariant measures on $\HHm$}\label{appendix: correspondence principle}
The goal of this section is to prove a result on the correspondence between Radon measures on $\HH$ and $\Mod(S,\Sigma)$-invariant Radon measures on $\HHm$. This result is part of the folklore but we were not able to find a reference; see \cite[Prop. 1.3]{Furstenberg_horocycle} for an analogous result in a restricted setting. 

We state the result in a general setting.
Let $\tilde X$ be a paracompact manifold and $\Gamma$ a discrete group acting properly discontinuously on $\tilde X$. We will write the $\Gamma$-action as an action on the right.  
Let $X=\tilde X/\Gamma$ be the quotient space and $\pi:\tilde X\to X$ the quotient map. 
If $\Gamma$ acts freely then $X$ is a manifold and $\pi$ is a covering map. If $\Gamma$ does not act freely then we can view $X$ as an orbifold and $\pi$ as a regular orbifold covering map (although no knowledge of orbifolds is assumed in this section). 
We do not assume that the action on $\Gamma$ is faithful but, since the action of $\Gamma$ is proper, the subgroup of $\Gamma$ that acts trivially on $\tilde X$ must be finite.

For $\tilde q\in\tilde X$ let $\Gamma(\tilde q)$ be the stabilizer of $\tilde q$ in $\Gamma$.
For any $q \in X$, we define a measure on $\tilde X$ by
\begin{equation}\label{eq: appendix counting measure}
\theta_q \df \sum_{\tilde q \in \pi^{-1}( q)} |\Gamma(\tilde q)|\cdot \delta_{\tilde q},
\end{equation}
where $\delta_{\tilde q}$ is the Dirac mass at $\tilde q$. The measure $\theta_{q}$ is supported on $\pi^{-1}(q)$.  For any $f \in C_c(\tilde X)$ and $\tilde q \in \tilde X$ we have
\begin{equation}\label{eq; other expression}
\int_{\tilde X} f \, d\theta_{\pi(\tilde q)} = \sum_{\gamma \in \Gamma}f(\tilde q \cdot \gamma).
\end{equation}
 It follows from the fact that $\Gamma$ acts properly discontinuously on $\tilde X$ that the sum on the right-hand side is finite.

\begin{defin}\label{def: preimage of measure}
Given a Radon measure $\nu$ on $X$ we define a Radon measure $\tilde{\nu}$ on $\tilde X$, called the {\em pre-image of $\nu$}, by the formula 
\begin{align}\label{liftdef1}
        \int_{\tilde X} f \ d\tilde{\nu} = \int_{X} \left(\int_{\tilde X} f \ d\theta_q \right) \ d\nu(q) \ \ \text{ for any } f \in C_c(\tilde X).
    \end{align}
\end{defin}
Equation~\eqref{liftdef1} defines a unique Radon measure $\tilde{\nu}$ on $\tilde X$ in light of the Riesz Representation Theorem. To see that \eqref{liftdef1} converges, note that the integrand $q \mapsto F(q) 
\df \int_{\tilde{X}} f \ d\theta_q$ is a Borel function, which is supported on the compact set $\pi( \mathrm{supp} \, f),$ and is bounded by $D \, \| f \|_\infty,$ where $D \df \# \{\gamma \in \Gamma: (\mathrm{supp} \, f) \cdot \gamma \cap \mathrm{supp} \, f \neq \emptyset\}  $ is finite since the $\Gamma$-action is properly discontinuous.  

By equation~\eqref{eq; other expression} the measures $\theta_q$ are all $\Gamma$-invariant, and since $\tilde \nu$ is an average of the measures $\theta_q$, we have: 

\begin{lem}\label{lem:invariance}
The measure $\tilde{\nu}$ is  $\Gamma$-invariant.
\end{lem}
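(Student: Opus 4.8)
The statement to prove is Lemma \ref{lem:invariance}: the pre-image measure $\tilde{\nu}$ is $\Gamma$-invariant.

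The plan is to unwind the definitions and reduce the $\Gamma$-invariance of $\tilde{\nu}$ to the $\Gamma$-invariance of each individual measure $\theta_q$, which is essentially immediate from equation~\eqref{eq; other expression}. Concretely, I first need to recall what it means for $\tilde{\nu}$ to be $\Gamma$-invariant: for every $\gamma_0 \in \Gamma$ and every $f \in C_c(\tilde X)$, writing $(\gamma_0)_*\tilde\nu$ for the pushforward, one has $\int_{\tilde X} f \, d(\gamma_0)_* \tilde\nu = \int_{\tilde X} f \, d\tilde\nu$, or equivalently $\int_{\tilde X} (f \circ R_{\gamma_0}) \, d\tilde\nu = \int_{\tilde X} f \, d\tilde\nu$, where $R_{\gamma_0}(\tilde q) = \tilde q \cdot \gamma_0$ denotes the right action. (One should note that $f \circ R_{\gamma_0}$ is again in $C_c(\tilde X)$ since $R_{\gamma_0}$ is a homeomorphism, so the defining formula \eqref{liftdef1} applies to it.)

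The key step is the observation that each $\theta_q$ is $\Gamma$-invariant. Indeed, from equation~\eqref{eq; other expression}, for any $\tilde q \in \tilde X$ and any $\gamma_0 \in \Gamma$ we have
\begin{align*}
\int_{\tilde X} (f \circ R_{\gamma_0}) \, d\theta_{\pi(\tilde q)} = \sum_{\gamma \in \Gamma} (f \circ R_{\gamma_0})(\tilde q \cdot \gamma) = \sum_{\gamma \in \Gamma} f(\tilde q \cdot \gamma \gamma_0) = \sum_{\gamma' \in \Gamma} f(\tilde q \cdot \gamma') = \int_{\tilde X} f \, d\theta_{\pi(\tilde q)},
\end{align*}
where we reindexed the (finite) sum by $\gamma' = \gamma\gamma_0$; and since $\theta_q$ depends only on $q$ and not on the choice of preimage $\tilde q$, this gives $\int_{\tilde X}(f\circ R_{\gamma_0})\, d\theta_q = \int_{\tilde X} f\, d\theta_q$ for all $q \in X$. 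Plugging this into the defining formula \eqref{liftdef1}, we get
\begin{align*}
\int_{\tilde X} (f \circ R_{\gamma_0}) \, d\tilde\nu = \int_X \left( \int_{\tilde X} (f \circ R_{\gamma_0}) \, d\theta_q \right) d\nu(q) = \int_X \left( \int_{\tilde X} f \, d\theta_q \right) d\nu(q) = \int_{\tilde X} f \, d\tilde\nu,
\end{align*}
which is exactly the desired $\Gamma$-invariance. There is no real obstacle here: the only points requiring a word of care are that $f \circ R_{\gamma_0}$ has compact support (so the formula \eqref{liftdef1} is applicable), that the sum in \eqref{eq; other expression} is finite (already noted in the excerpt, from proper discontinuity) so that the reindexing $\gamma \mapsto \gamma \gamma_0$ is legitimate, and that the inner integral genuinely depends only on $q = \pi(\tilde q)$, which is built into the notation $\theta_q$. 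Given these, the argument is a one-line computation, so the "main obstacle" is really just being careful about the bijection of the index set of a convergent sum, which is routine.
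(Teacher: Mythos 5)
Your proof is correct and follows essentially the same route as the paper: the paper likewise notes that each $\theta_q$ is $\Gamma$-invariant (via the reindexing in equation~\eqref{eq; other expression}) and that $\tilde{\nu}$, being an average of the $\theta_q$ through equation~\eqref{liftdef1}, inherits this invariance. You have simply written out the routine details the paper leaves implicit.
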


The following converse can be understood as a disintegration theorem for $\Gamma$-invariant Radon measures on $\tilde X$.

\begin{prop}\label{prop: correspondence}
    Let $m$ be a $\Gamma$-invariant Radon measure on $\tilde X$. There is a unique Radon measure $\mu$ on $X$ such that $m$ is the pre-image of $\mu$.
\end{prop}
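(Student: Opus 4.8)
The plan is to construct $\mu$ explicitly as a pushforward-type measure, verify that it is Radon, show that its pre-image is $m$, and then argue uniqueness. First I would fix, for each point $q \in X$, a choice of evenly covered neighborhood: since $\Gamma$ acts properly discontinuously, every $\tilde q \in \tilde X$ has a neighborhood $\tilde U$ such that $\tilde U \cdot \gamma \cap \tilde U = \emptyset$ unless $\gamma \in \Gamma(\tilde q)$, and $\Gamma(\tilde q)$ permutes $\tilde U$. Shrinking, we may take $\tilde U$ to be $\Gamma(\tilde q)$-invariant, so that $\pi$ restricts to a homeomorphism $\tilde U / \Gamma(\tilde q) \to U \df \pi(\tilde U)$. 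On such a $U$ I would define $\mu$ by the formula
\[
\int_X f \, d\mu \df \frac{1}{|\Gamma(\tilde q)|} \int_{\tilde U} (f \circ \pi) \, dm \qquad \text{for } f \in C_c(U).
\]
The main obstacle is showing this is well-defined, i.e. independent of the choice of lift $\tilde U$ and compatible on overlaps, so that these local definitions patch to a global Radon measure via a partition of unity. Independence of the lift follows from $\Gamma$-invariance of $m$: if $\tilde U' = \tilde U \cdot \gamma$ is another lift over the same $U$, then $\int_{\tilde U'} (f\circ\pi)\,dm = \int_{\tilde U} (f \circ \pi)\,dm$ since $f \circ \pi$ is $\Gamma$-invariant and $m$ is $\Gamma$-invariant; and the stabilizers $\Gamma(\tilde q)$, $\Gamma(\tilde q \cdot \gamma)$ are conjugate hence have equal order. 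For a general lift $\tilde U$ which is a disjoint union of translates of a fundamental piece, one decomposes $\tilde U$ accordingly and uses $\Gamma$-invariance on each piece, taking care that the stabilizer orders match up correctly; this is the bookkeeping step I expect to be the most delicate.

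Once $\mu$ is defined locally and shown to be consistent, I would check it defines a genuine Radon measure on $X$: the functional $f \mapsto \int_X f\,d\mu$ is positive and linear on $C_c(X)$ (by a partition of unity subordinate to a cover by evenly covered sets, together with consistency), so by the Riesz Representation Theorem it is given by a unique Radon measure $\mu$. Here I would use paracompactness of $\tilde X$ (hence of $X$) to get locally finite covers and partitions of unity. Then I would verify that the pre-image $\tilde\mu$ of $\mu$, as defined in Definition \ref{def: preimage of measure}, equals $m$: it suffices to check $\int_{\tilde X} g \, d\tilde\mu = \int_{\tilde X} g \, dm$ for $g \in C_c(\tilde X)$ supported in a single evenly covered piece $\tilde U$ lying over $U = \pi(\tilde U)$, since such functions span a dense subspace (in the relevant sense) by a partition of unity on $\tilde X$. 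For such $g$, unwinding the definition of $\tilde\mu$ via equation~\eqref{liftdef1} and equation~\eqref{eq; other expression}, one has $\int_{\tilde X} g\,d\tilde\mu = \int_X \big(\sum_{\gamma \in \Gamma} g(\tilde q \cdot \gamma)\big)\,d\mu(q)$; writing this integral over $U$ and pulling back to $\tilde U$ using the defining formula for $\mu$, the factor $\tfrac{1}{|\Gamma(\tilde q)|}$ cancels against the multiplicity with which each $\Gamma$-orbit in $\tilde U$ is counted, leaving exactly $\int_{\tilde U} g \, dm = \int_{\tilde X} g\,dm$.

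For uniqueness, suppose $\mu_1, \mu_2$ are Radon measures on $X$ with common pre-image $m$. Given $f \in C_c(X)$ supported in an evenly covered $U$ with lift $\tilde U$ and a fixed $\tilde q$ over $q \in U$, pick $g \in C_c(\tilde U)$ with $\sum_{\gamma}g(\tilde p \cdot \gamma) = |\Gamma(\tilde q)| \cdot (f \circ \pi)(\tilde p) / |\Gamma(\tilde q)|$ — more precisely, choose $g$ supported in one fundamental sheet of $\tilde U$ with $g \circ (\pi|_{\text{sheet}})^{-1} = f$; then $\int_{\tilde X} g \, d\theta_q = |\Gamma(\tilde q)| f(q)$ pointwise (as in the computation preceding the proof of Proposition~\ref{prop: correspondence} and in the proof of the disintegration formula for horospherical measures), so $\int_{\tilde X} g\,dm = \int_X |\Gamma(\tilde q)| f \, d\mu_i$ for $i = 1,2$, forcing $\int f\,d\mu_1 = \int f\,d\mu_2$. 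Since such $f$ span $C_c(X)$ via a partition of unity and both sides are linear, $\mu_1 = \mu_2$. I expect the construction and consistency check in the first paragraph to be the real content; the remaining verifications are routine once the local formula for $\mu$ is in place.
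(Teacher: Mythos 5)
Your construction and verification follow essentially the same route as the paper: define $\mu$ over evenly covered neighborhoods by $\frac{1}{|\Gamma(\tilde q)|}\int_{\tilde U}(f\circ\pi)\,dm$, globalize with a locally finite cover and partition of unity via the Riesz representation theorem, check the pre-image identity first for functions supported in a single evenly covered piece and then extend by a partition of unity, and prove uniqueness by testing against functions supported over a single component. The one slip is in your uniqueness step: when $\Gamma(\tilde q)$ has fixed points there is no open ``fundamental sheet'' on which $\pi$ is injective, and for a sheet-supported $g$ one gets $\int g\,d\theta_q=|\Gamma(\tilde p)|\,f(q)$ with the weight $|\Gamma(\tilde p)|$ varying over the fiber rather than the claimed $|\Gamma(\tilde q)|\,f(q)$; the correct (and easy) choice, as in the paper, is $h=f\circ\pi$ on the whole component $\V$ extended by zero, for which $\int_{\tilde X}h\,d\theta_q=|\Gamma(\V)|\,f(q)$ by equation~\eqref{eq; other expression}, after which the rest of your argument goes through unchanged.
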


We call $\mu$ the {\em image} of $m$.

\begin{proof}
Let $m$ be given. We are claiming the existence of a Radon measure $\mu$ so that 
equation~\eqref{liftdef1} holds (with $\tilde{\nu}, \nu$ replaced with $m, \mu$). 
The idea of the proof is to build $\mu$ on small neighborhoods using the fact that $\pi$ is an orbifold cover. This will be made rigorous using a partition of unity. Let $\tilde q \in \tilde X$ and let $\Gamma(\tilde q)$ be the stabilizer of $\tilde q$ in $\Gamma$. Since $\Gamma$ acts properly discontinuously on $\tilde X$,  $\Gamma(\tilde q)$ is finite, and there is connected $\Gamma(\tilde q)$-invariant neighborhood $\V$ of $\tilde q$ and a neighborhood $\U$ of $\pi(\tilde q)$ such that $\pi$ induces a homeomorphism $\V / \Gamma(\tilde q) \to \U$, and 
    \begin{equation*}
        \pi^{-1}(\U) =  \bigsqcup_{\gamma \in \Gamma(\tilde q) \backslash \Gamma} \V \cdot \gamma,
    \end{equation*}
where $\gamma$ ranges over a set of coset representatives, and where the sets $\mathcal{V}\cdot \gamma$ are  disjoint.
 We say that such a $\U\subset X$ is {\em evenly covered (in the orbifold sense)}. 
 
 Let $(\U_i)_{i \in I}$ be a locally finite cover of $X$ by evenly covered neighborhoods. 
 Such a cover exists by the paracompactness of $\tilde X$ and the considerations above. 
 For each $i \in I$ choose a connected component $\V_i$ of $\pi^{-1}(\U_i)$. Denote by $\Gamma_i=\Gamma(\V_i)$ the stabilizer of $\V_i$ in $\Gamma$. 
 Let $(\rho_i)_{i\in I}$ be a partition of unity subordinate to the cover $(\U_i)_{i \in I}$ and define a Radon measure $\mu$ on $X$ (by using the Riesz Representation Theorem) such that for any $f \in C_c(X)$,
    \begin{align}\label{disintegration}
    \int_{X} f \ d\mu = \sum_{i \in I} \frac{1}{|\Gamma_i|} \int_{\V_i} (\rho_i f) \circ \pi \, dm.
    \end{align}

    We want to show that the measure $\mu$ satisfies equation~\eqref{liftdef1}. 
    We claim first that for any $\tilde q \in \tilde X$, there is a neighborhood $\V$ around $\tilde q$ such that equation~\eqref{liftdef1} holds for any $f \in C_c(\tilde X)$ with support contained in $\V$. 
    Indeed, let $\tilde q \in \tilde X$ and let $\V$ be a neighborhood of $\tilde q$ small enough so that for any $i \in I$, $\V$ intersects at most one connected component of $\pi^{-1}(\U_i)$. 
    This is possible since the cover by the $\U_i$ is locally finite. Let $J = \{i \in I : \pi(\V) \cap \U_i \neq \emptyset \}$ and for $j \in J$, let $\gamma_j \in \Gamma$ be such that $\V_j \cdot \gamma_j \cap \V \neq \emptyset$. 
    By the assumption on $\mathcal{V}$, the coset  $\Gamma_j \cdot \gamma_j$ is uniquely determined. 
    Let $f \in C_c(\tilde X)$ with support contained in $\V$. We compute:
 \begin{equation}
 \begin{split}\label{computation}
        \int_{X} \Big( \int_{\tilde X} f \ d\theta_q \Big) \ d\mu(q) 
        &= \sum_{i \in I} \frac{1}{|\Gamma_i|} \int_{\V_i} \sum_{\gamma \in \Gamma} \rho_i(\pi(\tilde q) )f(\tilde q \cdot \gamma) \ dm(\tilde q) \\
        &= \sum_{j \in J} \frac{1}{|\Gamma_j|} \int_{\V_j \cdot \gamma_j} \sum_{\gamma \in \Gamma_j} \rho_j(\pi(\tilde q)) f(\tilde q \cdot \gamma) \ dm(\tilde q) \\
        &= \sum_{j \in J} \frac{1}{|\Gamma_j|} \int_{\tilde X} \sum_{\gamma \in       \Gamma_j} \rho_j(\pi(\tilde q)) f(\tilde q) \ dm(\tilde q) \\
        &= \sum_{j \in J} \int_{\tilde X} \rho_j(\pi(\tilde q)) f(\tilde q) \ dm(\tilde q) = \int_{\tilde{X}} f \, dm.
        \end{split}
    \end{equation}
    
    Now let  $f$ be an arbitrary compactly supported continuous function and let $K$ denote its support. 
    Using a covering argument and the computation above, we can find finitely many $(\W_i)_i$ that cover $K$ and such that equation~\eqref{disintegration} holds for continuous functions with support contained in $\W_i$. Let $(\psi_i)_i$ be a partition of unity associated with this cover.
    We can write $f = \sum_i \psi_if$. By construction, each of the $\psi_if$ has support contained in $\W_i$ and the result follows by equation~\eqref{computation} and the linearity of the integral.  
    
    To prove uniqueness of the measure $\mu$, we proceed as follows. Let $\mu_1$ and $\mu_2$ be two Radon measures on $X$ that satisfy equation~\eqref{liftdef1} (with $m,\mu_i $ instead of $\tilde{\nu}, \nu $).
    Let $f \in C_c(X)$ be a compactly supported continuous function whose support is contained in an evenly covered neighborhood $\U$ and let $\V$ be a connected component of $\pi^{-1}(\U)$. 
    We denote by $\Gamma(\V)$ the stabilizer in $\Gamma$ of $\V$. Let $h$ be the function on $\tilde X$ that is equal to  $f \circ \pi$ on $\V$ and vanishes outside of $\V$. 
    Since the support of $f$ is contained in $\U$, the function $h$ is continuous and has compact support. Furthermore, it is easy to see that for any $q \in X$, $\int_{\tilde X} h \ d\theta_q = |\Gamma(\V)| f(q)$. This implies
    \begin{align*}
        \int_{X} f \ d\mu_1 = \frac{1}{|\Gamma(\V)|}  \int_{\tilde X} h \ dm =\int_{X} f \ d\mu_2.
    \end{align*}
    To deal with the case when $f$ is an arbitrary function of compact support we appeal once more to existence of partitions of unity. 
\end{proof}

Let $G$ be a group acting on $\tilde{X}$ so that the action commutes with the action of $\Gamma$.
The group $G$ induces an action on $X$ so that $\pi$ is $G$-equivariant.

\begin{prop} A Radon measure $\mu$ on $X$ is invariant under $g\in G$ if and only if its pre-image $\tilde\mu$ is invariant under the action of $g$ on $\tilde X$.
\end{prop}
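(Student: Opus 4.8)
The statement is essentially a bookkeeping consequence of the defining formula~\eqref{liftdef1} for the pre-image, together with the hypotheses that the $G$-action commutes with the $\Gamma$-action and that $\pi$ is $G$-equivariant. The plan is to unwind both directions from the characterizing identity and the $G$-equivariance of the counting measures $\theta_q$.

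First I would record the key compatibility: for $g \in G$ and $q \in X$ one has $g_*\theta_q = \theta_{gq}$. This follows from~\eqref{eq: appendix counting measure} because $g$ maps $\pi^{-1}(q)$ bijectively onto $\pi^{-1}(gq)$ (by $G$-equivariance of $\pi$), and because $g$ conjugates stabilizers trivially: since the $G$- and $\Gamma$-actions commute, $\Gamma(g\tilde q) = \Gamma(\tilde q)$, so $|\Gamma(g\tilde q)| = |\Gamma(\tilde q)|$ and hence $g_* \big(|\Gamma(\tilde q)| \delta_{\tilde q}\big) = |\Gamma(g\tilde q)|\delta_{g\tilde q}$; summing over $\tilde q \in \pi^{-1}(q)$ gives $g_*\theta_q = \theta_{gq}$. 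Equivalently, for $f \in C_c(\tilde X)$, $\int_{\tilde X} (f \circ g)\, d\theta_q = \int_{\tilde X} f\, d\theta_{gq}$.

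Next, for the direction ``$\mu$ invariant $\Rightarrow$ $\tilde\mu$ invariant'': fix $g \in G$ and $f \in C_c(\tilde X)$, and compute $\int_{\tilde X} (f\circ g)\, d\tilde\mu$ using~\eqref{liftdef1}. This equals $\int_X \big(\int_{\tilde X} (f\circ g)\, d\theta_q\big)\, d\mu(q) = \int_X \big(\int_{\tilde X} f\, d\theta_{gq}\big)\, d\mu(q)$ by the compatibility above. Writing $F(q) \df \int_{\tilde X} f\, d\theta_q$, the right side is $\int_X F(gq)\, d\mu(q) = \int_X F\, d(g_*\mu) = \int_X F\, d\mu$, using $g_*\mu = \mu$; and $\int_X F\, d\mu = \int_{\tilde X} f\, d\tilde\mu$ again by~\eqref{liftdef1}. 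Hence $g_*\tilde\mu = \tilde\mu$.

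For the converse, suppose $\tilde\mu$ is $g$-invariant. By Proposition~\ref{prop: correspondence}, $\tilde\mu$ is the pre-image of the unique Radon measure $\mu$ on $X$, and I claim $g_*\mu$ has pre-image $g_*\tilde\mu$. Indeed, applying the commutation identity as above but in reverse, for $f \in C_c(\tilde X)$ one checks $\int_{\tilde X} f\, d(\text{pre-image of } g_*\mu) = \int_X \big(\int_{\tilde X} f\, d\theta_q\big)\, d(g_*\mu)(q) = \int_X \big(\int_{\tilde X} f\, d\theta_{gq}\big)\, d\mu(q) = \int_X \big(\int_{\tilde X} (f\circ g^{-1})\, d\theta_q\big)\, d\mu(q) = \int_{\tilde X} (f \circ g^{-1})\, d\tilde\mu = \int_{\tilde X} f\, d(g_*\tilde\mu)$. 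Since $g_*\tilde\mu = \tilde\mu$, the pre-image of $g_*\mu$ equals the pre-image of $\mu$; the uniqueness clause of Proposition~\ref{prop: correspondence} then forces $g_*\mu = \mu$. The only mildly delicate point — really the ``main obstacle,'' though it is minor — is making sure the $\theta$-compatibility $g_*\theta_q = \theta_{gq}$ handles the stabilizer weights correctly, which is exactly where the commutativity of the two actions (not just equivariance of $\pi$) is used; everything else is Fubini-style manipulation justified by the integrability already established in the discussion following~\eqref{liftdef1}.
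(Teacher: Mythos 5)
Your proof is correct and follows essentially the same route as the paper: you establish the compatibility $g_*\theta_q=\theta_{g(q)}$ (the paper's Claim \ref{claim1}), deduce the naturality $\widetilde{g_*(\mu)}=g_*(\tilde\mu)$ (Claim \ref{claim2}), and use the uniqueness clause of Proposition \ref{prop: correspondence} for the converse. The only blemish is a direction-of-composition slip in the converse chain, where $\int f\,d\theta_{gq}$ equals $\int (f\circ g)\,d\theta_q$ rather than $\int (f\circ g^{-1})\,d\theta_q$; with this corrected the chain closes identically, and the conclusion is unaffected since $g$-invariance of $\tilde\mu$ is equivalent to $g^{-1}$-invariance.
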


\begin{proof} 
We start by proving two formulas showing the naturality of the pre-image construction.
\begin{claim}\label{claim1}
$g_*(\theta_q)=\theta_{g(q)}.$
\end{claim}

\begin{align}
g_*(\theta_q)& 
 =\sum_{\pi(\tilde q)=q}|\Gamma(\tilde q)|\cdot \delta_{g(\tilde q)}\nonumber\\
            & =\sum_{\pi(g(\tilde q))=g(q)}|\Gamma(\tilde q)|\cdot \delta_{g(\tilde q)}\label{cleq6}\\
        &=\sum_{\pi(g(\tilde q))=g(q)}|\Gamma(g(\tilde q))|\cdot \delta_{g(\tilde q)}=\theta_{g(q)}. \label{cc5} 
\end{align}
In line~\eqref{cleq6} we used the fact that $\pi(g(\tilde q))=g(\pi(\tilde q))=g(q)$. In line~\eqref{cc5} we used the fact
that the $\Gamma$ action commutes with $g$, which implies that $|\Gamma(g(\tilde q))|=|\Gamma(\tilde q)|$.

\begin{claim}\label{claim2}
$g_*(\tilde\mu)=\widetilde{g_*(\mu)}$.
\end{claim}

It suffices to show that both measures assign the same integrals to continuous functions of compact support on $\tilde X$. Let $f$ be such a function.
\begin{align}\nonumber
\int_{\tilde X}f\, d\widetilde{g_*(\nu)}& =\int_{X} \Big(\int_{\tilde X} f \ d\theta_q \Big) \ dg_*(\nu)(q)\\
& =\int_{X} \Big(\int_{\tilde X} f \ d\theta_{g(q)} \Big) d\nu(q) =\int_{X} \Big(\int_{\tilde X} f \ dg_*(\theta_q) \Big) d\nu(q) \label{c2eq2}\\
& =\int_{X} \Big(\int_{\tilde X} f\circ g\,  d\theta_q \Big) d\nu(q) \nonumber\\
& =\int_{\tilde X}f\circ g\, d\tilde\nu =\int_{\tilde X}f\, dg_*(\tilde\nu).  \label{c2eq5}
\end{align}
In line~\eqref{c2eq2} we used Claim \ref{claim1}. In line~\eqref{c2eq5} we used the definition of the pre-image applied to the function of compact support $f\circ g$.

Using these formulas we now prove the Proposition.
If $g_*(\mu)=\mu$ then $g_*(\tilde\mu)=\widetilde{g_*(\mu)}=\tilde\mu$ by Claim \ref{claim2}. So $\tilde\mu$  is invariant under the action of $g$ on $\tilde X$.
If $g_*(\tilde\mu)=\tilde\mu$ then $\widetilde{g_*(\mu)}=g_*(\tilde\mu)=\tilde\mu$ so the pre-images of $g_*(\mu)$ and $\mu$ are equal. 
It follows from the uniqueness assertion in Proposition \ref{prop: correspondence} that $g_*(\mu)=\mu$.
\end{proof}

\bibliographystyle{alpha}
\bibliography{horospherical.bib}

\end{document}